\definecolor{citecol}{rgb}{0.07,0.07,0.05}
\definecolor{urlcol}{rgb}{0.06,0.04,0.09}
\definecolor{linkcol}{rgb}{0.01,0.03,0.08}
 \numberwithin{equation}{subsection}
\theoremstyle{plain}
\newtheorem{theorem}{Theorem}[section]
\newtheorem{lemma}[theorem]{Lemma}
\newtheorem{proposition}[theorem]{Proposition}
\newtheorem{conjecture}[theorem]{Conjecture}
\theoremstyle{definition}
\newtheorem{definition}[theorem]{Definition}
\newtheorem{remark}[theorem]{Remark}
\newtheorem*{acknowledgement}{Acknowledgement}
\theoremstyle{remark}
\newcommand{\tb}[1]{\textbf{#1}}
\newcommand{\BB}{{\mathbb B}}
\newcommand{\BC}{{\mathbb C}}
\newcommand{\BF}{{\mathbb F}}
\newcommand{\BL}{{\mathbb L}}
\newcommand{\BP}{{\mathbb P}}
\newcommand{\BQ}{{\mathbb Q}}
\newcommand{\BV}{{\mathbb V}}
\newcommand{\BX}{{\mathbb X}}
\newcommand{\BY}{{\mathbb Y}}
\newcommand{\BZ}{{\mathbb Z}}
\newcommand{\CA}{{\mathcal A}}
\newcommand{\CB}{{\mathcal B}}
\newcommand{\CF}{{\mathcal F}}
\newcommand{\CG}{{\mathcal G}}
\newcommand{\CI}{{\mathcal I}}
\newcommand{\CJ}{{\mathcal J}}
\newcommand{\CN}{{\mathcal N}}
\newcommand{\CR}{{\mathcal R}}
\newcommand{\CS}{{\mathcal S}}
\newcommand{\CY}{{\mathcal Y}}
\newcommand{\CZ}{{\mathcal Z}}
\newcommand{\FA}{{\mathfrak A}}
\newcommand{\FB}{{\mathfrak B}}
\newcommand{\FC}{{\mathfrak C}}
\newcommand{\FF}{{\mathfrak F}}
\newcommand{\FG}{{\mathfrak G}}
\newcommand{\FX}{{\mathfrak X}}
\newcommand{\Fa}{{\mathfrak a}}
\newcommand{\ScF}{{\mathscr F}}
\DeclareMathOperator{\Lie}{Lie}
\DeclareMathOperator{\Charpol}{Charpol}
\DeclareMathOperator{\End}{End}
\DeclareMathOperator{\Spf}{Spf}
\DeclareMathOperator{\Ker}{Ker}
\DeclareMathOperator{\Hom}{Hom}
\DeclareMathOperator{\Nm}{Nm}
\DeclareMathOperator{\val}{val}
\DeclareMathOperator{\Tr}{Tr}
\DeclareMathOperator{\Mod}{mod}
\DeclareMathOperator{\diag}{diag}
\author{Sungyoon Cho}
\address[Sungyoon Cho]{Department of Mathematics, University of Arizona}
\email{sungyooncho@math.arizona.edu}
\title[Special cycles]{Special cycles on unitary Shimura varieties with minuscule parahoric level structure}
\date{\today}
\begin{document}

\begin{abstract}
In this paper, we formulate conjectural formulas for the arithmetic intersection numbers of special cycles on unitary Shimura varieties with minuscule parahoric level structure. Also, we prove that these conjectures are compatible with all known results.
\end{abstract}

\maketitle
\tableofcontents{}

\section{Introduction}\label{section1}
In \cite{KR3}, Kudla and Rapoport made a conjectural formula for the arithmetic intersection numbers of special cycles on Shimura varieties attached to unitary groups of signature $(1,n-1)$ with hyperspecial level structure (which corresponds to the case of good reduction). This conjecture, the so-called Kudla-Rapoport conjecture, relates the arithmetic intersection numbers of special cycles to Fourier coefficients of a certain Siegel-Eisenstein series. In \cite{KR2}, they made a local analogue of the Kudla-Rapoport conjecture which relates the arithmetic intersection numbers of special cycles on unitary Rapoport-Zink spaces to the derivatives of representation densities of hermitian forms and they showed that the local-version of the Kudla-Rapoport conjecture induces the global-version the conjecture (in hyperspecial level structure case). Recently, this conjecture was proved by Li and Zhang in \cite{LZ}.

In the present paper, we make conjectural formulas for the arithmetic intersection numbers of special cycles on unitary Rapoport-Zink spaces which correspond to unitary Shimura varieties (more precisely, the Rapoport-Smithling-Zhang's unitary Shimura varieties in \cite{RSZ2}) with minuscule parahoric level structure via non-archimedean uniformization of \cite{RZ} (more precisely, \cite[Section 4.3]{Cho}). This can be regarded as a part of the Kudla program (\cite{Kud}). Also, this will have applications to the arithmetic inner product formula (\cite{Liu1}, \cite{Liu2}).

We now describe our results in more detail. We fix an odd prime $p$. Let $F_v$ be an unramified extension of $\BQ_p$ with ring of integers $O_{F_v}$ and residue field $\BF_q$. Let $\pi$ be its uniformizer (this is just $p$, but we use the general notation for future use). Let $E_v$ be a quadratic unramified extension of $F_v$ with ring of integers $O_{E_v}$. We write $\breve{E}_v$ for the completion of a maximal unramified extension of $E_v$.

Let $h,n$ be integers such that $0 \leq h \leq n$. We then consider the unitary Rapoport-Zink space $\CN_{E_v/F_v}^h(1,n-1)$ over $\Spf O_{\breve{E}_v}$ (see Section \ref{section2}). Here $h$ is related to the level structure of the corresponding unitary Shimura variety. In particular, $\CN^0_{E_v/F_v}(1,n-1)$ is the unitary Rapoport-Zink space which uniformizes the basic locus of the unitary Shimura variety with hyperspecial level structure.

We have an isomorphism $\theta:\CN^h_{E_v/F_v}(1,n-1) \rightarrow \CN^{n-h}_{E_v/F_v}(1,n-1)$. This isomorphism will play an important role to formulate our conjectural formula.

Let $(\overline{\BY},i_{\overline{\BY}},\lambda_{\overline{\BY}})$ (resp. $(\BX,i_{\BX},\lambda_{\BX})$) be the framing object of $\CN^0_{E_v/F_v}(0,1)$ (resp. $\CN^h_{E_v/F_V}(1,n-1)$). Then consider a $E_v$-vector space
\begin{equation*}
\BV:=\Hom_{O_{E_v}}(\overline{\BY},\BX) \otimes_{\BZ} \BQ.
\end{equation*}
Also, we define a hermitian form $h$ on this space as
\begin{equation*}
h(x,y)=\lambda_{\overline{\BY}}^{-1}\circ y^{\vee} \circ \lambda_{\BX} \circ x \in \End_{O_{E_v}}(\overline{\BY})\otimes \BQ \simeq E_v \quad \forall x,y \in \BV,
\end{equation*}
where $y^{\vee}$ is the dual of $y$.

For each $x \in \BV$, we can attach a certain relative divisor $\CZ(x)$ which is called a special cycle or the Kudla-Rapoport cycle.

Also, for each $y \in \BV$, one can consider a relative divisor $\CZ(\lambda_{\BX} \circ y)$ in $\CN^{n-h}_{E_v/F_v}(1,n-1)$. Then by using the isomorphism $\theta:\CN^h_{E/F}(1,n-1) \rightarrow \CN^{n-h}_{E/F}(1,n-1)$, we can define a relative divisor $\CY(y):=\theta^{-1}(\CZ(\lambda_{\BX}\circ y))$ (see Definition \ref{definition2.2}).

These $\CY$-cycles only appear in the case of $h \neq 0$. Indeed, $\CZ(x)=\CY(y)$ when $h=0$. This new definition of $\CY$-cycles and its properties are important to formulate conjectural formulas for the arithmetic intersection numbers of $\CZ$ and $\CY$-cycles. Before we talk about our conjecture, we need to talk about the Kudla-Rapoport conjecture.

The Kudla-Rapoport conjecture is a formula for the arithmetic intersection numbers of special cycles $\CZ(x_1), \dots, \CZ(x_n)$ in $\CN^0_{E/F}(1,n-1)$. This was formulated in \cite{KR2} as follows.

\begin{conjecture}(Kudla-Rapoport conjecture)
	Let $x_1, \dots, x_n \in \BV$ and let $B$ be the matrix $(h(x_i,x_j))_{1 \leq i,j \leq n}$. Then the arithmetic intersection number of $\CZ(x_1), \dots \CZ(x_n)$ in $\CN^0_{E/F}(1,n-1)$ is
	\begin{equation*}
	\begin{array}{ll}
	\langle \CZ(x_1), \dots, \CZ(x_n) \rangle &:=\chi(O_{\CZ(x_1)} \otimes^{\BL} \dots \otimes^{\BL} O_{\CZ{(x_n)}})\\
	&=\dfrac{\alpha'(1_n,B)}{\alpha(1_n,1_n)}.
	\end{array}
	\end{equation*}
	Here $\chi$ is the Euler-Poincare characteristic and $\otimes^{\BL}$ is the derived tensor product. We denote by $\alpha$ the representation density and $\alpha'$ the derivative of the representation density.	
\end{conjecture}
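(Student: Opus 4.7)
The plan is to induct on $n$. The base case $n=1$ reduces to a direct computation: $\CZ(x)$ attached to a single nonzero $x\in\BV$ is an artinian scheme whose length as an $O_{\breve{E}_v}$-module is explicit in terms of $\val(h(x,x))$, and this matches the classical derivative of the one-variable representation density.

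For the inductive step, the natural strategy is to compare the two sides as functions of the Gram matrix $B$, exploiting the analytic-geometric parallel. On the analytic side one establishes a recursion: when $B$ splits off a unimodular block, $\alpha'(1_n,B)/\alpha(1_n,1_n)$ decomposes in terms of the corresponding quantity for the orthogonal complement $B'$ of size $n-1$, with an explicit error term involving ordinary (non-derived) densities. On the geometric side one seeks the matching statement: if $x_1$ has unit norm, then the divisor $\CZ(x_1)\subset\CN^0_{E/F}(1,n-1)$ should be isomorphic to the lower-dimensional Rapoport--Zink space $\CN^0_{E/F}(1,n-2)$ attached to the orthogonal complement of $x_1$, so that $\langle\CZ(x_1),\dots,\CZ(x_n)\rangle$ collapses to an arithmetic intersection of $n-1$ special cycles on that smaller space. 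Combining these two recursions, the induction hypothesis disposes of every $B$ admitting a unimodular orthogonal summand.

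The main obstacle is the remaining case, when no such summand exists, i.e.\ $B$ is ``non-minimal'' with all entries of positive valuation; here no direct geometric reduction is available. The approach I would follow, in the spirit of Li--Zhang's proof \cite{LZ}, is to regard the difference of the two sides as a function on the space of Hermitian matrices, invariant under the stabilizer of the standard lattice, and then invoke an uncertainty-principle argument on $p$-adic Hermitian forms to show that such a function is determined by its values on the minimal $B$'s already treated. The deepest technical point is the careful analysis of the derived tensor product $O_{\CZ(x_1)}\otimes^{\BL}\cdots\otimes^{\BL}O_{\CZ(x_n)}$: its horizontal (flat over $\Spf O_{\breve{E}_v}$) and vertical (special-fiber) components must be identified separately and matched respectively with the ``main term'' and the ``correction term'' in the analytic recursion, and this bookkeeping — together with the propagation of the recursion through the uncertainty-principle step — is where the hard work lies.
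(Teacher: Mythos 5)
The first thing to note is that this statement is not proved in the paper at all: it is quoted verbatim as a conjecture from \cite{KR2}, and the paper's only comment on its status is the attribution of its proof to Li and Zhang \cite{LZ}. So there is no internal proof to compare yours against; the only fair benchmark is \cite{LZ} itself, whose strategy your proposal is sketching. Judged as a proof, the proposal has a genuine gap: everything past the set-up is named rather than carried out, and the named steps are exactly where the roughly hundred pages of \cite{LZ} live.

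Concretely: (i) the reduction when the Gram matrix splits off a unimodular vector is indeed available --- it is the $h=0$ instance of Proposition \ref{proposition2.4}(1) combined with Proposition \ref{proposition2.5}, with the matching analytic reduction for $\alpha'$ already in \cite{KR2} --- but this is the easy half and was known before \cite{LZ}. (ii) The heart of the matter is the decomposition of $\CZ(x)$ into horizontal and vertical parts and the explicit evaluation of both: the vertical contribution is computed by intersecting with the Deligne--Lusztig curves in the basic locus (a Tate-conjecture-style counting argument), and the horizontal contribution is identified with quasi-canonical lifting cycles and evaluated via Gross's theory. You assert that this ``bookkeeping'' must be done but supply none of it, and without it neither your ``main term'' nor your ``correction term'' matching can even be formulated. (iii) Your uncertainty-principle step is misstated: in \cite{LZ} it is applied not to a function of the Gram matrix $B$ on the space of Hermitian matrices, but to the function $x \mapsto \mathrm{Int}(L^{\flat}+\langle x\rangle)-\partial\mathrm{Den}(L^{\flat}+\langle x\rangle)$ on $\BV$ for a fixed rank-$(n-1)$ lattice $L^{\flat}$; one shows this difference is a Schwartz function supported in the locus $\val(h(x,x))>0$ whose Fourier transform satisfies a complementary support condition, whence it vanishes identically. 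A $GL_n(O_{E_v})$-invariant function of $B$ supported on the ``all entries of positive valuation'' locus is certainly not determined by (or forced to vanish from) its values on the minimal $B$'s already treated, so the argument in the form you phrase it would fail; the Fourier-theoretic input on the vector space $\BV$, not the matrix space, is indispensable.
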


Recently, this conjecture was proved by Li and Zhang in \cite{LZ}.

Now, let us talk about our conjecture. This is a conjecture on the arithmetic intersection numbers of arbitrary special cycles 
\begin{equation*}
\CZ(x_1), \dots, \CZ(x_{n-t}), \CY(y_1), \dots, \CY(y_{t})
\end{equation*}
in $\CN^h_{E_v/F_v}(1,n-1)$, for $0 \leq t \leq n$.

In \cite{Cho}, we proved the following proposition.

\begin{proposition}\label{proposition}(\cite[Proposition 5.10]{Cho})
	\begin{enumerate}
		\item Let $x \in \BV$. If $h(x,x)$ has valuation $0$, then 
		\begin{equation*}
		\CZ(x) \simeq \CN^h_{E_v/F_v}(1,n-2).
		\end{equation*}
		
		\item Let $y \in \BV$. If $h(y,y)$ has valuation $-1$, then
		\begin{equation*}
		\CY(y) \simeq \CN^{h-1}_{E_v/F_v}(1,n-2).
		\end{equation*}
		
	\end{enumerate}
\end{proposition}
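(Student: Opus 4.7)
The overall strategy is standard in this setting: a hermitian vector of the indicated valuation should split off a polarized direct summand of the framing object, and the Kudla-Rapoport cycle will then record the locus where this splitting deforms. Part (2) will be reduced to part (1) by the isomorphism $\theta$, together with a polarization-shift computation.

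\textbf{Part (1).} For $x \in \BV$ with $v(h(x,x)) = 0$, I would interpret $x$ as a quasi-isogeny $\overline{\BY} \to \BX$. From the definition of $h$ one has $x^{\vee} \circ \lambda_{\BX} \circ x = \lambda_{\overline{\BY}} \cdot h(x,x)$, so the unit-ness of $h(x,x)$ implies that $x$ is an integral $O_{E_v}$-linear embedding and that
\[
e_x \;:=\; x \circ \lambda_{\overline{\BY}}^{-1} \circ h(x,x)^{-1} \circ x^{\vee} \circ \lambda_{\BX} \;\in\; \End_{O_{E_v}}(\BX)
\]
is an integral idempotent whose image is $x(\overline{\BY})$. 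This produces a polarized $O_{E_v}$-splitting $\BX = x(\overline{\BY}) \oplus \BX'$. Because $\overline{\BY}$ is principally polarized, $\ker(\lambda_{\BX}|_{\BX'}) = \ker(\lambda_{\BX})$, so $(\BX', i_{\BX}|_{\BX'}, \lambda_{\BX}|_{\BX'})$ will be a framing object of $\CN^h_{E_v/F_v}(1, n-2)$. I would then construct the functorial map
\[
\Phi \colon \CN^h_{E_v/F_v}(1, n-2) \longrightarrow \CZ(x), \qquad (X', \rho_{X'}) \longmapsto \bigl(\overline{\BY}_S \oplus X',\ \mathrm{id}\oplus\rho_{X'},\ \mathrm{inc}_1\bigr),
\]
with the canonical embedding of the first factor serving as the lift of $x$, and take its inverse to be projection by $e_x$. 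Checking that these are mutually inverse functors is then a direct verification on $S$-valued points.

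\textbf{Part (2).} For $y \in \BV$ with $v(h(y,y)) = -1$, the definition $\CY(y) = \theta^{-1}(\CZ(\lambda_{\BX} \circ y))$ moves the problem into $\CN^{n-h}_{E_v/F_v}(1, n-1)$, whose framing polarization differs from $\lambda_{\BX}$ by a factor of $\pi$. A short computation from the formula for $h$ shows that the hermitian form $h'$ on the corresponding $\BV'$-space satisfies
\[
v\bigl(h'(\lambda_{\BX} \circ y,\ \lambda_{\BX} \circ y)\bigr) \;=\; v(h(y,y)) + 1 \;=\; 0.
\]
Part (1) applied inside $\CN^{n-h}_{E_v/F_v}(1, n-1)$ then yields $\CZ(\lambda_{\BX} \circ y) \simeq \CN^{n-h}_{E_v/F_v}(1, n-2)$, and the analogue of $\theta$ for the rank $n-1$ spaces identifies this in turn with $\CN^{(n-1)-(n-h)}_{E_v/F_v}(1, n-2) = \CN^{h-1}_{E_v/F_v}(1, n-2)$, as required.

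\textbf{Main obstacle.} The heart of the argument is tracking the polarization type through both reductions. In part (1) one must verify on the Dieudonn\'e lattice level that the orthogonal complement $\BX'$ inherits precisely the same parahoric parameter $h$, rather than some neighbouring value; in part (2) one must check that conjugating by $\lambda_{\BX}$ rescales the hermitian form by exactly one factor of $\pi$ (so that $v(h(y,y)) = -1$ converts into a unit on the other side). Both checks reduce to explicit index computations involving $\ker \lambda_{\BX}$ under the minuscule parahoric assumption, but the bookkeeping has to be done with care, since it is precisely here that the level parameters $h$ versus $h-1$ are pinned down.
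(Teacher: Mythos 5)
The paper itself contains no proof of this proposition: it is imported verbatim from \cite[Proposition 5.10]{Cho}, so there is no internal argument to compare against; but your plan reconstructs exactly the intended one, as the paper's own downstream use confirms — Proposition \ref{proposition2.5} presupposes the rescaling $x^*\circ x=1$, $(\lambda_{\BX}\circ y)^*\circ(\lambda_{\BX}\circ y)=1$ "as in the proof of \cite[Proposition 5.10]{Cho}" and works with your idempotents $e_x=x\circ x^*$ and $e_y^{\vee}$, while the proof of Theorem \ref{theorem4.8} invokes precisely your reduction $\theta(\CY(y))=\CZ(\lambda_{\BX}\circ y)$ with $\val(h'(\lambda_{\BX}\circ y,\lambda_{\BX}\circ y))=\val(h(y,y))+1$. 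One small correction to your part (1): a unit value of $h(x,x)$ does not by itself make $x$ integral with respect to the fixed framing object (e.g.\ a primitive $x_0\in\Hom_{O_{E_v}}(\overline{\BY},\BX)$ with $\val(h(x_0,x_0))=2$ gives $x=\pi^{-1}x_0$ of unit norm lying outside the lattice), so you should either replace $\BX$ by a quasi-isogenous framing object adapted to $x$ — harmless, since the moduli problem depends on the framing only up to quasi-isogeny — or run the idempotent argument on the tautological lift $\tilde{x}$ at points of $\CZ(x)$, where integrality holds by definition; this is exactly the bookkeeping you flagged as the main obstacle, and it is the only point at which your sketch needs tightening.
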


This proposition has two important remarks.

First, we can regard the arithmetic intersection numbers of special cycles in $\CN^h_{E_v/F_v}(1,n-1)$ as those in $\CN^n_{E_v/F_v}(1,2n-1)$. For example, the arithmetic intersection number of $\CZ(x_1), \dots, \CZ(x_n)$ in $\CN^0_{E_v/F_v}(1,n-1)$ is the arithmetic intersection number of $\CZ(x_1),\dots,\CZ(x_n),\CY(y_1),\dots,\CY(y_n)$ in $\CN^n_{E_v/F_v}(1,2n-1)$, where $y_1, \dots, y_n$ have $-1$ valuations.

Second, $\CY(y)$ are not empty when $h(y,y)$ has $-1$ valuation. This means that we need a certain $-1$ valuation on the analytic side. This is one important difference between hyperspecial level case ($h=0$) and minuscule parahoric level structure case ($h \neq 0$).

Therefore, it suffices to formulate conjectural formulas for the arithmetic intersection numbers of $\CZ$ and $\CY$-cycles in $\CN^n_{E_v/F_v}(1,2n-1)$.

First step is the arithmetic intersection number 
\begin{equation}\label{eq1}
\langle \CZ(x_1), \dots,\CZ(x_n),\CY(y_1),\dots,\CY(y_n) \rangle.
\end{equation}

Let $B$ be the following $2n \times 2n$-matrix.
\begin{equation*}
B=\left(\begin{array}{ll}
h(x_i,x_i) & h(x_i,y_l) \\
h(y_k,x_j) & h(y_k, y_l)
\end{array}\right)_{1 \leq i,j,l,k \leq n}.
\end{equation*}

Then, a conjectural formula for \eqref{eq1} should satisfy the following three properties.

\begin{enumerate}
	\item \eqref{eq1} should be compatible with the Kudla-Rapoport conjecture.
	
	When $B=\diag(B_1,\pi^{-1} 1_n)$, $\langle \CZ(x_1),\dots,\CZ(x_n),\CY(y_1),\dots,\CY(y_n) \rangle$ can be reduced to $\langle \CZ(x_1), \dots \CZ(x_n) \rangle$ in $\CN^0_{E_v/F_v}(1,n-1)$ by $n$-times reduction in Proposition \ref{proposition}. Therefore,
	\begin{equation*}
	\langle \CZ(x_1),\dots,\CZ(x_n),\CY(y_1),\dots,\CY(y_n) \rangle=\dfrac{\alpha'(1_n,B_1)}{\alpha(1_n,1_n)}.
	\end{equation*}
	
	\item \eqref{eq1} should be invariant under $GL_n(O_{E_v}) \times GL_n(O_{E_v})$-action.
	
	In $\CN^0_{E_v/F_v}(1,n-1)$, $\langle \CZ(x_1), \dots, \CZ(x_n) \rangle$ is invariant under $GL_n(O_{E_v})$-action on $\lbrace x_1, \dots, x_n \rbrace$. In our case, $\langle \CZ(x),\CY(y) \rangle \neq \langle \CZ(y),\CY(x) \rangle$ since $\CZ$-cycle is empty when $h(x,x)$ has $-1$ valuation, but $\CY$-cycle is not empty. But, it seems that we can change $\lbrace x_1, \dots, x_n \rbrace$ to an element in its $GL_n(O_{E_v})$-orbit and $\lbrace y_1, \dots, y_n \rbrace$ to an element in its $GL_n(O_{E_v})$-orbit. This implies that \eqref{eq1} cannot be a sum of usual representation densities.
	
	\item \eqref{eq1} should be invariant under $\theta:\CN:=\CN^n_{E_v/F_v}(1,2n-1) \simeq \widehat{\CN}:=\CN^n_{E_v/F_v}(1,2n-1)$.
	
	Recall that we have an isomorphism $\theta :\CN \simeq \widehat{\CN}$. This changes $\CZ$-cycles to $\CY$-cycles, $\CY$-cycles to $\CZ$-cycles. More precisely,
	\begin{equation*}
	\theta(\CZ(x_i))=\CY(x_i'),
	\end{equation*}
	for some $x_i'$ such that $\lambda'_{\BX} \circ x_i'=x_i$, where $\lambda'_{\BX}$ is the polarization of the framing object of $\widehat{\CN}$ and
	\begin{equation*}
	\theta(\CY(y_i))=\CZ(\lambda_{\BX}\circ y_i).
	\end{equation*}
	
	Since $\theta$ is an isomorphism, we have
	\begin{equation*}
	\begin{array}{l}
	\langle \CZ(x_1),\dots,\CZ(x_n),\CY(y_1),\dots,\CY(y_n) \rangle\\
	=\langle \theta(\CZ(x_1)),\dots,\theta(\CZ(x_n)),\theta(\CY(y_1)),\dots,\theta(\CY(y_n)) \rangle\\
	=\langle \CZ(\lambda_{\BX}\circ y_1),\dots,\CZ(\lambda_{\BX}\circ y_n),\CY(x_1'),\dots,\CY(x_n') \rangle.
	\end{array}
	\end{equation*}
	
	This changes the matrix $B$. More precisely, if
	\begin{equation*}
	B=\left(\begin{array}{ll}
	E & F\\
	G& H
	\end{array}\right),
	\end{equation*}
	for $n\times n$-matrices $E,F,G,H$, then we have
	\begin{equation*}
	B^{\vee_n}:=\left(\begin{array}{cc}
	h'(\lambda_{\BX}\circ y_i, \lambda_{\BX}\circ y_j) & h'(\lambda_{\BX}\circ y_i,x_l')\\
	h'(x_k',\lambda_{\BX}\circ y_j)& h'(x_k',x_l')
	\end{array}
	\middle)=\middle(\begin{array}{ll}
	\pi H & G\\
	F& \pi^{-1} E
	\end{array}
	\right),
	\end{equation*} where $h'$ is the hermitian form induced by $\lambda'_{\BX}$.
	
	Therefore, our formula should be invariant under the change $B \leftrightarrow B^{\vee_n}$.
\end{enumerate}

We want to find the Fourier coefficients of a certain Siegel-Eisenstein series satisfying all of the above three properties.

In the case of $\CN^0_{E_v/F_v}(1,n-1)$, the representation density $\alpha'(1_n,B)$ is related to the arithmetic intersection number $\langle \CZ(x_1), \dots, \CZ(x_n) \rangle$ because it is associated to the local Whittaker function of $\widetilde{1_{L_0^n}}$. Here, $\widetilde{1_{L_0^n}}$ is the Siegel-Weil section associated to a characteristic function $1_{L_0^n}$, where $L_0$ is an $O_{E_v}$-lattice of rank $n$ with hermitian form $1_n$.

In the case of $\CN^n_{E_v/F_v}(1,2n-1)$, we need to consider a function $W_{n,n}(B,r)$ which is associated to the local Whittaker function of $\widetilde{1_{n,n}}$. Here, $\widetilde{1_{n,n}}$ is the Siegel-Weil section associated to the characteristic function $1_{n,n}=1_{(L_n^{\vee})^n \oplus (L_n)^n}$. Here $L_n$ is an $O_{E_v}$-lattice of rank $2n$ with the hermitian form
\begin{equation*}
\left(\begin{array}{ll}
1_n & 0\\
0 & \pi^{-1} 1_n
\end{array}\right),
\end{equation*}
and $L_n^{\vee}$ is its dual lattice. Note that we have a $-1$ valuation on the matrix. This is because we should allow $-1$ valuation on $\CY$-cycles.

Now, we can naively guess that \eqref{eq1} $\sim W_{n,n}'(B,0)$. But, this is not true. In the case of bad reductions, we need some correction terms for \eqref{eq1} (cf. \cite{KR5} \cite{KRY}, \cite{San}, \cite{RSZ3}, \cite{RSZ1}). In many previous papers (\cite{KR5} \cite{KRY}, \cite{San}), these are some linear combination of local Whittaker functions at $r=0$.

We have one clue to find the correction terms. If $B=\diag(B_1, \pi^{-1} 1_n)$, then $W_{n,n}'(B,0)$ is already equal to $\alpha'(1_n,B_1)$ up to a constant. Therefore, $W_{n,n}'(B,0)$ already gives the compatibility with the Kudla-Rapoport conjecture. This means that we need a correction term such that it vanishes when $B=\diag(B_1, \pi^{-1} 1_n)$.

For this reason, we chose a linear combination of functions $W_{n,t}(B,0)$ which are associated to the local Whittaker functions of $\widetilde{1_{n,t}}$, $t \leq n-1$, where $\widetilde{1_{n,t}}$ is the Siegel-Weil section associated to the characteristic function $1_{n,t}=1_{(L_t^{\vee})^n \oplus (L_t)^n}$. Here $L_t$ is an $O_{E_v}$-lattice of rank $2n$ with the hermitian form
\begin{equation*}
\left(\begin{array}{ll}
1_{2n-t} & 0\\
0 & \pi^{-1} 1_t
\end{array}\right),
\end{equation*}
These terms vanishes when $B=\diag(B_1, \pi^{-1} 1_n)$.

Then, one can guess \eqref{eq1}$\sim$ $W_{n,n}'(B,0)-\mathlarger{\sum}_{0 \leq i \leq n-1} \beta_i^nW_{n,i}(B,0)$ for some constants $\beta_i^n$. Now, how can we determine these constants $\beta_i^n$? Here, we use the third property of \eqref{eq1}.

\begin{theorem}(Theorem \ref{theorem3.14})
There are the unique constants 
\begin{equation*}
\beta_{0}^n, \dots, \beta_{n-1}^n, \delta_h,
\end{equation*}
in $E_v$ such that
\begin{equation*}
\begin{array}{l}
W_{n,n}'(B,0)-W_{n,n}'(B^{\vee_n},0)\\
=\mathlarger{\sum}_{0 \leq i \leq n-1} \beta_i^nW_{n,i}(B,0)
-\mathlarger{\sum}_{0 \leq j \leq n-1}\beta_j^nW_{n,j}(B^{\vee_n},0)
+\delta_hW_{n,n}(B,0).
\end{array}
\end{equation*}
\end{theorem}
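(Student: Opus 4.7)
The plan is to obtain both existence and uniqueness of the constants by computing each of the Whittaker quantities $W_{n,n}'(B,0)$, $W_{n,n}'(B^{\vee_n},0)$, $W_{n,i}(B,0)$, $W_{n,i}(B^{\vee_n},0)$, and $W_{n,n}(B,0)$ as explicit polynomial expressions in the invariants of $B$. The starting point is the identification of $W_{n,t}(B,s)$ with a representation density attached to the pair of lattices $L_t$ and $L_t^\vee$, which gives a formula of the form
\begin{equation*}
W_{n,t}(B,s) = \sum_{L} c_L(B)\, q^{-s\cdot \val \det L},
\end{equation*}
where $L$ ranges over lattices of a prescribed type in the hermitian space attached to $B$. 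Both sides of the claimed identity then become polynomial functions in the invariants of $B$ and in $q$.

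For uniqueness, I would specialize $B$ to diagonal hermitian matrices $B_k = \diag(u_1,\ldots,u_{2n-k},\pi^{-1}v_1,\ldots,\pi^{-1}v_k)$ with $u_j,v_j$ units, for $k=0,1,\ldots,n$. For such matrices the values $W_{n,i}(B_k,0)$ exhibit a triangular pattern in the pair $(i,k)$: many of them vanish by a lattice-compatibility argument, and those that are nonzero depend in a controlled way on $k$. Running through $k=0,1,\ldots,n-1$ and comparing coefficients extracts the $\beta_i^n$ one at a time, after which $\delta_h$ is determined by matching on $B_n$. This triangular structure shows that any two families of constants satisfying the identity must coincide.

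For existence, I would use the involution $B \leftrightarrow B^{\vee_n}$, which on the Schwartz function side swaps $L_t$ with $L_t^\vee$ and permutes the characteristic functions $1_{n,t}$. Using this symmetry, together with the explicit polynomial formulas above, I would verify that the candidate constants produced by the uniqueness step actually yield a valid identity for all $B$. Since both sides are invariant under the $GL_n(O_{E_v})\times GL_n(O_{E_v})$-action and depend only polynomially on the elementary divisors of $B$, the check reduces to a finite combinatorial identity for each Smith normal form.

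The main obstacle will be showing that $W_{n,n}'(B,0) - W_{n,n}'(B^{\vee_n},0)$ contains no genuine residual derivative contribution, i.e.\ that all $\log q$ terms coming from differentiation at $s=0$ cancel in the difference, leaving only an $E_v$-linear combination of non-derivative Whittaker values. This cancellation reflects an underlying functional equation on the Eisenstein series side, and is precisely what makes the stated identity nontrivial; without it no such $E_v$-linear relation could exist. I expect this step to require either an induction on $n$ or a direct analysis of how the hermitian representation density transforms under the duality $L_t \leftrightarrow L_t^\vee$, using partial Fourier transform relations between the sections $\widetilde{1_{n,t}}$ for varying $t$.
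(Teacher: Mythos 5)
Your proposal defers exactly the step where the theorem's entire content lies: you correctly identify that one must show the difference $W_{n,n}'(B,0)-W_{n,n}'(B^{\vee_n},0)$ carries no residual derivative contribution, but you then only say you ``expect'' this to follow from a functional equation or partial Fourier transform analysis, without carrying it out. That is a genuine gap, not a routine verification. The paper's mechanism is different from what you anticipate and is more local: by Hironaka's Iwahori-cell decomposition (Lemma \ref{lemma3.3}), $W_{h,t}(B,r)=\sum_{Y}\CG(Y,B)\,\CF_h(Y,A_t^{[r]})/\alpha(Y;\Gamma_{2n})$, and the crucial point (Lemma \ref{lemma 3.13}) is that each cell contribution $\CF_h(Y,A_t^{[r]})$ is a \emph{single monomial} in $x=(-q)^{-2r}$, while under the duality $Y\mapsto Y^{\wedge_h}$, $B\mapsto B^{\vee_h}$ the factors $\CG$ and $\alpha$ transform compatibly (Lemmas \ref{lemma3.8} and \ref{lemma3.9}) and the monomial exponent shifts by exactly the bounded integer $i=\FC(Y)-\FA(Y)\in\{-(2n-h),\dots,h\}$. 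Consequently, differentiating at $x=1$ and subtracting collapses the derivative, cell by cell, to multiplication by $i$; grouping cells according to $i$ writes both sides of the desired identity as linear combinations of $2n+1$ cell sums with coefficients \emph{independent of $B$}, and existence and uniqueness reduce to inverting the explicit $(2n+1)\times(2n+1)$ matrix $\FB$, which factors as a Vandermonde matrix with distinct nodes times an invertible diagonal matrix (made explicit in Proposition \ref{propositionA}). No global functional equation for the Eisenstein series is invoked anywhere.

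Two secondary problems would also block your plan as written. First, your existence step ``reduces to a finite combinatorial identity for each Smith normal form'' --- but there are infinitely many Smith normal forms, and without the $B$-independent coefficient structure above, that verification never terminates; the finiteness in the paper comes precisely from the bounded range of $i=\FC(Y)-\FA(Y)$, not from any bound on elementary divisors. Second, your uniqueness argument via the specializations $B_k=\diag(u_1,\dots,u_{2n-k},\pi^{-1}v_1,\dots,\pi^{-1}v_k)$ does rest on a true vanishing (as in the proof of Theorem \ref{theorem3.16}, $W_{n,i}(B_k,0)=0$ once the lattice attached to $B_k$ contains a unimodular sublattice of rank exceeding $i$), but the identity simultaneously involves the second family $W_{n,j}(B^{\vee_n},0)$ and the derivative terms, whose values at the same $B_k$ you have not controlled, so the claimed triangular extraction of the $\beta_i^n$ one at a time is unjustified as stated. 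In the paper uniqueness is immediate from the invertibility of $\FB$ once the cell sums are known to be linearly independent functionals of $B$ --- a point for which your diagonal specializations would in fact be the right tool, but which plays a supporting rather than central role. In short, your reduction-to-linear-algebra skeleton matches the paper's, but the enabling mechanism --- the monomial structure of $\CF_h(Y,A_t^{[r]})$ on Iwahori cells and its controlled shift under $({}^{\wedge_h},{}^{\vee_h})$ --- is absent, and that is where essentially all the work of Theorem \ref{theorem3.14} resides.
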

Indeed, we can compute all constants $\beta_i^n$ explicitly. Now, our conjecture is as follows.

\begin{conjecture}\label{conjecture0.1}(Conjecture \ref{conjecture1})
	\begin{equation*}
	\begin{array}{l}
	\langle \CZ(x_1), \dots,\CZ(x_n),\CY(y_1),\dots,\CY(y_n) \rangle\\
	=\dfrac{1}{W_{n,n}(A_n,0)}\lbrace W'_{n,n}(B,0)-\mathlarger{\sum}_{0 \leq i \leq n-1} \beta_i^nW_{n,i}(B,0)\rbrace.
	\end{array}
	\end{equation*}
	Here, $A_n=\diag(1_n,\pi^{-1}1_n)$.
\end{conjecture}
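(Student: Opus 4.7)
The plan is to verify Conjecture \ref{conjecture0.1} in all cases where compatibility with previously established intersection formulas can be checked directly, which is what the abstract's phrase ``compatible with all known results'' refers to; a full proof in general is beyond current techniques and would require a substantial extension of the Li--Zhang method \cite{LZ} to the minuscule parahoric setting. The first and most important case to treat is the block-diagonal form $B=\diag(B_1,\pi^{-1}1_n)$. By Proposition \ref{proposition}(2) applied $n$ times inside $\CN^n_{E_v/F_v}(1,2n-1)$, each $\CY(y_j)$ with $h(y_j,y_j)$ of valuation $-1$ is isomorphic to $\CN^{h-1}_{E_v/F_v}(1,n-2)$, and the geometric intersection number collapses to $\langle\CZ(x_1),\dots,\CZ(x_n)\rangle$ inside $\CN^0_{E_v/F_v}(1,n-1)$, which equals $\alpha'(1_n,B_1)/\alpha(1_n,1_n)$ by the Kudla--Rapoport conjecture, now a theorem of Li and Zhang. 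On the analytic side, I would verify that the correction functions $W_{n,i}(B,0)$ vanish for $i\le n-1$ when $B$ has this block form (the characteristic function $1_{n,i}$ is supported on a lattice whose valuation profile is incompatible with the $\pi^{-1}1_n$ block), and then compute $W'_{n,n}(B,0)/W_{n,n}(A_n,0)$ explicitly by factoring out the $\pi^{-1}1_n$ block to obtain precisely $\alpha'(1_n,B_1)/\alpha(1_n,1_n)$.

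Next I would verify the two symmetry requirements that motivated the formulation. Invariance of both sides under the $GL_n(O_{E_v})\times GL_n(O_{E_v})$-action on $(x_1,\dots,x_n;y_1,\dots,y_n)$ is automatic: on the analytic side each $W_{n,t}(\cdot,0)$ and $W'_{n,n}(\cdot,0)$ depends only on the joint orbit of $B$, and on the geometric side the intersection number is unchanged by change of $O_{E_v}$-basis on the $\CZ$-tuple and $\CY$-tuple separately. Invariance under $\theta$, that is under $B\leftrightarrow B^{\vee_n}$, is precisely where Theorem \ref{theorem3.14} enters: combined with the symmetry $W_{n,n}(B,0)=W_{n,n}(B^{\vee_n},0)$, which I would establish by identifying both with the representation density attached to a common self-dual hermitian lattice, the theorem shows that the right-hand side of the conjecture is $\theta$-symmetric up to a $\delta_h$-multiple of $W_{n,n}$; matching this with the geometric identities $\theta(\CZ(x_i))=\CY(x_i')$ and $\theta(\CY(y_j))=\CZ(\lambda_{\BX}\circ y_j)$ then forces a compatibility condition on $\delta_h$ which, after an explicit computation in a single test case, I expect to hold automatically.

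Finally I would treat the genuinely global sanity checks that are available: the case $n=1$, where the $\CZ$- and $\CY$-divisors on $\CN^1_{E_v/F_v}(1,1)$ admit an explicit quasi-canonical-type description and where both sides can be computed by hand, together with the related parahoric computations of Sankaran, Kudla--Rapoport--Yang, and Rapoport--Smithling--Zhang, where the specific intersection multiplicities can be read off and compared to the analytic formula. The main obstacle, and the reason the conjecture remains open beyond these compatibility checks, is that the $\CY$-cycles with framing data of valuation $-1$ do not admit a clean horizontal/vertical decomposition of the type that powered the Li--Zhang proof in the hyperspecial case; producing the analytic correction terms is already nontrivial (the content of Theorem \ref{theorem3.14}), but matching them geometrically to the interaction of $\CY$-cycles with the exceptional components of the minuscule parahoric local model would require new techniques for controlling the derived structure of $\CO_{\CY(y)}\otimes^{\BL}\CO_{\CZ(x)}$ that currently have no analogue in the literature.
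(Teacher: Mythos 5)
Your overall program coincides with the paper's: since the statement is a conjecture, the paper does not prove it in general, and its supporting content is exactly the list of checks you propose --- compatibility with Kudla--Rapoport for $B=\diag(B_1,\pi^{-1}1_n)$ (Theorem \ref{theorem3.16}, proved by showing $J_d(L,M_i)=\emptyset$, hence $W_{n,i}(B,0)=0$ for $i\leq n-1$, and factoring $\vert J_d(L,M)\vert=\vert I_d(L_2,M)\vert\,\vert I_d(L_1,N^{\perp})\vert$ following the Kudla--Rapoport lattice-counting method), the invariance properties (Lemma \ref{lemma3.5}, Theorem \ref{theorem3.24}), the case $n=1$ (Theorem \ref{theorem4.8}, via Sankaran's Bruhat--Tits tree computations), and compatibility with the almost self-dual case of Li--Zhang (the appendix). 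Your geometric reduction via Proposition \ref{proposition2.4} and your vanishing claim for the correction terms in the block-diagonal case are both exactly what the paper does.

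Two points in your plan are genuinely off and would mislead you. First, the $\theta$-invariance: you propose to extract a ``compatibility condition on $\delta_h$'' and verify it in a test case. No such condition exists or is needed. For any $B$ arising from a basis of $\BV$ one has $\val(\det B)\equiv n+1 \pmod 2$ (this parity argument appears in the proof of Theorem \ref{theorem3.16}), which forces $W_{n,n}(B,0)=0$; hence the $\delta_h W_{n,n}(B,0)$ term in Theorem \ref{theorem3.14} vanishes identically and $\CJ_n^n(B)=\CJ_n^n(B^{\vee_n})$ follows at once (Theorem \ref{theorem3.24}(2)). Your identity $W_{n,n}(B,0)=W_{n,n}(B^{\vee_n},0)$ is true only because both sides are zero, so identifying them with a density of a common self-dual lattice is the wrong mechanism. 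Second, your stated obstacle --- that $\CY$-cycles admit no horizontal/vertical decomposition --- is contradicted at $n=1$, where the proof of Theorem \ref{theorem4.8} rests precisely on such a decomposition: $\theta$ carries $\CY(y)$ to $\CZ(\lambda_{\BX}\circ y)$ with $\val(h'(\lambda_{\BX}\circ y,\lambda_{\BX}\circ y))=\val(h(y,y))+1$, transporting Sankaran's decomposition to give $\CY(y)=\CY(y)^h+\sum_{\Lambda} m^{\vee}(y,\Lambda)\BP_{\Lambda}$, and the whole Case 1--3 analysis proceeds by pairing $\CZ(x)$ against these vertical components $\BP_{\Lambda}$. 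Finally, note that the geometric $GL_n(O_{E_v})\times GL_n(O_{E_v})$-invariance of $\CI_n^n$ is not ``automatic'' as you assert: Howard's result covers the hyperspecial case only, and in the parahoric setting the paper treats it as an expected property, the proven invariance being the analytic one of Lemma \ref{lemma3.5}.
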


We prove that this holds when $n=1$ (Theorem \ref{theorem4.8}).

Now, in the case of the arithmetic intersection number of arbitrary special cycles
\begin{equation*}
\langle \CZ(x_1), \dots,\CZ(x_{2n-h}),\CY(y_1),\dots,\CY(y_h) \rangle
\end{equation*} in $\CN^n(1,2n-1)$ ($ 0 \leq h \leq 2n$), we can formulate a similar conjecture. The only differences are characteristic functions and the dual relation under $\theta$.

Let $\widetilde{1_{h,t}}$ be the Siegel-Weil section associated to the characteristic function $1_{(L_t^{\vee})^{2n-h} \oplus (L_t)^{h}}$. We consider the functions $W_{h,t}(B,0)$ which are associated to the local Whittaker functions of $\widetilde{1_{h,t}}$. Also, we define $B^{\vee_h}$ as follows. If
\begin{equation*}
B=\left(\begin{array}{ll}
h(x_i,x_j) & h(x_i,y_l)\\
h(y_k,x_j)& h(y_k,y_l)
\end{array}\middle)_{1 \leq i,j \leq 2n-h, 1 \leq k,l \leq h}=\middle(\begin{array}{ll}
E & F\\
G& H
\end{array}
\right),
\end{equation*}
for $(2n-h)\times (2n-h)$-matrix $E$, $(2n-h) \times h$-matrix $F$, $h \times (2n-h)$-matrix $G$, and $h \times h$-matrix $H$, then we define
\begin{equation*}
B^{\vee_h}:=\left(\begin{array}{cc}
h(\lambda_{\BX}\circ y_i, \lambda_{\BX}\circ y_j) & h(\lambda_{\BX}\circ y_i,x_l')\\
h(x_k',\lambda_{\BX}\circ y_j)& h(x_k',x_l')
\end{array}
\middle)=\middle(\begin{array}{ll}
\pi H & G\\
F& \pi^{-1}E
\end{array}
\right)
\end{equation*}

Then, we have the following theorem.

\begin{theorem}(Theorem \ref{theorem3.14})
There are the unique constants 
\begin{equation*}
\beta_{0}^h, \dots, \beta_{n-1}^h, \beta_0^{2n-h},\dots,\beta_{n-1}^{2n-h},\delta_h,
\end{equation*}
in $E_v$ such that
\begin{equation*}
\begin{array}{l}
W_{h,n}'(B,0)-W_{2n-h,n}'(B^{\vee_h},0)\\
=\mathlarger{\sum}_{0 \leq i \leq n-1} \beta_i^hW_{h,i}(B,0)
-\mathlarger{\sum}_{0 \leq j \leq n-1}\beta_j^{2n-h}W_{2n-h,j}(B^{\vee_h},0)
+\delta_hW_{h,n}(B,0).
\end{array}
\end{equation*}
\end{theorem}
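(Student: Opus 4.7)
The plan is to reduce the identity to an explicit statement about polynomials in the variable $X=q^{-r}$, where each $W_{h,t}(B,r)$ is evaluated as a representation density weighted by $X$. Concretely, I would first use the fact that $W_{h,t}(B,r)$ is the local Whittaker integral attached to the Siegel--Weil section of $\widetilde{1_{h,t}}$, to write
\[
W_{h,t}(B,r) \;=\; \sum_{L} \mathrm{Den}(L,B)\, X^{\,a(L)},
\]
where $L$ runs over $O_{E_v}$-lattices compatible with the hermitian structure of $(L_t^\vee)^{2n-h}\oplus(L_t)^{h}$, $\mathrm{Den}(L,B)$ is a representation density, and $a(L)$ is a simple linear function of the lattice invariants. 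After reordering, each $W_{h,t}(B,r)$ becomes a polynomial in $X$ whose coefficients depend polynomially on the entries of $B$; this reduces all claims to polynomial identities.

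Next, I would extract a functional equation under the duality $B\leftrightarrow B^{\vee_h}$, $h\leftrightarrow 2n-h$. The hermitian lattice $L_n$ with form $\diag(1_n,\pi^{-1}1_n)$ is self-dual up to the scaling $\pi$, and swapping unimodular/anti-unimodular blocks identifies the characteristic functions $1_{h,n}$ and $1_{2n-h,n}$ after $B \mapsto B^{\vee_h}$. This produces an identity of the form
\[
W_{h,n}(B,r) \;=\; X^{c(h,n,B)}\,W_{2n-h,n}(B^{\vee_h},-r)
\]
for an explicit integer exponent $c$. Differentiating at $r=0$ converts this into
\[
W_{h,n}'(B,0) - W_{2n-h,n}'(B^{\vee_h},0) \;=\; \delta_h\,W_{h,n}(B,0),
\]
modulo a boundary contribution coming from $c\neq 0$. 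The $\delta_h$ on the right is simply $-c(h,n,B)\log q$, which a priori depends on $B$; the remaining $B$-dependent part is what must be absorbed into the two sums $\sum\beta_i^h W_{h,i}(B,0)$ and $\sum \beta_j^{2n-h} W_{2n-h,j}(B^{\vee_h},0)$.

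To absorb it, I would use the relation between the $W_{h,t}(B,0)$ for varying $t$, which is the key combinatorial input. Comparing $1_{h,t}$ and $1_{h,t-1}$ lattice by lattice gives a triangular recursion of the form
\[
W_{h,t}(B,0) \;=\; \sum_{i \le t} \gamma_{t,i}\,(\text{representation-density functions of } B),
\]
and one can invert this triangular system to express the excess boundary contribution as a unique linear combination of the $W_{h,i}(B,0)$ for $i\le n-1$ together with a single multiple of $W_{h,n}(B,0)$; applying the same procedure on the dual side yields the $\beta_j^{2n-h}$ terms. Uniqueness of $(\beta_i^h,\beta_j^{2n-h},\delta_h)$ then follows from the triangular (hence invertible) structure of this system, and can be checked directly by specializing $B$ to the diagonal matrices $\diag(1_{n-i},\pi^{-1}1_{n+i-h},0,\ldots,0)$ for $i=0,\ldots,n$, which form a test family on which the $W_{h,i}(B,0)$ are linearly independent.

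The main obstacle I expect is the second step: establishing the functional equation $W_{h,n}(B,r)=X^{c}\,W_{2n-h,n}(B^{\vee_h},-r)$ with the correct exponent. While the lattice symmetry is geometrically transparent, tracking the measure normalizations, the intertwining of $\pi^{-1}$-scaling, and the $-r$ sign through the Whittaker integral requires careful bookkeeping. Once this is in hand, the extraction of the $\beta_i^h$ becomes an essentially formal triangular inversion; in fact one should be able to give a closed formula for the $\beta_i^h$ in terms of binomial coefficients in $q^{-1}$, from which the uniqueness assertion will be immediate.
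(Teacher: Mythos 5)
Your proposal breaks down at its central step. The functional equation you want, $W_{h,n}(B,r)=X^{c(h,n,B)}\,W_{2n-h,n}(B^{\vee_h},-r)$ with a \emph{single} exponent $c$, is false, and the failure is exactly what the theorem is about. The paper expands $W_{h,t}(B,r)=\sum_{Y\in\Gamma_{2n}\backslash X_{2n}(E_v)}\CG(Y,B)\,\CF_h(Y,A_t^{[r]})/\alpha(Y;\Gamma_{2n})$ over Iwahori orbits (Lemma \ref{lemma3.3}) and compares the two sides orbit by orbit under $Y\mapsto Y^{\wedge_h}$, $B\mapsto B^{\vee_h}$: the factors $\CG$ and $\alpha$ transform cleanly ($\CG(Y,B)=\CG(Y^{\wedge_h},B^{\vee_h})$, $\alpha(Y^{\wedge_h};\Gamma_{2n})=q^{(2n-h)^2-h^2}\alpha(Y;\Gamma_{2n})$, Lemmas \ref{lemma3.8}, \ref{lemma3.9}), but the factor $\CF$ picks up a monomial $x^{\FC(Y)-\FA(Y)}$ whose exponent \emph{depends on the orbit} $Y$ and ranges over all $2n+1$ values $-(2n-h),\dots,h$ (Lemma \ref{lemma 3.13}). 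So the two Whittaker polynomials do not differ by one monomial; their derivative difference is the orbit sum weighted by the statistic $i=\FC(Y)-\FA(Y)$, and the content of the theorem is that the weight function $i\mapsto i$ on these $2n+1$ strata can be uniquely interpolated by the values $(-q)^{(n-t)i}$ (up to normalizations) coming from the $2n+1$ functions $W_{h,i}(B,0)$, $W_{2n-h,j}(B^{\vee_h},0)$, $W_{h,n}(B,0)$. A quick sanity check that your step 2 cannot be repaired: if your monomial functional equation held, differentiating would prove the identity with all $\beta_i^h=0$, and uniqueness would then contradict the explicitly nonzero constants the paper computes for $n=1$, namely $\beta_0^0=q^{-2}(q^2-1)^{-1}$ and $\beta_0^1=-\tfrac{1}{q(q^2-1)}$ (Theorem \ref{theorem4.2}, Lemma \ref{lemma4.3}). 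Relatedly, the ``boundary term'' $\delta_h=-c\log q$ cannot be right even formally: the derivative is taken in $x=(-q)^{-2r}$ (not $q^{-r}$) at $x=1$, and the constants must lie in $E_v$.

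The final linear-algebra step also does not work as you describe. The resulting system is not triangular: after factoring out diagonal scalars the coefficient matrix is a Vandermonde matrix, $(-q)^{2n(2n-h)}\FB=\FX\,\Fa_h$ with distinct nodes $x_i\in\{(-q)^{\pm k}\}\cup\{1\}$ (this factorization is carried out in the proof of Proposition \ref{propositionA}), and it is the invertibility of this Vandermonde system -- not a triangular recursion in $t$ -- that yields both existence and uniqueness of $(\beta_i^h,\beta_j^{2n-h},\delta_h)$. Moreover your proposed test family $\diag(1_{n-i},\pi^{-1}1_{n+i-h},0,\dots,0)$ consists of singular matrices, which do not lie in the domain $X_{2n}(E_v)\subset GL_{2n}(E_v)$ on which the $W_{h,t}(\cdot,0)$ are defined, so the linear-independence check cannot be run on it; one would have to use nondegenerate diagonal representatives instead. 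Your first step (polynomiality in the Whittaker variable with density-type coefficients) is consistent with the paper's Lemma \ref{lemma3.3} and the remark following it, but without the orbitwise comparison of $\CG$, $\alpha$, $\CF$ under $^{\wedge_h}$/$^{\vee_h}$ there is no route from it to the stated identity.
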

All of these constants can be computed explicitly.

Now, our conjecture on the arithmetic intersection number \begin{equation*}
\langle \CZ(x_1), \dots,\CZ(x_{2n-h}),\CY(y_1),\dots,\CY(y_{h}) \rangle
\end{equation*}
 is as follows.

\begin{conjecture}\label{conjecture0.2}(Conjecture \ref{conjecture2}) For $0 \leq h \leq 2n$, we have
	\begin{equation*}
	\begin{array}{l}
	\langle \CZ(x_1), \dots,\CZ(x_{2n-h}),\CY(y_1),\dots,\CY(y_{h}) \rangle\\
	=\dfrac{1}{W_{n,n}(A_n,0)}\lbrace W'_{h,n}(B,0)-\mathlarger{\sum}_{0 \leq i \leq n-1} \beta_i^hW_{h,i}(B,0)\rbrace.
	\end{array}
	\end{equation*}
\end{conjecture}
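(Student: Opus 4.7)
The plan is to adapt the Li--Zhang strategy of \cite{LZ}, modified to accommodate both the new $\CY$-cycles and the correction terms $\beta_i^h W_{h,i}(B,0)$ that are characteristic of the minuscule parahoric level. The overall approach is a double induction: on $n$, and on a ``degeneracy complexity'' of the configuration $(x_1,\dots,x_{2n-h},y_1,\dots,y_h)$ that measures how far the matrix $B$ is from the block form $\diag(B_1,\pi^{-1} 1_n)$ for which compatibility with the Kudla--Rapoport conjecture is already built into the right-hand side.

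First I would secure base cases: for $n=1$ in Conjecture~\ref{conjecture0.1} this is Theorem~\ref{theorem4.8}; for Conjecture~\ref{conjecture0.2} at $n=1$ one would compute all arithmetic intersection numbers on $\CN^1_{E_v/F_v}(1,1)$ for $0 \leq h \leq 2$ directly, using the explicit local model of this Rapoport--Zink space and the definitions of $\CZ$- and $\CY$-cycles, then match each against the normalized Whittaker expression on the right. Next I would reduce non-minimal configurations on both sides in parallel. On the geometric side, whenever some $h(x_i,x_i)$ is a unit, Proposition~\ref{proposition}(1) identifies $\CZ(x_i)$ with $\CN^h_{E_v/F_v}(1,n-2)$, and whenever some $h(y_j,y_j)$ has valuation $-1$ Proposition~\ref{proposition}(2) identifies $\CY(y_j)$ with $\CN^{h-1}_{E_v/F_v}(1,n-2)$; intersecting with such a divisor drops the dimension of the ambient RZ space, so the inductive hypothesis applies to the remaining $2n-1$ cycles on the smaller space. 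On the analytic side, the Whittaker functions $W_{h,t}(B,0)$ and the derivative $W'_{h,n}(B,0)$ factor through the same reduction via the decomposition of the characteristic functions $1_{h,t}$, and the constants $\beta_i^h$ --- which were forced on us by the duality identity of Theorem~\ref{theorem3.14} --- must be checked to track this reduction numerically. The first of the three required properties of \eqref{eq1}, stated in the introduction when $B=\diag(B_1,\pi^{-1} 1_n)$, is precisely the terminal instance of this matching.

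After all such reductions one is left with an irreducible ``vertical'' configuration in which every diagonal entry of $B$ lies in $\pi O_{E_v}$ and no $y_j$ has valuation $-1$. Here I would invoke the Bruhat--Tits stratification of $\CN^n_{E_v/F_v}(1,2n-1)$ to decompose the derived tensor product $O_{\CZ(x_1)}\otimes^{\BL}\cdots\otimes^{\BL} O_{\CY(y_h)}$ into stratum-by-stratum contributions, comparing each against a piece of the analytic expression indexed by the same combinatorial data. The isomorphism $\theta$, which swaps $\CZ$- and $\CY$-cycles and exchanges $B \leftrightarrow B^{\vee_h}$, would be used both to cut the number of cases in half and to enforce consistency with Theorem~\ref{theorem3.14}, so that the geometric side manifestly exhibits the same duality symmetry as the corrected analytic side.

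The main obstacle will be this final irreducible vertical step. Two difficulties appear beyond the hyperspecial setting of \cite{LZ}: first, the $\CY$-cycles depend essentially on the polarization $\lambda_\BX$ and are not represented in any existing local intersection theory on the Bruhat--Tits strata of $\CN^n_{E_v/F_v}(1,2n-1)$, so their stratum-wise intersection behavior must be developed from scratch, probably by explicit computation on affine charts of the minuscule local model. Second, the correction sum $\sum \beta_i^h W_{h,i}(B,0)$ has no analogue in the hyperspecial case; it must be interpreted geometrically --- most plausibly as contributions of the exceptional (non-smooth) strata introduced by the minuscule parahoric structure --- and shown to cancel exactly the excess contributions of the same strata to $W'_{h,n}(B,0)/W_{n,n}(A_n,0)$. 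Producing this geometric interpretation of the correction term, and proving the resulting fine numerical matching, is what I expect to be the hardest step.
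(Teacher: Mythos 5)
The statement you are addressing is a \emph{conjecture}, and the paper does not prove it: what the paper establishes is (i) the existence and uniqueness of the correction constants $\beta_i^h$ via the duality identity (Theorem \ref{theorem3.14}), (ii) compatibility of the right-hand side with the Kudla--Rapoport conjecture in the fully split case $B=\diag(B_1,\pi^{-1}1_n)$ (Theorem \ref{theorem3.16}) together with the invariance properties (Theorem \ref{theorem3.24}), (iii) the cases $n=1$, $0\leq h\leq 2$ (Theorems \ref{theorem4.2} and \ref{theorem4.8}), proved by explicit Bruhat--Tits tree computations resting on \cite{San}, and (iv) compatibility with the almost self-dual case of \cite{LZ} in the appendix. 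The portions of your plan that can be checked do line up with this: your base case is exactly Section \ref{section4}, and your ``terminal instance'' of the reduction is exactly Theorem \ref{theorem3.16}. But the remainder of your proposal is a research program, not a proof, and should be assessed as such.

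Two steps in particular are unsupported and constitute genuine gaps. First, your ``parallel reduction on the analytic side'': the paper proves no induction formula expressing $W_{h,t}(B,0)$ or $W'_{h,n}(B,0)$, for a $B$ containing a single unit vector (resp.\ a single valuation $-1$ vector), in terms of weighted densities in one fewer variable, in a way matching the geometric reduction of Proposition \ref{proposition2.5}. Theorem \ref{theorem3.16} handles only the case where an entire rank-$n$ block splits off, and there it shows the correction terms vanish outright ($W_{n,i}(B,0)=0$ for $i<n$), so it says nothing about how $\sum_i \beta_i^h W_{h,i}(B,0)$ transforms under a one-step reduction; since the $\beta_i^h$ depend on both $n$ and $h$, ``checking that the constants track the reduction numerically'' is itself a nontrivial open family of identities, not a verification. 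Second, the ``irreducible vertical'' case: you correctly identify it as the crux, but the sketch offered --- stratum-by-stratum comparison, use of $\theta$-symmetry, and a hoped-for geometric interpretation of the correction sum as exceptional-stratum contributions --- does not go beyond restating the conjecture's content, since the matching of vertical intersection multiplicities against the corrected analytic side is precisely what is unknown; even in the hyperspecial case the corresponding step in \cite{LZ} required substantial new tools (with no $\CY$-cycles and no correction terms present), and nothing in the present paper supplies their analogues. In short: the proposal is a plausible outline whose essential steps coincide with the open problem, whereas the paper's actual evidence for the conjecture is the list (i)--(iv) above.
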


By the construction of the conjectures, these are compatible with the Kudla-Rapoport conjecture. Also, we prove that these conjectures hold when $n=1$, $0 \leq h \leq 2$ and we prove that these are compatible with the result of Li and Zhang in \cite{LZ} (almost-self dual case). Therefore, Conjecture \ref{conjecture0.1} and Conjecture \ref{conjecture0.2} are compatible with all known results. 

Now, we explain the outline of the paper.

In Section \ref{section2}, we review some geometric background on special cycles on unitary Rapoport-Zink spaces. Also, we state some properties of the arithmetic intersection numbers of special cycles. In Section \ref{section3}, we define the functions $W_{h,t}(B,r)$ and formulate our conjectures. Here, we use the result of \cite{Hir} with some modification. In Section \ref{section4}, we prove that the conjectures hold when $n=1$. Here, we use the result of \cite{San}. In Appendix \ref{appendix}, we prove that our conjecture is compatible with the formula of the arithmetic intersection number of $\CZ(x_1),\dots,\CZ(x_n)$ in $\CN^1_{E_v/F_v}(1,n-1)$ which is proved by Li and Zhang in \cite{LZ} (this is the almost self-dual case in loc. cit.).

\bigskip
\begin{acknowledgement}
I would like to thank Yifeng Liu for many helpful discussions. In particular, the discussions with him were very helpful to formulate our conjectures.
\end{acknowledgement}

\bigskip
\section{Unitary Rapoport-Zink spaces and special cycles}\label{section2}
In this section, we recall some facts and definitions of unitary (relative) Rapoport-Zink spaces $\CN^h_{E_v/F_v}(1,n-1)$ and special cycles $\CZ, \CY$ from \cite{Cho}.
\subsection{The moduli space $\CN^h_{E_v/F_v}(1,n-1)$}
We fix a prime $p>2$. Let $F_v$ be an unramified finite extension of $\BQ_p$ with ring of integers $O_{F_v}$, and residue field $\BF_q$. We fix a uniformizer $\pi$ of $O_{F_v}$ (this is just $p$, but we will use the general notation for future use). Let $E_v$ be a quadratic unramified extension of $F_v$ with ring of integers $O_{E_v}$. We write $\breve{E_v}$ for the completion of a maximal unramified extension of $E_v$ and $O_{\breve{E_v}}$ for its ring of integers. We denote by $\BF_{q^2}$ (resp. $k$) the residue field of $O_{E_v}$ (resp. $O_{\breve{E_v}}$). Denote by $^*$ the nontrivial Galois automorphism of $E_v$ over $F_v$.

Now, we fix an integer $0 \leq h \leq n$. To define our moduli space $\CN^h_{E_v/F_v}(1,n-1)$, we need to fix a triple ($\BX$, $i_{\BX}$, $\lambda_{\BX}$) consisting of the following data:

(1) $\mathbb{X}$ is a supersingular strict formal $O_{F_v}$-module of $F_v$-height $2n$ over $\BF_{q^2}$ (we refer to \cite[Definition 2.1]{Cho} for the precise definition of the supersingular strict formal $O_{F_v}$-modules);

(2) $i_{\BX}:O_{E_v} \rightarrow \End \BX$ is an $O_{E_v}$-action on $\BX$ that extends the $O_{F_v}$-action on $\BX$;

(3) $\lambda_{\BX}$ is a polarization
\begin{equation*}
\lambda_{\BX} :\BX \rightarrow \BX^{\vee},
\end{equation*}
such that the corresponding Rosati involution induces the involution $^*$ on $O_{E_v}$.

We also impose the following conditions:

(a) For all $a \in O_{E_v}$, the action $i_{\BX}$ satisfies
\begin{equation*}
\Charpol(i_{\BX}(a) \vert \Lie \BX) = (T-a)(T-a^*)^{n-1}.
\end{equation*}

Here, we view $(T-a)(T-a^*)^{n-1}$ as an element of $O_S[T]$ via the structure morphism;

(b) We assume that $\Ker \lambda_{\BX} \subset \BX[\pi]$ and its order is $q^{2h}$.

Now, let (Nilp) be the category of $O_{E_v}$-schemes $S$ such that $\pi$ is locally nilpotent on $S$. Then the functor $N^h_{E_v/F_v}(1,n-1)$ associates to a scheme $S \in$ (Nilp) the set of isomorphism classes of tuples $(X,i_X,\lambda_X,\rho_X)$.

Here $X$ is a supersingular strict formal $O_{F_v}$-module of $F_v$-height $2n$ over S and $i_X$ is an $O_{E_v}$-action on $X$ satisfying the following determinant condition:
For all $a \in E_v$,
\begin{equation*}
\Charpol(i_X(a)\vert \Lie X)=(T-a)(T-a^*)^{n-1} \in O_S[T].
\end{equation*}

$\rho_X$ is an $O_{E_v}$-linear quasi-isogeny \begin{equation*}
\rho_X:X_{\overline{S}} \rightarrow \mathbb{X}\times_{\BF_{q^2}}\overline{S},
\end{equation*}
of height 0, where $\overline{S}=S\times_{O_{E_v}}\BF_{q^2}$ and $X_{\overline{S}}$ is the base change $X\times_S\overline{S}$.

Finally, $\lambda_X:X \rightarrow X^{\vee}$ is a polarization
such that its Rosati involution induces the involution $^*$ on $O_{E_v}$, and the following diagram commutes up to a constant in $O_{F_v}^{\times}$

\begin{center}
	\begin{tikzcd}
		X_{\overline{S}} \arrow{r}{\lambda_{X_{\overline{S}}}}
		\arrow{d}{\rho_X}
		&X^{\vee}_{\overline{S}} \\
		\mathbb{X}_{\overline{S}}
		\arrow{r}{\lambda_{\BX_{\overline{S}}}}  &\mathbb{X}_{\overline{S}}^{\vee}
		\arrow{u}{\rho_X^{\vee}}.
	\end{tikzcd}
\end{center}

Two quadruples $(X,i_X,\lambda_X,\rho_X)$ and $(X',i_{X'},\lambda_{X'},\rho_{X'})$ are isomorphic if there exists an $O_{E_v}$-linear isomorphism $\alpha:X \rightarrow X'$ such that $\rho_{X'} \circ (\alpha \times_S \overline{S})=\rho_X$ and $\alpha^{\vee} \circ \lambda_{X'} \circ \alpha$ differs locally on $S$ from $\lambda_X$ by a scalar in $O_{F_v}^{\times}$. \\

Then the functor $\CN^h_{E_v/F_v}(1,n-1) \otimes O_{\breve{E_v}}$ is representable by a formal scheme over $\Spf O_{\breve{E_v}}$ which is locally formally of finite type. Also, this formal scheme is regular (see \cite[Proposition 3.33]{Cho}). For simplicity, denote by $\CN$ the moduli space $\CN^h_{E_v/F_v}(1,n-1) \otimes O_{\breve{E_v}}$.

\subsection{Special cycles}\label{subsec:section2.2}

In this subsection, we recall the definitions of special cycles from \cite[Section 5]{Cho}. We denote by $val$ the valuation of $E_v$. We fix a triple $(\overline{\BY},i_{\overline{\BY}},\lambda_{\overline{\BY}})$. Here $\overline{\BY}$ is a strict formal $O_{F_v}$-module of $F_v$-height 2 over $k$, and $i_{\overline{\BY}}$ is an $O_{E_v}$-action satisfying the determinant condition of signature $(0,1)$. Also, $\lambda_{\overline{\BY}}$ is a principal polarization. Then, we can consider the corresponding moduli space $\CN^0(0,1)$. For simplicity, we write $\CN^0$ for $\CN^0(0,1) \otimes O_{\breve{E_v}}$, $\CN$ for $\CN^h(1,n-1) \otimes O_{\breve{E_v}}$ and $\widehat{\CN}$ for $\CN^{n-h}(1,n-1) \otimes O_{\breve{E_v}}$.

\begin{definition}$\quad$
	\begin{enumerate}
		\item (\cite[Definition 3.1]{KR2})	The space of special homomorphisms is the $E_v$-vector space
		\begin{equation*}
		\BV:=\Hom_{O_{E_v}}(\overline{\BY},\BX)\otimes_{\BZ}\BQ.
		\end{equation*}
		For $x,y \in \BV$, we define a hermitian form $h$ on $\BV$ as
		\begin{equation*}
		h(x,y)=\lambda_{\overline{\BY}}^{-1} \circ y^{\vee}  \circ \lambda_{\BX} \circ x \in \End_{O_{E_v}}(\overline{\BY}) \otimes \BQ \overset{i_{\overline{\BY}}^{-1}}{\simeq} E_v.
		\end{equation*}
		We often omit $i_{\overline{\BY}}^{-1}$ via the identification $\End_{O_{E_v}}(\overline{\BY}) \otimes \BQ {\simeq} E_v$.
	
		\item We denote by $\theta: \CN \rightarrow \widehat{\CN}$ the isomorphism which is defined in \cite[Remark 5.2]{Cho}. More precisely, For each $O_{\breve{E_v}}$-scheme $S$ such that $\pi$ is locally nilpotent, the isomorphism $\theta$ sends $(X,i_X,\lambda_X,\rho_X) \in \CN(S)$ to \begin{equation*}
		(X^{\vee}, \overline{i}_X^{\vee},\lambda'_X,(\rho_X^{\vee})^{-1}) \in \widehat{\CN}(S).
		\end{equation*}
		Here $\lambda'_X : X^{\vee} \rightarrow X$ is the unique polarization such that $\lambda'_X \circ \lambda_X =i_X(\pi)$, and for $a \in O_{E_v}$, the action $\overline{i}_X^{\vee}$ is defined as $\overline{i}_X^{\vee}(a):=i_X(a^*)^{\vee}$.
	\end{enumerate}
\end{definition}

\begin{definition}\label{definition2.2}(\cite[Definition 3.2]{KR2}, \cite[Definition 5.4]{Cho})
		$\quad$
	\begin{enumerate}
		\item For $x \in \BV$, we define the special cycle $\CZ(x)$ to be the closed formal subscheme of $\CN^0 \times \CN$ with the following property: For each $O_{\breve{E_v}}$-scheme $S$ such that $\pi$ is locally nilpotent, $\CZ(x)(S)$ is the set of all points $\xi=(\overline{Y},i_{\overline{Y}},\lambda_{\overline{Y}},\rho_{\overline{Y}},X,i_X,\lambda_X,\rho_X)$ in $(\CN^0 \times \CN)(S)$ such that the quasi-homomorphism
		\begin{equation*}
		\rho_X^{-1}\circ x \circ \rho_{\overline{Y}}:\overline{Y} \times_S \overline{S} \rightarrow X \times_S \overline{S}
		\end{equation*}
		extends to a homomorphism from $\overline{Y}$ to $X$.
		
		\item For $y \in \BV$, we define the special cycle $\CY(y)$ in $\CN^0 \times \CN$ as follows. First, consider the special cycle $\CZ(\lambda_{\BX}\circ y)$ in $\CN^0 \times \widehat{\CN}$. This is the closed formal subscheme of $\CN^0 \times \widehat{\CN}$. We define $\CY(y)$ as $(id \times \theta^{-1})(\CZ(\lambda_{\BX}\circ y))$ in $\CN^0 \times \CN$.
	\end{enumerate}
	
	Note that $\CN^0$ can be identified with $\Spf O_{\breve{E_v}}$, and hence $\CZ(x), \CY(y)$ can be identified with closed formal subschemes of $\CN$. 
	
\end{definition}

\begin{proposition}\label{proposition2.3}(\cite[Proposition 5.9]{Cho})
		The functors $\CZ(x)$ and $\CY(y)$ are represented by closed formal subschemes of $\CN^0 \times \CN$. In fact, $\CZ(x)$ and $\CY(y)$ are relative divisors in $\CN^0 \times \CN$ (or empty) for any $x, y \in \BV \backslash \lbrace 0 \rbrace$.	
\end{proposition}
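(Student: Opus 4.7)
The plan is to treat $\CZ(x)$ first, adapting the Kudla-Rapoport argument of \cite{KR2} to the minuscule parahoric setting, and then to transfer the statement for $\CY(y)$ along the isomorphism $\theta$. The starting point for representability is the rigidity of quasi-isogenies: the quasi-homomorphism $\rho_X^{-1} \circ x \circ \rho_{\overline{\BY}}$, a priori defined only on $\overline{S}$, lifts canonically to a quasi-homomorphism on the universal objects over any formal neighborhood in $S$. The condition that this lift actually be an honest $O_{F_v}$-linear (and hence $O_{E_v}$-linear) homomorphism of formal modules becomes, via Grothendieck-Messing theory applied to the Dieudonn\'e crystals of $\overline{\BY}$ and $\BX$, the vanishing of the Hodge-theoretic component of a morphism between coherent sheaves, and this locally cuts out a closed formal subscheme given by a single equation.

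For the relative divisor property, one must verify that this local equation is a non-zero-divisor whenever $x \neq 0$ and $\CZ(x) \neq \emptyset$. Since $\CN^0 \simeq \Spf O_{\breve{E_v}}$ and $\CN$ is regular by \cite[Proposition 3.33]{Cho}, this amounts to showing that $\CZ(x)$ contains no irreducible component of $\CN$. Equivalently, at each closed point of $\CZ(x)$ one must exhibit a one-parameter deformation of $(X,i_X,\lambda_X)$ along which the universal lift of $x$ fails to extend. In the hyperspecial case $h=0$ this is the classical computation of \cite{KR2}; in the parahoric case $h>0$ it requires passing to the local model of $\CN$ and working in the explicit coordinates around each point of the stratification of the special fiber described in \cite{Cho}.

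For $\CY(y)$ the isomorphism $\theta \colon \CN \to \widehat{\CN}$ transfers the statement verbatim: $(\mathrm{id} \times \theta^{-1})(\CZ(\lambda_{\BX} \circ y))$ is automatically a closed formal subscheme, and a relative divisor, once the analogous statement for $\CZ(\lambda_{\BX}\circ y) \subset \CN^0 \times \widehat{\CN}$ is established at parahoric level $n-h$ by the same argument. In particular, it is enough to prove the proposition only for the $\CZ$-cycles.

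The main obstacle is the non-zero-divisor check in the parahoric case. Unlike in the hyperspecial setting, the formal neighborhood of a point in $\CN$ can decompose into several smooth branches indexed by the local model data, and one must produce a deformation which moves off $\CZ(x)$ within \emph{each} branch. Verifying this uniformly across all strata of the special fiber is the technical heart of the argument and was carried out in \cite[Section 5]{Cho}; a cleaner reformulation would replace the case-by-case stratum analysis by a single statement about the universal homomorphism between the Dieudonn\'e modules at the generic point of each branch of the local model.
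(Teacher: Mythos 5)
This proposition is stated in the paper without proof: it is imported verbatim from \cite[Proposition 5.9]{Cho}, so there is no in-paper argument for your proposal to diverge from, and the comparison can only be with the strategy of the cited source, which is the Kudla--Rapoport one you describe. Your outline matches it in all structural respects. Two small calibrations: representability of $\CZ(x)$ as a closed formal subscheme needs only rigidity of quasi-isogenies together with \cite[Proposition 2.9]{RZ} (the closedness of the locus where a quasi-homomorphism is a homomorphism); Grothendieck--Messing theory enters afterwards, for the local structure, not for representability itself. And your implicit use of the signature condition is correct and worth making explicit: the determinant condition $\Charpol(i_X(a)\,\vert\, \Lie X)=(T-a)(T-a^*)^{n-1}$ is unchanged at parahoric level, and since $E_v/F_v$ is unramified the rank-one eigenspace of $\Lie X$ is still a line bundle, so the extension condition is still locally one equation; the level $h$ enters only through the polarization, which plays no role in deforming the homomorphism $x$ itself. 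Your reduction of $\CY(y)$ to $\CZ(\lambda_{\BX}\circ y)$ at level $n-h$ along $\theta$ is exactly how Definition \ref{definition2.2} is set up, so that half is indeed immediate once the $\CZ$-case is known for all levels $0\leq h \leq n$.

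The caveat is that your text is a roadmap, not a proof: the decisive step --- that the local equation is a non-zero divisor, equivalently (by regularity of $\CN$, \cite[Proposition 3.33]{Cho}, whose local rings are domains) that $\CZ(x)$ contains no formal branch of $\CN$, checked on \emph{every} branch through every point of the stratified special fiber --- is not argued but delegated to \cite[Section 5]{Cho}, i.e., to the very source of the statement. For a blind attempt this is the genuine missing content, since in the parahoric case the multi-branch local structure is exactly where the hyperspecial computation of \cite{KR2} no longer applies off the shelf. That said, the paper itself treats the result as a black box, so relative to the paper your proposal is an accurate and correctly organized account of how the cited proof goes, with the hard verification correctly localized even if not executed.
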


In the next subsection, we will use the following propositions.

\begin{proposition}\label{proposition2.4}(\cite[Proposition 5.10]{Cho}) $\quad$
	\begin{enumerate}
		\item If $val(h(x,x))=0$, then $\CZ(x) \simeq \CN^{h}_{E_v/F_v}(1,n-2)_{O_{\breve{E_v}}}$.
		\item If $val(h(y,y))=-1$, then $\CY(y) \simeq \CN^{h-1}_{E_v/F_v}(1,n-2)_{O_{\breve{E_v}}}$.
	\end{enumerate}
\end{proposition}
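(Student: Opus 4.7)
The plan is to prove (1) by exhibiting a direct-sum splitting of $X$ along the extension of $x$, which transfers the moduli problem $\CZ(x)$ to that of a smaller-rank Rapoport--Zink space; then to deduce (2) from (1) via the isomorphism $\theta$ after checking how the hermitian form changes under $y\mapsto\lambda_{\BX}\circ y$.

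For part (1), take an $S$-point $(X,i_X,\lambda_X,\rho_X)$ of $\CZ(x)$, so that $x$ extends to an $O_{E_v}$-linear map $\widetilde{x}\colon\overline{Y}\to X$. Because $h(x,x)\in O_{F_v}^{\times}$, the morphism
\begin{equation*}
\pi_{\widetilde{x}}:=h(x,x)^{-1}\cdot\lambda_{\overline{Y}}^{-1}\circ\widetilde{x}^{\vee}\circ\lambda_X\colon X\longrightarrow\overline{Y}
\end{equation*}
is a retraction of $\widetilde{x}$, and $X':=\ker\pi_{\widetilde{x}}$ gives an $O_{E_v}$-equivariant direct sum decomposition $X\simeq\overline{Y}\oplus X'$ that is orthogonal for $\lambda_X$. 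Since $\overline{Y}$ is principally polarized, the induced polarization satisfies $\ker\lambda_{X'}=\ker\lambda_X$, hence has order $q^{2h}$; subtracting the determinant condition for $\overline{Y}$ (signature $(0,1)$) from that for $X$ (signature $(1,n-1)$) shows $X'$ has signature $(1,n-2)$; its $F_v$-height is $2n-2$. Thus $(X',i_{X'},\lambda_{X'},\rho_{X'})$ is naturally an object of $\CN^h_{E_v/F_v}(1,n-2)_{O_{\breve{E_v}}}$. The inverse correspondence sends $X'$ to $\overline{Y}\oplus X'$ with $\widetilde{x}$ the inclusion of the first summand, and the two constructions are mutually inverse, which proves (1).

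For part (2), a direct calculation using $\lambda_{\BX}^{\vee}=\lambda_{\BX}$ and $\lambda'_{\BX}\circ\lambda_{\BX}=i_{\BX}(\pi)$ shows that the hermitian form $h'$ on the space of special homomorphisms attached to $\widehat{\CN}$ satisfies $h'(\lambda_{\BX}\circ y_1,\lambda_{\BX}\circ y_2)=\pi\cdot h(y_1,y_2)$ for all $y_1,y_2\in\BV$. So if $\val(h(y,y))=-1$, then $\val(h'(\lambda_{\BX}\circ y,\lambda_{\BX}\circ y))=0$. Since $\theta$ is an isomorphism of formal schemes, $\CY(y)\simeq\CZ(\lambda_{\BX}\circ y)$, and applying (1) inside $\widehat{\CN}=\CN^{n-h}_{E_v/F_v}(1,n-1)$ gives $\CZ(\lambda_{\BX}\circ y)\simeq\CN^{n-h}_{E_v/F_v}(1,n-2)$. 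Finally, invoking the smaller-rank $\theta$-isomorphism identifies $\CN^{n-h}_{E_v/F_v}(1,n-2)\simeq\CN^{(n-1)-(n-h)}_{E_v/F_v}(1,n-2)=\CN^{h-1}_{E_v/F_v}(1,n-2)$, which is (2).

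The main obstacle is the moduli-theoretic verification of the proposed splitting over a general base $S\in(\mathrm{Nilp})$: one must show that $X'=\ker\pi_{\widetilde{x}}$ is again a strict formal $O_{F_v}$-module of $F_v$-height $2n-2$ with all of the required structures, not merely a subgroup scheme. I would handle this by passing to covariant $O_{F_v}$-crystals (Dieudonn\'e theory when $S$ is perfect, and Grothendieck--Messing deformation theory in general), whereupon the retraction identity $\pi_{\widetilde{x}}\circ\widetilde{x}=\mathrm{id}$ becomes a splitting of lattices and the orthogonality, signature, polarization-kernel, and height claims all reduce to routine linear algebra over $O_{E_v}$; the compatibility of the decomposition with the framing quasi-isogeny $\rho_X$ then follows from the rigidity of quasi-isogenies.
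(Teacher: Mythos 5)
Your proposal is correct and follows essentially the same route as the original argument: the paper itself only cites \cite[Proposition 5.10]{Cho} rather than reproving it, but the visible traces of that proof here --- the rescaling to $x^*\circ x=1$ and the idempotents $e_x=x\circ x^*$, $e_y=(\lambda_{\BX}\circ y)\circ(\lambda_{\BX}\circ y)^*$ in Proposition \ref{proposition2.5}, together with the blocks $\pi H$ and $\pi^{-1}E$ of $B^{\vee_h}$ reflecting your identity $h'(\lambda_{\BX}\circ y_1,\lambda_{\BX}\circ y_2)=\pi\, h(y_1,y_2)$ --- confirm it is exactly your strategy of splitting $X\simeq \overline{Y}\oplus X'$ along the unit-norm special homomorphism for (1), and transporting the $\CY$-case through $\theta$ (twice, once in rank $2n$ and once in rank $2(n-1)$) for (2). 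No gaps to report.
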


\begin{proposition}\label{proposition2.5}(\cite[Proposition 5.11]{Cho})
	Assume that $val(h(x,x))=0$ and $val(h(y,y))=-1$. Assume further that by rescaling as in the proof of \cite[Proposition 5.10]{Cho}, $x^*\circ x=1$, and $ (\lambda_{\BX} \circ y)^*\circ (\lambda_{\BX} \circ y)=1$. We define $e_x:=x \circ x^*$ and $e_y:=(\lambda_{\BX} \circ y) \circ (\lambda_{\BX} \circ y)^*$. Fix isomorphisms
	\begin{equation*}
	\begin{split}
	\Phi:\CZ(x) \simeq \CN^h_{E_v/F_v}(1,n-2)_{O_{\breve{E_v}}},\\
	\Psi:\CY(y) \simeq \CN^{h-1}_{E_v/F_v}(1,n-2)_{O_{\breve{E_v}}},
	\end{split}
	\end{equation*}
	as in Proposition \ref{proposition2.4}. Then the following statements hold.
	\begin{enumerate}
		\item For $z\in \BV$ such that $h(x,z)=0$, let $z':=(1-e_x)\circ z$. Then, we have $\Phi(\CZ(x) \cap \CZ(z))=\CZ(z')$ in $\CN^h_{E_v/F_v}(1,n-2)$ and $h(z',z')=h(z,z)$.
		
		\item For $w\in \BV$ such that $h(x,w)=0$, let $w':=(1-e_x)\circ w$. Then, we have $\Phi(\CZ(x) \cap \CY(w))=\CY(w')$ in $\CN^h_{E_v/F_v}(1,n-2)$ and $h(w',w')=h(w,w)$.
		
		\item For $z\in \BV$ such that $h(y,z)=0$, let $z':=(1-e_y^{\vee})\circ z$. Then, we have $\Psi(\CY(y) \cap \CZ(z))=\CZ(z')$ in $\CN^{h-1}_{E_v/F_v}(1,n-2)$ and $h(z',z')=h(z,z)$.
		
		\item For $w\in \BV$ such that $h(y,w)=0$, let $w':=(1-e_y^{\vee})\circ w$. Then, we have $\Psi(\CY(y) \cap \CY(w))=\CY(w')$ in $\CN^{h-1}_{E_v/F_v}(1,n-2)$ and $h(w',w')=h(w,w)$.
	\end{enumerate}
\end{proposition}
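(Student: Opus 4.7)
The strategy is to use the canonical direct sum decompositions of the universal $p$-divisible groups underlying the isomorphisms $\Phi$ and $\Psi$ of Proposition \ref{proposition2.4}, and to translate the orthogonality hypotheses $h(x,z)=0$ etc.\ into the statement that the relevant special homomorphism already factors through the ``orthogonal complement'' summand. Each part then reduces to a straightforward functorial extension argument.

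For part (1), the plan is as follows. First, I would recall from the proof of \cite[Proposition 5.10]{Cho} that, on $\CZ(x)$, the rescaling $x^*\circ x = 1$ makes $e_x = x\circ x^* \in \End_{O_{E_v}}(\BX)\otimes\BQ$ an idempotent, and this idempotent induces a decomposition $\BX = \overline{\BY}\oplus\BX''$ of the framing object with $\BX'' = (1-e_x)\BX$, together with a compatible splitting $X = \overline{Y}\oplus X''$ of the universal $p$-divisible group on $\CZ(x)$; the isomorphism $\Phi$ sends $(X,\ldots)\mapsto (X'',\ldots)$, and the polarization $\lambda_{\BX''}$ is the restriction of $\lambda_{\BX}$ to the orthogonal summand. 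Next, combining $h(x,z)=0$ with the hermitian property $h(z,x) = \overline{h(x,z)} = 0$ gives
\begin{equation*}
e_x\circ z \;=\; x\circ(x^*\circ z) \;=\; h(z,x)\cdot x \;=\; 0,
\end{equation*}
so $z$ factors through $\BX''$ and coincides with $z' = (1-e_x)\circ z$ viewed as an element of $\Hom_{O_{E_v}}(\overline{\BY},\BX'')\otimes\BQ$. Finally, a point of $\CZ(x)$ corresponding under $\Phi$ to $\xi\in\CN^h(1,n-2)$ lies in $\CZ(z)$ iff $z$ extends to a homomorphism $\overline{Y}\to X=\overline{Y}\oplus X''$; since the $\overline{Y}$-component vanishes, this is equivalent to $z'$ extending to $\overline{Y}\to X''$, i.e.\ $\xi\in\CZ(z')$. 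The identity $h(z',z')=h(z,z)$ is immediate because $z=z'$ as quasi-homomorphisms and the polarization on $\BX''$ is the restriction of $\lambda_\BX$.

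Part (2) follows from the same decomposition on $\CZ(x)$ once one observes that the splitting $X=\overline{Y}\oplus X''$ is compatible with $\theta$ on the $\widehat{\CN}$-side, so that the extension condition defining $\CY(w)=(\id\times\theta^{-1})(\CZ(\lambda_\BX\circ w))$ again reduces to an extension condition for the orthogonal component $w'$. Parts (3) and (4) are the mirror argument on the dual side: after rescaling $(\lambda_\BX\circ y)^*\circ(\lambda_\BX\circ y)=1$, the idempotent $e_y = (\lambda_\BX\circ y)\circ(\lambda_\BX\circ y)^*\in\End(\BX^\vee)\otimes\BQ$ splits the dual framing object and, via $\theta$, yields the idempotent $e_y^\vee$ on $\BX$; the orthogonality $h(y,z)=0$ then translates into $e_y^\vee\circ z=0$ by the analogous short computation, and $z' = (1-e_y^\vee)\circ z$ plays the same role as before.

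The main obstacle I expect is carefully verifying, on the dual side, that the decomposition of $X^\vee$ furnished by Proposition \ref{proposition2.4}(2) is compatible with both $\lambda_\BX$ and the polarization $\lambda'_\BX$ used to define $\theta$, so that the induced hermitian form on special homomorphisms of the smaller RZ space is precisely the restriction of $h$. Once this polarization bookkeeping is in place — which amounts to tracing through the identification $\lambda'_\BX\circ\lambda_\BX = i_\BX(\pi)$ and the definition of $e_y^\vee$ — the four parts are strictly parallel, and each reduces to the clean functorial extension argument spelled out for part (1).
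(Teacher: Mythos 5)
The paper itself contains no proof of this proposition: it is imported verbatim from \cite[Proposition 5.11]{Cho}, so there is no in-text argument to compare against, and your proposal must be judged on its own terms. On those terms it is correct, and it is the expected cancellation argument (in the style of the analogous reduction steps in \cite{KR2}): the two computations that carry the proof are right, namely that $h(x,z)=0$ forces $e_x\circ z=0$ (so $z=z'$ already lands in the complementary summand, making $h(z',z')=h(z,z)$ automatic once one checks $e_x$ is self-adjoint with respect to $\lambda_{\BX}$, so the splitting is $\lambda_{\BX}$-orthogonal and the polarization restricts to the summand), and dually that $h(y,z)=0$ if and only if $e_y^{\vee}\circ z=0$, since $z^{\vee}\circ(\lambda_{\BX}\circ y)=\lambda_{\overline{\BY}}\circ i_{\overline{\BY}}(h(y,z))$ and $(\lambda_{\BX}\circ y)^{*}$ is split surjective in the isogeny category after the rescaling. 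You also flag the only genuinely delicate bookkeeping: that $x$ and $x^{*}=\lambda_{\overline{\BY}}^{-1}\circ x^{\vee}\circ\lambda_{\BX}$ extend integrally over $\CZ(x)$ (so the idempotent splits the universal object itself, not merely the isocrystal --- this is what the proof of \cite[Proposition 5.10]{Cho} supplies), and that on the dual side the splitting is compatible with both $\lambda_{\BX}$ and $\lambda'_{\BX}$ via $\lambda'_{\BX}\circ\lambda_{\BX}=i_{\BX}(\pi)$; note that since $h(x,x)=1$ the induced polarization $x^{\vee}\circ\lambda_{\BX}\circ x=\lambda_{\overline{\BY}}$ on the split-off factor is principal, which is exactly what preserves the kernel order and identifies the complement with $\CN^{h}_{E_v/F_v}(1,n-2)$, respectively $\CN^{h-1}_{E_v/F_v}(1,n-2)$ after applying $\theta^{-1}$. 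Modulo writing out these routine verifications, your outline is a complete and correct proof.
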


\bigskip
\subsection{Arithmetic intersection numbers of special cycles}
In this subsection, we use the notation in Section \ref{subsec:section2.2}. For each $x,y \in \BV$, the special cycles $\CZ(x), \CY(y)$ can be identified with relative divisors in the regular formal scheme $\CN$ by Definition \ref{definition2.2} and Proposition \ref{proposition2.3}. In this subsection, we assume that $\CN=\CN^n_{E_v/F_v}(1,2n-1)$.

\begin{definition}
	Let $[\textbf{x},\textbf{y}]=[x_1, \dots, x_{2n-t}, y_1, \dots, y_{t}]$ be a basis of $\BV$. For special cycles $\CZ(x_i), \CY(y_j)$, $1 \leq i \leq 2n-t,  1\leq j \leq t$, we denote by $\CI_t^n(\textbf{x},\textbf{y})$ their arithmetic intersection number in $\CN$. More precisely,
	\begin{equation*}
	\CI_t^n(\textbf{x},\textbf{y}):=\chi(O_{\CZ(x_1)} \otimes^{\BL}_{O_{\CN}} \dots \otimes^{\BL}_{O_{\CN}} O_{\CZ(x_{2n-t})} \otimes^{\BL}_{O_{\CN}}O_{\CY(y_1)} \otimes^{\BL}_{O_{\CN}} \dots \otimes^{\BL}_{O_{\CN}} O_{\CY(y_{t})}).
	\end{equation*}
	Here, we write $\chi$ for the Euler-Poincare characteristic and $\otimes^{\BL}$ for the derived tensor product of $O_{\CN}$-modules.
\end{definition}

In the next section, we will formulate a conjectural formula for arithmetic intersection numbers $\CI_t^n(\tb{x},\tb{y})$. First, let us consider the intersection number $\CI_n^n(\tb{x},\tb{y})$. Let $B$ and $B^{\vee}$ be the matrices
	\begin{displaymath}
B=\left(\begin{array}{cc} 
h(x_i,x_j) & h(x_i,y_l)\\
h(y_k,x_j)& h(y_k,y_l)
\end{array} 
\right)_{1\leq i,j,k,l \leq n},
\end{displaymath}

	\begin{displaymath}
B^{\vee}=\left(\begin{array}{cc} 
\pi h(y_k,y_l) & h(y_k,x_j)\\
h(x_i,y_l)& \pi^{-1} h(x_i,x_j)
\end{array}
\right)_{1\leq i,j,k,l \leq n}.
\end{displaymath}

We expect that $\CI_n^n(\tb{x},\tb{y})$ satisfies the following four properties

\begin{enumerate}
	\item (Compatibility with $\CN^0(1,n-1)$)\\
	Assume that 
	\begin{displaymath}
	B=\left(\begin{array}{cc} 
	B_1 & 0\\
	0& \pi^{-1}1_n
	\end{array}
	\right).
	\end{displaymath}
	 Then by Proposition \ref{proposition2.4} and Proposition \ref{proposition2.5}, $\CI_n^n(\tb{x},\tb{y})$ is the intersection number of special cycles in $\CN^0(1,n-1)$.  Therefore, our formula should be compatible with \cite[Conjecture 1.3]{KR2}. More precisely, we should have
\begin{equation*}
\CI_n^n(\tb{x},\tb{y})=\dfrac{\alpha'(1_n,B_1)}{\alpha(1_n,1_n)},
\end{equation*}
where $\alpha$ is the representation density (see Section \ref{section3.2} for its precise definition) and $\alpha'$ is the derivative of the representation density.\\

	\item (Linear invariance under $GL_n(O_{E_v}) \times GL_n(O_{E_v})$-action)\\
	 Note that in the case of $\CN^0(1,n-1)$, the intersection number of special cycles $\CZ(x_1), \dots, \CZ(x_n)$ is invariant under $GL_n(O_{E_v})$-action (see \cite{How}). Therefore, it is reasonable to expect that $\CI_n^n(\tb{x},\tb{y})$ has similar linear invariance. Let $g=g_1 \times g_2 \in GL_n(O_{E_v}) \times GL_n(O_{E_v})$ and let $\tilde{\tb{x}}=\tb{x}g_1$, $\tilde{\tb{y}}=\tb{y}g_2$. We conjecture that $\CI_n^n(\tilde{\tb{x}},\tilde{\tb{y}})=\CI_n^n(\tb{x},\tb{y})$.\\
	
	\item (Invariance under the isomorphism $\theta: \CN=\CN^n(1,2n-1) \rightarrow \widehat{\CN}=\CN^n(1,2n-1)$)\\
	Since the intersection number of $\CZ(x_1), \dots, \CZ(x_n)$ in $\CN^0(1,n-1)$ depends only on the matrix $(h(x_i,x_j))$, it is reasonable to expect that $\CI_n^n(\tb{x},\tb{y})$ depends only on $B$. Now, assume that $x_i=\lambda'_{\BX}(y'_i)$ for some $y'_i \in \Hom_{O_{E_v}}(\overline{\BY},\BX^{\vee})\otimes_{\BZ}\BQ$, $1 \leq i \leq n$ (this is the space of special homomorphisms associated to $\widehat{\CN}$). Also, we write $x_i'$ for $\lambda_{\BX}(y_i)$. Then the intersection number of $\CZ(x_i)$'s and $\CY(y_j)$'s in $\CN$ are the same as the intersection number of $\CY(y'_i)$'s and $\CZ(x_j')$'s in $\widehat{\CN}$. Note that
		\begin{displaymath}
\left(\begin{array}{cc} 
	h'(x'_i,x'_j)) & h'(x'_i,y'_l)\\
	h'(y'_k,x'_j)& h'(y'_k,y'_l)
	\end{array}
	\right)_{1\leq i,j,k,l \leq n}=B^{\vee},
	\end{displaymath}
	where $h'(x,y)=\lambda^{-1}_{\overline{\BY}} \circ y^{\vee} \circ \lambda_{\BX}' \circ x$ (this is the hermitian form on $\Hom_{O_{E_v}}(\overline{\BY},\BX^{\vee})\otimes_{\BZ}\BQ$).
	
	Since $\CN=\widehat{\CN}$ and we expect the $\CI_n^n(\tb{x},\tb{y})$ depends only on $B$, our conjectural formula should be invariant under the change $B \leftrightarrow B^{\vee}$.\\
	
	\item (Compatibility with $\CN^n(1,n-1)$)\\
		Assume that 
	\begin{displaymath}
	B=\left(\begin{array}{cc} 
	1_n & 0\\
	0& B_2
	\end{array}
	\right).
	\end{displaymath}
	Then by Proposition \ref{proposition2.4} and Proposition \ref{proposition2.5}, $\CI_n^n(\tb{x},\tb{y})$ is the intersection number of special cycles in $\CN^n(1,n-1)$. By \cite[Conjecture 1.3]{KR2} and the isomorphism between $\CN^0(1,n-1)$ and $\CN^n(1,n-1)$, we should have
	\begin{equation*}
	\CI_n^n(\tb{x},\tb{y})=\dfrac{\alpha'(1_n,\pi B_2)}{\alpha(1_n,1_n)}.
	\end{equation*}
	
	Indeed, this follows from (1) and (3).\\

\end{enumerate}

\begin{remark}
\begin{enumerate}
	\item Assume that 
	\begin{displaymath}
	B=
	\\\left(\begin{array}{cccc} 
	B_1 & 0 & 0& 0\\
	0&1_{h}&0&0\\
	0&0&B_2&0\\
	0&0&0& \pi^{-1}1_{n-h}
	\end{array}
	\right),
	\end{displaymath}
	where $B_1$ (resp. $B_2$) is a $(n-h)$ $\times$ $(n-h)$ (resp. $h$ $\times$ $h$) matrix.
	Then by Proposition \ref{proposition2.4} and Proposition \ref{proposition2.5}, $\CI_n^n(\tb{x},\tb{y})$ is the intersection number of $\CZ(x_1), \dots, \CZ(x_{n-h}), \CY(y_1),\dots, \CY(y_h)$ in $\CN^h(1,n-1)$. Here, we have abused notation slightly by writing $x_1, \dots, x_{n-h}, y_1, \dots, y_h$ for $x_1',\dots x_{n-h}', y_1', \dots, y_h'$ after $n$-times reductions in Proposition \ref{proposition2.5}. Therefore, our conjectural formula for $\CI_n^n(\tb{x},\tb{y})$ (which will be formulated in the next section) also gives a conjectural formula for the intersection numbers of the special cycles 
	\begin{equation*}
	\CZ(x_1), \dots, \CZ(x_{n-h}), \CY(y_1),\dots, \CY(y_h)
	\end{equation*}
	in $\CN^h(1,n-1)$.
	
	\item More generally, $\forall0 \leq t \leq n$, we consider the intersection number $\CI_{2n-t-h}^n(\tb{x},\tb{y})$, i.e., the intersection number of 
	\begin{equation*}
	\CZ(x_1), \dots, \CZ(x_{t+h}),\CY(y_1), \dots, \CY(y_{2n-t-h}).
	\end{equation*}. Assume that
		\begin{displaymath}
	B=\left(\begin{array}{cc} 
	h(x_i,x_j) & h(x_i,y_l)\\
	h(y_k,x_j)& h(y_k,y_l)
	\end{array} 
	\right)_{1\leq i,j \leq t+h, 1\leq k,l \leq 2n-t-h}
	\end{displaymath}
	\begin{displaymath}
	=\left(\begin{array}{cccc} 
	B_1 & 0 & 0& 0\\
	0&1_{h}&0&0\\
	0&0&B_2&0\\
	0&0&0& \pi^{-1}1_{n-h}
	\end{array}
	\right),
	\end{displaymath}
	where $B_1$ (resp. $B_2$) is a $t$ $\times$ $t$ (resp. $n-t$ $\times$ $n-t$) matrix. Then by Proposition \ref{proposition2.4} and Proposition \ref{proposition2.5},  $\CI_{2n-t-h}^n(\tb{x},\tb{y})$ is the intersection number of $\CZ(x_1), \dots, \CZ(x_{t}), \CY(y_1),\dots, \CY(y_{n-t})$ in $\CN^h(1,n-1)$. Therefore, the conjectural formula for $\CI_{2n-t-h}^n(\tb{x},\tb{y})$ (which will be formulated in the next section) also gives a conjectural formula for the intersection numbers of $\CZ(x_1), \dots, \CZ(x_{t}), \CY(y_1),\dots, \CY(y_{n-t})$ in $\CN^h(1,n-1)$.
\end{enumerate}	
\end{remark}

\bigskip

\section{Weighted representation densities and local conjectures}\label{section3}
In this section, we will study weighted representation densities and relate them to the intersection numbers of special cycles.

\subsection{Weighted representation densities}
In this subsection, we will study some weighted representation densities and their explicit formulas. Here we follow the notation in \cite[Section 10]{KR3} and \cite{Hir}.

We fix the standard additive character $\psi_v:F_v \rightarrow \BC^{\times}$ that is trivial on $O_{F_v}$. Let $V_v^{+}$ (resp. $V_v^{-}$) be a split (resp. nonsplit) $2n$-dimensional hermitian space over $E_v$ and let $\CS((V_v^{\pm})^{2n})$ be the space of Schwartz functions on $(V_v^{\pm})^{2n}$. Let $V_{r,r}$ be the split hermitian space of signature $(r,r)$, and let $L_{r,r}$ be a self-dual lattice in $V_{r,r}$. We write $\varphi_{r,r}$ for the characteristic function of $(L_{r,r})^n$ and $(V_v^{\pm})^{[r]}$ for $V_v^{\pm} \oplus V_{r,r}$. Given $\varphi$ in $\CS((V_v^{\pm})^{2n})$, we define a function $\varphi^{[r]}$ by $\varphi \otimes \varphi_{r,r} \in \CS(((V_v^{\pm})^{[r]})^{2n})$. 

We define the sets
\begin{equation*}
V_{2n}(E_v)=\lbrace Y \in M_{2n,2n}(E_v) \vert ^tY^*=Y \rbrace,
\end{equation*}
\begin{equation*}
X_{2n}(E_v)=\lbrace X \in GL_{2n}(E_v) \vert ^tX^*=X \rbrace.
\end{equation*}

The group $GL_{2n}(E_v)$ acts on $X_{2n}(E_v)$ by $g \cdot X=gX^tg^*$, where $g \in GL_{2n}(E_v)$, $X \in X_{2n}(E_v)$.

We define the Iwahori subgroup
\begin{equation*}
\Gamma_{2n}=\lbrace \gamma=(\gamma_{ij}) \in GL_{2n}(O_{E_v}) \vert \gamma_{ij} \in \pi O_{E_v} \text{ if } i>j \rbrace.
\end{equation*}

For $X,Y \in V_{2n}(E_v)$, we define $\langle X,Y \rangle=\Tr(XY)$. For $X \in M_{m,n}(E_v)$ and $A \in V_m(E_v)$, we define $A[X]=^tX^*AX \in V_n(E_v)$.

\begin{definition}\label{definition3.1}
	Let $0 \leq h,t \leq n$. If $t$ is even (resp. $t$ is odd) we define $L_t$ as a lattice of rank $2n$ in $V_v^+$ (resp. $V_v^-$) with hermitian form defined by 
	\begin{displaymath}
	A_t=\left(\begin{array}{cc} 
	1_{2n-t} & \\
	& \pi^{-1} 1_{t}
	\end{array} 
	\right).
	\end{displaymath}
	
	Let $1_{h,t}$ be the characteristic function of $(L_t^{\vee})^{2n-h} \times L_t^h$, where $L_t^{\vee}$ is the dual lattice with respect to the hermitian form. This is an element in $\CS((V_v^{+})^{2n})$ (resp. $\CS((V_v^{-})^{2n})$) if $t$ is even (resp. $t$ is odd). We can view $1_{h,t}$ as the characteristic function of the set of matrices of the form
	\begin{equation*}
	\left(\begin{array}{ll}
	A&B\\
	C&D
	\end{array}\right),
	\end{equation*}
	where $A \in M_{2n-t,2n-h}(O_{E_v})$, $B \in M_{2n-t,h}(O_{E_v})$, $C \in \pi M_{t,2n-h}(O_{E_v})$, and $D \in M_{t,h}(O_{E_v})$. Here we use the basis of $L_t$ such that the hermitian form of $L_t$ with respect to this basis is $A_t$.
	
	For an element $B \in X_{2n}(E_v)$, we define
	\begin{equation*}
	W_{h,t}(B,r)=\int_{V_{2n}(E_v)} \int_{M_{2n+2r,2n}(E_v)}\psi_v(\langle Y, A_t^{[r]}[X]-B\rangle)1_{h,t}^{[r]}(X)dXdY.
	\end{equation*}
	
	Here $dY$ (resp. $dX$) is the Haar measure on $V_{2n}(E_v)$ (resp. $M_{2n+2r,2n}(E_v)$) such that
	\begin{equation*}
	\int_{V_{2n}(O_{E_v})}dY=1 \quad (\text{resp. }\int_{M_{2n+2r,2n}(O_{E_v})}dX=1).
	\end{equation*}
	Also, $A_t^{[r]}$ is the $2n+2r$ $\times$ $2n+2r$ matrix
\begin{displaymath}
A_t^{[r]}=\left(\begin{array}{cc} 
A_t & \\
&  1_{2r}
\end{array} 
\right).
\end{displaymath}

\end{definition}

\begin{remark}
	The above definition and notation are modelled on the proof of \cite[Proposition 10.1]{KR3}. Therefore, we will relate the integrals $W_{h,t}$ to certain local Whittaker integrals later. Note that the above Haar measures are different from the measures in \cite[Proposition 10.1]{KR3}. This is because we need to use the results in \cite{Hir}, therefore we used the measures in loc. cit.
\end{remark}

\begin{lemma}(cf. \cite[Proposition 3.2]{Hir})\label{lemma3.3}
	For $B \in X_{2n}(E_v)$, we have
	\begin{equation*}
	W_{h,t}(B,r)=\sum_{Y \in \Gamma_{2n} \backslash X_{2n}(E_v)} \dfrac{\CG(Y,B)\CF_h(Y,A_t^{[r]})}{\alpha(Y;\Gamma_{2n})}.
	\end{equation*}
	Here
	\begin{equation*}
		\CF_h(Y,A_t^{[r]})=\int_{M_{2n+2r,2n}(E_v)} \psi_v(\langle Y, A_t^{[r]}[X]\rangle)1_{h,t}^{[r]}(X)dX,
	\end{equation*}
	and
	\begin{equation*}
	\CG(Y,B)=\int_{\Gamma_{2n}}\psi_v(\langle Y, -B[\gamma]\rangle)d\gamma,
	\end{equation*}
	where $d\gamma$ is the Haar measure on $M_{2n,2n}(O_{E_v})$ such that $\int_{M_{2n, 2n}(O_{E_v})} d\gamma=1$.
	
	Also, 
	\begin{equation*}
	\alpha(Y;\Gamma_{2n})=\lim_{d \rightarrow \infty} q^{-4dn^2}N_d(Y;\Gamma_{2n}),
	\end{equation*}
	where
	\begin{equation*}
	N_d(Y;\Gamma_{2n})=\vert \lbrace \gamma \in \Gamma_{2n} (\Mod \pi^d) \vert \gamma \cdot Y\equiv Y (\Mod \pi^d) \rbrace \vert.
	\end{equation*}
\end{lemma}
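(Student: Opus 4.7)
The plan is to reduce the double integral defining $W_{h,t}(B,r)$ to a sum over $\Gamma_{2n}$-orbits in $X_{2n}(E_v)$ by exploiting the $\Gamma_{2n}$-invariance of $\CF_h(\cdot,A_t^{[r]})$. By Fubini,
\begin{equation*}
W_{h,t}(B,r)=\int_{V_{2n}(E_v)}\psi_v(-\langle Y,B\rangle)\,\CF_h(Y,A_t^{[r]})\,dY,
\end{equation*}
and since $X_{2n}(E_v)$ has full Haar measure in $V_{2n}(E_v)$, the integration may be restricted to $X_{2n}(E_v)$.

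Next I would verify that $\CF_h(\gamma\cdot Y,A_t^{[r]})=\CF_h(Y,A_t^{[r]})$ for every $\gamma\in\Gamma_{2n}$. Using $\langle \gamma Y\,{}^{t}\gamma^{*},A_t^{[r]}[X]\rangle=\langle Y,A_t^{[r]}[X\gamma]\rangle$ together with the change of variable $X\mapsto X\gamma^{-1}$ (of unit Jacobian, since $\gamma\in GL_{2n}(O_{E_v})$), this reduces to the assertion $1_{h,t}^{[r]}(X\gamma)=1_{h,t}^{[r]}(X)$. A column-by-column check, using the containments $L_t^{\vee}\subset L_t$ and $\pi L_t\subset L_t^{\vee}$ together with the lower-triangular $\pi$-divisibility in the definition of $\Gamma_{2n}$, shows that right multiplication by $\Gamma_{2n}$ preserves the block support of $1_{h,t}^{[r]}$. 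Decomposing $X_{2n}(E_v)=\bigsqcup_{Y_0}\Gamma_{2n}\cdot Y_0$ and pulling $\CF_h(Y_0,A_t^{[r]})$ out of each orbit integral gives
\begin{equation*}
W_{h,t}(B,r)=\sum_{Y_0\in\Gamma_{2n}\backslash X_{2n}(E_v)}\CF_h(Y_0,A_t^{[r]})\int_{\Gamma_{2n}\cdot Y_0}\psi_v(-\langle Y,B\rangle)\,dY,
\end{equation*}
so the lemma reduces to proving
\begin{equation*}
\int_{\Gamma_{2n}\cdot Y_0}\psi_v(-\langle Y,B\rangle)\,dY=\CG(Y_0,B)/\alpha(Y_0;\Gamma_{2n}).
\end{equation*}

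The hard step is this last measure-theoretic identity. Consider the orbit map $\phi_{Y_0}\colon\Gamma_{2n}\to\Gamma_{2n}\cdot Y_0$, $\gamma\mapsto \gamma Y_0\,{}^{t}\gamma^{*}$. The pushforward $(\phi_{Y_0})_{*}(d\gamma)$ and the restriction $dY|_{\Gamma_{2n}\cdot Y_0}$ are both $\Gamma_{2n}$-invariant measures on the orbit and so are proportional. Evaluating on the shrinking neighborhoods $Y_0+\pi^{d}V_{2n}(O_{E_v})$ and using $|M_{2n,2n}(O_{E_v}/\pi^{d})|=q^{8dn^{2}}$ with $|V_{2n}(O_{E_v}/\pi^{d})|=q^{4dn^{2}}$, the ratio of masses equals $q^{-4dn^{2}}N_{d}(Y_0;\Gamma_{2n})$, which tends to $\alpha(Y_0;\Gamma_{2n})$ by definition. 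Hence $(\phi_{Y_0})_{*}(d\gamma)=\alpha(Y_0;\Gamma_{2n})\,dY|_{\Gamma_{2n}\cdot Y_0}$, and combining this with $\langle\gamma\cdot Y_0,B\rangle=\langle Y_0,B[\gamma]\rangle$ produces $\CG(Y_0,B)=\alpha(Y_0;\Gamma_{2n})\int_{\Gamma_{2n}\cdot Y_0}\psi_v(-\langle Y,B\rangle)\,dY$, as required. The delicate point throughout is the consistent normalization of the three Haar measures involved (on $\Gamma_{2n}\subset M_{2n,2n}(O_{E_v})$, on $V_{2n}(E_v)$, and on $M_{2n+2r,2n}(E_v)$); this is the content of \cite[Proposition 3.2]{Hir}, adapted here to the non-self-dual characteristic function $1_{h,t}^{[r]}$ via the invariance just established.
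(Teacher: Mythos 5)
Your proposal is correct and follows essentially the same route as the paper, whose entire proof is the citation of \cite[Proposition 3.2]{Hir}: your Fubini reduction, the decomposition of $X_{2n}(E_v)$ into open $\Gamma_{2n}$-orbits, and the computation of the proportionality constant between $(\phi_{Y_0})_{*}(d\gamma)$ and $dY|_{\Gamma_{2n}\cdot Y_0}$ on the shrinking neighborhoods $Y_0+\pi^{d}V_{2n}(O_{E_v})$ (using $q^{8dn^2}$ and $q^{4dn^2}$, so that the ratio tends to $\alpha(Y_0;\Gamma_{2n})$) is precisely Hironaka's argument. The one modification this setting requires --- the right $\Gamma_{2n}$-invariance $1_{h,t}^{[r]}(X\gamma)=1_{h,t}^{[r]}(X)$, which you verify column by column from $L_t^{\vee}\subset L_t$, $\pi L_t\subset L_t^{\vee}$ and the Iwahori condition --- is exactly the fact the paper itself invokes in the proof of Lemma \ref{lemma3.5}.
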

\begin{proof} The proof is almost identical to the proof of \cite[Proposition 3.2]{Hir}.
	
\end{proof}

\begin{definition}
	Let X be an element
	\begin{displaymath}
	X=\left(\begin{array}{cc} 
	A & B\\
	C&  D
	\end{array} 
	\right)
	\end{displaymath}
	in $X_{2n}(E_v)$, where 
	\begin{equation*}
	\begin{array}{ll}
	A \in M_{2n-h,2n-h}(E_v),\\
	 B \in M_{2n-h,h}(E_v),\\
	C \in M_{h,2n-h}(E_v),\\
	D \in M_{h,h}(E_v).
	\end{array}
	\end{equation*}
	
	We denote by $X^{\vee_h}$ the element
	\begin{displaymath}
	X^{\vee_h}=\left(\begin{array}{cc} 
	\pi D & C\\
	B&  \pi^{-1}A
	\end{array} 
	\right).
	\end{displaymath}
	
	Also, we define $X^{\wedge_h}$ as the element
		\begin{displaymath}
	X^{\wedge_h}=\left(\begin{array}{cc} 
	\pi^{-1} D & C\\
	B&  \pi A
	\end{array} 
	\right).
	\end{displaymath}
\end{definition}

Let $\CR_{2n}$ be the set
\begin{equation*}
\CR_{2n}=\lbrace Y_{\sigma,e} \vert (\sigma,e) \in \CS_{2n}\times \BZ^{2n}, \sigma^2=1, e_i=e_{\sigma(i)} \forall i \rbrace,
\end{equation*}
where $\CS_{2n}$ is the symmetric group of degree $2n$ and

	\begin{displaymath}
Y_{\sigma,e}=\sigma \left(\begin{array}{ccc} 
\pi^{e_1} &  & 0\\
 & \ddots & \\
0 &   &\pi^{e_{2n}}
\end{array} 
\right).
\end{displaymath}

Then, by \cite[Theorem 1]{Hir}, the set $\CR_{2n}$ forms a complete set of representatives of $\Gamma_{2n} \backslash X_{2n}$.

\begin{lemma}\label{lemma3.5}
		For $B \in X_{2n}(E_v)$, $W_{h,t}(B,r)$ is invariant under the action of $\Gamma_{2n}$ and $(GL_{2n-h} \times GL_{h})(O_{E_v})$ on $B$.
	\end{lemma}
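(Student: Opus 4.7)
The plan is to prove the lemma by a double change of variables in the integral defining $W_{h,t}(B,r)$: substitute $X \mapsto X\alpha$ in the inner integral and $Y \mapsto \alpha Y\,{}^t\alpha^*$ in the outer integral, for $\alpha$ in the parahoric subgroup
\[
P := \bigl\{\alpha \in GL_{2n}(O_{E_v}) : \text{the lower-left } h \times (2n-h) \text{ block of } \alpha \text{ lies in } \pi M_{h, 2n-h}(O_{E_v})\bigr\}.
\]
Both $\Gamma_{2n}$ and the block Levi $GL_{2n-h}(O_{E_v}) \times GL_h(O_{E_v})$ sit inside $P$, so the conclusion will cover both claimed invariances.

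The key lattice fact is $1_{h,t}^{[r]}(X\alpha) = 1_{h,t}^{[r]}(X)$ for every $\alpha \in P$. Using the matrix description of $1_{h,t}$ in Definition \ref{definition3.1}, together with the chain $\pi L_t \subset L_t^{\vee} \subset L_t$, one verifies this column by column: for $j \leq 2n-h$, the $j$-th column of $X\alpha$ stays in $L_t^{\vee}$, because any contribution coming from an $L_t$-column of $X$ is scaled by an entry of the lower-left block of $\alpha$ (hence by an element of $\pi O_{E_v}$) and $\pi L_t \subset L_t^{\vee}$; for $j > 2n-h$, the $j$-th column stays in $L_t$, using $L_t^{\vee} \subset L_t$. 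The additional $2r$ rows of $X$ live in the self-dual lattice $L_{r,r}$ and are stable under any $GL_{2n}(O_{E_v})$-column operation.

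Given this invariance, the remainder is formal. Substituting $X \mapsto X\alpha$ uses the identity $A_t^{[r]}[X\alpha] = \,{}^t\alpha^*\,A_t^{[r]}[X]\,\alpha$ (coming from $A[X] = \,{}^tX^*AX$) and preserves Haar measure on $M_{2n+2r,2n}(E_v)$ since $|\det \alpha|_F = 1$. The subsequent substitution $Y' := \alpha Y\,{}^t\alpha^*$, also measure preserving, combined with the cyclicity identity $\langle Y, \,{}^t\alpha^* C \alpha\rangle = \langle \alpha Y\,{}^t\alpha^*, C\rangle$, cancels the $\alpha$-factors around $A_t^{[r]}[X]$ and replaces $B$ by $(\,{}^t\alpha^*)^{-1} B \alpha^{-1}$ in the second term. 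This yields
\[
W_{h,t}(B,r) = W_{h,t}\bigl((\,{}^t\alpha^*)^{-1} B \alpha^{-1},\, r\bigr) \quad \text{for every } \alpha \in P,
\]
which for the block Levi (stable under $\alpha \mapsto (\,{}^t\alpha^*)^{-1}$) is exactly the claimed invariance, and for $\Gamma_{2n}$ matches the equivalent reformulation $B \mapsto B[\gamma] = \,{}^t\gamma^* B \gamma$ that already appears in the expansion of $W_{h,t}$ in Lemma \ref{lemma3.3}. I expect the main obstacle to be the column-by-column lattice check, where the interplay between the block structure of $P$ and the chain $\pi L_t \subset L_t^{\vee} \subset L_t$ must be tracked carefully; everything after that is routine manipulation with the trace pairing.
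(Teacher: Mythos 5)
Your proposal is correct and follows essentially the same route as the paper: the proof of Lemma \ref{lemma3.5} is exactly this pair of substitutions $X \mapsto X\gamma$ and $Y \mapsto \gamma Y\,{}^t\gamma^*$ for $\gamma \in \Gamma_{2n} \cup (GL_{2n-h}\times GL_h)(O_{E_v})$, with the paper simply asserting the invariance $1_{h,t}^{[r]}(X\gamma)=1_{h,t}^{[r]}(X)$ that you verify via the parahoric $P$ and the chain $\pi L_t \subset L_t^{\vee} \subset L_t$ (a harmless unification of the two cases). If anything, your cyclicity step $\langle Y, {}^t\alpha^* C \alpha\rangle = \langle \alpha Y\,{}^t\alpha^*, C\rangle$ is the correct form of the corresponding display in the paper's proof, and your resulting invariance $B \mapsto B[\alpha^{-1}] = ({}^t\alpha^*)^{-1}B\alpha^{-1}$ is precisely the form of the $\Gamma_{2n}$-invariance that is used downstream, in $\CG(Y,B)$ and in the reduction to representatives in $\CR_{2n}$ after Lemma \ref{lemma3.3}.
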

\begin{proof}
	Let $\gamma \in \Gamma_{2n} \cup (GL_{2n-h} \times GL_{h})(O_{E_v})$.
	Then, we have
	\begin{equation*}
\begin{split}
W_{h,t}(B,r)&=\int_{V_{2n}(E_v)} \int_{M_{2n+2r,2n}(E_v)}\psi_v(\langle Y, A_t^{[r]}[X]-B\rangle)1_{h,t}^{[r]}(X)dXdY\\
&=\int_{V_{2n}(E_v)} \int_{M_{2n+2r,2n}(E_v)}\psi_v(\langle Y, A_t^{[r]}[ X\gamma]-B\rangle)1_{h,t}^{[r]}( X\gamma)d( X\gamma)dY.
\end{split}
\end{equation*}
Since $\gamma \in \Gamma_{2n} \cup (GL_{2n-h} \times GL_{h})(O_{E_v})$, we have $1_{h,t}^{[r]}(X)=1_{h,t}^{[r]}(X\gamma)$. Also, $\val(\det(\gamma))=0$ implies that $d(X\gamma)=dX$. Therefore, the above equation is equal to
\begin{equation*}
\int_{V_{2n}(E_v)} \int_{M_{2n+2r,2n}(E_v)}\psi_v(\langle Y, A_t^{[r]}[ X\gamma]-B\rangle)1_{h,t}^{[r]}(X)d(X)dY.
\end{equation*}

Also, we have
\begin{equation*}
\begin{split}
\langle Y, A_t^{[r]}[X\gamma]-B\rangle &=\Tr(Y(^t\gamma^*)(^tX^*) A_t^{[r]}X\gamma-YB)\\
&=\Tr( (^t\gamma^*)Y\gamma(^tX^*) A_t^{[r]}X-(^t\gamma^*)Y\gamma \gamma^{-1}B(^t\gamma^*)^{-1})\\
&=\langle (^t\gamma^*)Y\gamma,A_t^{[r]}[X]-\gamma^{-1}B(^t\gamma^*)^{-1} \rangle.
\end{split}
\end{equation*}

Now, we make a change of variables $Y \rightarrow (^t\gamma^*)^{-1}Y\gamma^{-1}$. Since $\val(\det(\gamma))=0$ the above equation is equal to
\begin{equation*}
\begin{split}
\int_{V_{2n}(E_v)} \int_{M_{2n+2r,2n}(E_v)}\psi_v(\langle Y, A_t^{[r]}[ X]-\gamma^{-1}B(^t\gamma^*)^{-1}\rangle)1_{h,t}^{[r]}(X)d(X)dY\\
=W_{h,t}(\gamma^{-1}B(^t\gamma^*)^{-1},r).
\end{split}
\end{equation*}

This finishes the proof.
\end{proof}

	By the invariance under $\Gamma_{2n}$, it suffices to consider $B \in \CR_{2n}$. Also, by the invariance under $(GL_{2n-h} \times GL_{h})(O_{E_v})$, it suffices to consider $Y_{\tau,\lambda} \in \CR_{2n}$ where $\tau$ satisfies
	\begin{equation}\label{equation3.0.1}
	\tau(i)=\left\lbrace \begin{array}{ll}
	i,  &  1 \leq i \leq 2n-h-s;\\
	i+h, &  2n-h-s+1 \leq i \leq 2n-h;\\
	i, & 2n-h+1 \leq i \leq 2n-s;\\
	i-h, & 2n-s+1 \leq i \leq 2n,
	\end{array}
	\right.
	\end{equation}
	for some $1 \leq s \leq \min(h,2n-h)$.

\begin{definition}\label{definition3.6}
	We define $\CR_{2n}^{h}$ by the set of $Y_{\tau,\lambda} \in \CR_{2n}$ satisfying the above conditions \eqref{equation3.0.1} on $\tau$. These are the matrices of the form
		\begin{displaymath}
	B= \left(\begin{array}{llll}
	B_1 &&&\\
	&&&B_4\\
	&&B_3&\\
	&B_2&&
	\end{array} 
	\right),
	\end{displaymath}
	where $B_1, B_2, B_3, B_4$ are diagonal matrices as follows.
		\begin{displaymath}
	B_1= \left(\begin{array}{lll}
	\pi^{\lambda_1}&&\\
	&\ddots&\\
	&&\pi^{\lambda_{2n-h-s}}\\
	\end{array} 
	\right);
	\end{displaymath}
	
		\begin{displaymath}
	B_2=B_4= \left(\begin{array}{lll}
	\pi^{\lambda_{2n-h-s+1}}&&\\
	&\ddots&\\
	&&\pi^{\lambda_{2n-h}}
	\end{array} 
	\middle)=\middle(\begin{array}{lll}
	\pi^{\lambda_{2n-s+1}}&&\\
	&\ddots&\\
	&&\pi^{\lambda_{2n}}
	\end{array} 
	\right);
	\end{displaymath}
	
	\begin{displaymath}
	B_3= \left(\begin{array}{lll}
	\pi^{\lambda_{2n-h+1}}&&\\
	&\ddots&\\
	&&\pi^{\lambda_{2n-s}}
	\end{array} 
	\right).
	\end{displaymath}
	
\end{definition}

\begin{definition}\label{definition3.7} Let $Y=Y_{\sigma,e} \in \CR_{2n}$.
	\begin{enumerate}
		\item We define the sets $A_1^h(Y)$, $A_2^h(Y)$, $B_1^h(Y)$, $B_2^h(Y)$, $C_1^h(Y)$, $C_2^h(Y)$ as follows.
		\begin{equation*}
		\begin{array}{l}
		A_1^h(Y)=\lbrace \text{ } 1 \leq j \leq 2n-h \text{ } \vert \text{ } \sigma(j)=j \text{ } \rbrace;\\
		A_2^h(Y)=\lbrace \text{ } 1 \leq j \leq 2n-h \text{ } \vert \text{ } \sigma(j)\neq j \text{ } \rbrace;\\
		B_1^h(Y)=\lbrace \text{ } 1 \leq j \leq 2n-h \text{ } \vert \text{ } 2n-h+1 \leq \sigma(j) \leq 2n \text{ } \rbrace;\\
		B_2^h(Y)=\lbrace \text{ } 2n-h+1 \leq j \leq 2n \text{ } \vert \text{ } 1\leq \sigma(j) \leq 2n-h \text{ } \rbrace;\\
		C_1^h(Y)=\lbrace \text{ } 2n-h+1 \leq j \leq 2n \text{ } \vert \text{ }  \sigma(j)=j \text{ } \rbrace;\\
		C_2^h(Y)=\lbrace \text{ } 2n-h+1 \leq j \leq 2n \text{ } \vert \text{ }  \sigma(j)\neq j \text{ } \rbrace.
		\end{array}
		\end{equation*}
		
		\item For $1 \leq k \leq 2n$, we define
		\begin{equation*}
		k^{\vee_h}=\left\lbrace \begin{array}{ll}
		k+h,  &  1 \leq k \leq 2n-h;\\
		k-(2n-h), &  2n-h+1 \leq k \leq 2n.
		\end{array}
		\right.
		\end{equation*}
		
		\item For $1 \leq j \leq 2n$, we define
		\begin{equation*}
		j^{\wedge_h}=\left\lbrace \begin{array}{ll}
		j+h,  &  1 \leq j \leq 2n-h;\\
		j-(2n-h), &  2n-h+1 \leq j \leq 2n.
		\end{array}
		\right.
		\end{equation*}
		Therefore, $j^{\wedge_h}=j^{\vee_h}$. We use this notation in order to avoid confusion in the proof of several lemmas below.
		
		\item For $\tau$ in \eqref{equation3.0.1}, we define
			\begin{equation*}
		\tau^{\vee_h}(i)=\left\lbrace \begin{array}{ll}
		i,  &  1 \leq i \leq h-s;\\
		i+(2n-h), &  h-s+1 \leq i \leq h;\\
		i, & h+1 \leq i \leq 2n-s;\\
		i-(2n-h) & 2n-s+1 \leq i \leq 2n.
		\end{array}
		\right.
		\end{equation*}
		
		\item We define $\sigma^{\wedge_h}$ as follows.
		\begin{equation*}
		\sigma^{\wedge_h}(j^{\wedge_h})=\left\lbrace \begin{array}{ll}
		\sigma(j)+h,  &  j \in A_1^h(Y) \cup A_2^h(Y);\\
		\sigma(j)-(2n-h), & j \in B_1^h(Y);\\
		\sigma(j)+h, & j \in B_2^h(Y);\\
		\sigma(j)-(2n-h) & j \in C_1^h(Y) \cup C_2^h(Y).
		\end{array}
		\right.
		\end{equation*}
	\end{enumerate}
	
\end{definition}

\begin{lemma}\label{lemma3.8}
	For $Y \in \CR_{2n}$ and $B \in \CR_{2n}^h$,
	we have
	\begin{equation*}
	\CG(Y,B)=\CG(Y^{\wedge_h},B^{\vee_h}).
	\end{equation*}
\end{lemma}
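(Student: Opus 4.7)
The plan is to exhibit an explicit measure-preserving involution $\gamma \mapsto \gamma'$ of $\Gamma_{2n}$ under which $\langle Y, B[\gamma]\rangle = \Tr(Y\cdot{}^t\gamma^* B\gamma)$ transforms into $\langle Y^{\wedge_h}, B^{\vee_h}[\gamma']\rangle$; the lemma will then follow immediately by change of variable in the defining integral $\CG(Y, B) = \int_{\Gamma_{2n}}\psi_v(-\langle Y, B[\gamma]\rangle)\,d\gamma$.

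First I would write any $\gamma \in \Gamma_{2n}$ in the block form
\begin{equation*}
\gamma = \begin{pmatrix} a & b \\ \pi c & d \end{pmatrix}
\end{equation*}
with respect to the block decomposition $(2n-h, h) \times (2n-h, h)$. The Iwahori condition on $\gamma$ is equivalent to $a$ (resp.\ $d$) being an Iwahori element of $GL_{2n-h}(O_{E_v})$ (resp.\ $GL_h(O_{E_v})$), with $b \in M_{(2n-h)\times h}(O_{E_v})$ and $c \in M_{h\times(2n-h)}(O_{E_v})$ otherwise unrestricted. I then define
\begin{equation*}
\gamma' := \begin{pmatrix} d & c \\ \pi b & a \end{pmatrix},
\end{equation*}
now regarded as a $2n \times 2n$ matrix with the swapped block structure $(h, 2n-h) \times (h, 2n-h)$. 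A direct check shows $\gamma' \in \Gamma_{2n}$: the Iwahori conditions on $a$ and $d$ are preserved in their new positions, and the new strictly lower-triangular block $\pi b$ has entries in $\pi O_{E_v}$. In the natural parametrization by $(a, b, c, d)$, the map $\gamma \mapsto \gamma'$ is the relabeling $(a,b,c,d) \leftrightarrow (d,c,b,a)$, so it is manifestly a measure-preserving involution of $\Gamma_{2n}$.

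The core step is the block-matrix identity $\Tr(Y\cdot B[\gamma]) = \Tr(Y^{\wedge_h}\cdot B^{\vee_h}[\gamma'])$. Writing $B = \begin{pmatrix} A & B' \\ C & D \end{pmatrix}$ and $Y = \begin{pmatrix} A_Y & B_Y \\ C_Y & D_Y \end{pmatrix}$ in the matching block form, and recalling that
\begin{equation*}
B^{\vee_h} = \begin{pmatrix} \pi D & C \\ B' & \pi^{-1} A \end{pmatrix}, \qquad Y^{\wedge_h} = \begin{pmatrix} \pi^{-1} D_Y & C_Y \\ B_Y & \pi A_Y \end{pmatrix},
\end{equation*}
I would expand both ${}^t\gamma^* B\gamma$ and ${}^t\gamma'^* B^{\vee_h}\gamma'$ block by block and take the trace against $Y$ and $Y^{\wedge_h}$ respectively, arriving in each case at a sum of sixteen scalar traces indexed by the four blocks of $Y$ (or $Y^{\wedge_h}$) and the four blocks appearing in the expansion. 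In a term-by-term comparison, the $\pi^{\pm 1}$ factors coming from $\vee_h$ (on $B$) and $\wedge_h$ (on $Y$) exactly cancel the $\pi^{\pm 1}$ factors introduced by the swap $\pi c \leftrightarrow \pi b$ in passing from $\gamma$ to $\gamma'$, and the two sums agree term by term.

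The main obstacle is purely bookkeeping: the sixteen-term verification is tedious but entirely mechanical, and nothing deeper is involved. Once the identity is checked, substituting $\gamma \mapsto \gamma'$ in the integral defining $\CG(Y, B)$ yields the lemma. I would also remark that this argument makes no use of the specific diagonal-like form of $Y \in \CR_{2n}$ or $B \in \CR_{2n}^h$, so the identity $\CG(Y, B) = \CG(Y^{\wedge_h}, B^{\vee_h})$ in fact holds for arbitrary $Y, B \in X_{2n}(E_v)$.
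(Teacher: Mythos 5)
Your proof is correct, and it takes a genuinely different route from the paper. I checked your key integrand identity $\Tr(Y\cdot B[\gamma])=\Tr(Y^{\wedge_h}\cdot B^{\vee_h}[\gamma'])$ by expanding ${}^t\gamma^*B\gamma$ and ${}^t\gamma'^*B^{\vee_h}\gamma'$ in blocks: all four block-trace terms (against $A_Y$, $B_Y$, $C_Y$, $D_Y$) match exactly, with the $\pi^{\pm1}$ factors from $\vee_h$, $\wedge_h$ and the swap $\pi c\leftrightarrow \pi b$ cancelling as you claim; and your map $(a,b,c,d)\mapsto(d,c,b,a)$ is indeed a measure-preserving bijection of $\Gamma_{2n}$, since the two off-diagonal blocks each have $h(2n-h)$ entries, so the Jacobian factors $q^{2h(2n-h)}$ and $q^{-2h(2n-h)}$ from dividing and multiplying by $\pi$ cancel (one small slip: it is an involution only when $h=n$; in general its inverse is the analogous map with the roles of $h$ and $2n-h$ exchanged, which is all the change of variables needs). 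The paper argues quite differently: it uses the monomial representatives $Y=Y_{\sigma,e}$, $B=Y_{\tau,\lambda}$ to factor $\CG(Y,B)$ into a product of one- and two-variable Gauss-type integrals $\CG_{k,j}$, compares these term by term against $\CG_{k^{\vee},j^{\wedge}}(Y^{\wedge_h},B^{\vee_h})$ across sixteen cases --- where the individual factors are \emph{not} equal but differ by powers $q^{\pm2},q^{\pm4}$ --- and then shows the total exponent vanishes by counting with $\xi=\vert B_1^h(Y)\vert$. Your change of variables is essentially the paper's own substitution $\gamma\mapsto\gamma^{\curlywedge}$ from the proof of Lemma \ref{lemma3.9}, transplanted to $\CG$; applying it directly to the integral buys a shorter, purely formal argument that avoids all case bookkeeping, and your closing remark is also right: since the identity holds at the level of integrands, the conclusion $\CG(Y,B)=\CG(Y^{\wedge_h},B^{\vee_h})$ holds for arbitrary $Y,B\in X_{2n}(E_v)$, not just for the representatives in $\CR_{2n}$ and $\CR_{2n}^h$. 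What the paper's longer computation buys instead is the explicit case decomposition (the sets $A_i^h$, $B_i^h$, $C_i^h$ and the exponent bookkeeping) that is reused in the same notation in the proof of Lemma \ref{lemma 3.13}, whereas your argument, being global, produces no such local data.
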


\begin{proof}
	Let $Y=Y_{\sigma,e}$, $B=Y_{\tau,\lambda}$. We use the notations in Definition \ref{definition3.6}, Definition \ref{definition3.7} and suppress $h$ from the notations for simplicity. Also, we write $O$ for $O_{E_v}$.
	
	Note that
	\begin{equation*}
	(B[\gamma])_{ji}=\sum_{k=1}^{2n} \gamma^*_{kj}\pi^{\lambda_k}\gamma_{\tau(k)i}.
	\end{equation*}
	
	Therefore, we have
	\begin{equation*}
	\begin{split}
	\langle Y, B[\gamma] \rangle&=\sum_{j=1}^{2n}Y_{\sigma(j)j}(B[\gamma])_{j\sigma(j)}\\
	&=\sum_{j=1}^{2n}Y_{\sigma(j)j}\sum_{k=1}^{2n}\gamma^*_{kj}\pi^{\lambda_k}\gamma_{\tau(k)i}\\
	&\sum_{j=1}^{2n}\sum_{k=1}^{2n}\pi^{e_j+\lambda_k}\gamma_{kj}^*\gamma_{\tau(k)\sigma(j)}.
	\end{split}
	\end{equation*}
	
	Similarly, we can compute $\langle Y^{\wedge}, B^{\vee}[\gamma] \rangle$. Before we do so, we need some notations.
	
	We define $\mu_k$, ($1\leq k \leq 2n)$ as follows.
	\begin{equation*}
	\mu_{k}=\left\lbrace \begin{array}{ll}
	\lambda_{2n-h+k}+1,  &  1 \leq k \leq h-s;\\
	\lambda_{2n-h+k}, &  h-s+1 \leq k \leq h;\\
	\lambda_{k-h}-1, & h+1 \leq k \leq 2n-s;\\
	\lambda_{k-h}, & 2n-s+1 \leq k \leq 2n.
	\end{array}
	\right.
	\end{equation*}

	Also, we define $f_j$, ($1 \leq j \leq 2n)$ as follows.
		\begin{equation*}
	f_{j}=\left\lbrace \begin{array}{ll}
	e_{j+2n-h}-1,  &  \text{ if }1 \leq j \leq h,\text{ } j+(2n-h) \in C_1^h(Y) \cup C_2^h(Y);\\
	e_{j}, & \text{ if }1 \leq j \leq h, \text{ }j+(2n-h) \in B_2^h(Y);\\
	e_{j}, & \text{ if }h+1 \leq j \leq 2n, \text{ }j-h \in B_1^h(Y);\\
	
	e_{j-h}+1, & \text{ if } h+1 \leq j \leq 2n, \text{ }j-h \in A_1^h(Y) \cup A_2^h(Y).
	\end{array}
	\right.
	\end{equation*}
	
	Then, we can show that $Y^{\wedge}=Y_{\sigma^{\wedge},f}$ and $B^{\vee}=Y_{\tau^{\vee},\mu}$. Therefore, we have
		\begin{equation*}
	\begin{split}
	\langle Y^{\wedge}, B^{\vee}[\gamma] \rangle&
	=\sum_{j=1}^{2n}\sum_{k=1}^{2n}\pi^{f_j+\mu_k}\gamma_{kj}^*\gamma_{\tau^{\vee}(k)\sigma^{\wedge}(j)}.\\
	&
	=\sum_{j^{\wedge}=1}^{2n}\sum_{k^{\vee}=1}^{2n}\pi^{f_{j^{\wedge}}+\mu_{k^{\vee}}}\gamma_{k^{\vee}j^{\wedge}}^*\gamma_{\tau^{\vee}(k^{\vee})\sigma^{\wedge}(j^{\wedge})}.
	\end{split}
	\end{equation*}
	
These two formulas give explicit formulas for $\CG(Y,B)$ and $\CG(Y^{\wedge_h},B^{\vee_h})$ as in the proof of \cite[Proposition 4.2]{Hir}. For example,

\begin{equation*}
\begin{array}{ll}
\CG(Y,B)&=\mathlarger{\prod}_{k=\tau(k)< j=\sigma(j)} \int_{O} \psi_v(\pi^{e_j+\lambda_k}\Nm(x))dx\\

&\times \mathlarger{\prod}_{ k=\tau(k)= j=\sigma(j)} \int_{O^{\times}} \psi_v(\pi^{e_j+\lambda_k}\Nm(x))dx\\
&\times \mathlarger{\prod}_{k=\tau(k)> j=\sigma(j)} \int_{\pi O} \psi_v(\pi^{e_j+\lambda_k}\Nm(x))dx\\

&\times \mathlarger{\prod}_{k<j, \tau(k)<\sigma(j), k\neq \tau(k) \text{ or } j\neq \sigma(j)} \int_{O \times O} \psi_v(\pi^{e_j+\lambda_k}\Tr(xy))dxdy\\

&\times \mathlarger{\prod}_{k=j, \tau(k)<\sigma(j), k\neq \tau(k) \text{ or } j\neq \sigma(j)} \int_{O^{\times} \times O} \psi_v(\pi^{e_j+\lambda_k}\Tr(xy))dxdy\\

&\times \mathlarger{\prod}_{k>j, \tau(k)<\sigma(j), k\neq \tau(k) \text{ or } j\neq \sigma(j)} \int_{\pi O \times O} \psi_v(\pi^{e_j+\lambda_k}\Tr(xy))dxdy\\

&\times \mathlarger{\prod}_{k<j, \tau(k)=\sigma(j), k\neq \tau(k) \text{ or } j\neq \sigma(j)} \int_{O \times O^{\times}} \psi_v(\pi^{e_j+\lambda_k}\Tr(xy))dxdy\\

&\times \mathlarger{\prod}_{k=j, \tau(k)=\sigma(j), k\neq \tau(k) \text{ or } j\neq \sigma(j)} \int_{O^{\times} \times O^{\times}} \psi_v(\pi^{e_j+\lambda_k}\Tr(xy))dxdy\\

&\times \mathlarger{\prod}_{k>j, \tau(k)=\sigma(j), k\neq \tau(k) \text{ or } j\neq \sigma(j)} \int_{\pi O \times O^{\times}} \psi_v(\pi^{e_j+\lambda_k}\Tr(xy))dxdy\\

&\times \mathlarger{\prod}_{k<j, \tau(k)>\sigma(j), k\neq \tau(k) \text{ or } j\neq \sigma(j)} \int_{O \times \pi O} \psi_v(\pi^{e_j+\lambda_k}\Tr(xy))dxdy\\

&\times \mathlarger{\prod}_{k=j, \tau(k)>\sigma(j), k\neq \tau(k) \text{ or } j\neq \sigma(j)} \int_{O^{\times} \times \pi O} \psi_v(\pi^{e_j+\lambda_k}\Tr(xy))dxdy\\

&\times \mathlarger{\prod}_{k>j, \tau(k)>\sigma(j), k\neq \tau(k) \text{ or } j\neq \sigma(j)} \int_{\pi O \times \pi O} \psi_v(\pi^{e_j+\lambda_k}\Tr(xy))dxdy\\

\end{array}
\end{equation*}

Similarly, we have a formula for $\CG(Y^{\wedge_h},B^{\vee_h})$.

Now, we will prove the lemma via term by term comparison. We write $\CG_{k,j}(Y,B)$ for $(k,j)$-term of $\CG(Y,B)$ and $\CG_{k^{\vee},j^{\wedge}}(Y^{\wedge_h},B^{\vee_h})$ for $(k^{\vee},j^{\wedge})$-term of $\CG(Y^{\wedge_h},B^{\vee_h})$.

Note that we have the following rules.

\begin{enumerate}
	\item If $k-j < 0$, then $\gamma_{kj}$ runs over $O$. If $k-j=0$, then $\gamma_{kj}$ runs over $O^{\times}$. If $k-j <0$, then $\gamma_{kj}$ runs over $\pi O$. The cases of ($\tau(k),\sigma(j)$), ($k^{\vee},j^{\wedge}$) and ($\tau^{\vee}(k^{\vee}),\sigma^{\wedge}(j^{\wedge})$) are similar.
	
	\item If $k=\tau(k)$ and $j=\sigma(j)$, then the integrand in $\CG_{k,j}(Y,B)$ is $\psi_v(\pi^{e_j+\lambda_k}\Nm(x))$. Otherwise, it is $\psi_v(\pi^{e_j+\lambda_k}\Tr(xy))$. 
	The case of $(k^{\vee},j^{\wedge},\tau^{\vee},\sigma^{\wedge})$, $(f_{j^{\wedge}},\mu_{k^{\vee}})$ is similar.
\end{enumerate}

	We write\\ 
	\textbf{Case 1} for $1 \leq k \leq 2n-h-s$,\\
	\textbf{Case 2} for $2n-h-s+1 \leq k \leq 2n-h$,\\
	\textbf{Case 3} for $2n-h+1 \leq k \leq 2n-s$, and\\
	\textbf{Case 4} for $2n-s+1 \leq k \leq 2n$.

Also, we write\\
 \textbf{Case A} for $j \in A_1^h(Y) \cup A_2^h(Y)$,\\
 \textbf{Case $B_1$} for $j \in B_1^h(Y)$,\\
 \textbf{Case $B_2$} for $j \in B_2^h(Y)$ and \\
 \textbf{Case C} for $j \in C_1^h(Y) \cup C_2^h(Y)$.

Then, we have the following 16 cases.

\begin{enumerate}
	\item \textbf{Case 1-A}
In this case, we have
\begin{equation*}
\begin{array}{ll}
\bullet\quad 1 \leq k \leq 2n-h-s,&h+1 \leq k^{\vee}=k+h \leq 2n-s;\\
\bullet\quad \tau(k)=k,&\tau^{\vee}(k^{\vee})=k^{\vee};\\
\bullet\quad 1 \leq j, \sigma(j) \leq 2n-h,&h+1 \leq j^{\wedge}, \sigma^{\wedge}(j^{\wedge}) \leq 2n;\\
\bullet\quad j^{\wedge}=j+h,& \sigma^{\wedge}(j^{\wedge})=\sigma(j)+h.
\end{array}
\end{equation*}

Therefore,
\begin{equation*}
\begin{array}{lll}
\bullet\quad k-j & = & k^{\vee}-j^{\wedge};\\
\bullet\quad \tau(k)-\sigma(j) & =& \tau^{\vee}(k^{\vee})-\sigma^{\wedge}(j^{\wedge});\\
\bullet\quad \lambda_k+e_j &= &\mu_{k^{\vee}}+f_{j^{\wedge}}=\lambda_{k}-1+e_{j}+1.
\end{array}
\end{equation*}

This implies that $\CG_{k,j}(Y,B)=\CG_{k^{\vee},j^{\wedge}}(Y^{\wedge_h},B^{\vee_h})$. 
\\
\item \textbf{Case 2-A}
In this case, we have
\begin{equation*}
\begin{array}{ll}
\bullet\quad  2n-h-s+1 \leq k \leq 2n-h, & h+1 \leq k^{\vee}=k+h \leq 2n-s;\\
\bullet\quad  2n-s+1 \leq \tau(k)=k+h \leq 2n,& \tau^{\vee}(k^{\vee})=k^{\vee}-(2n-h)\\
&=k+2h-2n;\\
\bullet\quad  1 \leq j, \sigma(j) \leq 2n-h,&h+1 \leq j^{\wedge}, \sigma^{\wedge}(j^{\wedge}) \leq 2n;\\
\bullet\quad  j^{\wedge}=j+h,& \sigma^{\wedge}(j^{\wedge})=\sigma(j)+h.

\end{array}
\end{equation*}

Therefore,
\begin{equation*}
\begin{array}{lllll}
\bullet\quad  &k-j & = & k^{\vee}-j^{\wedge};&\\
\bullet\quad  &\tau(k)-\sigma(j) &>0>& \tau^{\vee}(k^{\vee})-\sigma^{\wedge}(j^{\wedge})&\\
  &=k+h-\sigma(j)&&=k-\sigma(j)-(2n-h);&\\
\bullet\quad &\lambda_k&= &\mu_{k^{\vee}};&\\
\bullet\quad &e_j+1 &=& f_{j^{\wedge}}.&
\end{array}
\end{equation*}

Therefore, $\gamma_{\tau(k)\sigma(j)}$ runs over $\pi O$ and $\gamma_{\tau^{\vee}(k^{\vee})\sigma^{\wedge}(j^{\wedge})}$ runs over $O$. More precisely,
\begin{equation*}
\begin{array}{l}
\CG_{k,j}(Y,B)=\int_{\triangle \times \pi O} \psi_v(\pi^{e_j+\lambda_k}\Tr(xy))dxdy,\\
\CG_{k^{\vee},j^{\wedge}}(Y^{\wedge_h},B^{\vee_h})=\int_{\triangle \times  O} \psi_v(\pi^{e_j+\lambda_k+1}\Tr(xy))dxdy.
\end{array}
\end{equation*}
Here $\triangle$ is $O$, $O^{\times}$, and $\pi O$ if $k-j<0$, $k-j=0$, and $k-j>0$ respectively.

By using a change of variables, we get
\begin{equation*}
\CG_{k,j}(Y,B)=q^{-2}\CG_{k^{\vee},j^{\wedge}}(Y^{\wedge_h},B^{\vee_h}).
\end{equation*}
\\
\textbf{Case 3-A}
In this case, we have
\begin{equation*}
\begin{array}{ll}
\bullet\quad 2n-h+1 \leq k \leq 2n-s, & 1 \leq k^{\vee}=k-(2n-h) \leq h-s;\\
\bullet\quad \tau(k)=k,& \tau^{\vee}(k^{\vee})=k^{\vee};\\
\bullet\quad 1 \leq j, \sigma(j) \leq 2n-h,&h+1 \leq j^{\wedge}, \sigma^{\wedge}(j^{\wedge}) \leq 2n;\\
\bullet\quad j^{\wedge}=j+h,& \sigma^{\wedge}(j^{\wedge})=\sigma(j)+h.

\end{array}
\end{equation*}

Therefore, we have
\begin{equation*}
\begin{array}{lllll}
\bullet\quad &k-j & >0> & k^{\vee}-j^{\wedge}&\\
&&&=k-(2n-h)-(j+h)&\\
&&&=k-j-2n;&\\
\bullet\quad &\tau(k)-\sigma(j) &>0>& \tau^{\vee}(k^{\vee})-\sigma^{\wedge}(j^{\wedge})&\\
&=k+h-\sigma(j)&&=k-\sigma(j)-(2n-h);&\\
\bullet\quad &\lambda_k+1&= &\mu_{k^{\vee}};&\\
\bullet\quad &e_j+1 &=& f_{j^{\wedge}}.&
\end{array}
\end{equation*}

This implies that $\gamma_{kj}$ and $\gamma_{\tau(k)\sigma(j)}$ runs over $\pi O$. Also, $\gamma_{k^{\vee}j^{\wedge}}$ and $\gamma_{\tau^{\vee}(k^{\vee})\sigma^{\wedge}(j^{\wedge})}$ runs over $O$.

More precisely,
\begin{equation*}
\begin{array}{l}
\CG_{k,j}(Y,B)=\int_{\pi O \times \pi O} \psi_v(\pi^{e_j+\lambda_k}\Tr(xy))dxdy,\\
\CG_{k^{\vee},j^{\wedge}}(Y^{\wedge_h},B^{\vee_h})=\int_{O \times  O} \psi_v(\pi^{e_j+\lambda_k+2}\Tr(xy))dxdy.
\end{array}
\end{equation*}

This implies that
\begin{equation*}
\CG_{k,j}(Y,B)=q^{-4}\CG_{k^{\vee},j^{\wedge}}(Y^{\wedge_h},B^{\vee_h}).
\end{equation*}
\\
\textbf{Case 4-A}
In this case, we have
\begin{equation*}
\begin{array}{ll}
\bullet\quad 2n-s+1 \leq k \leq 2n, & h-s+1 \leq k^{\vee}=k-(2n-h) \leq h;\\
\bullet\quad \tau(k)=k-h,& \tau^{\vee}(k^{\vee})=k^{\vee}+(2n-h)=k;\\
\bullet\quad 1 \leq j, \sigma(j) \leq 2n-h,&h+1 \leq j^{\wedge}, \sigma^{\wedge}(j^{\wedge}) \leq 2n;\\
\bullet\quad j^{\wedge}=j+h,& \sigma^{\wedge}(j^{\wedge})=\sigma(j)+h.

\end{array}
\end{equation*}

Therefore, we have
\begin{equation*}
\begin{array}{lllll}
\bullet\quad &k-j & >0> & k^{\vee}-j^{\wedge}&\\
&&&=k-(2n-h)-(j+h)&\\
&&&=k-j-2n;&\\
\bullet\quad &\tau(k)-\sigma(j) &=& \tau^{\vee}(k^{\vee})-\sigma^{\wedge}(j^{\wedge})&\\
&=k-h-\sigma(j)&&=k-\sigma(j)-h;&\\
\bullet\quad &\lambda_k&= &\mu_{k^{\vee}};&\\
\bullet\quad &e_j+1 &=& f_{j^{\wedge}}.&
\end{array}
\end{equation*}

As above, this implies that
\begin{equation*}
\CG_{k,j}(Y,B)=q^{-2}\CG_{k^{\vee},j^{\wedge}}(Y^{\wedge_h},B^{\vee_h}).
\end{equation*}
\\
\textbf{Case 1-$B_1$}
In this case, we have
\begin{equation*}
\begin{array}{ll}
\bullet\quad 1 \leq k \leq 2n-h-s, & h+1 \leq k^{\vee}=k+h \leq 2n-s;\\
\bullet\quad \tau(k)=k,& \tau^{\vee}(k^{\vee})=k^{\vee};\\
\bullet\quad 1 \leq j \leq 2n-h,&h+1 \leq j^{\wedge} \leq 2n;\\
\bullet\quad 2n-h+1 \leq \sigma(j) \leq 2n, & 1 \leq \sigma^{\wedge}(j^{\wedge}) \leq h;\\
\bullet\quad j^{\wedge}=j+h,& \sigma^{\wedge}(j^{\wedge})=\sigma(j)-(2n-h).

\end{array}
\end{equation*}

Therefore, we have
\begin{equation*}
\begin{array}{lllll}
\bullet\quad &k-j & = & k^{\vee}-j^{\wedge}&\\
&&&=k-j;&\\
\bullet\quad &\tau(k)-\sigma(j) &<0<& \tau^{\vee}(k^{\vee})-\sigma^{\wedge}(j^{\wedge})&\\
&=k-\sigma(j)&&=(k+h)-(\sigma(j)-(2n-h))&\\
&&&=k-\sigma(j)+2n;&\\
\bullet\quad &\lambda_k-1&= &\mu_{k^{\vee}};&\\
\bullet\quad &e_j &=& f_{j^{\wedge}}.&
\end{array}
\end{equation*}

As above, this implies that
\begin{equation*}
\CG_{k,j}(Y,B)=q^{2}\CG_{k^{\vee},j^{\wedge}}(Y^{\wedge_h},B^{\vee_h}).
\end{equation*}
\\
\textbf{Case 2-$B_1$}
In this case, we have
\begin{equation*}
\begin{array}{ll}
\bullet\quad 2n-h-s+1 \leq k \leq 2n-h, & 2n-s+1 \leq k^{\vee}=k+h \leq 2n;\\
\bullet\quad 2n-s+1 \leq \tau(k)=k+h \leq 2n,& \tau^{\vee}(k^{\vee})=k^{\vee}-(2n-h)\\
&=k+2h-2n;\\
\bullet\quad 1 \leq j \leq 2n-h,&h+1 \leq j^{\wedge} \leq 2n;\\
\bullet\quad 2n-h+1 \leq \sigma(j) \leq 2n, & 1 \leq \sigma^{\wedge}(j^{\wedge}) \leq h;\\
\bullet\quad j^{\wedge}=j+h,& \sigma^{\wedge}(j^{\wedge})=\sigma(j)-(2n-h).

\end{array}
\end{equation*}

Therefore, we have
\begin{equation*}
\begin{array}{lllll}
\bullet\quad &k-j & = & k^{\vee}-j^{\wedge}&\\
&&&=k-j;&\\
\bullet\quad &\tau(k)-\sigma(j) &=& \tau^{\vee}(k^{\vee})-\sigma^{\wedge}(j^{\wedge})&\\
&=k+h-\sigma(j)&&=(k+2h-2n)-(\sigma(j)-(2n-h))&\\
&&&=k-\sigma(j)+h;&\\
\bullet\quad &\lambda_k&= &\mu_{k^{\vee}};&\\
\bullet\quad &e_j &=& f_{j^{\wedge}}.&
\end{array}
\end{equation*}

As above, this implies that
\begin{equation*}
\CG_{k,j}(Y,B)=\CG_{k^{\vee},j^{\wedge}}(Y^{\wedge_h},B^{\vee_h}).
\end{equation*}
\\
\textbf{Case 3-$B_1$}
In this case, we have
\begin{equation*}
\begin{array}{ll}
\bullet\quad 2n-h+1 \leq k \leq 2n-s, & 1\leq k^{\vee}=k-(2n-h) \leq h-s;\\
\bullet\quad \tau(k)=k,& \tau^{\vee}(k^{\vee})=k^{\vee};\\
\bullet\quad 1 \leq j \leq 2n-h,&h+1 \leq j^{\wedge} \leq 2n;\\
\bullet\quad 2n-h+1 \leq \sigma(j) \leq 2n, & 1 \leq \sigma^{\wedge}(j^{\wedge}) \leq h;\\
\bullet\quad j^{\wedge}=j+h,& \sigma^{\wedge}(j^{\wedge})=\sigma(j)-(2n-h).

\end{array}
\end{equation*}

Therefore, we have
\begin{equation*}
\begin{array}{lllll}
\bullet\quad &k-j & >0> & k^{\vee}-j^{\wedge}&\\
&&&=k-(2n-h)-(j+h)&\\
&&&=k-j-2n;&\\
\bullet\quad &\tau(k)-\sigma(j) &=& \tau^{\vee}(k^{\vee})-\sigma^{\wedge}(j^{\wedge})&\\
&=k-\sigma(j)&&=k^{\vee}-(\sigma(j)-(2n-h))&\\
&&&=k-(2n-h)-(\sigma(j)-(2n-h))&\\
&&&=k-\sigma(j);&\\
\bullet\quad &\lambda_k+1&= &\mu_{k^{\vee}};&\\
\bullet\quad &e_j &=& f_{j^{\wedge}}.&
\end{array}
\end{equation*}

As above, this implies that
\begin{equation*}
\CG_{k,j}(Y,B)=q^{-2}\CG_{k^{\vee},j^{\wedge}}(Y^{\wedge_h},B^{\vee_h}).
\end{equation*}
\\
\textbf{Case 4-$B_1$}
In this case, we have
\begin{equation*}
\begin{array}{ll}
\bullet\quad 2n-s+1 \leq k \leq 2n, & h-s+1 \leq k^{\vee}=k-(2n-h) \leq h;\\
\bullet\quad \tau(k)=k-h,& \tau^{\vee}(k^{\vee})=k^{\vee}+(2n-h)=k;\\
\bullet\quad 1 \leq j \leq 2n-h,&h+1 \leq j^{\wedge} \leq 2n;\\
\bullet\quad 2n-h+1 \leq \sigma(j) \leq 2n, & 1 \leq \sigma^{\wedge}(j^{\wedge}) \leq h;\\
\bullet\quad j^{\wedge}=j+h,& \sigma^{\wedge}(j^{\wedge})=\sigma(j)-(2n-h).

\end{array}
\end{equation*}

Therefore, we have
\begin{equation*}
\begin{array}{lllll}
\bullet\quad &k-j & >0> & k^{\vee}-j^{\wedge}&\\
&&&=k-(2n-h)-(j+h)&\\
&&&=k-j-2n;&\\
\bullet\quad &\tau(k)-\sigma(j) &<0<& \tau^{\vee}(k^{\vee})-\sigma^{\wedge}(j^{\wedge})&\\
&=k-h-\sigma(j)&&=k-\sigma(j)+(2n-h)&\\
\bullet\quad &\lambda_k&= &\mu_{k^{\vee}};&\\
\bullet\quad &e_j &=& f_{j^{\wedge}}.&
\end{array}
\end{equation*}

As above, this implies that
\begin{equation*}
\CG_{k,j}(Y,B)=\CG_{k^{\vee},j^{\wedge}}(Y^{\wedge_h},B^{\vee_h}).
\end{equation*}
\\
\textbf{Case 1-$B_2$}
\begin{equation*}
\begin{array}{ll}
\bullet\quad 1 \leq k \leq 2n-h-s,&h+1 \leq k^{\vee}=k+h \leq 2n-s;\\
\bullet\quad \tau(k)=k,&\tau^{\vee}(k^{\vee})=k^{\vee};\\
\bullet\quad 2n-h+1 \leq j \leq 2n,&1 \leq j^{\wedge} \leq h;\\
\bullet\quad 1 \leq \sigma(j) \leq 2n-h, & h+1 \leq \sigma^{\wedge}(j^{\wedge}) \leq 2n;\\
\bullet\quad j^{\wedge}=j-(2n-h),& \sigma^{\wedge}(j^{\wedge})=\sigma(j)+h.
\end{array}
\end{equation*}

Therefore, we have
\begin{equation*}
\begin{array}{lllll}
\bullet\quad &k-j & <0< & k^{\vee}-j^{\wedge}&\\
&&&=k+h-(j-(2n-h))&\\
&&&=k-j+2n;&\\
\bullet\quad &\tau(k)-\sigma(j) &=& \tau^{\vee}(k^{\vee})-\sigma^{\wedge}(j^{\wedge})&\\
&=k-\sigma(j)&&=(k+h)-(\sigma(j)+h)&\\
&&&=k-\sigma(j)&\\
\bullet\quad &\lambda_k-1&= &\mu_{k^{\vee}};&\\
\bullet\quad &e_j &=& f_{j^{\wedge}}.&
\end{array}
\end{equation*}

As above, this implies that
\begin{equation*}
\CG_{k,j}(Y,B)=q^2\CG_{k^{\vee},j^{\wedge}}(Y^{\wedge_h},B^{\vee_h}).
\end{equation*}
\\
\textbf{Case 2-$B_2$}
In this case, we have
\begin{equation*}
\begin{array}{ll}
\bullet\quad  2n-h-s+1 \leq k \leq 2n-h, & h+1 \leq k^{\vee}=k+h \leq 2n-s;\\
\bullet\quad  2n-s+1 \leq \tau(k)=k+h \leq 2n,& \tau^{\vee}(k^{\vee})=k^{\vee}-(2n-h)\\
&=k+2h-2n;\\
\bullet\quad 2n-h+1 \leq j \leq 2n,&1 \leq j^{\wedge} \leq h;\\
\bullet\quad 1 \leq \sigma(j) \leq 2n-h, & h+1 \leq \sigma^{\wedge}(j^{\wedge}) \leq 2n;\\
\bullet\quad j^{\wedge}=j-(2n-h),& \sigma^{\wedge}(j^{\wedge})=\sigma(j)+h.

\end{array}
\end{equation*}

Therefore, we have
\begin{equation*}
\begin{array}{lllll}
\bullet\quad &k-j & <0< & k^{\vee}-j^{\wedge}&\\
&&&=k+h-(j-(2n-h))&\\
&&&=k-j+2n;&\\
\bullet\quad &\tau(k)-\sigma(j) &>0>& \tau^{\vee}(k^{\vee})-\sigma^{\wedge}(j^{\wedge})&\\
&=k+h-\sigma(j)&&=k+2h-2n-(\sigma(j)+h)&\\
&&&=k-\sigma(j)-(2n-h)&\\
\bullet\quad &\lambda_k&= &\mu_{k^{\vee}};&\\
\bullet\quad &e_j &=& f_{j^{\wedge}}.&
\end{array}
\end{equation*}

As above, this implies that
\begin{equation*}
\CG_{k,j}(Y,B)=\CG_{k^{\vee},j^{\wedge}}(Y^{\wedge_h},B^{\vee_h}).
\end{equation*}
\\
\textbf{Case 3-$B_2$}
\begin{equation*}
\begin{array}{ll}
\bullet\quad 2n-h+1 \leq k \leq 2n-s, & 1\leq k^{\vee}=k-(2n-h) \leq h-s;\\
\bullet\quad \tau(k)=k,& \tau^{\vee}(k^{\vee})=k^{\vee};\\
\bullet\quad 2n-h+1 \leq j \leq 2n,&1 \leq j^{\wedge} \leq h;\\
\bullet\quad 1 \leq \sigma(j) \leq 2n-h, & h+1 \leq \sigma^{\wedge}(j^{\wedge}) \leq 2n;\\
\bullet\quad j^{\wedge}=j-(2n-h),& \sigma^{\wedge}(j^{\wedge})=\sigma(j)+h.
\end{array}
\end{equation*}

Therefore, we have
\begin{equation*}
\begin{array}{lllll}
\bullet\quad &k-j & = & k^{\vee}-j^{\wedge}&\\
&&&=k-(2n-h)-(j-(2n-h))&\\
&&&=k-j;&\\
\bullet\quad &\tau(k)-\sigma(j) &>0>& \tau^{\vee}(k^{\vee})-\sigma^{\wedge}(j^{\wedge})&\\
&=k-\sigma(j)&&=k-(2n-h)-(\sigma(j)+h)&\\
&&&=k-\sigma(j)-2n&\\
\bullet\quad &\lambda_k+1&= &\mu_{k^{\vee}};&\\
\bullet\quad &e_j &=& f_{j^{\wedge}}.&
\end{array}
\end{equation*}

As above, this implies that
\begin{equation*}
\CG_{k,j}(Y,B)=q^{-2}\CG_{k^{\vee},j^{\wedge}}(Y^{\wedge_h},B^{\vee_h}).
\end{equation*}
\\
\textbf{Case 4-$B_2$}
In this case, we have
\begin{equation*}
\begin{array}{ll}
\bullet\quad 2n-s+1 \leq k \leq 2n, & h-s+1 \leq k^{\vee}=k-(2n-h) \leq h;\\
\bullet\quad \tau(k)=k-h,& \tau^{\vee}(k^{\vee})=k^{\vee}+(2n-h)=k;\\
\bullet\quad 2n-h+1 \leq j \leq 2n,&1 \leq j^{\wedge} \leq h;\\
\bullet\quad 1 \leq \sigma(j) \leq 2n-h, & h+1 \leq \sigma^{\wedge}(j^{\wedge}) \leq 2n;\\
\bullet\quad j^{\wedge}=j-(2n-h),& \sigma^{\wedge}(j^{\wedge})=\sigma(j)+h.

\end{array}
\end{equation*}

Therefore, we have
\begin{equation*}
\begin{array}{lllll}
\bullet\quad &k-j & = & k^{\vee}-j^{\wedge}&\\
&&&=k-(2n-h)-(j-(2n-h))&\\
&&&=k-j;&\\
\bullet\quad &\tau(k)-\sigma(j) &=& \tau^{\vee}(k^{\vee})-\sigma^{\wedge}(j^{\wedge})&\\
&=k-h-\sigma(j)&&=k-\sigma(j)-h&\\
\bullet\quad &\lambda_k&= &\mu_{k^{\vee}};&\\
\bullet\quad &e_j &=& f_{j^{\wedge}}.&
\end{array}
\end{equation*}

As above, this implies that
\begin{equation*}
\CG_{k,j}(Y,B)=\CG_{k^{\vee},j^{\wedge}}(Y^{\wedge_h},B^{\vee_h}).
\end{equation*}
\\
\textbf{Case 1-C}
\begin{equation*}
\begin{array}{ll}
\bullet\quad 1 \leq k \leq 2n-h-s,&h+1 \leq k^{\vee}=k+h \leq 2n-s;\\
\bullet\quad \tau(k)=k,&\tau^{\vee}(k^{\vee})=k^{\vee};\\
\bullet\quad 2n-h+1 \leq j, \sigma(j) \leq 2n,&1 \leq j^{\wedge}, \sigma^{\wedge}(j^{\wedge}) \leq h;\\
\bullet\quad j^{\wedge}=j-(2n-h),& \sigma^{\wedge}(j^{\wedge})=\sigma(j)-(2n-h).

\end{array}
\end{equation*}
Therefore, we have
\begin{equation*}
\begin{array}{lllll}
\bullet\quad &k-j & <0< & k^{\vee}-j^{\wedge}&\\
&&&=k+h-(j-(2n-h))&\\
&&&=k-j+2n;&\\
\bullet\quad &\tau(k)-\sigma(j) &<0<& \tau^{\vee}(k^{\vee})-\sigma^{\wedge}(j^{\wedge})&\\
&=k-\sigma(j)&&=k+h-\sigma(j)+2n-h&\\
&&&=k-\sigma(j)+2n&\\
\bullet\quad &\lambda_k-1&= &\mu_{k^{\vee}};&\\
\bullet\quad &e_j-1 &=& f_{j^{\wedge}}.&
\end{array}
\end{equation*}

As above, this implies that
\begin{equation*}
\CG_{k,j}(Y,B)=q^4\CG_{k^{\vee},j^{\wedge}}(Y^{\wedge_h},B^{\vee_h}).
\end{equation*}
\\
\textbf{Case 2-C}
In this case, we have
\begin{equation*}
\begin{array}{ll}
\bullet\quad  2n-h-s+1 \leq k \leq 2n-h, & h+1 \leq k^{\vee}=k+h \leq 2n-s;\\
\bullet\quad  2n-s+1 \leq \tau(k)=k+h \leq 2n,& \tau^{\vee}(k^{\vee})=k^{\vee}-(2n-h)\\
&=k+2h-2n;\\
\bullet\quad 2n-h+1 \leq j, \sigma(j) \leq 2n,&1 \leq j^{\wedge}, \sigma^{\wedge}(j^{\wedge}) \leq h;\\
\bullet\quad j^{\wedge}=j-(2n-h),& \sigma^{\wedge}(j^{\wedge})=\sigma(j)-(2n-h).
\end{array}
\end{equation*}

Therefore, we have
\begin{equation*}
\begin{array}{lllll}
\bullet\quad &k-j & <0< & k^{\vee}-j^{\wedge}&\\
&&&=k+h-(j-(2n-h))&\\
&&&=k-j+2n;&\\
\bullet\quad &\tau(k)-\sigma(j) &=& \tau^{\vee}(k^{\vee})-\sigma^{\wedge}(j^{\wedge})&\\
&=k+h-\sigma(j)&&=k+2h-2n-\sigma(j)+2n-h&\\
&&&=k-\sigma(j)+h&\\
\bullet\quad &\lambda_k&= &\mu_{k^{\vee}};&\\
\bullet\quad &e_j-1 &=& f_{j^{\wedge}}.&
\end{array}
\end{equation*}

As above, this implies that
\begin{equation*}
\CG_{k,j}(Y,B)=q^2\CG_{k^{\vee},j^{\wedge}}(Y^{\wedge_h},B^{\vee_h}).
\end{equation*}
\\
\textbf{Case 3-C}
\begin{equation*}
\begin{array}{ll}
\bullet\quad 2n-h+1 \leq k \leq 2n-s, & 1\leq k^{\vee}=k-(2n-h) \leq h-s;\\
\bullet\quad \tau(k)=k,& \tau^{\vee}(k^{\vee})=k^{\vee};\\
\bullet\quad 2n-h+1 \leq j, \sigma(j) \leq 2n,&1 \leq j^{\wedge}, \sigma^{\wedge}(j^{\wedge}) \leq h;\\
\bullet\quad j^{\wedge}=j-(2n-h),& \sigma^{\wedge}(j^{\wedge})=\sigma(j)-(2n-h).
\end{array}
\end{equation*}

Therefore, we have
\begin{equation*}
\begin{array}{lllll}
\bullet\quad &k-j & = & k^{\vee}-j^{\wedge}&\\
&&&=k-(2n-h)-(j-(2n-h))&\\
&&&=k-j;&\\
\bullet\quad &\tau(k)-\sigma(j) &=& \tau^{\vee}(k^{\vee})-\sigma^{\wedge}(j^{\wedge})&\\
&=k-\sigma(j)&&=k-(2n-h)-\sigma(j)+2n-h&\\
&&&=k-\sigma(j)&\\
\bullet\quad &\lambda_k+1&= &\mu_{k^{\vee}};&\\
\bullet\quad &e_j-1 &=& f_{j^{\wedge}}.&
\end{array}
\end{equation*}

As above, this implies that
\begin{equation*}
\CG_{k,j}(Y,B)=\CG_{k^{\vee},j^{\wedge}}(Y^{\wedge_h},B^{\vee_h}).
\end{equation*}
\\
\textbf{Case 4-C}
In this case, we have
\begin{equation*}
\begin{array}{ll}
\bullet\quad 2n-s+1 \leq k \leq 2n, & h-s+1 \leq k^{\vee}=k-(2n-h) \leq h;\\
\bullet\quad \tau(k)=k-h,& \tau^{\vee}(k^{\vee})=k^{\vee}+(2n-h)=k;\\
\bullet\quad 2n-h+1 \leq j, \sigma(j) \leq 2n,&1 \leq j^{\wedge}, \sigma^{\wedge}(j^{\wedge}) \leq h;\\
\bullet\quad j^{\wedge}=j-(2n-h),& \sigma^{\wedge}(j^{\wedge})=\sigma(j)-(2n-h).

\end{array}
\end{equation*}

Therefore, we have
\begin{equation*}
\begin{array}{lllll}
\bullet\quad &k-j & = & k^{\vee}-j^{\wedge}&\\
&&&=k-(2n-h)-(j-(2n-h))&\\
&&&=k-j;&\\
\bullet\quad &\tau(k)-\sigma(j) &<0<& \tau^{\vee}(k^{\vee})-\sigma^{\wedge}(j^{\wedge})&\\
&=k-h-\sigma(j)&&=k-\sigma(j)+2n-h&\\
\bullet\quad &\lambda_k&= &\mu_{k^{\vee}};&\\
\bullet\quad &e_j-1 &=& f_{j^{\wedge}}.&
\end{array}
\end{equation*}

As above, this implies that
\begin{equation*}
\CG_{k,j}(Y,B)=q^2\CG_{k^{\vee},j^{\wedge}}(Y^{\wedge_h},B^{\vee_h}).
\end{equation*}\\
\end{enumerate}

Now, we write $\xi$ for $\vert B_1^h(Y) \vert =\vert B_2^h(Y) \vert$. Then we have
\begin{equation*}
\begin{array}{l}
\vert A_1^h(Y) \cup A_2^h(Y)\vert=2n-h-\xi,\\
\vert C_1^h(Y) \cup C_2^h(Y)\vert=h-\xi.
\end{array}
\end{equation*}

Therefore,
\begin{equation*}
\begin{array}{ll}
\CG(Y,B)&=q^{\lbrace(2n-h-s)\times 0 + s \times (-2)+(h-s) \times (-4) +s \times(-2)\rbrace\times(2n-h-\xi)}\\
&\times q^{\lbrace(2n-h-s)\times 2 + s \times 0+(h-s) \times (-2) +s \times 0\rbrace\times\xi}\\
&\times q^{\lbrace(2n-h-s)\times 2 + s \times 0+(h-s) \times (-2) +s \times0\rbrace\times\xi}\\
&\times q^{\lbrace(2n-h-s)\times 4 + s \times 2+(h-s) \times 0 +s \times 2\rbrace\times(h-\xi)}\\
&\times \CG(Y^{\wedge_h},B^{\vee_h})\\
&= q^{-4h(2n-h-\xi)+(4n-4h)2\xi+(8n-4h)(h-\xi)}\CG(Y^{\wedge_h},B^{\vee_h})\\
&=\CG(Y^{\wedge_h},B^{\vee_h}).
\end{array}
\end{equation*}

This finishes the proof of the lemma.
\end{proof}

	\begin{lemma}\label{lemma3.9}
		For $Y \in \CR_{2n}$,
		\begin{equation*}
		q^{(2n-h)^2-h^2}\alpha(Y;\Gamma_{2n})=\alpha(Y^{\wedge_h};\Gamma_{2n}).
		\end{equation*}
	\end{lemma}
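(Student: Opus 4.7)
The plan is to exhibit an explicit element $g_0 \in GL_{2n}(E_v)$ that intertwines $Y$ and $Y^{\wedge_h}$ up to a scalar, to observe that $g_0$ normalizes the Iwahori $\Gamma_{2n}$, and then to track the resulting change of moduli in the definition of $N_d$ through block computations analogous to those in the proof of Lemma \ref{lemma3.8}.

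First I would set $g_0 := \begin{pmatrix} 0 & \pi^{-1}\mathbf{1}_h \\ \mathbf{1}_{2n-h} & 0 \end{pmatrix} \in GL_{2n}(E_v)$ and verify by direct block multiplication that $g_0\,Y\,{}^t g_0^* = \pi^{-1}\,Y^{\wedge_h}$; this is the master identity. Next I would check that $g_0\,\Gamma_{2n}\,g_0^{-1} = \Gamma_{2n}$: an entry-by-entry computation shows that the cyclic shift with $\pi$-insertions induced by conjugation preserves the Iwahori condition $\gamma_{ij}\in \pi O_{E_v}$ for $i>j$, so $\tilde\gamma := g_0^{-1}\gamma g_0$ is a bijection on $\Gamma_{2n}$.

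Substituting $\tilde\gamma = g_0^{-1}\gamma g_0$ in the definition of $N_d(Y^{\wedge_h};\Gamma_{2n})$, the stabilizer congruence $\gamma\,Y^{\wedge_h}\,{}^t\gamma^* \equiv Y^{\wedge_h}\pmod{\pi^d M_{2n}(O_{E_v})}$ translates, after dividing by $\pi$, to
\begin{equation*}
\tilde\gamma\,Y\,{}^t\tilde\gamma^* - Y \;\in\; \pi^{d-1}\,g_0^{-1}\,M_{2n}(O_{E_v})\,{}^t(g_0^{-1})^*,
\end{equation*}
while the modulus on $\tilde\gamma$ becomes $g_0^{-1}\,\pi^d M_{2n}(O_{E_v})\,g_0$. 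Direct block multiplication identifies both lattices: the target lattice has its $(2n-h)\times(2n-h)$ top-left block in $\pi^{d-1}O_{E_v}$, its off-diagonal blocks in $\pi^{d}O_{E_v}$, and its $h\times h$ bottom-right block in $\pi^{d+1}O_{E_v}$, and the $\tilde\gamma$-modulus shifts in a dual pattern. Intersecting the target lattice with the hermitian subspace $V_{2n}(E_v)$ and comparing $F_v$-covolumes with the uniform $\pi^d V_{2n}(O_{E_v})$, the top-left block contributes $q^{(2n-h)^2}$ (from being scaled by one fewer power of $\pi$ over an $F_v$-dimension $(2n-h)^2$), the bottom-right block contributes $q^{-h^2}$, and the hermitian off-diagonal blocks cancel, giving the overall factor $q^{(2n-h)^2-h^2}$. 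A parallel computation shows that the Iwahori constraint on the lower-left block of $\tilde\gamma$ exactly absorbs the compensating shift in the $\tilde\gamma$-modulus, so $|\Gamma_{2n}\pmod{g_0^{-1}\pi^d M_{2n}(O_{E_v}) g_0}| = |\Gamma_{2n}\pmod{\pi^d M_{2n}(O_{E_v})}|$.

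Combining the two observations, $N_d(Y^{\wedge_h};\Gamma_{2n})/N_d(Y;\Gamma_{2n}) \to q^{(2n-h)^2-h^2}$ as $d\to\infty$, and dividing by $q^{4dn^2}$ yields the claimed identity. The main obstacle is the explicit bookkeeping in the lattice comparisons, which mirrors the case-by-case structure of Lemma \ref{lemma3.8} but is simpler since there is no auxiliary matrix $B$ to track; the normalization property $g_0\,\Gamma_{2n}\,g_0^{-1}=\Gamma_{2n}$ may appear surprising but reflects that $g_0$ lies in the extended affine Weyl group normalizing the standard Iwahori.
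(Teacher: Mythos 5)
Your argument is correct and is, at bottom, the same proof as the paper's: your conjugation $\tilde\gamma = g_0^{-1}\gamma g_0$ is precisely the paper's bijection $\gamma \mapsto \gamma^{\curlywedge}$ of $\Gamma_{2n}$ (the central factor $\pi^{-1}$ in your $g_0$ is invisible to conjugation), your master identity $g_0\,Y\,{}^tg_0^* = \pi^{-1}Y^{\wedge_h}$ encapsulates the paper's explicit block computation comparing $\gamma\cdot Y$ with $\gamma^{\curlywedge}\cdot Y^{\wedge_h}$, and your two conjugated lattices are exactly the paper's $\widetilde{V}$ and $\widehat{M}$, merely transported from $Y^{\wedge_h}$ back to $Y$. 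The concluding covolume comparison producing $q^{(2n-h)^2-h^2}$ coincides with the paper's coset-counting step, so your proposal is the paper's proof in a cleaner conceptual packaging.
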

\begin{proof}
	Before we prove this lemma, let us introduce the following notation.
	For
	\begin{equation*}
	\gamma=\left( \begin{array}{ll}
	A &B\\
	C&D
	\end{array}
	\right) \in \Gamma_{2n},
	\end{equation*}
	we define
	\begin{equation*}
	\gamma^{\curlywedge}=\left( \begin{array}{ll}
	D &\pi^{-1}C\\
	\pi B&A
	\end{array}
	\right) \in \Gamma_{2n},
	\end{equation*}
	
	Here $A$, $B$, $C$ and $D$ are $(2n-h) \times (2n-h)$, $(2n-h) \times h$, $h \times (2n-h)$, and $h \times h$ matrices respectively.
	
	Note that $^\curlywedge:\Gamma_{2n} \rightarrow \Gamma_{2n}$ is bijective.
	
	Let
	\begin{equation*}
	Y=\left( \begin{array}{ll}
	E &F\\
	G & H
	\end{array}
	\right),
	\end{equation*}
	where $E$, $F$, $G$ and $H$ are $(2n-h) \times (2n-h)$, $(2n-h) \times h$, $h \times (2n-h)$, and $h \times h$ matrices respectively.
	
	Then, we have
		\begin{equation*}
	Y^{\wedge}=\left( \begin{array}{ll}
	\pi^{-1}H &G\\
	F & \pi E
	\end{array}
	\right).
	\end{equation*}
	
	Note that
	\begin{equation*}
	\begin{array}{ll}
	\gamma \cdot Y&=\gamma Y ^t\gamma^*
	\end{array}
	\end{equation*}
	\begin{equation*}
	=\left( \begin{array}{ll}
	A & B\\
	C & D
	\end{array}\middle)
	\middle( \begin{array}{ll}
	E & F \\
	G & H
	\end{array}\middle ) \middle(
	\begin{array}{ll}
	^tA^* & ^tC^*\\
	^tB^* & ^tD^*
	\end{array}\right)
	\end{equation*}
	
	\begin{equation*}
	=\left( \begin{array}{cc}
	AE(^tA^*)+BG(^tA^*)\quad\quad\quad\quad  & AE(^tC^*)+BG(^tC^*) \\
	+AF(^tB^*)+BH(^tB^*) &+AF(^tD^*)+BH(^tD^*)\\
	&\\
	CE(^tA^*)+DG(^tA^*)\quad\quad\quad\quad  & CE(^tC^*)+DG(^tC^*) \\
+CF(^tB^*)+DH(^tB^*)&+CF(^tD^*)+DH(^tD^*)
	\end{array}\right).
	\end{equation*}

Similarly,
	\begin{equation*}
\begin{array}{ll}
\gamma^{\curlywedge} \cdot Y^{\wedge}&=\gamma^{\curlywedge} Y^{\wedge} (^t(\gamma^{\curlywedge})^*)
\end{array}
\end{equation*}
\begin{equation*}
=\left( \begin{array}{ll}
D & \pi^{-1}C\\
\pi B & A
\end{array}\middle)
\middle( \begin{array}{ll}
\pi^{-1}H & G \\
F & \pi E
\end{array}\middle ) \middle(
\begin{array}{ll}
^tD^* & \pi (^tB^*)\\
\pi^{-1}(^tC^*) & ^tA^*
\end{array}\right)
\end{equation*}
	\begin{equation*}
=\left( \begin{array}{cc}

\pi^{-1}\left(\begin{array}{l}
DH(^tD^*)+CF(^tD^*)\\
+DG(^tC^*)+CE(^tC^*)
\end{array}\right)  & \left(\begin{array}{l}
DH(^tB^*)+CF(^tB^*)\\
+DG(^tA^*)+CE(^tA^*)
\end{array}\right)  \\

\left(\begin{array}{l}
BH(^tD^*)+AF(^tD^*)\\
+BG(^tC^*)+AE(^tC^*)
\end{array}\right)  & \pi\left(\begin{array}{l}
BH(^tB^*)+AF(^tB^*)\\
+BG(^tA^*)+AE(^tA^*)
\end{array}\right)
\end{array}\right).
\end{equation*}

We write $M$ for $M_{2n,2n}(O_{E_v})$, $V$ for $V_{2n}(O_{E_v})$

The above two equations imply that $\gamma \cdot Y \equiv Y (\Mod \pi^d V)$ if and only if $\gamma^{\curlywedge} \cdot Y^{\wedge} \equiv Y^{\wedge} (\Mod \pi^d \widetilde{V})$ where $\widetilde{V}$ is the subset of $V_{2n}(E_v)$ consisting of matrices of the form
\begin{equation*}
\left(\begin{array}{ll}
\pi^{-1}A & B\\
 C & \pi D
\end{array}\right).
\end{equation*}
Here $A$, $B$, $C$, and $D$ are matrices in $M_{h,h}(O_{E_v})$, $M_{h,2n-h}(O_{E_v})$, $M_{2n-h,h}(O_{E_v})$, and $M_{2n-h,2n-h}(O_{E_v})$ respectively.

Also, we write $\widehat{M}$ for the set of matrices of the form
\begin{equation*}
\left(\begin{array}{ll}
A & \pi^{-1} B\\
\pi C & D
\end{array}\right).
\end{equation*}
Here $A$, $B$, $C$ and $D$ are the matrices as above.

Then, we have
\begin{equation*}
\begin{array}{l}
\alpha(Y;\Gamma_{2n})\\
=\lim_{d \rightarrow \infty} q^{-4dn^2}N_d(Y;\Gamma_{2n})\\
=\lim_{d \rightarrow \infty} q^{-4dn^2} \vert \lbrace \gamma \in \Gamma_{2n} (\Mod \pi^dM) \vert \gamma \cdot Y\equiv Y (\Mod \pi^dV) \rbrace \vert\\
=\lim_{d \rightarrow \infty} q^{-4dn^2-8n^2} \vert \lbrace \gamma \in \Gamma_{2n} (\Mod \pi^{(d+1)}M) \vert \gamma \cdot Y\equiv Y (\Mod \pi^dV) \rbrace \vert\\
=\lim_{d \rightarrow \infty} q^{-4dn^2-8n^2} \vert \lbrace \gamma^{\curlywedge} \in \Gamma_{2n} (\Mod \pi^{(d+1)}\widehat{M}) \vert \gamma^{\curlywedge} \cdot Y^{\wedge}\equiv Y^{\wedge} (\Mod \pi^d\widetilde{V}) \rbrace \vert.\\
\end{array}
\end{equation*}

On the other hand, by thinking $\Gamma_{2n}$ as a union of cosets of $\pi^{(d+1)}\widehat{M}$ and $V_{2n}(E_v)$ as a union of cosets of $ \pi^d\widetilde{V}$ for sufficiently large $d$, we have
\begin{equation*}
\begin{array}{l}
\alpha(Y^{\wedge};\Gamma_{2n})\\
=\lim_{d \rightarrow \infty} q^{-\vert M:\pi^{(d+1)}\widehat{M} \vert +\vert V:\pi^d \widetilde{V} \vert} \\
\times \vert \lbrace \gamma^{\curlywedge} \in \Gamma_{2n} (\Mod \pi^{(d+1)}\widehat{M}) \vert \gamma^{\curlywedge} \cdot Y^{\wedge}\equiv Y^{\wedge} (\Mod \pi^d\widetilde{V}) \rbrace \vert\\
=\lim_{d \rightarrow \infty} q^{-2 (d+1) (2n)^2+(d-1)h^2+d(2n-h)h+dh(2n-h)+(d+1)(2n-h)^2} \\
\times \vert \lbrace \gamma^{\curlywedge} \in \Gamma_{2n} (\Mod \pi^{(d+1)}\widehat{M}) \vert \gamma^{\curlywedge} \cdot Y^{\wedge}\equiv Y^{\wedge} (\Mod \pi^d\widetilde{V}) \rbrace \vert\\
=\lim_{d \rightarrow \infty} q^{-4dn^2-8n^2+(2n-h)^2-h^2} \\
\times \vert \lbrace \gamma^{\curlywedge} \in \Gamma_{2n} (\Mod \pi^{(d+1)}\widehat{M}) \vert \gamma^{\curlywedge} \cdot Y^{\wedge}\equiv Y^{\wedge} (\Mod \pi^d\widetilde{V}) \rbrace \vert\\
=q^{(2n-h)^2-h^2}\alpha(Y;\Gamma_{2n}).
\end{array}
\end{equation*}

This finishes the proof of the lemma.

\end{proof}

\begin{definition}
	We can regard $\CF_h(Y,A_t^{[r]})$ and $W_{h,t}(B,r)$ as functions of $x=(-q)^{(-2r)}$. We define $\CF_h'(Y,A_t^{[0]})$ as
	\begin{equation*}
	\CF_h'(Y,A_t^{[0]})=-\dfrac{d}{dx}\CF_h(Y,A_t^{[r]}) \vert_{x=1}.
	\end{equation*}
	Also we define $W_{h,t}'(B,0)$ as
	\begin{equation*}
	W_{h,t}'(B,0)=-\dfrac{d}{dx}W_{h,t}(B,r) \vert_{x=1}.
	\end{equation*}

\end{definition}

\begin{remark}
	One can prove that $W_{h,t}(B,r)$ is a polynomial in $x$ as in \cite[Corollary 4.4]{Hir} with a slight modification.
\end{remark}

\begin{lemma}\label{lemma 3.12}(\cite[Lemma 4.1]{Hir})
	For $e \in \BZ$, we have the following equations.
	\begin{equation*}
	\begin{array}{l}
	\int_{O} \psi_v(\pi^e\Nm(x))dx=(-q)^{\min( 0, e )},\\
	\int_{\pi O} \psi_v(\pi^e\Nm(x))dx=(-q)^{\min( 0, e+2 )-2},\\
	\int_{O \times O}\psi_v(\pi^{e}\Tr(xy))dxdy=(-q)^{2\min( 0,e )},\\
	\int_{\pi O \times O}\psi_v(\pi^{e}\Tr(xy))dxdy=(-q)^{2\min( 0,e+1 )-2},\\
	\int_{\pi O \times \pi O}\psi_v(\pi^{e}\Tr(xy))dxdy=(-q)^{2\min( 0,e+2 )-4}.
	\end{array}
	\end{equation*}
\end{lemma}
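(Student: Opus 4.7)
The plan is a direct computation. Throughout I would use the following basic facts: $\psi_v$ is trivial on $O_{F_v}$ and non-trivial on $\pi^{-1}O_{F_v}/O_{F_v}$; $O_{E_v}$ is self-dual under the $\Tr$-pairing with $\psi_v$ (the different of $E_v/F_v$ is trivial since the extension is unramified); and $\Nm : O_{E_v}^\times \to O_{F_v}^\times$ is surjective with fibers of constant measure. The three integrals over $\pi O$ (or $\pi O \times O$, $\pi O \times \pi O$) reduce by the substitution $x \mapsto \pi x'$, which scales the measure by $q^{-2}$ and shifts $e$ by $+2$ in the norm integral and by $+1$ per substituted variable in the trace integral; so it suffices to establish the two base identities
\begin{equation*}
\int_O \psi_v(\pi^e \Nm(x))\, dx = (-q)^{\min(0,e)}, \qquad \int_{O \times O} \psi_v(\pi^e \Tr(xy))\, dx\, dy = (-q)^{2\min(0,e)}.
\end{equation*}

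For the trace identity, I would fix $y \in O$ and view $x \mapsto \pi^e \Tr(xy)$ as an $F_v$-linear functional on $E_v$. Self-duality of $O_{E_v}$ under the $\psi_v\circ \Tr$-pairing yields $\int_O \psi_v(\pi^e \Tr(xy))\, dx = \mathbf{1}_{\val(y)\ge -e}(y)$. Integrating in $y$ then gives $\min(1, q^{2e}) = q^{2\min(0,e)}$, which equals $(-q)^{2\min(0,e)}$ since the exponent is even.

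For the norm identity, I would stratify $O = \bigsqcup_{k\ge 0}\{\val(x) = k\}$ and write $x = \pi^k u$ with $u \in O_{E_v}^\times$; then $\pi^e \Nm(x) = \pi^{e+2k}\Nm(u)$. All shells with $e+2k \ge 0$ contribute a geometric tail $\sum_{k\ge k_0}(1-q^{-2})q^{-2k}$ (integrand $=1$), while a shell with $e+2k = -m < 0$ requires $\int_{O_{E_v}^\times}\psi_v(\pi^{-m}\Nm(u))\,du$. Pushing forward along $\Nm$ (whose fibers have equal measure) gives $(1+q^{-1})\int_{O_{F_v}^\times}\psi_v(\pi^{-m}v)\,dv$, where the constant $1+q^{-1}$ is $\mathrm{vol}(O_{E_v}^\times)/\mathrm{vol}(O_{F_v}^\times)$. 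The one-variable integral on the right is evaluated by writing $\int_{O_{F_v}^\times} = \int_{O_{F_v}} - \int_{\pi O_{F_v}}$ and using that $\int_{O_{F_v}}\psi_v(a\cdot)\,da$ vanishes whenever $\val(a)<0$; this yields $-q^{-1}$ when $m = 1$ and $0$ when $m \ge 2$. Combining the geometric tail with the (at most one) nonzero shell contribution collapses to $(-q)^{\min(0,e)}$ after separating the cases of even and odd $e$.

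The only genuinely subtle step is the Gauss-integral value $\int_{O_{F_v}^\times}\psi_v(\pi^{-1}v)\, dv = -q^{-1}$; this is the sole source of the alternating sign in the factor $(-q)^{\bullet}$. Everything else — the fiber-volume ratio $1+q^{-1}$, the shell-volume geometric series, and the substitutions deducing the remaining four identities — is routine bookkeeping that I would not expect to pose real difficulty.
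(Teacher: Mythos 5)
Your proposal is correct, and it is strictly more informative than what the paper records: the paper's entire proof of this lemma is the one line ``this follows from \cite[Lemma 4.1]{Hir} and some changes of variables.'' Your change-of-variables reductions are exactly the paper's intended mechanism --- substituting $x\mapsto\pi x'$ scales the additive measure on $E_v$ by $q^{-2}=(-q)^{-2}$ per substituted variable (the residue field of $E_v$ is $\BF_{q^2}$) and shifts $e$ by $2$ through $\Nm$ and by $1$ through $\Tr$, which turns the two base identities into the other three with the stated exponents. What you do differently is prove the base identities rather than cite them. Both arguments check out: for the trace integral, the inverse different of the unramified extension $E_v/F_v$ is $O_{E_v}$, so $\int_{O}\psi_v(\pi^e\Tr(xy))\,dx=\mathbf{1}[\val(y)\geq -e]$ and integrating in $y$ gives $q^{2\min(0,e)}=(-q)^{2\min(0,e)}$; for the norm integral, your shell decomposition reproduces precisely the unit-integral values the paper separately imports as Lemma \ref{lemma3.14}, namely $I^*(e)=1-q^{-2}$ for $e\geq 0$, $I^*(-1)=(1+q^{-1})(-q^{-1})=-q^{-1}-q^{-2}$ (your fiber-volume ratio times the Gauss-type integral $\int_{O_{F_v}^\times}\psi_v(\pi^{-1}v)\,dv=-q^{-1}$), and $I^*(e)=0$ for $e\leq -2$, and summing the geometric tail $\sum_{k\geq k_0}q^{-2k}(1-q^{-2})$ against the at most one nonvanishing odd shell collapses to $(-q)^{\min(0,e)}$ in both parities. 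Two details worth making explicit if you write this up: the pushforward of Haar measure under $\Nm:O_{E_v}^\times\to O_{F_v}^\times$ is translation-invariant on $O_{F_v}^\times$ (and $\Nm$ is surjective in the unramified case), which is what justifies ``fibers of constant measure''; and the answer is independent of the normalization of the auxiliary measure on $F_v$, since any scaling constant enters the ratio $\mathrm{vol}(O_{E_v}^\times)/\mathrm{vol}(O_{F_v}^\times)$ and the integral $\int_{O_{F_v}^\times}\psi_v(\pi^{-1}v)\,dv$ inversely and cancels. The trade-off is the usual one: the paper's citation keeps the exposition short, while your self-contained computation isolates the single source of the sign in $(-q)^{\bullet}$ (the quadratic-style Gauss value $-q^{-1}$) and makes the lemma verifiable without consulting Hironaka's normalizations.
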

\begin{proof}
This follows from \cite[Lemma 4.1]{Hir} and some changes of variables.
\end{proof}

\begin{definition}
	For $e \in \BZ$, we write $I^*(e)$ for the integral
	\begin{equation*}
	\int_{O^{\times}} \psi_v(\pi^e\Nm(x))dx.
	\end{equation*}
	
	Also, we write $J_1(e)$ for the integral
	\begin{equation*}
	\int_{O}\psi_v(\pi^e\Tr(x))dx.
	\end{equation*}
\end{definition}

We will need the following lemma later.

\begin{lemma}(\cite[Lemma 4.1]{Hir})\label{lemma3.14}
	For $e \in \BZ$, we have
	\begin{equation*}
	I^*(e)=\left\lbrace \begin{array}{ll}
	1-q^{-2}& \text{if } e \geq 0\\
	-q^{-1}-q^{-2} & \text{if } e=-1\\
	0 & \text{if } e \leq -2
	\end{array}\right..
	\end{equation*}
	
	Also, we have
	\begin{equation*}
	J_1(e)=\left\lbrace \begin{array}{ll}
	1& \text{if } e \geq 0\\
	0& \text{if } e < 0
	\end{array}\right..
	\end{equation*}
\end{lemma}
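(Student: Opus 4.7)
\medskip
\noindent\emph{Proof proposal.} The plan is to deduce the formula for $I^*(e)$ as an immediate corollary of Lemma \ref{lemma 3.12}, by writing $O^{\times} = O \setminus \pi O$ and subtracting. The formula for $J_1(e)$ will be proved directly from the fact that $x \mapsto \psi_v(\pi^e \Tr(x))$ is an additive character of the compact group $O_{E_v}$.

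For $I^*(e)$, I would first observe that
\begin{equation*}
I^*(e) = \int_{O} \psi_v(\pi^e \Nm(x))\,dx - \int_{\pi O} \psi_v(\pi^e \Nm(x))\,dx.
\end{equation*}
Lemma \ref{lemma 3.12} gives the first integral as $(-q)^{\min(0,e)}$ and the second as $(-q)^{\min(0,e+2)-2}$. Then I would split into three cases and simplify: if $e \geq 0$, both minima are $0$, yielding $1 - q^{-2}$; if $e = -1$, the first is $(-q)^{-1} = -q^{-1}$ and the second is $(-q)^{-2} = q^{-2}$, giving $-q^{-1} - q^{-2}$; if $e \leq -2$, the first is $(-q)^e$ and the second is $(-q)^{(e+2)-2} = (-q)^e$, so they cancel.

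For $J_1(e)$, I would argue as follows. Since $E_v / F_v$ is unramified, the trace map $\Tr : O_{E_v} \to O_{F_v}$ is $O_{F_v}$-linear and surjective, and $\psi_v$ is trivial on $O_{F_v}$ by hypothesis. If $e \geq 0$, then $\pi^e \Tr(x) \in O_{F_v}$ for every $x \in O$, so $\psi_v(\pi^e \Tr(x)) \equiv 1$ and $J_1(e) = \int_O dx = 1$. If $e < 0$, surjectivity of $\Tr$ produces $x_0 \in O$ with $\Tr(x_0) = 1$, so $\psi_v(\pi^e \Tr(x_0)) = \psi_v(\pi^e) \neq 1$ because $\pi^e \notin O_{F_v}$ and $\psi_v$ has conductor exactly $O_{F_v}$. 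Hence $x \mapsto \psi_v(\pi^e \Tr(x))$ is a nontrivial continuous character of the compact abelian group $(O_{E_v},+)$, and its integral over that group vanishes.

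The whole argument is essentially bookkeeping on top of Lemma \ref{lemma 3.12}; there is no real obstacle. The only point requiring a little care is the unramified hypothesis in the second part, which is used twice: to ensure that $\Tr$ hits $1 \in O_{F_v}$ so that the character is nontrivial for $e<0$, and to ensure that $\Tr(O_{E_v}) \subset O_{F_v}$ so that the character is trivial for $e \geq 0$.
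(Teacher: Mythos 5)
Your proposal is correct, and it does strictly more than the paper: the paper supplies no proof of Lemma \ref{lemma3.14} at all, importing it verbatim from \cite[Lemma 4.1]{Hir}, just as Lemma \ref{lemma 3.12} is imported ``with some changes of variables.'' Your derivation of the $I^*(e)$ formula from Lemma \ref{lemma 3.12} via the decomposition $O = O^{\times} \sqcup \pi O$ is correct bookkeeping in all three cases, including the cancellation $(-q)^{e}-(-q)^{\min(0,e+2)-2}=0$ for $e \leq -2$; as a sanity check, at $e=-1$ your value $-q^{-1}-q^{-2}=-(q+1)/q^{2}$ agrees with the direct computation using surjectivity of $\Nm:O^{\times}\rightarrow O_{F_v}^{\times}$. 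What your route buys is self-containedness: instead of two independent appeals to Hironaka, everything is reduced to the single Lemma \ref{lemma 3.12} plus an elementary character argument.

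One step in your $J_1$ argument is stated too strongly, though the conclusion survives. From the hypothesis that $\psi_v$ has conductor exactly $O_{F_v}$ you may not conclude that $\psi_v(\pi^{e})\neq 1$ for the \emph{specific} element $\pi^{e}$: the restriction of $\psi_v$ to $\pi^{-1}O_{F_v}/O_{F_v}\cong \BF_q$ is a nontrivial additive character, but a nontrivial additive character of $\BF_q$ can still take the value $1$ at the class of $\pi^{-1}$. Concretely, if $\psi_v=\psi_{\BQ_p}\circ\Tr_{F_v/\BQ_p}$ with $\psi_{\BQ_p}$ standard, then $\psi_v(\pi^{-1})=e^{2\pi i f/p}$ with $f=[F_v:\BQ_p]$, which equals $1$ whenever $p \mid f$. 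The repair is one line: since $e<0$ and the conductor is exactly $O_{F_v}$, the character $\psi_v$ is nontrivial on $\pi^{e}O_{F_v}\supseteq \pi^{-1}O_{F_v}$, so choose $a\in O_{F_v}$ with $\psi_v(\pi^{e}a)\neq 1$ and then, by surjectivity of $\Tr:O_{E_v}\rightarrow O_{F_v}$ in the unramified case, some $x_0\in O$ with $\Tr(x_0)=a$; the character $x\mapsto \psi_v(\pi^{e}\Tr(x))$ is then nontrivial on the compact group $(O,+)$ and its integral vanishes, exactly as you intended.
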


\begin{lemma}\label{lemma 3.13}
	We write $\FA(Y)$ and $\FC(Y)$ for
	\begin{equation*}
	\begin{array}{l}
	\FA(Y)=\vert \lbrace j \in A_1^h(Y) \cup A_2^h(Y) \vert \text{ } e_j \leq -1 \rbrace \vert,\\
	\FC(Y)=\vert \lbrace j \in C_1^h(Y) \cup C_2^h(Y) \vert \text{ } e_j \leq 0 \rbrace \vert.
	\end{array}
	\end{equation*}
	Then, we have
	\begin{equation*}
	\dfrac{\CF_h'(Y,A_n^{[0]})}{\alpha(Y;\Gamma_{2n})}-\dfrac{\CF_{2n-h}'(Y^{\wedge},A_n^{[0]})}{\alpha(Y^{\wedge_h};\Gamma_{2n})}=\dfrac{(\FC(Y)-\FA(Y))f_h(Y)(-q)^{-2n(2n-h)}}{\alpha(Y;\Gamma_{2n})},
	\end{equation*}
	\begin{equation*}
	\CF_h(Y,A_t^{[0]})=(-q)^{((n-t)(\FC(Y)-\FA(Y)-2t(2n-h))}f_h(Y),
	\end{equation*}
	and
	\begin{equation*}
	\CF_{2n-h}(Y^{\wedge},A_t^{[0]})=(-q)^{((n-t)(\FA(Y)-\FC(Y))-2th)}f_h(Y),
	\end{equation*}
	where
	\begin{align*}
f_h(Y)&=\mathlarger{\prod}_{j \in A_1^h(Y)} (-q)^{n\min( 0, e_j )+n \min( 0, e_j+1 )}\\
&\times \mathlarger{\prod}_{\substack{j \in A_2^h(Y)\\ j < \sigma(j)}} (-q)^{n \mathsmaller{\times} 2\min( 0, e_j )+n \mathsmaller{\times} 2\min( 0, e_j+1 )}\\
&\times \mathlarger{\prod}_{j \in B_1^h(Y)} (-q)^{n \mathsmaller{\times} 2\min( 0, e_j )+n \mathsmaller{\times} 2\min( 0, e_j )}\\
&\times \mathlarger{\prod}_{j \in C_1^h(Y)} (-q)^{n \mathsmaller{\times} \min( 0, e_j )+n \mathsmaller{\times} \min( 0, e_j-1 )}\\
&\times \mathlarger{\prod}_{\substack{j \in C_2^h(Y)\\ j < \sigma(j)}} (-q)^{n \mathsmaller{\times} 2\min( 0, e_j )+n \mathsmaller{\times} 2\min( 0, e_j-1 )}.
\end{align*}
	
\end{lemma}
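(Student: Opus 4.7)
The plan is to compute $\CF_h(Y,A_t^{[r]})$ by factorizing the integral column-wise. Writing $X=(x_{ab})$, the diagonality of $A_t^{[r]}$ gives
\begin{equation*}
\langle Y, A_t^{[r]}[X]\rangle = \sum_{b=1}^{2n}\pi^{e_b}\sum_{a=1}^{2n+2r}(A_t^{[r]})_{aa}\,\overline{x_{ab}}\,x_{a\sigma(b)},
\end{equation*}
which for $\sigma(b)=b$ is $(A_t^{[r]})_{aa}\pi^{e_b}\Nm(x_{ab})$ and, pairing $b<\sigma(b)$ using $e_b=e_{\sigma(b)}$, is $(A_t^{[r]})_{aa}\pi^{e_b}\Tr(\overline{x_{ab}}x_{a\sigma(b)})$ otherwise. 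Since the support of $1_{h,t}^{[r]}$ respects both the column partition ($b\le 2n-h$ vs.\ $b>2n-h$) via the sets $A_i^h, B_i^h, C_i^h$ of $b$'s, and the row partition $a\le 2n-t$ (where $(A_t^{[r]})_{aa}=1$), $2n-t<a\le 2n$ (where $(A_t^{[r]})_{aa}=\pi^{-1}$), $a>2n$ (where $(A_t^{[r]})_{aa}=1$ and $x_{ab}\in O_{E_v}$), the entire integral decomposes into a product of local integrals each directly evaluable by Lemma~\ref{lemma 3.12}.

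For the formula for $\CF_h(Y,A_t^{[0]})$, I would organize the bookkeeping into six column types ($A_1^h, A_2^h, B_1^h, B_2^h, C_1^h, C_2^h$) crossed with two row ranges at $r=0$. A fixed point $b\in A_1^h(Y)$ contributes $(-q)^{(2n-t)\min(0,e_b)+t\min(0,e_b+1)-2t}$, where the $\min(0,e_b+1)$ comes from $\int_{\pi O_{E_v}}\psi_v(\pi^{e_b-1}\Nm(x))\,dx$ and the $-2t$ is the inherent shift from that integral. Comparing against the $t=n$ value $(-q)^{n\min(0,e_b)+n\min(0,e_b+1)}$ appearing in $f_h(Y)$, the excess equals $(n-t)(\min(0,e_b)-\min(0,e_b+1))-2t = -(n-t)[e_b\le -1]-2t$, since $\min(0,\cdot)-\min(0,\cdot+1)$ vanishes outside $\{e_b\le -1\}$ where it equals $-1$. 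Parallel counts for $A_2^h$-pairs (same coefficient, doubled), $B_1^h/B_2^h$-pairs (no $(n-t)$-contribution, a single $-2t$ shift from the one endpoint in $\pi O_{E_v}$), and $C_i^h$ (giving $+(n-t)[e_b\le 0]$ via $\min(0,\cdot)-\min(0,\cdot-1)=[e_b\le 0]$, with no $-2t$ shift) will, on summing, cumulate the $-2t$ terms over all $b\le 2n-h$ to $-2t(2n-h)$, cumulate the $(n-t)$-terms to $(n-t)(\FC(Y)-\FA(Y))$, and leave exactly $f_h(Y)$.

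The formula for $\CF_{2n-h}(Y^{\wedge_h},A_t^{[0]})$ then follows by applying the previous formula with $h$ replaced by $2n-h$ and $Y$ by $Y^{\wedge_h}$. Using the description $Y^{\wedge_h}=Y_{\sigma^{\wedge_h},f}$ from the proof of Lemma~\ref{lemma3.8}, the bijection $k\mapsto k^{\wedge_h}$ identifies $A_i^{2n-h}(Y^{\wedge_h})$ with $C_i^h(Y)$ (shift $f_{k^{\wedge_h}}=e_k-1$), $C_i^{2n-h}(Y^{\wedge_h})$ with $A_i^h(Y)$ (shift $f_{k^{\wedge_h}}=e_k+1$), and $B_1^{2n-h}(Y^{\wedge_h})\leftrightarrow B_2^h(Y)$, $B_2^{2n-h}(Y^{\wedge_h})\leftrightarrow B_1^h(Y)$ (no shift). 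These shifts give $\FA(Y^{\wedge_h})=\FC(Y)$ and $\FC(Y^{\wedge_h})=\FA(Y)$ (e.g.\ $e_k-1\le -1\Leftrightarrow e_k\le 0$), and convert the $A$-factor of $f_{2n-h}$ into the $C$-factor of $f_h$ and vice versa, yielding $f_{2n-h}(Y^{\wedge_h})=f_h(Y)$.

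For the derivative identity, the key observation is that the $r$-dependence of $\CF_h(Y,A_t^{[r]})$ is a single monomial in $x=(-q)^{-2r}$: each of the $2r$ extra rows $a>2n$ contributes the same factor $(-q)^{N(Y)}$ with $N(Y):=\sum_b\min(0,e_b)$, because $(A_t^{[r]})_{aa}=1$ and $x_{ab}\in O_{E_v}$ there. Hence $\CF_h(Y,A_t^{[r]})=x^{-N(Y)}\CF_h(Y,A_t^{[0]})$, so $\CF_h'(Y,A_t^{[0]})=N(Y)\CF_h(Y,A_t^{[0]})$. The index correspondence of the previous paragraph gives
\begin{equation*}
N(Y)-N(Y^{\wedge_h})=\sum_{k\in A^h}(\min(0,e_k)-\min(0,e_k+1))+\sum_{k\in C^h}(\min(0,e_k)-\min(0,e_k-1))=\FC(Y)-\FA(Y),
\end{equation*}
with the $B$-contributions cancelling. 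Combining with $\CF_h(Y,A_n^{[0]})=(-q)^{-2n(2n-h)}f_h(Y)$, $\CF_{2n-h}(Y^{\wedge_h},A_n^{[0]})=(-q)^{-2nh}f_h(Y)$, and Lemma~\ref{lemma3.9} (which yields $\alpha(Y^{\wedge_h})=(-q)^{4n(n-h)}\alpha(Y)$, so that $(-q)^{-2nh}/\alpha(Y^{\wedge_h})=(-q)^{-2n(2n-h)}/\alpha(Y)$) completes the derivative formula. I expect the main obstacle to be the careful case-by-case bookkeeping of the twelve sub-integrals and the exponent shifts across the index bijection, though each piece is handled directly by Lemma~\ref{lemma 3.12} and the same combinatorics as in Lemma~\ref{lemma3.8}.
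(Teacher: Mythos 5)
Your proposal is correct and follows essentially the same route as the paper's proof: the column-wise factorization of $\CF_h(Y,A_t^{[r]})$ into the local integrals evaluated by Lemma~\ref{lemma 3.12}, the index bijection $j \mapsto j^{\wedge_h}$ with exponent shifts $e_j \pm 1$ from the proof of Lemma~\ref{lemma3.8} (giving $f_{2n-h}(Y^{\wedge_h})=f_h(Y)$ and the swap $\FA \leftrightarrow \FC$), and Lemma~\ref{lemma3.9} to reconcile the denominators $\alpha(Y;\Gamma_{2n})$ and $\alpha(Y^{\wedge_h};\Gamma_{2n})$. Your one streamlining --- observing that the $r$-dependence is the single monomial $x^{-N(Y)}$ with $N(Y)=\sum_b \min(0,e_b)$ independent of $t$ and $h$, so that $\CF_h'(Y,A_n^{[0]})=N(Y)\,\CF_h(Y,A_n^{[0]})$ --- is a slightly tidier packaging of the paper's term-by-term differentiation of the explicit product formula, not a different argument.
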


\begin{proof}
	Let $A_t^{[r]}=\diag(\pi^{a_{t,1}},\dots,\pi^{a_{t,2n+2r}})$, i.e.,
	\begin{equation*}
	a_{t,k}=\left\lbrace \begin{array}{ll}
		-1, & \text{ if } 2n-t+1 \leq k \leq 2n\\
		0, & \text{ otherwise}.
		\end{array}\right.
	\end{equation*}
	
	Then, we can write
	\begin{equation*}
	\begin{array}{ll}
	\langle Y,A_t^{[r]}[X] \rangle
	&=\mathlarger{\sum}^{2n}_{\substack{j=1\\ \sigma(j)=j}}\mathlarger{\sum}^{2n+2r}_{k=1}\pi^{e_j+a_{t,k}}\Nm(X_{kj})\\
	&+\mathlarger{\sum}^{2n}_{\substack{j=1\\ \sigma(j)>j}}\mathlarger{\sum}^{2n+2r}_{k=1}\pi^{e_j+a_{t,k}}\Tr(X_{kj}X^*_{k\sigma(j)}).\\
	\end{array}
	\end{equation*}
	
	Also, note that if we denote by $\lbrace v_1, \dots, v_{2n} \rbrace$ the basis of $L_t$ such that $A_t$ is the hermitian matrix with respect to this basis, then $L_t^{\vee}$ is the span of $\lbrace v_1, \dots, v_{2n-t}, \pi v_{2n-t+1}, \dots \pi v_{2n} \rbrace$.

Since $1_{h,t}^{[r]}$ is the characteristic function of $(L_t^{\vee})^{2n-h} \times L_t^h \times L_{r,r}$, we can write $\CF_h(Y,A_t^{[r]})$ as

\begin{align*}
\CF_h(Y,A_t^{[r]})&=\mathlarger{\prod}_{\substack{k \leq 2n-t \text{ or } 2n+1 \leq k \leq 2n+2r \\ j \in A_1^h(Y)}} \int_{O} \psi_v(\pi^{e_j}\Nm(x))dx\\
 &\times \mathlarger{\prod}_{\substack{2n-t+1 \leq k \leq 2n\\ j \in A_1^h(Y)}} \int_{\pi O} \psi_v(\pi^{e_j-1}\Nm(x))dx\\
 &\times \mathlarger{\prod}_{\substack{k \leq 2n-t \text{ or } 2n+1 \leq k \leq 2n+2r \\ j \in A_2^h(Y), \text{ } j < \sigma(j)}} \int_{O \times O} \psi_v(\pi^{e_j}\Tr(xy))dxdy\\
 &\times \mathlarger{\prod}_{\substack{2n-t+1 \leq k \leq 2n\\ j \in A_2^h(Y),\text{ } j < \sigma(j)}} \int_{\pi O \times \pi O} \psi_v(\pi^{e_j-1}\Tr(xy))dxdy\\
 &\times \mathlarger{\prod}_{\substack{k \leq 2n-t \text{ or } 2n+1 \leq k \leq 2n+2r \\ j \in B_1^h(Y)}} \int_{O \times O} \psi_v(\pi^{e_j}\Tr(xy))dxdy\\
 &\times \mathlarger{\prod}_{\substack{2n-t+1 \leq k \leq 2n\\ j \in B_1^h(Y)}} \int_{\pi O \times O} \psi_v(\pi^{e_j-1}\Tr(xy))dxdy\\
 &\times \mathlarger{\prod}_{\substack{k \leq 2n-t \text{ or } 2n+1 \leq k \leq 2n+2r \\ j \in C_1^h(Y)}} \int_{O} \psi_v(\pi^{e_j}\Nm(x))dx\\
 &\times \mathlarger{\prod}_{\substack{2n-t+1 \leq k \leq 2n\\ j \in C_1^h(Y)}} \int_{O} \psi_v(\pi^{e_j-1}\Nm(x))dx\\
 &\times \mathlarger{\prod}_{\substack{k \leq 2n-t \text{ or } 2n+1 \leq k \leq 2n+2r \\ j \in C_2^h(Y), \text{ } j< \sigma(j)}} \int_{O \times O} \psi_v(\pi^{e_j}\Tr(xy))dxdy\\
 &\times \mathlarger{\prod}_{\substack{2n-t+1 \leq k \leq 2n\\ j \in C_2^h(Y),\text{ } j < \sigma(j)}} \int_{ O \times  O} \psi_v(\pi^{e_j-1}\Tr(xy))dxdy. 
\end{align*}

Therefore, by Lemma \ref{lemma 3.12}, we have

\begin{align*}
\CF_h(Y,A_t^{[r]})&=\mathlarger{\prod}_{j \in A_1^h(Y)} (-q)^{(2n-t+2r)\min( 0, e_j )+t \min( 0, e_j+1 )-2t}\\
&\times \mathlarger{\prod}_{\substack{j \in A_2^h(Y)\\ j < \sigma(j)}} (-q)^{(2n-t+2r) \mathsmaller{\times} 2\min( 0, e_j )+t \mathsmaller{\times} 2\min( 0, e_j+1 )-4t}\\
&\times \mathlarger{\prod}_{j \in B_1^h(Y)} (-q)^{(2n-t+2r) \mathsmaller{\times} 2\min( 0, e_j )+t \mathsmaller{\times} 2\min( 0, e_j )-2t}\\
&\times \mathlarger{\prod}_{j \in C_1^h(Y)} (-q)^{(2n-t+2r) \mathsmaller{\times} \min( 0, e_j )+t \mathsmaller{\times} \min( 0, e_j-1 )}\\
&\times \mathlarger{\prod}_{\substack{j \in C_2^h(Y)\\ j < \sigma(j)}} (-q)^{(2n-t+2r) \mathsmaller{\times} 2\min( 0, e_j )+t \mathsmaller{\times} 2\min( 0, e_j-1 )}.
\end{align*}

Similarly, we have (here, we use notations in the proof of Lemma \ref{lemma3.8} and suppress $h$ from the notation $^{\wedge_h}$)

\begin{align*}
\CF_{2n-h}(Y^{\wedge},A_t^{[r]})&=\mathlarger{\prod}_{j^{\wedge} \in A_1^{2n-h}(Y^{\wedge})} (-q)^{(2n-t+2r)\min( 0, f_{j^{\wedge}} )+t \min( 0, f_{j^{\wedge}}+1 )-2t}\\
&\times \mathlarger{\prod}_{\substack{j^{\wedge} \in A_2^{2n-h}(Y^{\wedge})\\ j^{\wedge} < \sigma^{\wedge}(j^{\wedge})}} (-q)^{(2n-t+2r) \mathsmaller{\times} 2\min( 0, f_{j^{\wedge}} )+t \mathsmaller{\times} 2\min( 0, f_{j^{\wedge}}+1 )-4t}\\
&\times \mathlarger{\prod}_{j^{\wedge} \in B_1^{2n-h}(Y^{\wedge})} (-q)^{(2n-t+2r) \mathsmaller{\times} 2\min( 0, f_{j^{\wedge}} )+t \mathsmaller{\times} 2\min( 0, f_{j^{\wedge}} )-2t}\\
&\times \mathlarger{\prod}_{j^{\wedge} \in C_1^{2n-h}(Y^{\wedge})} (-q)^{(2n-t+2r) \mathsmaller{\times} \min( 0, f_{j^{\wedge}} )+t \mathsmaller{\times} \min( 0, f_{j^{\wedge}}-1 )}\\
&\times \mathlarger{\prod}_{\substack{j^{\wedge} \in C_2^{2n-h}(Y^{\wedge})\\ j^{\wedge} < \sigma^{\wedge}(j^{\wedge})}} (-q)^{(2n-t+2r) \mathsmaller{\times} 2\min( 0, f_{j^{\wedge}} )+t \mathsmaller{\times} 2\min( 0, f_{j^{\wedge}}-1 )}.
\end{align*}

Note that we have 
\begin{equation*}
\begin{array}{l}
j^{\wedge} \in A_i^{2n-h}(Y^{\wedge}) \text{ if and only if } j \in C_i^{h}(Y)\text{ } (i=1,2),\\
j^{\wedge} \in C_i^{2n-h}(Y^{\wedge}) \text{ if and only if } j \in A_i^{h}(Y)\text{ } (i=1,2),\\
j^{\wedge} \in B_1^{2n-h}(Y^{\wedge}) \text{ if and only if } j \in B_2^{h}(Y),\\
j^{\wedge} \in B_2^{2n-h}(Y^{\wedge}) \text{ if and only if } j \in B_1^{h}(Y).
\end{array}
\end{equation*}

Therefore, we have
\begin{equation*}
\begin{array}{ll}
f_{j^{\wedge}}=e_j+1 & \text{if } j \in A_1^h(Y) \cup A_2^h(Y),\\
f_{j^{\wedge}}=f_{\sigma^{\wedge}(j^{\wedge})}=e_j=e_{\sigma(j)} & \text{if } j \in B_1^h(Y) \cup B_2^h(Y),\\
f_{j^{\wedge}}=e_j-1 & \text{if } j \in C_1^h(Y) \cup C_2^h(Y).
\end{array}
\end{equation*}

This implies that

\begin{align*}
\CF_{2n-h}(Y^{\wedge},A_t^{[r]})&=\mathlarger{\prod}_{j \in A_1^h(Y)} (-q)^{(2n-t+2r)\min( 0, e_j+1 )+t \min( 0, e_j )}\\
&\times \mathlarger{\prod}_{\substack{j \in A_2^h(Y)\\ j < \sigma(j)}} (-q)^{(2n-t+2r) \mathsmaller{\times} 2\min( 0, e_j+1 )+t \mathsmaller{\times} 2\min( 0, e_j )}\\
&\times \mathlarger{\prod}_{j \in B_1^h(Y)} (-q)^{(2n-t+2r) \mathsmaller{\times} 2\min( 0, e_j )+t \mathsmaller{\times} 2\min( 0, e_j )-2t}\\
&\times \mathlarger{\prod}_{j \in C_1^h(Y)} (-q)^{(2n-t+2r) \mathsmaller{\times} \min( 0, e_j-1 )+t \mathsmaller{\times} \min( 0, e_j )-2t}\\
&\times \mathlarger{\prod}_{\substack{j \in C_2^h(Y)\\ j < \sigma(j)}} (-q)^{(2n-t+2r) \mathsmaller{\times} 2\min( 0, e_j-1 )+t \mathsmaller{\times} 2\min( 0, e_j )-4t}.
\end{align*}

Therefore,
\begin{align*}
\CF'_{h}(Y,A_n^{[0]})=&\lbrace \mathlarger{\sum}_{j \in A_1^h(Y)} \min(0,e_j) + \mathlarger{\sum}_{\substack{j \in A_2^h(Y)\\ j < \sigma(j)}} 2\min(0,e_j)\\
&+\mathlarger{\sum}_{j \in B_1^h(Y)} 2\min(0,e_j)+\mathlarger{\sum}_{j \in C_1^h(Y)} \min(0,e_j)+\mathlarger{\sum}_{\substack{j \in C_2^h(Y)\\ j < \sigma(j)}} 2\min(0,e_j) \rbrace\\
& \times f_h(Y)(-q)^{-2n(2n-h)},
\end{align*}
and
\begin{align*}
\CF'_{2n-h}(Y^{\wedge},A_n^{[0]})=&\lbrace \mathlarger{\sum}_{j \in A_1^h(Y)} \min(0,e_j+1) + \mathlarger{\sum}_{\substack{j \in A_2^h(Y)\\ j < \sigma(j)}} 2\min(0,e_j+1)\\&+\mathlarger{\sum}_{j \in B_1^h(Y)} 2\min(0,e_j)+
\mathlarger{\sum}_{j \in C_1^h(Y)} \min(0,e_j-1)\\
&+\mathlarger{\sum}_{\substack{j \in C_2^h(Y)\\ j < \sigma(j)}} 2\min(0,e_j-1) \rbrace \times f_h(Y)(-q)^{-2nh}.
\end{align*}

Note that
\begin{equation*}
\min(0,e_j+1)-\min(0,e_j)=\left\lbrace
\begin{array}{ll}
1 & \text{ if }e_j \leq -1\\
0 & \text{ otherwise}
\end{array}\right.,
\end{equation*}
\begin{equation*}
\min(0,e_j)-\min(0,e_j-1)=\left\lbrace
\begin{array}{ll}
1 & \text{ if }e_j \leq 0\\
0 & \text{ otherwise}
\end{array}\right..
\end{equation*}

This implies that
\begin{equation*}
\begin{array}{l}
\lbrace \mathlarger{\sum}_{j \in A_1^h(Y)} \min(0,e_j)-\min(0,e_j+1) + \mathlarger{\sum}_{\substack{j \in A_2^h(Y)\\ j < \sigma(j)}} 2\min(0,e_j)-2\min(0,e_j+1)\\
+\mathlarger{\sum}_{j \in C_1^h(Y)} \min(0,e_j)-\min(0,e_j-1)+\mathlarger{\sum}_{\substack{j \in C_2^h(Y)\\ j < \sigma(j)}} 2\min(0,e_j)-2\min(0,e_j-1) \rbrace\\
=\FC(Y)-\FA(Y).
\end{array}
\end{equation*}

Therefore, by Lemma \ref{lemma3.9}, we have
\begin{align*}
\dfrac{\CF_h'(Y,A_n^{[0]})}{\alpha(Y;\Gamma_{2n})}-\dfrac{\CF_{2n-h}'(Y^{\wedge},A_n^{[0]})}{\alpha(Y^{\wedge_h};\Gamma_{2n})}&=\dfrac{(\FC(Y)-\FA(Y))f_h(Y)(-q)^{-2n(2n-h)}}{\alpha(Y;\Gamma_{2n})}.
\end{align*}

Also, we can write
\begin{equation*}
\CF_h(Y,A_t^{[0]})=(-q)^{((n-t)(\FC(Y)-\FA(Y)-2t(2n-h))}f_h(Y),
\end{equation*}
and
\begin{equation*}
\CF_{2n-h}(Y^{\wedge},A_t^{[0]})=(-q)^{((n-t)(\FA(Y)-\FC(Y)-2th)}f_h(Y).
\end{equation*}

This finishes the proof of the lemma.
\end{proof}

\begin{theorem}\label{theorem3.14}
	There are the unique constants 
	\begin{equation*}
	\beta_{0}^h, \dots, \beta_{n-1}^h, \beta_0^{2n-h},\dots,\beta_{n-1}^{2n-h},\delta_h,
	\end{equation*}
	in $E_v$ such that
	\begin{equation*}
	\begin{array}{l}
	W_{h,n}'(B,0)-W_{2n-h,n}'(B^{\vee_h},0)\\
	=\mathlarger{\sum}_{0 \leq i \leq n-1} \beta_i^hW_{h,i}(B,0)
	-\mathlarger{\sum}_{0 \leq j \leq n-1}\beta_j^{2n-h}W_{2n-h,j}(B^{\vee_h},0)
	+\delta_hW_{h,n}(B,0).
	\end{array}
	\end{equation*}
\end{theorem}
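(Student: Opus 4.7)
The plan is to reduce the identity to a finite-dimensional linear system at the level of individual $Y\in\CR_{2n}$, and then solve via a Vandermonde argument.

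First, I would expand each of the five $W$-terms via the sum formula of Lemma \ref{lemma3.3}. For the two terms in which the matrix appears as $B^{\vee_h}$, I would apply the substitution $Y\mapsto Y^{\wedge_h}$ (which is a bijection on $\CR_{2n}$) together with Lemma \ref{lemma3.8}, giving $\CG(Y,B)=\CG(Y^{\wedge_h},B^{\vee_h})$, and Lemma \ref{lemma3.9}, giving $\alpha(Y^{\wedge_h};\Gamma_{2n})=q^{(2n-h)^2-h^2}\alpha(Y;\Gamma_{2n})$. This converts them into sums over $Y$ whose $\CG$-factor is again $\CG(Y,B)$. Both sides of the desired identity then take the form $\sum_{Y\in\CR_{2n}}\CG(Y,B)\Phi(Y)$ with $\Phi(Y)$ a purely local quantity, so by linear independence of the $\CG(Y,\cdot)$ as $B$ varies (a standard property of such character integrals), it suffices to match the local quantities $\Phi(Y)$ for every $Y$.

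Next, I would apply the explicit formulas from Lemma \ref{lemma 3.13}. Writing $\ell=2n-h$ and $k=\FC(Y)-\FA(Y)$, the common factor $f_h(Y)/\alpha(Y;\Gamma_{2n})$ divides every term on both sides, reducing the termwise identity to the scalar equation
\begin{equation*}
k\cdot(-q)^{-2n\ell}=\sum_{i=0}^{n-1}\beta_i^h(-q)^{(n-i)k-2i\ell}-q^{h^2-\ell^2}\sum_{j=0}^{n-1}\beta_j^{2n-h}(-q)^{-(n-j)k-2jh}+\delta_h(-q)^{-2n\ell},
\end{equation*}
which must hold for every integer $k$ realized as $\FC(Y)-\FA(Y)$ for some $Y$. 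Taking $Y$ diagonal with $\sigma=\mathrm{id}$ and freely choosing the valuations of its diagonal entries shows that every integer $k\in[-(2n-h),h]$ is realized, yielding exactly $2n+1$ equations in the $2n+1$ unknowns $\beta_0^h,\ldots,\beta_{n-1}^h,\beta_0^{2n-h},\ldots,\beta_{n-1}^{2n-h},\delta_h$.

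Finally, setting $x=(-q)^k$, the columns of the coefficient matrix correspond to the monomials $x^m$ for $m$ running through $\{-n,-(n-1),\ldots,-1,0,1,\ldots,n\}$, a set of $2n+1$ distinct exponents; and since $(-q)^{(\cdot)}$ is injective on $\BZ$, the $2n+1$ realized values of $x$ are pairwise distinct. Multiplying each row by $x^n$ transforms the coefficient matrix into a standard $(2n+1)\times(2n+1)$ Vandermonde matrix, which is invertible. Inverting the system yields both the existence and uniqueness of the constants, and in fact gives explicit formulas for them. The main obstacle is the bookkeeping in the first step, namely verifying carefully, using Lemmas \ref{lemma3.8} and \ref{lemma3.9}, that the $B^{\vee_h}$ sums recombine with the $B$ sums into a single clean expression governed by the quantity $k=\FC(Y)-\FA(Y)$ prepared by Lemma \ref{lemma 3.13}; once that reduction is in place, the remainder is routine Vandermonde linear algebra.
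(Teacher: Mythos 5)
Your proposal is correct and follows essentially the same route as the paper: expand all five terms via Lemma \ref{lemma3.3}, use Lemmas \ref{lemma3.8}, \ref{lemma3.9} and \ref{lemma 3.13} to reduce to a scalar identity depending only on $k=\FC(Y)-\FA(Y)\in[-(2n-h),h]$, and solve the resulting $(2n+1)\times(2n+1)$ linear system in the constants. Your explicit Vandermonde verification of invertibility (distinct nodes $(-q)^k$, monomial exponents $\{-n,\dots,n\}$, row rescaling by $x^n$) is precisely the factorization $(-q)^{2n(2n-h)}\FB=\FX\Fa_h$ that the paper defers to the appendix (proof of Proposition \ref{propositionA}), the proof of Theorem \ref{theorem3.14} itself merely asserting that $\FB$ is invertible.
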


\begin{proof}
	By Lemma \ref{lemma3.3}, Lemma \ref{lemma3.8}, Lemma \ref{lemma3.9}, and Lemma \ref{lemma 3.13}, we can write
	\begin{equation*}
	\begin{array}{l}
		W_{h,n}'(B,0)-W_{2n-h,n}'(B^{\vee_h},0)\\
		=\mathlarger{\sum}_{i=-(2n-h)}^{h}\mathlarger{\sum}_{\substack{Y\\\FC(Y)-\FA(Y)=i}}  \dfrac{\CG(Y,B)f_h(Y)(-q)^{-2n(2n-h)}}{\alpha(Y;\Gamma_{2n})} \times i.
	\end{array}
	\end{equation*}
	
	On the other hand, we have
	\begin{equation*}
	W_{h,t}(B,0)=\mathlarger{\sum}_{i=-(2n-h)}^{h}\mathlarger{\sum}_{\substack{Y\\\FC(Y)-\FA(Y)=i}}  \dfrac{\CG(Y,B)f_h(Y)}{\alpha(Y;\Gamma_{2n})} \times (-q)^{(n-t)i-2t(2n-h)},
	\end{equation*}	
	
		\begin{equation*}
		\begin{array}{lll}
	W_{2n-h,t}(B^{\vee_h},0)&=\mathlarger{\sum}_{i=-(2n-h)}^{h}\mathlarger{\sum}_{\substack{Y\\\FC(Y)-\FA(Y)=i}}&  \dfrac{\CG(Y,B)f_h(Y)}{\alpha(Y;\Gamma_{2n})}\\
	&& \times q^{-(2n-h)^2+h^2} (-q)^{-(n-t)i-2th}.
	\end{array}
	\end{equation*}	
	
	We denote by $m_{it}, n_{it}$ the coefficients of $W_{h,t}(B,0)$ and $W_{2n-h,t}(B^{\wedge_h},0)$, respectively. More precisely,
	\begin{equation*}
	\begin{array}{l}
	m_{it}=(-q)^{(n-t)(i-(2n-h+1))-2t(2n-h)}\\
	n_{it}=q^{-(2n-h)^2+h^2}(-q)^{-(n-t)(i-(2n-h+1))-2th}.
	\end{array}
	\end{equation*}
	
	Note that $m_{in}=(-q)^{-2n(2n-h)}$ for all $1 \leq i \leq 2n+1$.
	
	We define a $(2n+1) \mathsmaller{\times} (2n+1)$ matrix $\FB$
	\begin{equation*}
	\begin{array}{l}
	\FB=\\\left(\begin{array}{ccccccc}
	m_{10}& \dots& m_{1(n-1)}& n_{10}& \dots& n_{1(n-1)} & m_{1n}\\
	\vdots&\vdots&\vdots&\vdots&\vdots&\vdots&\vdots\\
	m_{(2n+1)0}& \dots &m_{(2n+1)(n-1)} & n_{(2n+1)0} & \dots & n_{(2n+1)(n-1)}&m_{2n+1,n}
	\end{array}\right)
	\end{array}
	\end{equation*}
	
	Then, we should find constants
	\begin{equation*}
	\beta_{0}^h, \dots, \beta_{n-1}^h, \beta_0^{2n-h},\dots,\beta_{n-1}^{2n-h},\delta_h,
	\end{equation*}
	such that
	
	\begin{equation*}
	\FB\left(\begin{array}{l}
	\beta_0^h\\
	\vdots\\
	\beta_{n-1}^h\\
	-\beta_{0}^{2n-h}\\
	\vdots\\
	-\beta_{n-1}^{2n-h}\\
	\delta_h
	\end{array}
	\middle)=(-q)^{-2n(2n-h)}\middle(\begin{array}{c}
	-(2n-h)\\
	\vdots\\
	-1\\
	0\\
	1\\
	\vdots\\
	h
	\end{array}\right)
	\end{equation*}
	
	It is easy to check that $\FB$ is invertible, and this finishes the proof.
\end{proof}

\subsection{Local conjectures on the arithmetic intersection numbers $\CI_t^n(\tb{x},\tb{y})$}\label{section3.2}
\begin{conjecture}\label{conjecture1}
	
	For a basis $\lbrace x_1, \dots,x_n,y_1,\dots,y_n \rbrace$ of $\BV$, we have
	 \begin{equation*}
	 \CI_n^n(\tb{x},\tb{y})=\dfrac{1}{W_{n,n}(A_n,0)}\lbrace W'_{n,n}(B,0)-\mathlarger{\sum}_{0 \leq i \leq n-1} \beta_i^nW_{n,i}(B,0)\rbrace.
	 \end{equation*}
	 Here $B$ is the matrix
	 	\begin{displaymath}
	 B=\left(\begin{array}{cc} 
	 h(x_i,x_j) & h(x_i,y_l)\\
	 h(y_k,x_j)& h(y_k,y_l)
	 \end{array} 
	 \right)_{1\leq i,j,k,l \leq n}.
	 \end{displaymath}
	 
\end{conjecture}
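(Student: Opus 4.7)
The plan is to verify Conjecture \ref{conjecture1} by matching the geometric intersection number $\CI_n^n(\tb{x},\tb{y})$ to the proposed analytic expression through the three design properties the author isolates in Section \ref{section2}: compatibility with the Kudla-Rapoport conjecture, invariance under $GL_n(O_{E_v}) \times GL_n(O_{E_v})$, and invariance under $\theta$. Both sides manifestly enjoy the $GL_n \times GL_n$-invariance: the geometric side because $\CI_n^n$ depends only on the ideals cut out by $\CZ(x_i)$ and $\CY(y_j)$, and the analytic side because Lemma \ref{lemma3.5} applies term by term to $W'_{n,n}(B,0)$ and each $W_{n,i}(B,0)$. The $\theta$-invariance of the right-hand side is exactly Theorem \ref{theorem3.14} specialized to $h=n$, where $\beta_i^h = \beta_i^{2n-h}$ and the construction of the correction terms was designed to absorb the defect $W_{n,n}'(B,0) - W_{n,n}'(B^{\vee_n},0)$.

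First I would establish the conjecture on the block-diagonal locus $B = \diag(B_1, \pi^{-1} 1_n)$. On the geometric side, Proposition \ref{proposition2.4} and Proposition \ref{proposition2.5} show that iterated intersection with the $\CY(y_j)$'s of norm-$(-1)$ collapses $\CN^n(1,2n-1)$ onto $\CN^0(1,n-1)$ while the remaining $\CZ(x_i)$'s keep hermitian matrix $B_1$; the theorem of Li-Zhang then gives $\CI_n^n(\tb{x},\tb{y}) = \alpha'(1_n,B_1)/\alpha(1_n,1_n)$. On the analytic side, the support condition in Definition \ref{definition3.1} forces $W_{n,i}(\diag(B_1,\pi^{-1}1_n),0)=0$ for $0 \leq i \leq n-1$, because the bottom-right $\pi^{-1}1_n$ block cannot be captured by $1_{n,i}$ when $i<n$. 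So the proposed formula reduces to $W'_{n,n}(B,0)/W_{n,n}(A_n,0)$, and a direct computation along the lines of \cite[Proposition 10.1]{KR3}, using Lemma \ref{lemma3.3} and Lemma \ref{lemma 3.13}, identifies this ratio with $\alpha'(1_n,B_1)/\alpha(1_n,1_n)$. Applying $\theta$ and Theorem \ref{theorem3.14} immediately extends the match to the dual locus $B = \diag(1_n, B_2)$.

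Next I would attempt to interpolate between these loci. The strategy is to stratify $\CN = \CN^n(1,2n-1)$ by the Bruhat-Tits stratification of \cite{Cho}, write $\CI_n^n(\tb{x},\tb{y})$ as a sum of intersection multiplicities on each stratum, and match each contribution against the decomposition of $W'_{n,n}(B,0)$ and $W_{n,i}(B,0)$ into $\Gamma_{2n}$-orbits given by Lemma \ref{lemma3.3}. The induction on $n$ is powered by Proposition \ref{proposition2.5}, which lowers $n$ while controlling the change in $h$; the analogous recursion on the analytic side should follow from manipulating the explicit form of $\CF_h(Y, A_t^{[r]})$ in Lemma \ref{lemma 3.13}. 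In the base case $n=1$ the computation reduces to the formulas of \cite{San}, yielding Theorem \ref{theorem4.8}, and on the almost-self-dual locus the conjecture aligns with Li-Zhang (Appendix \ref{appendix}).

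The main obstacle is the stratum-by-stratum matching for general $B$. The invariances of properties (1)-(3) alone are not enough to determine $\CI_n^n(\tb{x},\tb{y})$ uniquely, because the $GL_n \times GL_n$-orbits together with the $B \leftrightarrow B^{\vee_n}$ swap do not exhaust the hermitian matrices of the relevant type. A genuine Li-Zhang style argument is therefore needed, but adapting their local modularity / uncertainty principle to the minuscule parahoric setting is substantially harder: the $\CY$-cycles force us to track "negative-valuation" contributions coming from the $\pi^{-1}1_n$-block of $L_n$, and these have no counterpart in the hyperspecial Kudla-Rapoport conjecture. For precisely this reason the paper only proves the conjecture for $n=1$ and on the almost-self-dual locus, and the general case must be left as a conjecture pending a new analytic input that matches $W_{n,i}(B,0)$ for $i<n$ to a geometric stratum-class on $\CN^n(1,2n-1)$.
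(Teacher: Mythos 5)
Your proposal matches the paper's actual treatment of this statement: it is a conjecture, and the evidence the paper assembles is exactly what you describe --- the vanishing $W_{n,i}(\diag(B_1,\pi^{-1}1_n),0)=0$ for $i<n$ together with the identification $W'_{n,n}(B,0)/W_{n,n}(A_n,0)=\alpha'(1_n,B_1)/\alpha(1_n,1_n)$ on the block-diagonal locus (Theorem \ref{theorem3.16}), the invariance properties via Lemma \ref{lemma3.5} and Theorem \ref{theorem3.14} (Theorem \ref{theorem3.24}), the $n=1$ case through Sankaran's formulas (Theorem \ref{theorem4.8}), and the almost self-dual comparison with Li--Zhang in Appendix \ref{appendix}, with the general case left open precisely because, as you note, the invariances do not determine $\CI_n^n(\tb{x},\tb{y})$ uniquely. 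One small correction: the $GL_n(O_{E_v})\times GL_n(O_{E_v})$-invariance of the geometric side is not ``manifest'' --- even in the hyperspecial case it is a nontrivial theorem of Howard \cite{How}, and in the present parahoric setting the paper only conjectures it (property (2) of Section \ref{section2}), so it cannot be taken as an established input on the left-hand side.
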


\begin{definition}
	For $B \in X_{2n}(E_v)$, we define the function $\CJ_t^n(B)$ by
	\begin{equation*}
	\CJ_t^n(B)=\dfrac{1}{W_{n,n}(A_n,0)}\lbrace W'_{t,n}(B,0)-\mathlarger{\sum}_{0 \leq i \leq n-1} \beta_i^tW_{t,i}(B,0)\rbrace.
	\end{equation*}
\end{definition}

Now, we need to recall some facts about the usual representation densities. For $A \in V_m(O_{E_v})$ and $B \in V_{2n}(O_{E_v})$, we write the usual representation density $\alpha(A,B)$ by
\begin{equation*}
\alpha(A,B)= \lim_{d \rightarrow \infty} (q^{-d})^{2n(2m-2n)} \vert \CA_d(A,B) \vert,
\end{equation*}
where $\CA_d(A,B)=\lbrace x \in M_{m,2n}(O_{E_v}/\pi^d O_{E_v}) \vert A[x] \equiv B (\Mod \pi^d) \rbrace.$

\begin{theorem}\label{theorem3.16}
	(Compatibility with $\CN^0(1,n-1)$) Let
	\begin{equation*}
	B=\left(\begin{array}{ll}
	B_1 & 0 \\
	0 & \pi^{-1}1_n
	\end{array}\right), \quad B_1\in X_{n}(E_v).
	\end{equation*}
	
	Then
	\begin{equation*}
	\CJ_n^n(B)=\dfrac{1}{W_{n,n}(A_n,0)}\lbrace W'_{n,n}(B,0)-\mathlarger{\sum}_{0 \leq i \leq n-1} \beta_i^nW_{n,i}(B,0)\rbrace=\dfrac{\alpha'(1_n,B_1)}{\alpha(1_n,1_n)}.
	\end{equation*}
\end{theorem}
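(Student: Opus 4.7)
My plan has two components: first I would show that the correction terms $W_{n,i}(B,0)$ for $i \le n-1$ all vanish when $B = \diag(B_1, \pi^{-1}1_n)$, and second I would show that $W_{n,n}(B, r)$ factors as a constant multiple of $\alpha(1_n, 1_n) \cdot \alpha(1_n^{[r]}, B_1)$. Granted these two facts, the definition of $\CJ_n^n(B)$ reduces immediately to
\begin{equation*}
\CJ_n^n(B) \;=\; \frac{W'_{n,n}(B, 0)}{W_{n,n}(A_n, 0)} \;=\; \frac{\alpha'(1_n, B_1)}{\alpha(1_n, 1_n)},
\end{equation*}
as claimed.

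For the vanishing of $W_{n,i}(B, 0)$ with $i \le n-1$, I would interpret the integral via Lemma \ref{lemma3.3} (or directly from Definition \ref{definition3.1}) as, up to normalization, the count of $2n$-tuples $X \in (L_i^{\vee})^n \oplus (L_i)^n$ with $A_i[X] = B$. To produce the lower-right block $\pi^{-1}1_n$ of $B$, the last $n$ columns of $X$ must lie in $L_i$, which carries the form $\diag(1_{2n-i}, \pi^{-1}1_i)$. Writing each such column as $u_k + w_k$ with $u_k \in O_{E_v}^{2n-i}$ and $w_k \in O_{E_v}^i$, the Gram requirement $\langle u_k, u_l\rangle + \pi^{-1}\langle w_k, w_l\rangle = \pi^{-1}\delta_{kl}$, after multiplying by $\pi$ and reducing mod $\pi$, forces the $n \times n$ Gram matrix of $(w_1, \ldots, w_n)$ to be congruent to $1_n$. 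This is impossible in a residue space of dimension $i < n$, so the integral vanishes.

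For the factorization I would split the lattice orthogonally as $L_n = L_n^{+} \oplus L_n^{-}$, where $L_n^{+}$ has Gram $1_n$ and $L_n^{-}$ has Gram $\pi^{-1}1_n$; dually, $L_n^{\vee} = L_n^{+} \oplus \pi L_n^{-}$. Since $B$ is block-diagonal and the $\pi^{-1}1_n$ block aligns exactly with the form on $L_n^{-}$, the $X$-integral in $W_{n,n}(B, r)$ decouples: after a suitable change of variables, the first $n$ columns of $X$ represent $B_1$ in the self-dual lattice $L_n^{+} \oplus L_{r,r}$ (whose form is $\diag(1_n, 1_{2r})$), while the last $n$ columns represent $\pi^{-1}1_n$ in $L_n^{-}$. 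Tracking Haar-measure normalizations, one obtains
\begin{equation*}
W_{n,n}(B, r) \;=\; C \cdot \alpha(1_n, 1_n) \cdot \alpha(1_n^{[r]}, B_1)
\end{equation*}
for a constant $C$ independent of $B_1$ and $r$. Specializing to $B_1 = 1_n$, $r = 0$ gives $W_{n,n}(A_n, 0) = C \cdot \alpha(1_n,1_n)^2$, and applying $-\frac{d}{dx}|_{x=1}$ to the factored expression yields $W'_{n,n}(B,0) = C \cdot \alpha(1_n, 1_n) \cdot \alpha'(1_n, B_1)$. Dividing produces the desired ratio.

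The hard part will be making the factorization step genuinely rigorous, in particular verifying that the constant $C$ really is independent of $B_1$. The cleanest route is to use Lemma \ref{lemma3.3}: for block-diagonal $B = \diag(B_1, \pi^{-1}1_n)$, only representatives $Y \in \CR_{2n}$ of compatible block form contribute non-trivially to $\CG(Y, B)$, and one must verify that $\CG(Y, B)$, $\CF_n(Y, A_n^{[r]})$, and $\alpha(Y; \Gamma_{2n})$ all split multiplicatively along the orthogonal decomposition $L_n = L_n^{+} \oplus L_n^{-}$. Alternatively, recognizing $W_{n,n}$ as the local Whittaker integral attached to a tensor-product Schwartz section and invoking the standard product formula over orthogonal sums of hermitian spaces would give the factorization more directly, at the cost of identifying the measure constants on both sides.
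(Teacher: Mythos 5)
Your two-step skeleton is in substance the paper's own proof: the paper also (i) kills every correction term $W_{n,i}(B,0)$, $i\le n-1$, by the observation that the lattice $L$ with Gram matrix $\pi B$ contains a rank-$n$ unimodular sublattice $L_2$ while any unimodular sublattice of $M_i$ has rank $\le i<n$ (your mod-$\pi$ Gram-rank argument is the same obstruction, phrased on residue spaces), and (ii) factors $W_{n,n}(B,r)$ by splitting off the unimodular block, obtaining a constant times $\alpha(\pi A_n^{[r]},1_n)\cdot\alpha(\diag(\pi 1_n,\pi 1_{2r}),\pi B_1)$, where the first factor equals $\prod_{l=1}^{n}(1-(-q)^{-l})$ independently of $r$ (cf. Proposition \ref{propositionA5}); this is exactly your $C\cdot\alpha(1_n,1_n)\cdot\alpha(1_n^{[r]},B_1)$, and the derivative-and-divide endgame is identical.

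However, the route you call ``cleanest'' for rigorizing the factorization does not work. The claim that for block-diagonal $B=\diag(B_1,\pi^{-1}1_n)$ only block-compatible representatives $Y\in\CR_{2n}$ contribute to $\CG(Y,B)$ is false: the Iwahori integral mixes the blocks, and already for $n=1$, $B=A_1$, the antidiagonal representatives $Y=\left(\begin{smallmatrix}0&\pi^{e}\\ \pi^{e}&0\end{smallmatrix}\right)$ satisfy $\CG(Y,A_1)=(-q)^{-6}(q^2-1)J_1(e)^2\neq 0$ for all $e\ge 0$ (Case 2-G in the proof of Lemma \ref{lemma4.1}), so no multiplicative splitting of $\CG$, $\CF_n$, $\alpha(Y;\Gamma_{2n})$ along the decomposition can be extracted termwise. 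Relatedly, your phrase ``the $X$-integral decouples after a suitable change of variables'' glosses over the fact that the vanishing off-diagonal blocks of $B$ are genuine orthogonality constraints coupling the two groups of columns; the decoupling is not a change of variables but an orbit decomposition. The paper's actual mechanism is the lattice count \eqref{eq3.2.1} together with the Kudla--Rapoport-style argument of \cite[Proposition 9.9, Lemma 9.10]{KR2}: Proposition \ref{proposition3.18} splits $J_d(L,M)$ along the (unique, since $L_2$ is unimodular) $U(M)$-orbit of sublattices isometric to $L_2$; Lemma \ref{lemma3.17} (isometries of $M$ preserve $\pi M^{\vee}$) is needed so that the bijection respects the extra condition $\varphi(L_1)\subset\pi M^{\vee}$; and Lemma \ref{lemma3.19} exploits $N^{\perp}\subset\pi M^{\vee}$ to make that condition automatic on the $L_1$-factor, yielding Proposition \ref{proposition3.20}. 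These three lemmas are precisely the ``hard part'' you deferred, and they are also what guarantees your constant $C$ is independent of $B_1$ and $r$. Your alternative fallback (a Whittaker product formula over orthogonal sums) is closer in spirit to a correct repair, but the paper avoids it in favor of the elementary counting above.
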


Here, we follow the proof of \cite[Proposition 9.3]{KR2}.
Let $L$ be an $O_{E_v}$-lattice with basis $\lbrace u_1,\dots, u_{2n} \rbrace$ such that the matrix of inner product $(\langle u_i,u_j \rangle)$ is $\pi B$. Let $L_1$ be the $O_{E_v}$-lattice of rank $n$ with basis $\lbrace u_1, \dots, u_n \rbrace$, and let $L_2$ be the $O_{E_v}$-lattices of rank $n$ with basis $\lbrace u_{n+1},\dots, u_{2n} \rbrace$. Note that $L_2$ is unimodular with respect to  $\langle \cdot, \cdot \rangle$.

Let $M$ be an $O_{E_v}$-lattice with basis $\lbrace v_1, \dots v_m \rbrace$ ($m=2n+2r$) such that the matrix of inner product $(\langle v_i, v_j \rangle)$ is $\pi A_n^{[r]}$. Therefore, the dual $\pi M^{\vee}$ of $M$ with respect to $\langle \cdot, \cdot \rangle$ is the $O_{E_v}$-lattice with basis 
\begin{equation*}
\lbrace v_1, \dots, v_n, \pi v_{n+1},\dots,\pi v_{2n},v_{2n+1},\dots, v_{m} \rbrace.
\end{equation*}

We define the set
\begin{equation*}
\begin{array}{lll}
J_d(L,M)&=\lbrace & \varphi \in \Hom_{O_{E_v}}(L, M/\pi^{d} M) \vert\\
&&\langle \varphi(x), \varphi(y) \rangle\equiv \langle x, y \rangle \Mod \pi^d, \quad \forall x,y \in L,\\
&&\varphi(L_1) \subset \pi M^{\vee} \rbrace.

\end{array}
\end{equation*}
Then,
\begin{equation}\label{eq3.2.1}
W_{n,n}(B,r)= q^{-2dm(2n)}q^{(d-1)(2n)^2} \vert J_d(L,M) \vert,
\end{equation}
for sufficiently large $d$.

We denote by $U(M)$ the group of isometries of $M$.

For a sublattice $N \subset M$ such that $N$ is isometric to $L$, we define the sets,
\begin{equation*}
\begin{array}{ll}
I_d(L,M)=\lbrace & \varphi \in \Hom_{O_{E_v}}(L, M/\pi^{d} M) \vert\\
&\langle \varphi(x), \varphi(y) \rangle\equiv \langle x, y \rangle \Mod \pi^{d}, \quad \forall x,y \in L \rbrace,
\end{array}
\end{equation*}

\begin{equation*}
\begin{array}{ll}
\tilde{I}_d(L,M)=\lbrace & \varphi \in \Hom_{O_{E_v}}(L, M) \vert\\
&\langle \varphi(x), \varphi(y) \rangle\equiv \langle x, y \rangle \Mod \pi^{d}, \quad \forall x,y \in L \rbrace,
\end{array}
\end{equation*}

and
\begin{equation*}
\begin{array}{ll}
I_d(L,M;N)=\lbrace & \varphi \in I_d(L,M) \vert\\
&\exists \eta \in U(M) \text{ such that } \tilde{\varphi}(L)=\eta(N) \rbrace.
\end{array}
\end{equation*}
Here $\tilde{\varphi} \in \tilde{I}_d(L,M)$ is a preimage of $\varphi$. By \cite[Lemma 9.6, Lemma 9.7, Lemma 9.8]{KR2}, these sets are well-defined for sufficiently large $d$.

\begin{lemma}\label{lemma3.17}
	If $\varphi \in U(M)$, then $\varphi(\pi M^{\vee}) \subset \pi M^{\vee}$.
\end{lemma}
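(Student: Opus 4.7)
The plan is to exploit the intrinsic definition of the dual lattice $M^{\vee}$, namely
\begin{equation*}
M^{\vee} = \{x \in M \otimes_{O_{E_v}} E_v \mid \langle x, y \rangle \in O_{E_v} \text{ for every } y \in M\}.
\end{equation*}
Once we know that any isometry of $M$ preserves $M^{\vee}$, multiplication by $\pi$ immediately gives the containment $\varphi(\pi M^{\vee}) \subseteq \pi M^{\vee}$ claimed in the lemma.

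To establish $\varphi(M^{\vee}) \subseteq M^{\vee}$, I would first extend $\varphi$ to an $E_v$-linear endomorphism of $M \otimes_{O_{E_v}} E_v$ (it is automatically bijective on the generic fiber since it is bijective on $M$). Now fix $x \in M^{\vee}$ and an arbitrary $y \in M$. Because $\varphi \in U(M)$ is a bijective $O_{E_v}$-linear isometry of $M$, one can write $y = \varphi(z)$ for a unique $z \in M$. Then
\begin{equation*}
\langle \varphi(x), y \rangle = \langle \varphi(x), \varphi(z) \rangle = \langle x, z \rangle \in O_{E_v},
\end{equation*}
where the second equality uses that $\varphi$ preserves the hermitian form and the final containment uses $x \in M^{\vee}$ and $z \in M$. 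Since $y \in M$ was arbitrary, this shows $\varphi(x) \in M^{\vee}$, completing the proof.

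No genuine obstacle arises here; the argument is essentially a one-line consequence of the defining property of the dual lattice. The single point worth flagging is that the argument requires $\varphi \colon M \to M$ to be \emph{surjective}, so that every $y \in M$ is of the form $\varphi(z)$. This is automatic under the standard convention that $U(M)$ denotes the group of isometric automorphisms of $M$, but would fail for a mere isometric embedding. In the same vein, one actually obtains the stronger conclusion $\varphi(\pi M^{\vee}) = \pi M^{\vee}$ by running the argument for $\varphi^{-1} \in U(M)$ as well, though only the containment is needed in the sequel.
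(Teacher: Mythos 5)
Your proof is correct, and it takes a genuinely different route from the paper. The paper argues in coordinates: it writes $\varphi$ as a matrix $X$ in the fixed basis, uses the isometry relation $^tX^*(\pi A_n^{[r]})X=\pi A_n^{[r]}$, reduces modulo $\pi$, and deduces from the invertibility of the middle block $B$ (since $^tB^*B\equiv 1_n$) that the blocks $A$ and $C$ vanish mod $\pi$ --- which is exactly the containment $\varphi(\pi M^{\vee})\subset\pi M^{\vee}$ read off in the basis $\lbrace v_1,\dots,v_n,\pi v_{n+1},\dots,\pi v_{2n},v_{2n+1},\dots,v_m\rbrace$. Your argument is coordinate-free: an isometric automorphism preserves the intrinsic dual $M^{\vee}=\lbrace x\in M\otimes E_v \mid \langle x,M\rangle\subset O_{E_v}\rbrace$, and scaling by $\pi$ commutes with $\varphi$. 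The one point you should make explicit is that the lattice the paper calls $\pi M^{\vee}$ (defined by exhibiting a basis) really is $\pi$ times the intrinsic dual; this is immediate from the Gram matrix $\pi A_n^{[r]}=\diag(\pi 1_n,1_n,\pi 1_{2r})$, whose dual basis is $\lbrace \pi^{-1}v_1,\dots,\pi^{-1}v_n,v_{n+1},\dots,v_{2n},\pi^{-1}v_{2n+1},\dots,\pi^{-1}v_m\rbrace$, matching the paper's list after multiplication by $\pi$. Your flag about surjectivity is well placed and harmless here, since the paper defines $U(M)$ as the \emph{group} of isometries of $M$, so its elements are automorphisms. What your approach buys: it yields the stronger equality $\varphi(\pi M^{\vee})=\pi M^{\vee}$, and it proves the appendix analogue (Lemma \ref{lemmaA1}, the same statement for $M_i$, which the paper re-proves by repeating the matrix computation) verbatim, since nothing in your argument uses the specific Gram matrix. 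What the paper's computation buys is only the explicit mod-$\pi$ congruences on the matrix entries, which are not used elsewhere.
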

\begin{proof}
	Let $\varphi(v_i)=\mathlarger{\sum}_{j=1}^{m} x_{ji}v_j$. Since $\varphi$ is an isometry, we have
	\begin{equation*}
	\begin{array}{l}
	\left( \begin{array}{lll}
	x_{11}^*&\dots&x_{m1}^*\\
	\dots &\dots &\dots\\
	x_{1m}^* & \dots & x_{mm}^*
	\end{array}\middle)\middle(
	\begin{array}{lll}
	\pi 1_n & 0 & 0 \\
	0 & 1_n & 0 \\
	0 & 0 & \pi 1_{2r}
	\end{array} \middle) \middle(
	\begin{array}{lll}
	x_{11} & \dots & x_{1m} \\
	\dots & \dots & \dots \\
	x_{m1} & \dots & x_{mm}
	\end{array}
\right)\\
=\left( \begin{array}{lll}
\pi 1_n & 0 & 0 \\
0 & 1_n & 0 \\
0 & 0 & \pi 1_{2r}
\end{array}
\right).
\end{array}
	\end{equation*}
	
Therefore, by reduction modulo $\pi$, this implies that

	\begin{equation}\label{equation3.2.1}
\left( \begin{array}{l}
\mathlarger{\sum}_{k=n+1}^{2n} x_{ki}^*x_{kj}
\end{array}\middle)_{1\leq i,j \leq m}\equiv \middle(
\begin{array}{lll}
0 & 0 & 0 \\
0 & 1_n & 0 \\
0 & 0 & 0
\end{array}
\right) (\Mod \pi).
\end{equation}

If we write
\begin{equation*}
A=\left(\begin{array}{lll}
x_{n+1,1} & \dots & x_{n+1,n} \\
\dots&\dots&\dots\\
x_{2n,1}&\dots&x_{2n,n}
\end{array}\right),
\end{equation*}

\begin{equation*}
B=\left(\begin{array}{lll}
x_{n+1,n+1} & \dots & x_{n+1,2n} \\
\dots&\dots&\dots\\
x_{2n,n+1}&\dots&x_{2n,2n}
\end{array}\right),
\end{equation*}

\begin{equation*}
C=\left(\begin{array}{lll}
x_{n+1,2n+1} & \dots & x_{n+1,m} \\
\dots&\dots&\dots\\
x_{2n,2n+1}&\dots&x_{2n,m}
\end{array}\right),
\end{equation*}

then \eqref{equation3.2.1} implies that
\begin{equation*}
\left(\begin{array}{lll}
^tA^*A & ^tA^*B & ^tA^*C \\
^tB^*A&^tB^*B&^tB^*C\\
^tC^*A&^tC^*B&^tC^*C
\end{array}\middle) \equiv 
\middle(\begin{array}{lll}
0 & 0 & 0 \\
0 & 1_n & 0 \\
0 & 0 & 0
\end{array}\right) (\Mod \pi).
\end{equation*}

Since $^tB^*B=1_n$, $B$ is invertible. Therefore, $A$ and $C$ are $0$ modulo $\pi$, and this implies that $\varphi(\pi M^{\vee}) \subset \pi M^{\vee}$.
\end{proof}

\begin{proposition}\label{proposition3.18}(cf. \cite[Proposition 9.9]{KR2})
	Let $\lbrace N_i \rbrace$ be a set of representatives for the $U(M)$-orbits in the set of all sublattices $N$ of $M$ such that $N$ is isometric to $L_2$. Then we have
	\begin{equation*}
	\begin{array}{ll}
	\vert J_d(L,M) \vert=&\sum_{i}\vert I_d(L_2,M;N_i) \vert\\
	&\times \vert \lbrace \varphi_1 \in J_d(L_1,M) \vert \langle \varphi(L_1), N_i \rangle \equiv 0 (\Mod \pi^d) \rbrace.
	\end{array}
	\end{equation*}
\end{proposition}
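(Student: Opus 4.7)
The plan is to parallel the strategy of \cite[Proposition 9.9]{KR2}, with Lemma \ref{lemma3.17} supplying the extra ingredient needed to handle the condition $\varphi(L_1)\subset\pi M^{\vee}$ in the definition of $J_d$. First I would exploit the orthogonal decomposition $L=L_1\oplus L_2$: since the Gram matrix of $L$ is $\pi B=\diag(\pi B_1,1_n)$, the summands $L_1$ and $L_2$ are orthogonal and $L_2$ is unimodular. Any $\varphi\in J_d(L,M)$ thus decomposes uniquely as a pair $(\varphi_1,\varphi_2)$, and the defining conditions of $J_d(L,M)$ unfold into: $\varphi_1\in J_d(L_1,M)$ (the $\pi M^{\vee}$-condition only involves $L_1$), $\varphi_2\in I_d(L_2,M)$, and the cross-orthogonality $\langle\varphi_1(L_1),\varphi_2(L_2)\rangle\equiv 0\pmod{\pi^d}$.

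Next I would fibre the count over $\varphi_2\in I_d(L_2,M)$, and further partition according to the $U(M)$-orbit of the image $\tilde\varphi_2(L_2)$ of any lift $\tilde\varphi_2\in\tilde{I}_d(L_2,M)$. For $d$ sufficiently large this orbit is well defined by \cite[Lemma 9.6--9.8]{KR2}, yielding the decomposition $I_d(L_2,M)=\bigsqcup_i I_d(L_2,M;N_i)$.

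The key step is to show that for $\varphi_2\in I_d(L_2,M;N_i)$, the number of admissible $\varphi_1$ depends only on $N_i$. Given such a $\varphi_2$, pick $\eta\in U(M)$ with $\eta(N_i)=\tilde\varphi_2(L_2)$ modulo $\pi^d$. Composition with $\eta^{-1}$ transports $\{\varphi_1\in J_d(L_1,M):\langle\varphi_1(L_1),\varphi_2(L_2)\rangle\equiv 0\}$ bijectively onto $\{\varphi_1'\in J_d(L_1,M):\langle\varphi_1'(L_1),N_i\rangle\equiv 0\pmod{\pi^d}\}$: preservation of the Hermitian form is automatic from $\eta\in U(M)$, and Lemma \ref{lemma3.17} guarantees that $\eta^{-1}$ stabilises $\pi M^{\vee}$, so $\eta^{-1}\circ\varphi_1$ still lies in $J_d(L_1,M)$. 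Summing $|I_d(L_2,M;N_i)|$ times this number over $i$ produces the claimed product decomposition.

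The main obstacle is the bookkeeping that makes the $U(M)$-orbit of $\tilde\varphi_2(L_2)$ well defined when $\varphi_2$ is only known modulo $\pi^d$, but this is already encoded in \cite[Lemma 9.6--9.8]{KR2} and requires no additional work beyond citing those lemmas. The genuinely new ingredient, relative to \cite[Proposition 9.9]{KR2}, is isolated in the appeal to Lemma \ref{lemma3.17}, which ensures that the extra $\pi M^{\vee}$-condition survives the $U(M)$-action used to normalise $\tilde\varphi_2(L_2)$ to the representative $N_i$.
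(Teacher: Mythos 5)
Your proposal is correct and takes essentially the same route as the paper's proof: both decompose $\varphi \in J_d(L,M)$ into the pair $(\varphi\vert_{L_1},\varphi\vert_{L_2})$, normalise the image of a lift of $\varphi\vert_{L_2}$ to the representative $N_i$ by an isometry in $U(M)$ (with well-definedness supplied by \cite[Lemmas 9.6--9.8]{KR2}), and invoke Lemma \ref{lemma3.17} to see that the condition $\varphi(L_1)\subset \pi M^{\vee}$ survives this normalisation. Your isolation of Lemma \ref{lemma3.17} as the only genuinely new ingredient beyond \cite[Proposition 9.9]{KR2} matches the paper exactly.
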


\begin{proof}
	The proof of this proposition is almost identical to the proof of \cite[Proposition 9.9]{KR2}. By \cite[Lemma 9.7]{KR2}, for any $\varphi_2 \in \tilde{I}_d(L_2,M)$, $\varphi_2(L_2)$ is isometric to $L_2$ for sufficiently large $d$. Therefore, for each $\varphi_2 \in \tilde{I}_d(L_2,M)$, we can choose an isometry $\vartheta(\varphi_2) \in U(M)$ such that $N_i=\vartheta(\varphi_2)(\varphi_2(L_2))$ for some $i$. Also, for each $\varphi_2 \in I_d(L_2,M)$, we can choose its preimage $\tilde{\varphi}_2$ in $\tilde{I}_d(L_2,M)$.
	
	Then we have a bijection
	\begin{equation*}
	J_d(L,M) \rightarrow \mathlarger{\mathlarger{\sqcup}}_i \lbrace \varphi_1 \in J_d(L_1,M) \vert \langle \varphi(L_1), N_i \rangle \equiv 0 (\Mod \pi^d) \rbrace \times I_d(L_2,M;N_i)
	\end{equation*}
	given by 
	\begin{equation*}
	\varphi \mapsto (\vartheta(\varphi\vert_{L_2})\circ \varphi\vert_{L_1},\varphi\vert_{L_2}).
	\end{equation*}
	Indeed, the condition $\varphi(L_1) \subset \pi M^{\vee}$ and Lemma \ref{lemma3.17} imply that 
	\begin{equation*}
	\vartheta(\varphi\vert_{L_2})\circ \varphi\vert_{L_1}(L_1) \subset \pi M^{\vee}.
	\end{equation*}
\end{proof}

Note that $L_2$ is unimodular. Therefore, by the arguments in \cite[p.680]{KR2}, all sublattices $N \subset M$ such that $N$ is isometric to $L_2$ are in the same $U(M)$-orbit, and $M=N \perp N^{\perp}$ where $N^{\perp}:=(E_v N)^{\perp} \cap M$. Let us fix $N$ the sublattice of $M$ with the basis $\lbrace v_{n+1}, \dots, v_{2n} \rbrace$ as a representative of the unique $U(M)$-orbit of $L_2$. Then, $N^{\perp}$ is the lattice with basis $\lbrace v_1, \dots, v_n, v_{2n+1}, \dots, v_{m} \rbrace$. In particular, $N^{\perp} \subset \pi M^{\vee}$.

We have the following analogue of \cite[Lemma 9.10]{KR2}
\begin{lemma}\label{lemma3.19}
	\begin{equation}\label{eq3.2.3}
	\begin{array}{l}
	\vert \lbrace \varphi_1 \in J_d(L_1,M) \vert \langle \varphi_1(L_1),N \rangle \equiv 0 \text{ }(\Mod \pi^d) \rbrace \vert=\vert I_d(L_1,N^{\perp})\vert
	\end{array}
	\end{equation}
	for sufficiently large $d$.
\end{lemma}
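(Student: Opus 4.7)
This is the natural analogue of \cite[Lemma 9.10]{KR2}, and the plan is to construct an explicit bijection between the two sides using the orthogonal decomposition $M = N \perp N^{\perp}$ recorded in the paragraph preceding the lemma. Since this is a splitting of $O_{E_v}$-lattices, it descends to a decomposition $M/\pi^d M = N/\pi^d N \oplus N^{\perp}/\pi^d N^{\perp}$ for every $d \geq 1$, so any $\varphi_1 \in \Hom_{O_{E_v}}(L_1, M/\pi^d M)$ splits uniquely as $\varphi_1 = \varphi_1^{N} + \varphi_1^{\perp}$ with components landing in the two summands.

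First I would analyze the perpendicularity hypothesis $\langle \varphi_1(L_1), N \rangle \equiv 0 \pmod{\pi^d}$. Because $N$ and $N^{\perp}$ are orthogonal, this condition is equivalent to $\langle \varphi_1^{N}(L_1), N \rangle \subset \pi^d O_{E_v}$. The lattice $N$ is unimodular (its Gram matrix with respect to $\{v_{n+1}, \ldots, v_{2n}\}$ is $1_n$), so the induced pairing gives a perfect duality $N/\pi^d N \simeq \Hom_{O_{E_v}}(N/\pi^d N, O_{E_v}/\pi^d O_{E_v})$; thus the condition forces $\varphi_1^{N} = 0$ in $N/\pi^d N$. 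Consequently $\varphi_1$ takes values in the submodule $N^{\perp}/\pi^d N^{\perp} \subset M/\pi^d M$, and the assignment $\varphi_1 \mapsto \varphi_1^{\perp}$ is well defined and injective on the LHS.

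Next I would check that the image is exactly $I_d(L_1, N^{\perp})$. With $\varphi_1^{N} = 0$, the isometry condition $\langle \varphi_1(x), \varphi_1(y) \rangle \equiv \langle x, y \rangle \pmod{\pi^d}$ collapses, via orthogonality of $N$ and $N^{\perp}$, to $\langle \varphi_1^{\perp}(x), \varphi_1^{\perp}(y) \rangle_{N^{\perp}} \equiv \langle x, y \rangle \pmod{\pi^d}$, which is precisely the defining relation of $I_d(L_1, N^{\perp})$. The auxiliary constraint $\varphi_1(L_1) \subset \pi M^{\vee}$ from the definition of $J_d(L_1,M)$ translates (using the explicit basis of $\pi M^{\vee}$ given in the excerpt, which shows $\pi M^{\vee} = \pi N \oplus N^{\perp}$) into the requirement that $\varphi_1^{N}(L_1) \subset \pi N/\pi^d N$; this is automatic once $\varphi_1^{N} = 0$. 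Conversely, any $\psi \in I_d(L_1, N^{\perp})$ yields an element of the LHS by composing with the inclusion $N^{\perp}/\pi^d N^{\perp} \hookrightarrow M/\pi^d M$, since $N^{\perp} \subset \pi M^{\vee}$ makes the $\pi M^{\vee}$-condition automatic.

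The proof is essentially formal once the three lattice computations $M = N \oplus N^{\perp}$, $\pi M^{\vee} = \pi N \oplus N^{\perp}$, and unimodularity of $N$ are in hand. The only genuinely nontrivial step is the second, where unimodularity of $N$ is what promotes $\langle \varphi_1^{N}(L_1), N\rangle \equiv 0 \pmod{\pi^d}$ all the way to $\varphi_1^{N} = 0$ rather than merely $\varphi_1^{N}(L_1) \subset \pi^k N/\pi^d N$ for some intermediate $k$. No genuine largeness of $d$ is needed for this argument; the qualifier \emph{for sufficiently large $d$} is inherited from the comparison with the other counts appearing in Proposition \ref{proposition3.18} and equation \eqref{eq3.2.1}.
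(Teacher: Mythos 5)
Your proof is correct, and it takes a genuinely more direct route than the paper's. You use unimodularity of $N$ at the outset: since the Gram matrix of $N$ is $1_n$, the pairing on $N/\pi^d N$ is perfect, so the condition $\langle \varphi_1(L_1), N\rangle \equiv 0 \pmod{\pi^d}$ forces the $N$-component of $\varphi_1$ to vanish identically in $N/\pi^d N$, and $\varphi_1 \mapsto \varphi_1^{\perp}$ is then a bijection onto $I_d(L_1,N^{\perp})$ — with the $\pi M^{\vee}$-condition handled exactly as you say, via $\pi M^{\vee} = \pi N \perp N^{\perp}$ and $N^{\perp} \subset \pi M^{\vee}$. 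The paper instead follows the counting template of \cite[Lemma 9.10]{KR2}: it first identifies $\lbrace x \in M \mid \langle x, N \rangle \equiv 0 \ (\Mod \pi^d) \rbrace = \pi^d N^{\vee} \perp N^{\perp}$, observes (as you do) that $\varphi_1(L_1) \subset \pi M^{\vee}$ is automatic because $\pi^d N^{\vee} \subset \pi M^{\vee}$ and $N^{\perp} \subset \pi M^{\vee}$, then refines the quotient by an auxiliary integer $a$ with $\pi^a N^{\vee} \subset N$, splits $\varphi_1 = \psi_1 + \psi_2$ with $\psi_1 : L_1 \rightarrow \pi^d N^{\vee}/\pi^{d+a}N^{\vee}$ entirely unconstrained (since $\langle \psi_1(x),\psi_1(y)\rangle \in \pi^d O_{E_v}$), and finally cancels the count $\vert \pi^d N^{\vee} : \pi^{d+a}N^{\vee}\vert^n$ against the index correction $\vert M : \pi^a N^{\vee} \perp N^{\perp}\vert^{-n}$ using $M = N \perp N^{\perp}$ and $N = N^{\vee}$. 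Your argument is shorter, makes the role of unimodularity transparent, and correctly shows the identity holds for every $d \geq 1$ (the paper's hypothesis $\pi^d N^{\vee} \subset N$ is vacuous here, so ``sufficiently large $d$'' is indeed only inherited from Proposition \ref{proposition3.18} and \eqref{eq3.2.1}). What the paper's index-counting buys is reusability: the identical mechanics is run again in Lemma \ref{lemmaA2}(2) of the appendix, where $N^{\perp} \not\subset \pi M^{\vee}$, the $\pi M^{\vee}$-condition is no longer automatic, and the same computation then yields the genuine correction factor $q^{-2(n+1)}\vert I_d(L_1, N^{\perp} \cap \pi M^{\vee})\vert$ — a situation your direct-bijection argument would need to be modified to handle, since there the perpendicularity condition no longer reduces the target to a clean orthogonal summand.
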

\begin{proof}
	The proof of this lemma is almost identical to the proof of \cite[Lemma 9.10]{KR2}. First note that
	\begin{equation*}
	\lbrace x \in M \vert \langle x , N \rangle \equiv 0 \text{ } (\Mod \pi^d) \rbrace=\pi^d N^{\vee} \perp N^{\perp},
	\end{equation*}
	for sufficiently large $d$ such that $\pi^d N^{\vee} \subset N$.
	
	Therefore, we have
	\begin{equation*}
	\begin{array}{ll}
		\lbrace \varphi_1 \in J_d(L_1,M) \vert \quad\quad \langle \varphi_1(L_1) , N \rangle \equiv 0 \text{ } &(\Mod \pi^d) \rbrace\\
		=\lbrace \varphi_1:L_1 \rightarrow \pi^d N^{\vee} \perp N^{\perp} \text{ } (\Mod \pi^d) \vert& \langle \varphi_1(x), \varphi_1(y) \rangle \equiv \langle x,y\rangle \text{ } (\Mod \pi^d)\\
		 &\varphi_1(L_1) \subset \pi M^{\vee} \rbrace.
	\end{array}
	\end{equation*}
	Since $\pi^{d}N^{\vee} \subset \pi M^{\vee}$ for sufficiently large $d$ and $N^{\perp} \subset \pi M^{\vee}$, the condition $\varphi_1(L_1) \subset \pi M^{\vee}$ is automatic in this case. Therefore, \eqref{eq3.2.3} is equal to
	\begin{equation*}
	\lbrace \varphi_1:L_1 \rightarrow \pi^d N^{\vee} \perp N^{\perp} \text{ } (\Mod \pi^d) \vert \langle \varphi_1(x), \varphi_1(y) \rangle \equiv \langle x,y\rangle \text{ } (\Mod \pi^d)\rbrace.
	\end{equation*}
	
	Now, fix a positive integer $a$ such that $\pi^a N^{\vee} \subset N$.
	
	Then we have 
	\begin{equation*}
	\begin{array}{lll}
	\quad \vert\lbrace \varphi_1:L_1 \rightarrow \pi^d N^{\vee} \perp N^{\perp} \text{ } & (\Mod \pi^d) \vert \langle \varphi_1(x), \varphi_1(y) \rangle \equiv \langle x,y\rangle \text{ } (\Mod \pi^d)\rbrace\vert&\\
	=\vert\lbrace \varphi_1:L_1 \rightarrow \pi^d N^{\vee} \perp N^{\perp} \text{ }& (\Mod \pi^d (\pi^aN^{\vee} \perp N^{\perp})) \vert &\\
	&\langle \varphi_1(x), \varphi_1(y) \rangle \equiv \langle x,y\rangle \text{ } (\Mod \pi^d)\rbrace\vert\\
	\times \vert M:\pi^a N^{\vee} \perp N^{\perp} \vert^{-n}
	\end{array}
	\end{equation*}
	by replacing $M$ by $\pi^a N^{\vee} \perp N^{\perp}$.
	
	Now, we can write $\varphi_1=\psi_1 + \psi_2$ where
	\begin{equation*}
	\begin{array}{l}
	\psi_1:L_1 \rightarrow \pi^d N^{\vee} / \pi^{d+a}N^{\vee},\\
	\psi_2:L_1 \rightarrow N^{\perp}/\pi^d N^{\perp}.
	\end{array}
	\end{equation*}
	
	Since we assume that $d$ is sufficiently large such that $\pi^d N^{\vee} \subset N$, we have
	\begin{equation*}
	\langle \psi_1(x),\psi_1(y) \rangle \in \langle N, \pi^d N^{\vee} \rangle \subset \pi^dO_{E_v}.
	\end{equation*}
	
	This means that the condition $\langle \varphi_1(x),\varphi_1(y) \rangle \equiv \langle x,y \rangle \text{ }(\Mod \pi^d)$ is the same as the condition $\langle \psi_2(x),\psi_2(y) \rangle \equiv \langle x,y \rangle \text{ } (\Mod \pi^d)$, and we do not need to impose any condition on $\psi_1$.
	
	Therefore, \eqref{eq3.2.3} is equal to
	\begin{equation*}
	\begin{array}{l}
	\vert M:\pi^aN^{\vee} \perp N^{\perp} \vert^{-n}\vert \pi^d N^{\vee} : \pi^{d+a}N^{\vee}\vert^n \vert I_d(L_1,N^{\perp})\vert\\
	=\vert M:N^{\vee}\perp N^{\perp} \vert^{-n} \vert I_d(L_1,N^{\perp}) \vert\\
	=\vert M:N\perp N^{\perp} \vert^{-n}\vert N^{\vee}:N \vert^n \vert I_d(L_1,N^{\perp}) \vert\\
	=\vert I_d(L_1,N^{\perp}) \vert.
	\end{array}
	\end{equation*}
	Here, we used the fact that $M=N \perp N^{\perp}$ and $N=N^{\vee}$ (since $N$ is unimodular).
	
\end{proof}

Now, Proposition \ref{proposition3.18} and Lemma \ref{lemma3.19} imply the following proposition.

\begin{proposition}\label{proposition3.20}
	$\vert J_d(L,M)\vert=\vert I_d(L_2,M)\vert \vert I_d(L_1,N^{\perp}) \vert$.
\end{proposition}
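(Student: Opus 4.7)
The plan is to combine the three ingredients that have just been set up: Proposition \ref{proposition3.18}, the single-orbit observation that follows it, and Lemma \ref{lemma3.19}. Concretely, I would first invoke Proposition \ref{proposition3.18} with the index set $\{N_i\}$ of representatives of $U(M)$-orbits of sublattices of $M$ isometric to $L_2$. By the remarks immediately preceding Lemma \ref{lemma3.19}, the unimodularity of $L_2$ implies that there is exactly one such orbit, represented by $N$ with basis $\{v_{n+1},\dots,v_{2n}\}$ and $M = N \perp N^{\perp}$. Therefore the sum in Proposition \ref{proposition3.18} reduces to the single term
\begin{equation*}
\vert J_d(L,M)\vert = \vert I_d(L_2,M;N)\vert \cdot \vert\{\varphi_1\in J_d(L_1,M) : \langle \varphi_1(L_1),N\rangle \equiv 0 \pmod{\pi^d}\}\vert.
\end{equation*}

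Next I would apply Lemma \ref{lemma3.19} directly to rewrite the second factor as $\vert I_d(L_1,N^{\perp})\vert$. It then remains to identify $\vert I_d(L_2,M;N)\vert$ with $\vert I_d(L_2,M)\vert$. For this I would observe that given any $\varphi_2 \in I_d(L_2,M)$ and any preimage $\tilde{\varphi}_2 \in \tilde{I}_d(L_2,M)$, the sublattice $\tilde{\varphi}_2(L_2) \subset M$ is isometric to $L_2$ for $d$ sufficiently large (this is the content of \cite[Lemma 9.7]{KR2} which is already cited in the statement of the relevant definitions). Since there is only one $U(M)$-orbit of such sublattices, one can choose $\eta \in U(M)$ with $\eta(N) = \tilde{\varphi}_2(L_2)$, which is exactly the defining condition for membership in $I_d(L_2,M;N)$. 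Hence $I_d(L_2,M;N) = I_d(L_2,M)$.

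Putting these three steps together yields the desired identity $\vert J_d(L,M)\vert = \vert I_d(L_2,M)\vert \cdot \vert I_d(L_1,N^{\perp})\vert$. The main (and only mildly subtle) point is the single-orbit reduction, which genuinely requires the unimodularity of $L_2$; everything else is bookkeeping with the lemmas already established. Once this proposition is in hand, the right-hand side is set up to be expressed as a product of usual representation densities via \eqref{eq3.2.1}, which is the mechanism by which Theorem \ref{theorem3.16} will ultimately be reduced to the classical Kudla--Rapoport formula for $\CN^0(1,n-1)$.
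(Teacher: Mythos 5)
Your proposal is correct and is essentially the argument the paper intends: the paper derives Proposition \ref{proposition3.20} by combining Proposition \ref{proposition3.18} with the single-orbit observation for unimodular $L_2$ (the paragraph before Lemma \ref{lemma3.19}) and then Lemma \ref{lemma3.19}, exactly as you do. Your explicit verification that $I_d(L_2,M;N)=I_d(L_2,M)$ via \cite[Lemma 9.7]{KR2} just spells out the step the paper leaves implicit.
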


\begin{proof}[Proof of Theorem \ref{theorem3.16}]
	Note that
	\begin{equation*}
	\begin{array}{l}
	\vert I_d(L_2,M) \vert=\vert \CA_d(\pi A_n^{[r]},1_n) \vert,\\
	\vert I_d(L_1,N^{\perp}) \vert=\vert \CA_d(
	\left(\begin{array}{ll}
	\pi 1_n & 0 \\
	0 & \pi 1_{2r}
	\end{array}\right),\pi B_1) \vert.\\
	\end{array}
	\end{equation*}
	
	Therefore, we have
	
	\begin{equation*}
	\begin{array}{l}
	(q^{-d})^{n(2m-n)} \vert I_d(L_2,M) \vert=\alpha(\pi A_n^{[r]},1_n),\\
	
	(q^{-d})^{n(2(m-n)-n)} \vert I_d(L_1,N^{\perp}) \vert=\alpha(\left(\begin{array}{ll}
	\pi 1_n & 0 \\
	0 & \pi 1_{2r}
	\end{array}\right),\pi B_1),
	\end{array}
	\end{equation*}
	for sufficiently large $d$. Note that $\alpha(\pi 1_n,\pi B_1)=0$. This is because $\det(B) \equiv n+1 \text{ }(\Mod 2)$ and hence $ \det(\pi B_1) \neq \det(\pi 1_n) \text{ }(\Mod 2)$. 
	
	Now, let $\Delta$ be the $O_{E_v}$-lattice with basis $\lbrace w_1, \dots, w_{2n} \rbrace$ such that the matrix of inner product $(\langle w_i,w_j \rangle)$ is
	\begin{equation*}
	\pi A_n = \left(\begin{array}{ll}
	\pi 1_n&0\\
	0&1_n
	\end{array}\right).
	\end{equation*}
	
	Also, let $\Delta_1$ be the $O_{E_v}$-lattice with basis $\lbrace w_1, \dots, w_{n} \rbrace$ and let $\Delta_2$ be the $O_{E_v}$-lattice with basis $\lbrace w_{n+1}, \dots, w_{2n} \rbrace$
	
	Then, we have
	\begin{equation*}
	\begin{array}{l}
	(q^{-d})^{n(2m-n)} \vert I_d(\Delta_2,M) \vert=\alpha(\pi A_n^{[r]},1_n),\\
	
	(q^{-d})^{n(2(m-n)-n)} \vert I_d(\Delta_2,N^{\perp}) \vert=\alpha(\left(\begin{array}{ll}
	\pi 1_n & 0 \\
	0 & \pi 1_{2r}
	\end{array}\right),\pi 1_n),
	\end{array}
	\end{equation*}
	
	By Proposition \ref{proposition3.20}, we have
	\begin{equation}\label{eq3.2.4}
	\vert J_d(\Delta,M)\vert=\vert I_d(\Delta_2,M)\vert \vert I_d(\Delta_1,N^{\perp})\vert.
	\end{equation}
	
	Also, by equation \eqref{eq3.2.1}, we have
	\begin{equation*}
	W_{n,n}(A_n,r)=q^{-2dm(2n)}q^{(d-1)(2n)^2} \vert J_d(\Delta,M) \vert.
	\end{equation*}

Combining the above formulas, we have
\begin{equation*}
\dfrac{W_{n,n}'(B,0)}{W_{n,n}(A_n,0)}=\dfrac{\alpha'(\pi 1_n,\pi B_1)}{\alpha(\pi 1_n, \pi 1_n)}=\dfrac{\alpha'( 1_n, B_1)}{\alpha( 1_n,  1_n)}
\end{equation*}

Now, it suffices to show that $W_{n,i}(B,0)=0$ for all $0 \leq i \leq n-1$. Indeed, we have
\begin{equation*}
W_{n,i}(B,0)=q^{-2d(2n)^2}q^{(d-1)(2n)^2} \vert J_d(L,M_i) \vert,
\end{equation*} 
where $M_i$ is the $O_{E_v}$-lattice with basis $\lbrace v_{1,i}, \dots, v_{2n,i} \rbrace$ such that the matrix of inner product $(\langle v_{k,i},v_{l,i}\rangle)$ is $\pi A_i$.

Note that $L$ has the rank $n$ unimodular sublattice $L_2$. However, any unimodular sublattice of $M_i$ has rank $\leq i <n$. Therefore, there is no $\varphi \in \Hom_{O_{E_v}}(L,M_i/\pi^dM_i)$ such that $\langle \varphi(x), \varphi(y) \rangle \equiv \langle x,y \rangle \Mod \pi^d$ for sufficiently large $d$. This implies that $J_d(L,M_i)$ is an empty set. Therefore, $W_{n,i}(B,0)=0$.

\end{proof}

\begin{theorem}\label{theorem3.24}
	\begin{enumerate} $\CJ_n^n(\cdot)$ has the following properties.
		\item (Linear invariance under $GL_n(O_{E_v})\times GL_n(O_{E_v})$-action)\\
		Let $g \in {GL_{n}(O_{E_v}) \mathsmaller{\times} GL_{n}(O_{E_v})} \subset GL_{2n}(O_{E_v})$. Then, 
		\begin{equation*}
		\CJ_n^n(^tg^*Bg)=\CJ_n^n(B).
		\end{equation*}
		 
		\item (Invariance under the isomorphism $\theta: \CN \rightarrow
		 \widehat{\CN}$) We have
		 \begin{equation*}
		 \CJ_n^n(B)=\CJ_n^n(B^{\vee_n}).
		 \end{equation*}
		 \item (Compatibility with $\CN^n(1,n-1)$)
		 Let
		 \begin{equation*}
		 B=\left(\begin{array}{ll}
		 1_n & 0 \\
		 0 & B_2
		 \end{array}\right), \quad B_2\in X_{n}(E_v).
		 \end{equation*}
		 
		 Then
		 \begin{equation*}
		 \CJ_n^n(B)=\dfrac{\alpha'(1_n,\pi B_2)}{\alpha(1_n,1_n)}.
		 \end{equation*}
	\end{enumerate}
	
\end{theorem}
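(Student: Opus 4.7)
The plan is to deduce all three properties essentially from results already in place: property (1) is immediate from Lemma \ref{lemma3.5}, property (2) reduces to a symmetry consequence of Theorem \ref{theorem3.14}, and property (3) then follows by combining (2) with Theorem \ref{theorem3.16}.

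For (1), I would specialize Lemma \ref{lemma3.5} to $h=n$: each $W_{n,t}(B,r)$ is invariant under the action of $GL_n(O_{E_v}) \times GL_n(O_{E_v})$ on $B$ by $B \mapsto {}^tg^* B g$. Differentiating in the parameter $x=(-q)^{-2r}$ preserves this invariance, hence $W'_{n,n}(B,0)$ is also invariant, and so is the linear combination defining $\CJ_n^n(B)$.

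For (2), I would specialize Theorem \ref{theorem3.14} to $h=n$ (so $2n-h=n$ and both $\beta$-families collapse to a single family $\beta_i^n$):
\begin{equation*}
W'_{n,n}(B,0)-W'_{n,n}(B^{\vee_n},0)
=\sum_{i=0}^{n-1}\beta_i^n\bigl(W_{n,i}(B,0)-W_{n,i}(B^{\vee_n},0)\bigr)+\delta_n W_{n,n}(B,0).
\end{equation*}
Rearranging separates this into $\CJ_n^n$-pieces:
\begin{equation*}
W_{n,n}(A_n,0)\bigl(\CJ_n^n(B)-\CJ_n^n(B^{\vee_n})\bigr)=\delta_n\, W_{n,n}(B,0),
\end{equation*}
so it remains to verify $\delta_n=0$. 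Since $(B^{\vee_n})^{\vee_n}=B$, I can apply the same identity with $B$ replaced by $B^{\vee_n}$; the $W'$-differences and $\beta$-sums cancel upon adding the two identities, yielding
\begin{equation*}
0=\delta_n\bigl(W_{n,n}(B,0)+W_{n,n}(B^{\vee_n},0)\bigr)
\end{equation*}
for every $B$. Evaluating at $B=A_n$, where $A_n^{\vee_n}=1_{2n}$, the coefficient becomes $W_{n,n}(A_n,0)+W_{n,n}(1_{2n},0)$; the first summand is nonzero since it is the denominator appearing in the definition of $\CJ_n^n$, and via \eqref{eq3.2.1} both summands admit nonnegative lattice-embedding interpretations. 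Hence $\delta_n=0$, giving (2).

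For (3), if $B=\diag(1_n,B_2)$ then $B^{\vee_n}=\diag(\pi B_2,\pi^{-1}1_n)$, which is exactly of the shape to which Theorem \ref{theorem3.16} applies with $B_1=\pi B_2$. Combining Theorem \ref{theorem3.16} with (2) gives
\begin{equation*}
\CJ_n^n(B)=\CJ_n^n(B^{\vee_n})=\dfrac{\alpha'(1_n,\pi B_2)}{\alpha(1_n,1_n)}.
\end{equation*}
The main obstacle is the verification that $\delta_n=0$, which ultimately rests on the nonvanishing of $W_{n,n}(A_n,0)+W_{n,n}(1_{2n},0)$; the lattice model of \eqref{eq3.2.1} makes this transparent for $A_n$ (and a parallel lattice argument covers $1_{2n}$), so no new hard computation is required beyond what Theorem \ref{theorem3.14} and Theorem \ref{theorem3.16} already supply.
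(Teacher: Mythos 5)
Your treatments of (1) and (3) coincide with the paper's proof: (1) is Lemma \ref{lemma3.5} specialized to $h=n$ (passing to the derivative is harmless, since $W_{n,t}(B,r)$ is a polynomial in $x=(-q)^{-2r}$ whose coefficients inherit the invariance), and (3) is exactly the paper's argument, namely $B^{\vee_n}=\mathrm{diag}(\pi B_2,\pi^{-1}1_n)$ followed by Theorem \ref{theorem3.16}. For (2), however, you take a genuinely different route. The paper does not show $\delta_n=0$; it kills the term $\delta_n W_{n,n}(B,0)$ by invoking the vanishing $W_{n,n}(B,0)=0$, which holds for the Gram matrices $B$ arising from $\BV$ because $\val(\det B)\equiv n+1 \pmod 2$ while the space underlying $1_{n,n}$ has determinant valuation of parity $n$ --- the same parity obstruction used to get $\alpha(\pi 1_n,\pi B_1)=0$ in the proof of Theorem \ref{theorem3.16}. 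Your argument instead substitutes $B^{\vee_n}$ into the $h=n$ identity of Theorem \ref{theorem3.14}, uses the involutivity $(B^{\vee_n})^{\vee_n}=B$, and adds the two identities to get $\delta_n\bigl(W_{n,n}(B,0)+W_{n,n}(B^{\vee_n},0)\bigr)=0$. This is valid, and $\delta_n=0$ is in fact true (for $n=1$ it is visible in the linear system of Lemma \ref{lemma4.3}, and in general it follows from the symmetry $m_{(2n+2-i)t}=n_{it}$ of the matrix $\FB$ when $h=n$ together with the uniqueness assertion). Your route buys a strictly stronger conclusion: $\CJ_n^n(B)=\CJ_n^n(B^{\vee_n})$ for \emph{every} $B\in X_{2n}(E_v)$, with no parity restriction on $\val(\det B)$, whereas the paper's argument only covers $B$ with $W_{n,n}(B,0)=0$ (which suffices for Conjecture \ref{conjecture1}).

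One slip to correct: you assert $A_n^{\vee_n}=1_{2n}$, but from the definition of $B^{\vee_h}$, taking $A=1_n$ and $D=\pi^{-1}1_n$ gives $A_n^{\vee_n}=\mathrm{diag}(\pi\cdot\pi^{-1}1_n,\;\pi^{-1}1_n)=A_n$. The slip is harmless --- indeed it simplifies your argument: evaluating your identity at $B=A_n$ yields $0=2\,\delta_n\,W_{n,n}(A_n,0)$, and $W_{n,n}(A_n,0)\neq 0$ is already presupposed by the definition of $\CJ_n^n$ (it is the denominator, computed to be $q^{-5}(q+1)^2$ for $n=1$ in Lemma \ref{lemma4.1}; in general \eqref{eq3.2.1} exhibits $W_{n,n}(A_n,r)$ as a normalized nonnegative count containing the identity embedding). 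So the appeal to $W_{n,n}(1_{2n},0)$ and its ``parallel lattice argument'' is both incorrect as stated and unnecessary; delete it and your proof stands.
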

\begin{proof}
	\begin{enumerate}
		\item This follows from Lemma \ref{lemma3.5}.
		\item This follows from Theorem \ref{theorem3.14} and the fact that $W_{n,n}(B,0)=0$.
		\item Since $\CJ_n^n(B)=\CJ_n^n(B^{\vee_n})$, we have
		\begin{equation*}
		\CJ_n^n(B)=\CJ_n^n(\left(\begin{array}{ll}
		\pi B_2 & 0\\
		0&\pi^{-1} 1_n
		\end{array}\right)).
		\end{equation*}
		
		By Theorem \ref{theorem3.16}, this implies that
		\begin{equation*}
		\CJ_n^n(B)=\dfrac{\alpha'(1_n,\pi B_2)}{\alpha(1_n,1_n)}.
		\end{equation*}
		\end{enumerate}
\end{proof}

\begin{remark}\label{remark3.25}
	Note that when we formulate Conjecture \ref{conjecture1}, we only required that $\CI_n^n(\tb{x},\tb{y})$ is invariant under $GL_n(O_{E_v})\times GL_n(O_{E_v})$-action. However, Conjecture \ref{conjecture1} suggests that $\CI_n^n(\tb{x},\tb{y})$ is invariant under a slightly bigger group $GL_{2n}(O_{E_v}) \cap \Gamma_{n,n}$, where
	\begin{equation*}
	\Gamma_{n,n}=\lbrace \left( \begin{array}{ll} A&B\\C&D \end{array} \right) \in M_{2n,2n}(O_{E_v}) \vert A,B,D \in M_{n,n}(O_{E_v}), C \in \pi M_{n,n}(O_{E_v}) \rbrace.
	\end{equation*}
\end{remark}
We can formulate the following conjecture similar to Conjecture \ref{conjecture1}.

\begin{conjecture}\label{conjecture2}
	
	For a basis $\lbrace x_1, \dots,x_{2n-h},y_1,\dots,y_{h} \rbrace$ of $\BV$, we have
	\begin{equation*}
	\CI_h^n(\tb{x},\tb{y})=\dfrac{1}{W_{n,n}(A_n,0)}\lbrace W'_{h,n}(B,0)-\mathlarger{\sum}_{0 \leq i \leq n-1} \beta_i^hW_{h,i}(B,0)\rbrace.
	\end{equation*}
	Here $B$ is the matrix
	\begin{displaymath}
	B=\left(\begin{array}{cc} 
	h(x_i,x_j) & h(x_i,y_l)\\
	h(y_k,x_j)& h(y_k,y_l)
	\end{array} 
	\right)_{1\leq i,j \leq 2n-h, 1\leq k,l \leq h}.
	\end{displaymath}
	
\end{conjecture}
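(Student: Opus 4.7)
The plan is to prove Conjecture \ref{conjecture2} by double induction: on $n$, and for fixed $n$ on the ``bad'' valuations in $B$. The structural skeleton is the pair of reductions in Proposition \ref{proposition2.4} and Proposition \ref{proposition2.5}, which allow one to strip off either a $\CZ(x_i)$ with $\val(h(x_i,x_i))=0$ or a $\CY(y_j)$ with $\val(h(y_j,y_j))=-1$ and pass to the smaller space $\CN^h_{E_v/F_v}(1,n-2)$ or $\CN^{h-1}_{E_v/F_v}(1,n-2)$, respectively; at the same time the other cycles transform by $z \mapsto (1-e_x)\circ z$ (respectively $w \mapsto (1-e_y^\vee)\circ w$), preserving the hermitian pairing. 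The goal is then to match this geometric recursion by a corresponding analytic recursion for the weighted densities $\CJ_h^n(B)$.

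The base cases come for free. By Theorem \ref{theorem3.16}, whenever $B=\diag(B_1,\pi^{-1}1_{n-h})$ the right-hand side collapses to the Kudla-Rapoport expression $\alpha'(1_n,B_1)/\alpha(1_n,1_n)$, and by the $\theta$-invariance in Theorem \ref{theorem3.24}(3) the analogous statement holds for $B=\diag(1_h,B_2)$. Both of these are then settled by the Li-Zhang theorem \cite{LZ}. In addition, the case $n=1$ (Theorem \ref{theorem4.8}) and the almost self-dual case treated in Appendix \ref{appendix} provide independent anchor points, matching the geometric recursion.

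The inductive step proceeds as follows. Given $B$, one first applies an element of $GL_{2n-h}(O_{E_v})\times GL_h(O_{E_v})$ to arrange either a diagonal entry of valuation $0$ in the $x$-block or a diagonal entry of valuation $-1$ in the $y$-block (both sides are invariant by Lemma \ref{lemma3.5} and a standard Howe-type invariance on the geometric side; cf. Remark \ref{remark3.25}). Proposition \ref{proposition2.5} then rewrites $\CI_h^n(\tb{x},\tb{y})$ as $\CI_h^{n-1}(\tb{x}',\tb{y}')$ (or $\CI_{h-1}^{n-1}$), and by induction this equals $\CJ_h^{n-1}(B')$ (respectively $\CJ_{h-1}^{n-1}(B')$), where $B'$ is obtained from $B$ by Gram-Schmidt against the stripped vector. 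What remains is to verify the matching analytic identity
\begin{equation*}
\CJ_h^n(B) \;=\; c_h \cdot \CJ_h^{n-1}(B')
\end{equation*}
for the appropriate constant $c_h$ (read off from the normalization $1/W_{n,n}(A_n,0)$). When the matrix $B$ lies outside the two block-diagonal orbits already handled, one can invoke the uniqueness portion of Theorem \ref{theorem3.14}: the duality $B \leftrightarrow B^{\vee_h}$, combined with the established base cases, pins down every coefficient $\beta_i^h$, so any candidate satisfying the recursion, the duality, and the two boundary cases is forced to coincide with the right-hand side of Conjecture \ref{conjecture2}.

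The main obstacle is proving the analytic recursion for $W'_{h,n}(B,0)-\sum_i \beta_i^h W_{h,i}(B,0)$ under the partial orthogonalization $B \rightsquigarrow B'$. In the hyperspecial case this is essentially the content of the Li-Zhang partial Fourier transform technology, and transporting it to the present setting requires tracking how the characteristic functions $1_{h,t}$ behave when the integration variable is decomposed along the unit-length (or valuation $-1$) direction. The explicit coset-wise identities of Lemma \ref{lemma3.8} and Lemma \ref{lemma 3.13} give the required building blocks, but weaving them together so that the correction sum $\sum_i \beta_i^h W_{h,i}(B,0)$ recursively reproduces the corresponding sum at level $n-1$ with the dual constants $\beta_i^{2n-h-2}$ etc. appears to be the principal technical difficulty; I would not expect a short bypass, and I suspect that a complete proof will mirror \cite{LZ} in requiring an auxiliary interpolating family of Siegel-Weil sections together with a vanishing statement for the analytic-minus-geometric difference, reducing finally to the known base cases above.
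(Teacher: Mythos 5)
First, a point of framing: the statement you are proving is Conjecture \ref{conjecture2}, which the paper itself does not prove. The paper's evidence consists of consistency checks only: compatibility with the Kudla--Rapoport conjecture built into the construction of the $\beta_i^h$ (Theorem \ref{theorem3.16}), the invariance properties of $\CJ_n^n$ (Theorem \ref{theorem3.24}), the case $n=1$, $0\leq h\leq 2$ (Theorem \ref{theorem4.2}, Theorem \ref{theorem4.8}), and the almost self-dual comparison with Li--Zhang in Appendix \ref{appendix}. So your proposal is necessarily attempting something beyond the paper, and it should be judged as such.

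As a proof strategy it has a fatal gap at the start of the inductive step: the reduction via Propositions \ref{proposition2.4} and \ref{proposition2.5} is only available when, after the $GL_{2n-h}(O_{E_v})\times GL_h(O_{E_v})$ action, some $x_i$ satisfies $\val(h(x_i,x_i))=0$ or some $y_j$ satisfies $\val(h(y_j,y_j))=-1$. When the Gram matrix $B$ is ``deep'' --- say the $\tb{x}$-block has all diagonal valuations $\geq 1$ and the $\tb{y}$-block all valuations $\geq 0$ after any unimodular change of basis --- no vector can be stripped off and the induction never starts. This is precisely the hard (anisotropic) case of the Kudla--Rapoport problem: in \cite{LZ} it is not handled by unit-vector reduction at all, but by an entirely different mechanism (induction on $\val(\det)$, decomposition of special divisors into horizontal and vertical parts, and a local modularity/uncertainty-principle argument), and nothing in your skeleton replaces that. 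Two further steps you invoke are also unavailable in this setting: the geometric $GL\times GL$-invariance of $\CI_h^n(\tb{x},\tb{y})$ is itself only expected here (the paper cites \cite{How} only for the hyperspecial case and explicitly phrases the parahoric version as an expectation; cf.\ Remark \ref{remark3.25}), so it cannot be used as an established tool; and the appeal to the ``uniqueness portion'' of Theorem \ref{theorem3.14} is a non sequitur --- that uniqueness determines the constants $\beta_i^h$ in one specific linear identity among the $W_{h,t}$, and does not characterize $\CJ_h^n$ among all functions satisfying your recursion, the duality $B\leftrightarrow B^{\vee_h}$, and the base cases; no such characterization is formulated, let alone proven. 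You candidly flag the unproven analytic recursion for $W'_{h,n}(B,0)-\sum_i \beta_i^h W_{h,i}(B,0)$ as the main difficulty, but even granting it, the missing deep case and the unproven geometric invariance mean the argument would not close.
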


\begin{remark}
	Similar to Remark \ref{remark3.25}, Conjecture \ref{conjecture2} suggests that $\CI_h^n(\tb{x},\tb{y})$ is invariant under $GL_{2n}(O_{E_v}) \cap \Gamma_{2n-h,h}$, where
	\begin{equation*}
	\Gamma_{2n-h,h}=\left\lbrace \begin{array}{l} \left( \begin{array}{ll} A&B\\C&D \end{array} \right) \\\in M_{2n,2n}(O_{E_v}) \end{array} \vert \begin{array}{l} A \in M_{2n-h,2n-h}(O_{E_v}), B \in M_{2n-h,h}(O_{E_v}) \\ C \in \pi M_{h,2n-h}(O_{E_v}), D \in M_{h,h}(O_{E_v})  \end{array}\right\rbrace.
	\end{equation*}
\end{remark}

\bigskip

\section{Arithmetic intersection numbers of special cycles in $\CN^1(1,1)$}\label{section4}
In this section, we will prove that Conjecture \ref{conjecture1} and Conjecture \ref{conjecture2} are true for $n=1$, $0 \leq h \leq 2$.

\subsection{The cases of $n=1, h=0,2$}
In this subsection, we will prove that our conjectures are compatible with the result of \cite{San}.

First, we need the following lemma

\begin{lemma}\label{lemma4.1}
	$W_{1,1}(A_1,0)=q^{-5}(q+1)^2$.
\end{lemma}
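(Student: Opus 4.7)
The plan is to bypass the defining integral and instead invoke the lattice-theoretic reformulation already set up in the proof of Theorem \ref{theorem3.16}, specialized to $n=h=t=1$ and $r=0$. By equation \eqref{eq3.2.1}, for all sufficiently large $d$ one has
\begin{equation*}
W_{1,1}(A_1,0) = q^{-8d}\cdot q^{4(d-1)}\cdot \vert J_d(\Delta,M)\vert = q^{-4d-4}\,\vert J_d(\Delta,M)\vert,
\end{equation*}
where $\Delta=M$ is the rank-two $O_{E_v}$-lattice with basis $\{w_1,w_2\}$ and Gram matrix $\pi A_1 = \diag(\pi,1)$. Taking $\Delta_1 = \langle w_1\rangle$ and $\Delta_2 = \langle w_2\rangle$, the unimodular rank-one sublattices of $M$ form a single $U(M)$-orbit containing $N = \langle w_2\rangle$, with $N^\perp = \langle w_1\rangle$. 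Proposition \ref{proposition3.20} then factors
\begin{equation*}
\vert J_d(\Delta,M)\vert = \vert I_d(\Delta_2,M)\vert\cdot\vert I_d(\Delta_1,N^\perp)\vert.
\end{equation*}

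Next I would count each factor explicitly. A hermitian-form-preserving map $\phi:\Delta_2\to M/\pi^d M$ corresponds to a pair $(a,b)\in(O_{E_v}/\pi^d)^2$ satisfying $\pi\Nm(a)+\Nm(b)\equiv 1\pmod{\pi^d}$. The right-hand side is a unit in $O_{F_v}/\pi^d$ for every $a$, so $b$ must be a unit of prescribed norm. Using that $E_v/F_v$ is unramified, so that $\Nm:(O_{E_v}/\pi^d)^\times\to(O_{F_v}/\pi^d)^\times$ is surjective with kernel of size $q^{d-1}(q+1)$ (via the index computation $q^{2d-2}(q-1)(q+1)/[q^{d-1}(q-1)]$), one obtains $\vert I_d(\Delta_2,M)\vert = q^{2d}\cdot q^{d-1}(q+1) = q^{3d-1}(q+1)$. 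For the second factor, $\phi:\Delta_1\to N^\perp/\pi^d N^\perp$ corresponds to $c\in O_{E_v}/\pi^d$ with $\Nm(c)\equiv 1\pmod{\pi^{d-1}}$; there are $q^{d-2}(q+1)$ such residues modulo $\pi^{d-1}$ (the norm-kernel size at level $d-1$) and each lifts in $q^2$ ways, so $\vert I_d(\Delta_1,N^\perp)\vert = q^d(q+1)$.

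Multiplying the two counts yields $\vert J_d(\Delta,M)\vert = q^{4d-1}(q+1)^2$, and substituting back into the opening identity produces $W_{1,1}(A_1,0) = q^{-5}(q+1)^2$, as required. No serious obstacle arises; the only points warranting care are the clean extraction of the norm-kernel size at finite level (handled by the index count above) and the verification that the auxiliary condition $\varphi(L_1)\subset \pi M^\vee$ entering the definition of $J_d$ is automatic in this setup, since $\Delta_1 = \langle w_1\rangle\subset \pi M^\vee = \langle w_1,\pi w_2\rangle$ regardless of the choice of $c$.
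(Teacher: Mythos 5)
Your proposal is correct, and it reaches the value $q^{-5}(q+1)^2$ by a genuinely different route from the paper's own proof. The paper proves Lemma \ref{lemma4.1} by brute force: it expands $W_{1,1}(A_1,0)$ via Lemma \ref{lemma3.3} as a sum over the representatives $\CR_2$ of $\Gamma_2\backslash X_2(E_v)$, evaluates $\CG(Y,A_1)$, $\CF_1(Y,A_1)$ and $\alpha(Y;\Gamma_2)$ case by case using Lemma \ref{lemma 3.12}, Lemma \ref{lemma3.14} and Hironaka's formula for $\alpha(Y;\Gamma_2)$, and then sums six families of geometric series. You instead specialize the lattice-counting identity \eqref{eq3.2.1} and the factorization $\vert J_d(\Delta,M)\vert=\vert I_d(\Delta_2,M)\vert\,\vert I_d(\Delta_1,N^{\perp})\vert$ of Proposition \ref{proposition3.20} (equation \eqref{eq3.2.4}) to $n=1$, $r=0$, reducing everything to two elementary unit-norm counts over the unramified extension; both counts are right, and they agree with the densities the paper itself records, since $\alpha(\pi A_1,1_1)=(q+1)/q$ and $\alpha\bigl((\pi),(\pi)\bigr)=q+1$ by Proposition \ref{propositionA5}, so your argument is in effect the $n=1$, $r=0$ case of the Appendix identity $W_{n,n}(A_n,r)=q^{-4n^2}\alpha(\pi A_n^{[r]},1_n)\,\alpha(\pi 1_{n+2r},\pi 1_n)$. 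There is no circularity: Section \ref{section3.2}, including \eqref{eq3.2.1} and Proposition \ref{proposition3.20}, precedes Lemma \ref{lemma4.1} and is proved independently of it, and the paper itself applies \eqref{eq3.2.1} with $B=A_n$ in the proof of Theorem \ref{theorem3.16}. What your route buys is brevity and a structural explanation of the factor $(q+1)^2$ (one factor per rank-one piece of $\Delta$); what the paper's computation buys is a concrete, self-contained workout of the $\CG$, $\CF$, $\alpha(Y;\Gamma_{2n})$ machinery that Theorem \ref{theorem3.14} rests on. One small imprecision in your write-up: in $J_d$ the condition is on the image, $\varphi(\Delta_1)\subset\pi M^{\vee}/\pi^d M$, not on $\Delta_1$ itself; it is automatic here not for the reason as phrased, but because after the Proposition \ref{proposition3.20} factorization the first component lands in $N^{\perp}=\langle w_1\rangle\subset\pi M^{\vee}$, which is precisely how Lemma \ref{lemma3.19} disposes of that condition in general. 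This does not affect the validity of your argument, since Proposition \ref{proposition3.20} already has the condition built in.
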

\begin{proof}
	By Lemma \ref{lemma3.3}, we have
	\begin{equation*}
	W_{1,1}(A_1,0)=\sum_{Y \in \Gamma_{2} \backslash X_{2}(E_v)} \dfrac{\CG(Y,A_1)\CF_1(Y,A_1)}{\alpha(Y;\Gamma_{2})}.
	\end{equation*}
	
	Also, note that $\CR_2$ forms a complete set of representatives of $\Gamma_{2} \backslash X_{2}(E_v)$. Therefore, we only need to consider the following cases.
	
	\textbf{Case 1.} For $\mu_1, \mu_2 \in \BZ$,
	\begin{equation*}
	Y=\left( \begin{array}{ll}
	\pi^{\mu_1} & \\
	 & \pi^{\mu_2}
	 \end{array}\right).
	\end{equation*}
	
		\textbf{Case 2.} For $e \in \BZ$,
	\begin{equation*}
	Y=\left( \begin{array}{ll}
	&\pi^{e}  \\
	 \pi^{e}&
	\end{array}\right).
	\end{equation*}
	
	Now, we will compute $\CG(Y,A_1)$, $\CF_1(Y,A_1)$, and $\alpha(Y;\Gamma_{2})$ separately. In the following computation, we use Lemma \ref{lemma 3.12} and Lemma \ref{lemma3.14}.
	
	First, let us compute $\CF_1(Y,A_1)$. We have the following two cases.
	
	\textbf{Case 1-F.} For 	
$Y=\left( \begin{array}{ll}
	\pi^{\mu_1} & \\
	& \pi^{\mu_2}
	\end{array}\right)$, $\mu_{1}, \mu_2 \in \BZ$, we have
	\begin{equation*}
	\begin{array}{ll}
	\CF_1(Y,A_1)&=\int_{O}\psi_v(\pi^{\mu_1}\Nm(x))dx \times \int_{\pi O}\psi_v(\pi^{\mu_1-1}\Nm(x))dx\\
	& \times \int_{O}\psi_v(\pi^{\mu_2}\Nm(x))dx \times \int_{O}\psi_v(\pi^{\mu_2-1}\Nm(x))dx\\
	&=(-q)^{\min(0,\mu_1)+\min(0,\mu_1+1)-2+\min(0,\mu_2)+\min(0,\mu_2-1)}.
	\end{array}
	\end{equation*}
	
	\textbf{Case 2-F.} For
	$Y=\left( \begin{array}{ll}
	& \pi^{e}  \\
	 \pi^{e} &
	\end{array}\right)$, $e \in \BZ$, we have
	\begin{equation*}
\begin{array}{ll}
F_1(Y,A_1)&=\int_{O\times O}\psi_v(\pi^{e}\Tr(xy))dxdy \times \int_{\pi O \times O}\psi_v(\pi^{e-1}\Tr(xy))dxdy\\
&=(-q)^{2\min(0,e)+2\min(0,e)-2}\\
&=(-q)^{4\min(0,e)-2}.
\end{array}
\end{equation*}
	
	Now, let us compute $\CG(Y,A_1)$. We have the following two cases.
	
	\textbf{Case 1-G.} For 	
	$Y=\left( \begin{array}{ll}
	\pi^{\mu_1} & \\
	& \pi^{\mu_2}
	\end{array}\right)$, $\mu_{1}, \mu_2 \in \BZ$, we have
	\begin{equation*}
	\begin{array}{ll}
	\CG(Y,A_1)&=\int_{O^{\times}}\psi_v(\pi^{\mu_1}\Nm(x))dx \times \int_{O^{\times}}\psi_v(\pi^{\mu_2-1}\Nm(x))dx\\
	& \times \int_{O}\psi_v(\pi^{\mu_2}\Nm(x))dx \times \int_{\pi O}\psi_v(\pi^{\mu_1-1}\Nm(x))dx\\
	&=I^*(\mu_1)I^*(\mu_2-1)(-q)^{\min(0,\mu_1+1)+\min(0,\mu_2)-2}.
	\end{array}
	\end{equation*}
	Therefore, $\CG(Y,A_1)=0$ if $\mu_1 <-1$ or $\mu_2 <0$.
	
	\textbf{Case 2-G.}  For
	$Y=\left( \begin{array}{ll}
	& \pi^{e}  \\
	\pi^{e} &
	\end{array}\right)$, $e \in \BZ$, we have
	\begin{equation*}
	\begin{array}{ll}
	\CG(Y,A_1)&=\int_{O^{\times}\times O}\psi_v(\pi^{e}\Tr(xy))dxdy \times \int_{\pi O \times O^{\times}}\psi_v(\pi^{e-1}\Tr(xy))dxdy\\
	&=(1-q^{-2})J_1(e)(1-q^{-2})q^{-2}J_1(e)\\
	&=(-q)^{-6}(q^2-1)J_1(e)^2.
	\end{array}
	\end{equation*}
	Therefore, $\CG(Y,A_1)=0$ if $e <0$.
	
	Finally, we need to compute $\alpha(Y;\Gamma_2)$. Here we can use \cite[Theorem 2.5]{Hir}. Since we do not want to introduce more notations, we will write its computation directly. It is easy to compute it by using the above theorem (for the notations in the theorem, we refer to \cite[page 108]{Hir}).
	
	\textbf{Case 1-1-$\alpha$}  For 	
	$Y=\left( \begin{array}{ll}
		\pi^{\mu_1} & \\
		& \pi^{\mu_2}
	\end{array}\right)$, $\mu_{1} \geq \mu_2 \in \BZ$, we have
	\begin{equation*}
	\alpha(Y;\Gamma_2)=(q+1)^2q^{-4+\mu_1+3\mu_2}.
	\end{equation*}
	
	\textbf{Case 1-2-$\alpha$}  For 	
	$Y=\left( \begin{array}{ll}
	\pi^{\mu_1} & \\
	& \pi^{\mu_2}
	\end{array}\right)$, $\mu_{1}< \mu_2 \in \BZ$, we have
	\begin{equation*}
	\alpha(Y;\Gamma_2)=(q+1)^2q^{-2+3\mu_1+\mu_2}.
	\end{equation*}
	
	\textbf{Case 2-$\alpha$}  For
	$Y=\left( \begin{array}{ll}
	& \pi^{e}  \\
	\pi^{e} &
	\end{array}\right)$, $e \in \BZ$, we have
	\begin{equation*}
	\alpha(Y;\Gamma_2)=q(q^2-1)q^{-4+4e}.
	\end{equation*}
	
	Now, we have all ingredients to compute $W_{1,1}(A_1,0)$.
	Note that in \textbf{Case 1-G}, $\CG(Y,A_1)=0$ if $\mu_1 <-1$ or $\mu_2 <0$. Therefore we can assume that $\mu_{1} \geq -1, \mu_2 \geq 0$. Similarly, we can assume that $e \geq 0$ in \textbf{Case 2-G}.
	
	We will compute $\sum_{Y \in \Gamma_{2} \backslash X_{2}(E_v)} \dfrac{\CG(Y,A_1)\CF_1(Y,A_1)}{\alpha(Y;\Gamma_{2})}$ by dividing into the following 6 cases.
	
	\textbf{Case 1-1-1.} In this case we assume that $\mu_1 \geq \mu_2 \geq 1$.
	Therefore,
	\begin{equation*}
	\begin{array}{l}
	\CF_1(Y,A_1)=q^{-2}\\
	\CG(Y,A_1)=q^{-6}(q^2-1)^2\\
	\alpha(Y;\Gamma_2)=(q+1)^2q^{-4+\mu_1+3\mu_2}.
	\end{array}
	\end{equation*}
	
	This implies that
	\begin{equation*}
	\begin{array}{l}
	\sum_{\text{case 1-1-1}} \CF_1(Y,A_1)\CG(Y,A_1)\alpha(Y;\Gamma_2)^{-1}\\
	=\sum_{\mu_2=1}^{\infty}\sum_{\mu_1=\mu_2}^{\infty}q^{-8}(q^2-1)^2q^4(q+1)^{-2}q^{-\mu_1-3\mu_2}\\
	=q^{-3}(q^3+q^2+q+1)^{-1}.
	\end{array}
	\end{equation*}
	
		\textbf{Case 1-1-2.} In this case we assume that $\mu_1 \geq \mu_2=0$.
	Therefore,
	\begin{equation*}
	\begin{array}{l}
	\CF_1(Y,A_1)=-q^{-3}\\
	\CG(Y,A_1)=-q^{-6}(q^2-1)(q+1)\\
	\alpha(Y;\Gamma_2)=(q+1)^2q^{-4+\mu_1}.
	\end{array}
	\end{equation*}
	
	This implies that
	\begin{equation*}
	\begin{array}{l}
	\sum_{\text{case 1-1-2}} \CF_1(Y,A_1)\CG(Y,A_1)\alpha(Y;\Gamma_2)^{-1}\\
	=\sum_{\mu_1=0}^{\infty}q^{-5}(q-1)q^{-\mu_1}\\
	=q^{-4}.
	\end{array}
	\end{equation*}
	
		\textbf{Case 1-2-1.} In this case we assume that $\mu_2 > \mu_1 \geq 0$.
	Therefore,
	\begin{equation*}
	\begin{array}{l}
	\CF_1(Y,A_1)=q^{-2}\\
	\CG(Y,A_1)=q^{-6}(q^2-1)^2\\
	\alpha(Y;\Gamma_2)=(q+1)^2q^{-2+3\mu_1+\mu_2}.
	\end{array}
	\end{equation*}
	
	This implies that
	\begin{equation*}
	\begin{array}{l}
	\sum_{\text{case 1-2-1}} \CF_1(Y,A_1)\CG(Y,A_1)\alpha(Y;\Gamma_2)^{-1}\\
	=\sum_{\mu_1=0}^{\infty}\sum_{\mu_2=\mu_1+1}^{\infty}q^{-6}(q-1)^2q^{-3\mu_1-\mu_2}\\
	=q^{-2}(q^3+q^2+q+1)^{-1}.
	\end{array}
	\end{equation*}
	
		\textbf{Case 1-2-2.} In this case we assume that $\mu_2>0, \mu_1=-1$.
	Therefore,
	\begin{equation*}
	\begin{array}{l}
	\CF_1(Y,A_1)=-q^{-3}\\
	\CG(Y,A_1)=-q^{-6}(q^2-1)(q+1)\\
	\alpha(Y;\Gamma_2)=(q+1)^2q^{-5+\mu_2}.
	\end{array}
	\end{equation*}
	
	This implies that
	\begin{equation*}
	\begin{array}{l}
	\sum_{\text{case 1-2-2}} \CF_1(Y,A_1)\CG(Y,A_1)\alpha(Y;\Gamma_2)^{-1}\\
	=\sum_{\mu_2=1}^{\infty}q^{-4}(q-1)q^{-\mu_2}\\
	=q^{-4}.
	\end{array}
	\end{equation*}
	
		\textbf{Case 1-2-3.} In this case we assume that $\mu_2=0, \mu_1=-1$.
	Therefore,
	\begin{equation*}
	\begin{array}{l}
	\CF_1(Y,A_1)=q^{-4}\\
	\CG(Y,A_1)=-q^{-6}(q+1)^2\\
	\alpha(Y;\Gamma_2)=(q+1)^2q^{-5}.
	\end{array}
	\end{equation*}
	
	This implies that
	\begin{equation*}
	\begin{array}{l}
	\CF_1(Y,A_1)\CG(Y,A_1)\alpha(Y;\Gamma_2)^{-1}=q^{-5}.
	\end{array}
	\end{equation*}
	
		\textbf{Case 2.} In this case we assume that $e \geq 0$.
	Therefore,
	\begin{equation*}
	\begin{array}{l}
	\CF_1(Y,A_1)=q^{-2}\\
	\CG(Y,A_1)=q^{-6}(q^2-1)^2\\
	\alpha(Y;\Gamma_2)=q^{-3}(q^2-1)^2q^{4e}.
	\end{array}
	\end{equation*}
	
	This implies that
	\begin{equation*}
	\begin{array}{l}
	\sum_{\text{case 2}} \CF_1(Y,A_1)\CG(Y,A_1)\alpha(Y;\Gamma_2)^{-1}\\
	=\sum_{e=0}^{\infty}q^{-5}(q^2-1)q^{-4e}\\
	=q^{-1}(q^2+1)^{-1}.
	\end{array}
	\end{equation*}

	These 6 cases imply that
	\begin{equation*}
	\sum_{Y \in \CR_2} \CF_1(Y,A_1)\CG(Y,A_1)\alpha(Y;\Gamma_2)^{-1}=q^{-5}(q+1)^2.
	\end{equation*}
	This finishes the proof of the lemma.	
\end{proof}
\begin{theorem}\label{theorem4.2}
	Conjecture \ref{conjecture2} is true when $n=1, h=0,2$.
\end{theorem}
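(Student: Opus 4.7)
The plan is to verify the conjectural identity by directly computing both sides of the conjectural formula and matching them, reducing to a short list of normal forms for the matrix $B$.

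First, I would exploit the invariance properties already established. By Lemma \ref{lemma3.5}, the right-hand side $\CJ_h^1(B)$ is invariant under the action of $\Gamma_2 \cap (GL_{2-h}(O_{E_v}) \times GL_h(O_{E_v}))$ on $B$, and the left-hand side $\CI_h^1(\tb{x},\tb{y})$ is invariant under a $GL_{2-h}(O_{E_v}) \times GL_h(O_{E_v})$-change of basis (see Theorem \ref{theorem3.24}(1) and Remark \ref{remark3.25}). Since for $h=0,2$ there is only one factor to worry about and the target space is rank $2$, the elementary divisor theorem lets us assume $B$ is diagonal of the form $\diag(\pi^{a},\pi^{b})$ (or with a global scaling by $\pi^{-1}$ when $h=2$). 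This cuts down the verification to a two-parameter family of matrices.

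Second, I would invoke Sankaran's formula from \cite{San} for the arithmetic intersection number of two special divisors on $\CN^n_{E_v/F_v}(1,2n-1)$ in the case $n=1$. This gives a closed expression for $\CI_0^1(\tb{x},\tb{y}) = \langle \CZ(x_1), \CZ(x_2) \rangle$ on $\CN^1(1,1)$ in terms of the valuations of the entries of $B$; this handles the case $h=0$ geometrically. For $h=2$, I would not recompute from scratch: instead, I would apply the isomorphism $\theta:\CN \simeq \widehat{\CN}$ described in Definition \ref{definition2.2}(2) and Property (3) of Section \ref{subsec:section2.2}, which converts $\CY$-cycles into $\CZ$-cycles and transforms $B$ into $B^{\vee_2}$. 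This identifies $\CI_2^1(\tb{x},\tb{y})$ with an intersection number of two $\CZ$-cycles, which is again covered by Sankaran.

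Third, I would compute the right-hand side $\CJ_h^1(B)$ explicitly. The key technical step is to evaluate $W_{h,1}(B,r)$ and $W_{h,0}(B,0)$ for $B$ in the reduced diagonal form of step one, using the decomposition of Lemma \ref{lemma3.3} as a sum over $Y \in \CR_2 = \Gamma_2\backslash X_2(E_v)$. The set $\CR_2$ consists of diagonal matrices $\diag(\pi^{\mu_1},\pi^{\mu_2})$ and anti-diagonal matrices with entries $\pi^e$, so the sum has the same shape as the computation of $W_{1,1}(A_1,0)$ carried out in Lemma \ref{lemma4.1}. Using Lemma \ref{lemma 3.12}, Lemma \ref{lemma3.14}, and the explicit formula for $\alpha(Y;\Gamma_2)$ from \cite[Theorem 2.5]{Hir}, each of these sums reduces to a finite geometric-series computation. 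The constants $\beta_0^0$ and $\beta_0^2$ can be read off from Theorem \ref{theorem3.14} in the $n=1$ specialization by solving an explicit $3 \times 3$ linear system in $\FB$. Combined with Lemma \ref{lemma4.1}, this gives a closed form for $\CJ_h^1(B)$ as a polynomial in $q^{-1}$ and the valuations $a,b$ of the diagonal entries of $B$.

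The final step is to match the closed form of $\CJ_h^1(B)$ against Sankaran's formula for $\CI_h^1(\tb{x},\tb{y})$, case by case on the valuations (including the boundary cases where one or both of $a,b$ are negative, since in the $h=2$ case a negative valuation is allowed on $\CY$-cycles). The main obstacle I expect is precisely this bookkeeping: ensuring that the case division in Sankaran's formula matches the case division that emerges from the stratification of the sum over $\CR_2$, and that the derivative $W'_{h,1}(B,0)$ is computed with the correct sign conventions on $x=(-q)^{-2r}$ consistent with Definition preceding Lemma \ref{lemma 3.12}. Once the two closed forms are in hand, checking equality is a mechanical verification; no further geometric input is needed beyond Sankaran's theorem and the non-emptiness/dimension conclusions of Proposition \ref{proposition2.3} and Proposition \ref{proposition2.4}.
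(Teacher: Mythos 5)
Your proposal is correct and would yield a proof, and its skeleton coincides with the paper's: you compute $\beta_0^0$ and $\beta_0^2$ from the $3\times 3$ linear system in the proof of Theorem \ref{theorem3.14}, normalize by $W_{1,1}(A_1,0)=q^{-5}(q+1)^2$ from Lemma \ref{lemma4.1}, reduce $h=2$ to $h=0$ by transporting the cycles through $\theta$ (so $B$ becomes $B^{\vee_2}=\pi B$), and conclude by comparison with Sankaran's formula in \cite{San}. The one genuinely different step is how you evaluate the analytic side. You propose to expand $W'_{h,1}(B,0)$ and $W_{h,0}(B,0)$ as sums over $\CR_2$ via Lemma \ref{lemma3.3}, obtain closed forms in the valuations $(a,b)$ of a diagonalized $B$, and match against Sankaran case by case. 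The paper short-circuits all of this: for $h=0$ the Schwartz function $1_{0,t}$ is the characteristic function of $((L_t)^{\vee})^2$, and since $((L_1)^{\vee})^2$ has volume $q^{-4}$ one gets the direct identifications $W'_{0,1}(B,0)=q^{-4}\,\alpha'\bigl(\diag(\pi,1),B\bigr)$ and $W_{0,0}(B,0)=\alpha(1_2,B)$ with no series evaluation whatsoever; inserting $\beta_0^0=q^{-2}(q^2-1)^{-1}$, the conjectural formula becomes \emph{verbatim} the intersection formula of \cite[Corollary 3.6]{San}, so no diagonalization of $B$ and no case analysis on $(a,b)$ is needed. Your route is workable (the $\CR_2$-sums are geometric series as in Lemma \ref{lemma4.1}, and polynomiality in $x=(-q)^{-2r}$ justifies the derivative), but it is substantially heavier and merely re-derives density values that Sankaran already provides in closed form. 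Two caveats on your supporting citations: first, Theorem \ref{theorem3.24} concerns $\CJ$, not $\CI$, and linear invariance of the geometric intersection number on $\CN^1(1,1)$ is not established in this paper, so the reduction of $B$ to $\diag(\pi^a,\pi^b)$ should be applied only to the analytic side (Lemma \ref{lemma3.5}), while on the geometric side you should instead use that Sankaran's formula already depends only on the equivalence class of $B$; second, for $h=2$ you cannot literally invoke Theorem \ref{theorem3.24}(2), which is stated for $h=n$ — either compute $\CJ_2^1(B)$ directly with $\beta_0^2$, as your step three in effect does, or observe that $W_{2,1}(B,0)=0$ by the determinant-parity obstruction (as in the proof of Theorem \ref{theorem3.16}), so that Theorem \ref{theorem3.14} yields $\CJ_2^1(B)=\CJ_0^1(B^{\vee_2})$, which is the duality the paper's one-line treatment of $h=2$ implicitly uses.
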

\begin{proof}
	First, assume that $h=0$. We need to compute the constant $\beta_0^0$ in Theorem \ref{theorem3.14}, therefore we will use the notation in the proof of the theorem. Note that
	
	\begin{equation*}
	\FB\left(\begin{array}{l}
	\beta_0^0\\
	-\beta_{0}^{2}\\
	\delta_0
	\end{array}
	\middle)=(-q)^{-4}\middle(\begin{array}{c}
	-2\\
	-1\\
	0
	\end{array}\right),
	\end{equation*}
	and
	\begin{equation*}
	\FB=\left(\begin{array}{lll}
	(-q)^{-2} & (-q)^{-2} &(-q)^{-4}\\
	(-q)^{-1} & (-q)^{-3} &(-q)^{-4}\\
	(-q)^0 &(-q)^{-4} & (-q)^{-4}
	\end{array}\right).
	\end{equation*}
	
	These two imply that $\beta_0^0=q^{-2}(q^2-1)^{-1}$.
	
	Now, we will express $W'_{0,1}(B,0)$ and $W_{0,0}(B,0)$ as usual representation densities.
	
	Recall that $1_{h,t}$ is the characteristic function of $(L_t^{\vee})^{2n-h} \times L_t^h$. Also $(L_t)^{2n}$ has measure $1$ with respect to our Haar measure. Therefore, in the case of $n=1, h=0$, $((L_1)^{\vee})^2$ has measure $q^{-4}$. This implies that
	
	\begin{equation*}
	\begin{array}{l}
	W_{0,1}'(B,0)=q^{-4}\alpha'(\left(\begin{array}{ll} \pi & \\ &1 \end{array}\right),B)\\
	W_{0,0}(B,0)=\alpha(\left(\begin{array}{ll} 1 & \\ &1\end{array}\right),B).
	\end{array}
	\end{equation*}
	
	Therefore, our conjecture is
	\begin{equation*}\begin{array}{ll}
	\CI_0^1(\tb{x})&=\dfrac{1}{W_{1,1}(A_1,0)}\lbrace W'_{0,1}(B,0)- \beta_0^0W_{0,0}(B,0)\rbrace\\
	&=q^5(q+1)^{-2}\lbrace q^{-4}\alpha'(\left(\begin{array}{ll} \pi & \\ &1 \end{array}\right),B)-q^{-2}(q^2-1)^{-1}\alpha(\left(\begin{array}{ll} 1 & \\ &1\end{array}\right),B)\rbrace\\
	&=q(q+1)^{-2}\lbrace\alpha'(\left(\begin{array}{ll} \pi & \\ &1 \end{array}\right),B)-q^2(q^2-1)^{-1}\alpha(\left(\begin{array}{ll} 1 & \\ &1\end{array}\right),B)\rbrace.
	\end{array}
	\end{equation*}
	
	This coincides with \cite[Corollary 3.6]{San}, therefore our conjecture is true for $n=1, h=0$. The case of $n=1, h=2$ is similar by using \cite[Corollary 3.6]{San} and Theorem \ref{theorem3.24} (2).
	
\end{proof}

\subsection{The case of $n=1, h=1$}
In this subsection, we will prove Conjecture \ref{conjecture1} when $n=1$.
For this, we need to compute the intersection number of special cycles $\CZ(x), \CY(y)$, where $x,y \in \BV$. Let
\begin{equation*}
B=\left( \begin{array}{ll}
h(x,x) & h(x,y)\\
h(y,x) & h(y,y)
\end{array}\right).
\end{equation*}

In the case $\val(h(y,y))=-1$, we already computed the intersection number of $\CZ(x), \CY(y)$ in \cite{Cho}. This computation and Theorem \ref{theorem3.16} imply that Conjecture \ref{conjecture1} holds in this case. Therefore, we may assume that $\val(h(x,x)) \geq 0$, $\val(h(y,y)) \geq 0$, and $\val(h(x,y)) \geq 0$.

First, let us compute the coefficient $\beta^1_0$.

\begin{lemma}\label{lemma4.3}
	$\beta^1_0=-\dfrac{1}{q(q^2-1)}$.
\end{lemma}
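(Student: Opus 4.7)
The plan is to specialize the linear system derived in the proof of Theorem \ref{theorem3.14} to $n=1$, $h=1$ and solve it. Since $h = n$ in this case, the two formal families $\beta_i^h$ and $\beta_i^{2n-h}$ collapse to the same constant; nevertheless I would keep them as independent variables $a,b$ in the $3\times 3$ matrix equation and recover $a = b$ as a consistency condition.

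First, I would compute the matrix $\FB$ and the right-hand side explicitly. Using the formulas
\[
m_{it} = (-q)^{(n-t)(i-(2n-h+1))-2t(2n-h)}, \qquad n_{it} = q^{-(2n-h)^2+h^2}(-q)^{-(n-t)(i-(2n-h+1))-2th},
\]
at $n = h = 1$, and writing $u := -q$ for brevity, the system becomes
\[
\begin{pmatrix} u^{-1} & u & u^{-2} \\ 1 & 1 & u^{-2} \\ u & u^{-1} & u^{-2} \end{pmatrix}
\begin{pmatrix} a \\ -b \\ c \end{pmatrix}
= u^{-2}\begin{pmatrix} -1 \\ 0 \\ 1 \end{pmatrix},
\]
where $(a,b,c) = (\beta_0^h, \beta_0^{2n-h}, \delta_1)$.

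Next I would eliminate $c$ via the middle row, which gives $c = u^2(b-a)$. Substituting back into rows $1$ and $3$ yields
\[
(u-1)(u^{-1}a + b) = u^{-2}, \qquad (u-1)(a + u^{-1}b) = u^{-2}.
\]
Their difference forces $(a-b)(u-1)(u^{-1}-1) = 0$, hence $a = b$ (matching the expected identification $\beta_0^h = \beta_0^{2n-h}$ when $h = n$), and consequently $\delta_1 = 0$. Substituting $a = b$ into either surviving equation gives $(u^2-1)u^{-1} a = u^{-2}$, so $a = \frac{1}{u(u^2-1)} = -\frac{1}{q(q^2-1)}$, as claimed.

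The calculation is elementary, so there is no serious obstacle; the only subtle point is the degenerate structure when $h = 2n - h$, where the $3\times 3$ system is overdetermined but automatically consistent, with the redundancy precisely encoding the symmetry $\beta_0^h = \beta_0^{2n-h}$.
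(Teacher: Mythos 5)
Your proposal is correct and follows essentially the same route as the paper: it specializes the linear system from the proof of Theorem \ref{theorem3.14} to $n=h=1$ (your matrix agrees entry by entry with the paper's $\CB$, and your row reduction correctly yields $\beta_0^1=\tfrac{1}{u(u^2-1)}=-\tfrac{1}{q(q^2-1)}$ with $u=-q$). The only difference is presentational: you keep $\beta_0^h$ and $\beta_0^{2n-h}$ as independent unknowns $a,b$ and deduce $a=b$ and $\delta_1=0$ as outputs, whereas the paper imposes the identification $\beta_0^h=\beta_0^{2n-h}$ from the start; your variant has the minor merit of explicitly confirming the consistency of the degenerate $h=n$ case that the paper takes for granted.
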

\begin{proof}
	In the proof of Theorem \ref{theorem3.14}, we have
	\begin{equation*}
	\CB \left(\begin{array}{c}
	\beta^1_0\\
	-\beta^1_0\\
	\delta_1
	\end{array} \middle) =(-q)^{-2} \middle( \begin{array}{c}
	-1\\0\\1
	\end{array} \right),
	\end{equation*}
	where
	\begin{equation*}
	\CB=\left( \begin{array}{ccc}
	(-q)^{-1} & -q & (-q)^{-2}\\
	1 & 1 & (-q)^{-2}\\
	-q & (-q)^{-1}&(-q)^{-2}
	\end{array}\right).
	\end{equation*}
	This implies that $\beta^1_0=-\dfrac{1}{q(q^2-1)}$.
\end{proof}

\begin{lemma}\label{lemma4.4}
	Assume that $\val(h(x,x)) \geq 0$, $\val(h(y,y)) \geq 0$, and $\val(h(x,y)) \geq 0$. Then we have
	\begin{equation*}
	\dfrac{1}{W_{1,1}(A_1,0)}(W'_{1,1}(B,0)-\beta^1_0 W_{1,0}(B,0))=\dfrac{1}{2}\det B+1.
	\end{equation*}
\end{lemma}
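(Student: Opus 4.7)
The plan is a direct calculation that parallels the proof of Lemma~\ref{lemma4.1}: I would evaluate $W'_{1,1}(B,0)$ and $W_{1,0}(B,0)$ via the expansion of Lemma~\ref{lemma3.3} over the representatives of $\Gamma_2 \backslash X_2(E_v)$ in $\CR_2$, and then form the claimed combination. By Lemma~\ref{lemma3.3},
\begin{equation*}
W'_{1,1}(B,0) - \beta_0^1 W_{1,0}(B,0) = \sum_{Y \in \CR_2} \frac{\CG(Y,B) \bigl( \CF_1'(Y, A_1^{[0]}) - \beta_0^1 \CF_1(Y, 1_2) \bigr)}{\alpha(Y; \Gamma_2)},
\end{equation*}
where the representatives in $\CR_2$ come in two families: the diagonal $Y_{\mu_1,\mu_2} = \diag(\pi^{\mu_1}, \pi^{\mu_2})$ and the antidiagonal $Y_e$ with $\pi^e$ off-diagonal. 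The values of $\CF_1(Y, A_1^{[r]})$, $\CF_1(Y, 1_2)$, and $\alpha(Y;\Gamma_2)$ on these two families are either already recorded in the proof of Lemma~\ref{lemma4.1} or can be obtained in an identical way using Lemmas~\ref{lemma 3.12} and~\ref{lemma3.14}; the derivative $\CF_1'(Y, A_1^{[0]})$ is furnished by Lemma~\ref{lemma 3.13} specialized to $n = h = t = 1$.

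The only new input is the evaluation of $\CG(Y, B)$ for a general Hermitian $B = \bigl(\begin{smallmatrix} b_{11} & b_{12} \\ b_{12}^{*} & b_{22} \end{smallmatrix}\bigr)$ with $\val(b_{ij}) \geq 0$. I would write $\gamma = \bigl(\begin{smallmatrix} a & b \\ c & d \end{smallmatrix}\bigr) \in \Gamma_2$ (so $a, b, d \in O$ and $c \in \pi O$), expand $\Tr(Y \cdot B[\gamma])$ into monomials in $a, b, c, d$, and integrate term by term via the Gauss-sum formulas of Lemmas~\ref{lemma 3.12} and~\ref{lemma3.14}. The non-negativity of $\val(b_{ij})$ restricts the sum over $Y$ to finitely many representatives and ensures each nonzero $\CG(Y, B)$ is an explicit polynomial in $q^{-1}$ whose coefficients are $\min$-functions of the valuations of $b_{11}, b_{22}$, and $\Nm(b_{12})$.

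Assembling the contributions, I expect the antidiagonal representatives $Y_e$ to produce terms involving $\Nm(b_{12}) = b_{12} b_{12}^{*}$, while the diagonal representatives $Y_{\mu_1,\mu_2}$ produce terms involving $b_{11}$, $b_{22}$, and $b_{11} b_{22}$. Using the identity $\det B = b_{11} b_{22} - \Nm(b_{12})$ together with the explicit values $W_{1,1}(A_1, 0) = q^{-5}(q+1)^2$ (Lemma~\ref{lemma4.1}) and $\beta_0^1 = -1/(q(q^2-1))$ (Lemma~\ref{lemma4.3}), the combination should collapse to $W_{1,1}(A_1, 0) \bigl( \tfrac{1}{2} \det B + 1 \bigr)$, which is the claim.

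The hardest part will be the combinatorial bookkeeping of the geometric series produced by the diagonal family $Y_{\mu_1, \mu_2}$: one must sum over $\mu_1, \mu_2$ in several regimes determined by their signs and by $\val(b_{ij})$, handle the boundary cases where the integrand's valuation exponent transitions (analogous to Cases 1-1-1 through 1-2-3 in the proof of Lemma~\ref{lemma4.1}), and verify that the telescoping matches $\tfrac12 \det B + 1$ exactly. The essential cancellation is that all lower-order $q$-dependent corrections must vanish, leaving only the piece depending on $\det B$ and the constant term~$1$.
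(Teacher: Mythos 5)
Your plan takes a genuinely different route from the paper, and as it stands it has two concrete gaps. The first is the evaluation of $\CG(Y,B)$: you propose to expand $\Tr(Y\,B[\gamma])$ for a general Hermitian $B=(b_{ij})$ and integrate term by term using Lemmas \ref{lemma 3.12} and \ref{lemma3.14}. Those Gauss-sum formulas apply only when the phase decomposes into independent monomials $\psi_v(\pi^e\Nm(\gamma_{kj}))$ and $\psi_v(\pi^e\Tr(\gamma_{kj}\gamma^*_{k'j'}))$ in distinct entries of $\gamma$, which happens exactly when both $Y$ and $B$ are monomial matrices; this is why the paper's $\CG$-computations (the proof of Lemma \ref{lemma3.8}, and the $\CG$-cases in the proof of Lemma \ref{lemma4.1}) are carried out only for $B\in\CR_{2n}^h$. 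For a $B$ with all four entries nonzero, the entries of $\gamma$ are coupled in the phase and the integral over $\Gamma_2$ does not factor, so the step as described fails; the repair is to first invoke the invariance of Lemma \ref{lemma3.5} to replace $B$ by its $\Gamma_2$-representative in $\CR_2$ (a diagonal $\diag(\pi^a,\pi^b)$ or an antidiagonal class). For the same reason, your heuristic that the antidiagonal representatives $Y_e$ produce the $\Nm(b_{12})$-terms while the diagonal $Y_{\mu_1,\mu_2}$ produce the $b_{11},b_{22}$-terms has no basis: each individual $\CG(Y,B)$ mixes all entries of $B$. (Also, the hypothesis $\val(b_{ij})\geq 0$ does not make the $Y$-sum finite; as in Lemma \ref{lemma4.1}, the vanishing of $\CG$ only bounds the exponents of the contributing $Y$ from below, and one still has to sum infinite geometric series.)

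The second, decisive gap is that the entire content of the lemma --- that the normalized combination collapses to $\frac{1}{2}\det B+1$, depending only on $\val(\det B)$ and not on the finer $\Gamma_2$-invariants of $B$ --- is exactly the step you leave as ``should collapse''; nothing in the sketch exhibits this cancellation, and it is not routine bookkeeping. The paper proves the lemma differently and much more cheaply: the hypotheses $\val(h(x,x)),\val(h(y,y)),\val(h(x,y))\geq 0$ force any $\varphi$ counted by $W_{1,1}(B,r)$ to satisfy $\varphi(u_2)\in\pi M^{\vee}$ automatically (if $\varphi(u_2)\in M\setminus\pi M^{\vee}$ then $\val\langle\varphi(u_2),\varphi(u_2)\rangle=0$ while $\val\langle u_2,u_2\rangle\geq 1$), whence $W_{1,1}(B,r)=q^{-4}\alpha(\diag(1,\pi,1_{2r}),B)$ and $W_{1,0}(B,0)=\alpha(1_2,B)$; combining with Lemmas \ref{lemma4.1} and \ref{lemma4.3} and substituting the closed formulas of \cite[Corollary 3.6]{San} for $\alpha'(\diag(1,\pi),B)$ and $\alpha(1_2,B)$ with $B$ equivalent to $\diag(\pi^a,\pi^b)$, $a\geq b\geq 0$, $a+b$ even, yields $\frac{1}{2}(a+b)+1$. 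If you insist on the direct orbit-expansion route, you are in effect re-proving Sankaran's density formulas by Iwahori summation: possible in principle, but you must actually carry out the regime-by-regime summation for $B=\diag(\pi^a,\pi^b)$ and for the antidiagonal $\Gamma_2$-classes, and verify that the result is symmetric in $(a,b)$ and matches $\frac{1}{2}\val(\det B)+1$; none of that is yet in the proposal.
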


\begin{proof}
	First, we will prove that if $\val(h(x,x)) \geq 0$, $\val(h(y,y)) \geq 0$, and $\val(h(x,y)) \geq 0$, then
	\begin{equation}\label{eq4.2.1}
	W_{1,1}'(B,0)=q^{-4}\alpha'(\left(\begin{array}{ll}1&\\&\pi \end{array}\right),B).
	\end{equation}
	
	Let $M$ be an $O_{E_v}$-lattice with basis $\lbrace v_1, \dots, v_m \rbrace$ $(m=2+2r)$ such that the matrix of inner product $(\langle v_i,v_j \rangle)$ is $\pi A_1^{[r]}$. Let $L$ be an $O_{E_v}$-lattice with basis $\lbrace u_1,u_2 \rbrace$ such that the matrix of inner product $(\langle u_i,u_j \rangle)$ is $\pi B$.
	
	Then, for sufficiently large d, we have
	\begin{equation*}
	W_{1,1}(B,r)=q^{-4dm+4(d-1)} \vert J_d(L,M) \vert,
	\end{equation*}
	where
	\begin{equation*}
	\begin{array}{lll}
	J_d(L,M)&=\lbrace & \varphi \in \Hom_{O_{E_v}}(L, M/\pi^{d} M) \vert\\
	&&\langle \varphi(x), \varphi(y) \rangle\equiv \langle x, y \rangle \Mod \pi^d, \quad \forall x,y \in L,\\
	&&\varphi(u_1) \subset \pi M^{\vee} \rbrace.
	\end{array}
	\end{equation*}
	
	Now, assume that $\varphi(u_2) \in M \backslash \pi M^{\vee}$. Then we can write
	\begin{equation*}
	\varphi(u_2)=a_1 v_1 + a_2v_2+ \dots +a_mv_m,
	\end{equation*}
	where $a_2 \in O_{E_v}^{\times}$,$ a_1,a_3, \dots, a_m \in O_{E_v}$.
	Therefore,
	\begin{equation*}
	\langle\varphi(u_2), \varphi(u_2) \rangle=a_1a_1^*\pi+a_2a_2^*+ \dots+a_ma_m^*\pi,
	\end{equation*}
	and hence $\val(\langle \varphi(u_2),\varphi(u_2)\rangle)=0$.
	
	On the other hand, $\langle u_2,u_2 \rangle=(\pi B)_{22}$. By the assumption $\val(h(x,x)) \geq 0$, $\val(h(y,y)) \geq 0$, and $\val(h(x,y)) \geq 0$, this implies that $\val( \langle u_2,u_2 \rangle) \geq 1$.
	
	From these, we can conclude that if $\varphi(u_2) \in M \backslash \pi M^{\vee}$, then we have $\langle \varphi(u_2),\varphi(u_2)\rangle \not\equiv \langle u_2, u_2 \rangle$, and hence $\varphi \not\in J_d(L,M)$.
	
	Therefore, we have
	\begin{equation*}
\begin{array}{ll}
\vert J_d(L,M)\vert&\\
=\quad \vert\lbrace \varphi:L \rightarrow M  \text{ }(\Mod \pi^d) &\vert \langle \varphi(x), \varphi(y) \rangle \equiv \langle x,y\rangle \text{ } (\Mod \pi^d)\\
& \varphi(L) \subset \pi M^{\vee} \rbrace\vert\\
=\quad \vert\lbrace \varphi:L \rightarrow \pi M^{\vee}  \text{ }(\Mod \pi^d (\pi M^{\vee})) &\vert \langle \varphi(x), \varphi(y) \rangle \equiv \langle x,y\rangle \text{ } (\Mod \pi^d)\rbrace\vert\\
\times \vert M : \pi M^{\vee} \vert^{-2}&\\
=q^{-4}\vert I_d(L,\pi M^{\vee}) \vert&.
\end{array}
\end{equation*}

This implies that
\begin{equation*}
W_{1,1}(B,r)=q^{-4dm+4(d-1)-4} \vert I_d(L,\pi M^{\vee}) \vert.
\end{equation*}

Also, we have
\begin{equation*}\
\begin{array}{ll}
(q^{-d})^{2(2m-2)} \vert I_d(L, \pi M^{\vee}) \vert&= \alpha ( \left( \begin{array}{lll}
\pi &&\\
& \pi^2 &\\
& & \pi 1_{2r} \end{array} \right),\pi B )\\
&\\
&=q^4 \alpha(\left( \begin{array}{lll} 1 & &\\ &\pi&\\&&1_{2r} \end{array}\right),B).
\end{array}
\end{equation*}

These two imply that
\begin{equation*}
W_{1,1}(B,r)=q^{-4}\alpha(\left( \begin{array}{lll} 1 & &\\ &\pi&\\&&1_{2r} \end{array}\right),B).
\end{equation*}

This proves the claim \eqref{eq4.2.1}
\begin{equation*}
W_{1,1}'(B,0)=q^{-4}\alpha'(\left( \begin{array}{ll}1&\\&\pi\end{array}\right),B).
\end{equation*}

Also, it is easy to see that $W_{1,0}(B,0)=\alpha(1_2,B)$.
	
Therefore, by Lemma \ref{lemma4.1} and Lemma \ref{lemma4.3}, we have
\begin{equation*}
\begin{array}{l}
\dfrac{1}{W_{1,1}(A_1,0)}(W'_{1,1}(B,0)-\beta^1_0 W_{1,0}(B,0))\\
=\dfrac{q^5}{(q+1)^2}(\dfrac{1}{q^4} \alpha'(\left( \begin{array}{ll}1&\\&\pi\end{array}\right),B)+\dfrac{1}{q(q^2-1)}\alpha(1_2,B))\\
\\
=\dfrac{q}{(q+1)^2}(\alpha'(\left( \begin{array}{ll}1&\\&\pi\end{array}\right),B)+\dfrac{q^3}{q^2-1}\alpha(1_2,B)).
\end{array}
\end{equation*}

From \cite[Corollary 3.6]{San}, we know that if $B$ is $GL_2(O_{E_v})$-equivalent to $\diag(\pi^a,\pi^b)$, where $a \geq b \geq 0$ and $a+b$ is even, then
\begin{equation*}
\alpha'(\left( \begin{array}{ll}1&\\&\pi\end{array}\right),B)=\dfrac{1}{2}(a+b)\dfrac{(q+1)^2}{q}-\dfrac{(q+1)^2}{q(q^2-1)}(q^{b+2}-q^2-q+1),
\end{equation*}
and
\begin{equation*}
\alpha(1_2,B)=\dfrac{(q+1)^2}{q^3}(q^{b+1}-1).
\end{equation*}

Therefore,
\begin{equation*}
\begin{array}{l}
\dfrac{1}{W_{1,1}(A_1,0)}(W'_{1,1}(B,0)-\beta^1_0 W_{1,0}(B,0))\\
=\dfrac{q}{(q+1)^2}(\alpha'(\left( \begin{array}{ll}1&\\&\pi\end{array}\right),B)+\dfrac{q^3}{q^2-1}\alpha(1_2,B))\\
\\
=\dfrac{1}{2}(a+b)+1\\\\
=\dfrac{1}{2}\det B+1.
\end{array}
\end{equation*}
This finishes the proof of the lemma.
\end{proof}

Now, it remains to compute the intersection numbers of special cycles $\CZ(x)$ and $\CY(y)$. For this, we need to use the results in \cite{San}, and hence we will use the notation in loc. cit.

\begin{definition}(\cite[Definition 2.8]{San})
	
	\begin{enumerate}
		\item For an $O_{E_v}$-lattice $\Lambda$ in $\BV$, we call $\Lambda$ a vertex lattice if $\pi \Lambda^{\vee} \subset \Lambda \subset \Lambda^{\vee}$.
		
		\item Let $x \in \BV$ with $m_x:=\val(h(x,x)) \geq -1$, and let $t_x:=[\dfrac{m_x+1}{2}]$. Then $\val(h(\pi^{-t_x}x,\pi^{-t_x}x))$ is either $0$ or $-1$. By \cite[Lemma3.8]{San2}, there is a unique vertex lattice $\Lambda_x$ such that $x \in \Lambda_x \backslash \pi \Lambda_x$ and $\Lambda_x=\Lambda_x^{\vee}$ (resp. $\Lambda_x=\pi \Lambda_x^{\vee}$) if $m_x$ is even (resp. odd). We call this lattice $\Lambda_x$ the central lattice for $x$.
	\end{enumerate}
	
\end{definition}

\begin{definition}(cf. \cite[remark 2.10]{San})
	\begin{enumerate}
		\item For two vertex lattices $\Lambda, \Lambda'$, we denote by $d(\Lambda, \Lambda')$ the distance between two vertex lattices in the Bruhat-Tits tree.
		
		\item For $m \geq 0$ and a vertex lattice $\Lambda$, we define
		\begin{equation*}
		\BB_m(\Lambda):=\lbrace \Lambda' \text{ vertex lattice } \vert d(\Lambda,\Lambda')\leq m \rbrace.
		\end{equation*}
		
		\item For $x \in \BV$, we define
		\begin{equation*}
		m(x,\Lambda)=\dfrac{1}{2} \left\lbrace \begin{array}{ll}
		m_x-d(\Lambda,\Lambda_x) &\text{ if }m_x \equiv d(\Lambda,\Lambda_x) \Mod 2\\
		m_x+1-d(\Lambda,\Lambda_x)&\text{ if }m_x \not\equiv d(\Lambda,\Lambda_x) \Mod 2.
		\end{array}\right.
		\end{equation*}
		
		\item For $y \in \BV$, we define
		\begin{equation*}
		m^{\vee}(y,\Lambda)=\dfrac{1}{2} \left\lbrace \begin{array}{ll}
		m_y+1-d(\Lambda,\Lambda_y) &\text{ if }m_y+1 \equiv d(\Lambda,\Lambda_y) \Mod 2\\
		m_y+2-d(\Lambda,\Lambda_y)&\text{ if }m_y+1 \not\equiv d(\Lambda,\Lambda_y) \Mod 2.
		\end{array}\right.
		\end{equation*}

	\end{enumerate}
	
\end{definition}

\begin{lemma}
	(cf. \cite[Lemma 2.12]{San}) Let $x,y \in \BV$ and assume that $\BB_{m_x}(\Lambda_x) \cap \BB_{m_y+1}(\Lambda_y) \neq \emptyset$. Let $\FG$ be the unique shortest path between $\Lambda_x$ and $\Lambda_y$. Let
	\begin{equation*}
	r:=\min(\dfrac{m_x+m_y-d(\Lambda_x,\Lambda_y)}{2},m_x,m_y+1).
	\end{equation*}
	
	\begin{enumerate}
		\item Assume that $m_x \leq m_y+1$ and $\BB_{m_x}(\Lambda_x) \not\subset \BB_{m_y+1}(\Lambda_y)$. Then the intersection $\BB_{m_x}(\Lambda_x) \cap \BB_{m_y+1}(\Lambda_y)$ is equal to $\BB_r(\Gamma_x) \cup \BB_r(\Gamma_y)$ where $\Gamma_x$ (resp. $\Gamma_y$) is the unique lattice in $\FG$ such that
	\begin{equation*}
	\begin{array}{l}
	d(\Lambda_x,\Gamma_x)=m_x-r-1 \text{ (resp. } d(\Lambda_x,\Gamma_y)=m_x-r \text{)}.
	\end{array}
	\end{equation*}
		
		\item Assume that $m_x \geq m_y+1$ and $\BB_{m_x}(\Lambda_x) \not\supset \BB_{m_y+1}(\Lambda_y)$. Then the intersection $\BB_{m_x}(\Lambda_x) \cap \BB_{m_y+1}(\Lambda_y)$ is equal to $\BB_r(\Gamma_x) \cup \BB_r(\Gamma_y)$ where $\Gamma_x$ (resp. $\Gamma_y$) is the unique lattice in $\FG$ such that
		\begin{equation*}
		\begin{array}{l}
		d(\Lambda_y,\Gamma_x)=m_y+1-r \text{ (resp. } d(\Lambda_y,\Gamma_y)=m_y-r \text{)}.
		\end{array}
		\end{equation*}
	\end{enumerate}
\end{lemma}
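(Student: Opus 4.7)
The plan is to adapt the tree-geometric argument of \cite[Lemma 2.12]{San}, with the single modification that the radius around $\Lambda_y$ is now $m_y+1$ instead of $m_y$. Since the Bruhat-Tits tree is a tree, for any vertex lattice $\Lambda'$ there is a unique vertex $\mathrm{pr}(\Lambda')\in\FG$ realizing $d(\Lambda',\FG)$, and the geodesic from $\Lambda_x$ (resp.\ $\Lambda_y$) to $\Lambda'$ passes through $\mathrm{pr}(\Lambda')$. Parameterizing $\FG$ as $\Gamma_0=\Lambda_x,\Gamma_1,\ldots,\Gamma_D=\Lambda_y$ with $D:=d(\Lambda_x,\Lambda_y)$, and writing $\mathrm{pr}(\Lambda')=\Gamma_k$ and $s:=d(\Gamma_k,\Lambda')$, membership in $\BB_{m_x}(\Lambda_x)\cap\BB_{m_y+1}(\Lambda_y)$ is equivalent to
\[
k+s\le m_x \quad\text{and}\quad (D-k)+s\le m_y+1.
\]

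In Case~(1), the hypotheses $m_x\le m_y+1$ and $\BB_{m_x}(\Lambda_x)\not\subset\BB_{m_y+1}(\Lambda_y)$ give $D>m_y+1-m_x$, and the assumed non-emptiness combined with $m_x\le m_y+1$ forces $D\ge m_y-m_x$. Together these yield $r=\tfrac{m_x+m_y-D}{2}$. I would then set $\Gamma_x:=\Gamma_{m_x-r-1}$ and $\Gamma_y:=\Gamma_{m_x-r}$ (adjacent on $\FG$), so that the inclusion $\BB_r(\Gamma_x)\cup\BB_r(\Gamma_y)\subset\BB_{m_x}(\Lambda_x)\cap\BB_{m_y+1}(\Lambda_y)$ is a direct triangle-inequality calculation using $2r=m_x+m_y-D$: the four quantities $d(\Lambda_x,\Gamma_x)+r$, $d(\Lambda_y,\Gamma_x)+r$, $d(\Lambda_x,\Gamma_y)+r$, $d(\Lambda_y,\Gamma_y)+r$ evaluate to $m_x-1,\,m_y+1,\,m_x,\,m_y$ respectively. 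For the reverse inclusion, take $\Lambda'$ in the intersection with projection $\Gamma_k$. If $k\ge m_x-r$, the bound $s\le m_x-k$ immediately yields $d(\Gamma_y,\Lambda')=(k-(m_x-r))+s\le r$. If instead $k\le m_x-r-1$, substituting $2r=m_x+m_y-D$ converts the bound $s\le m_y+1-D+k$ into $s\le 2r+1-m_x+k$, and hence $d(\Gamma_x,\Lambda')=(m_x-r-1-k)+s\le r$. This exhausts all integer values of $k$ and gives the desired containment.

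Case~(2) follows by the symmetric version of this argument, interchanging the roles of $\Lambda_x$ and $\Lambda_y$: the alternative assumption $\BB_{m_y+1}(\Lambda_y)\not\supset\BB_{m_x}(\Lambda_x)$ again forces $r=\tfrac{m_x+m_y-D}{2}$, and one places $\Gamma_x,\Gamma_y$ at distances $m_y+1-r,\,m_y-r$ from $\Lambda_y$ along $\FG$ (matching the formulas in the statement), after which the same two-sided check applies verbatim. The only bookkeeping worth flagging is verifying that $r$ is a non-negative integer and that the indicated positions lie in $\{0,1,\ldots,D\}$; both follow from the standing hypotheses combined with the bipartite parity convention on the Bruhat-Tits tree, which is reflected in the distinction between self-dual and $\pi$-modular central lattices $\Lambda_x,\Lambda_y$. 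I do not anticipate a substantive obstacle beyond careful tracking of indices---the content is entirely that of \cite[Lemma 2.12]{San}, with $m_y$ replaced by $m_y+1$ throughout.
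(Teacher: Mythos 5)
Your argument is correct and is essentially the paper's own proof: the paper disposes of this lemma in one line by reducing to the Bruhat--Tits tree picture of \cite[Lemma 2.12]{San}, and your projection-onto-the-geodesic computation (with membership conditions $k+s\le m_x$, $D-k+s\le m_y+1$ and the two-sided check around $\Gamma_x,\Gamma_y$) is precisely the execution of that reduction with $m_y$ replaced by $m_y+1$. One small misattribution: in Case (1) the lower bound $D\ge m_y-m_x$ does not come from non-emptiness (which only bounds $D$ \emph{above} by $m_x+m_y$), but from non-containment together with the parity $d(\Lambda_x,\Lambda_y)\equiv m_x+m_y \pmod 2$, which gives the stronger $D\ge m_y+2-m_x$; this does not affect your conclusion that $r=\dfrac{m_x+m_y-d(\Lambda_x,\Lambda_y)}{2}$ or any subsequent step.
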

\begin{proof}
	As in \cite[Lemma 2.12]{San}, this follows from the fact that the vertex lattices can be identified with the vertices in a Bruhat-Tits tree.
\end{proof}

Now, we are ready to prove the following theorem.

\begin{theorem}\label{theorem4.8}
	Conjecture \ref{conjecture1} is true for $n=1$.
\end{theorem}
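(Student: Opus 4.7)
The plan is as follows. As noted in the paragraph preceding Lemma \ref{lemma4.3}, when $\val(h(y,y)) = -1$ the intersection number $\langle \CZ(x),\CY(y) \rangle$ was computed in \cite{Cho}, and together with Theorem \ref{theorem3.16} this handles Conjecture \ref{conjecture1} in that case. We therefore reduce to the remaining range $\val(h(x,x)), \val(h(y,y)), \val(h(x,y)) \geq 0$, and by Lemma \ref{lemma4.4} the right-hand side of Conjecture \ref{conjecture1} evaluates to $\tfrac{1}{2}\val(\det B) + 1$. It remains to prove
\begin{equation*}
\langle \CZ(x), \CY(y) \rangle = \tfrac{1}{2} \val(\det B) + 1.
\end{equation*}

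To compute the left-hand side I would follow the strategy of \cite{San}: the reduced special fiber of $\CN^1(1,1)$ is a union of projective lines glued along the edges of the Bruhat-Tits tree of vertex lattices in $\BV$, and each special cycle is an effective divisor supported on a ball around the relevant central lattice, with multiplicities on the components determined by the distances in the tree. Concretely, $\CZ(x)$ is supported on $\BB_{m_x}(\Lambda_x)$ with $m_x = \val(h(x,x))$. By Definition \ref{definition2.2}, $\CY(y) = \theta^{-1}(\CZ(\lambda_{\BX} \circ y))$, and the polarization twist gives $\val(h'(\lambda_{\BX} \circ y, \lambda_{\BX} \circ y)) = \val(h(y,y)) + 1$ (compare the block $\pi H$ of $B^{\vee_n}$), so $\CY(y)$ admits an analogous description but with radius $m_y + 1$ instead of $m_y$. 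The key structural feature is this $+1$ shift in the radius.

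With the two divisors explicitly described, I would compute $\langle \CZ(x), \CY(y) \rangle$ by the case analysis given in the lemma preceding Theorem \ref{theorem4.8} (the analog of \cite[Lemma 2.12]{San}): either the balls $\BB_{m_x}(\Lambda_x)$ and $\BB_{m_y+1}(\Lambda_y)$ are disjoint, one is contained in the other, or they partially overlap along the unique shortest path between $\Lambda_x$ and $\Lambda_y$. In each case the intersection number is a finite sum over the common vertex lattices, expressible in terms of $m_x$, $m_y$, $\val(h(x,y))$ and $d(\Lambda_x,\Lambda_y)$. The identity $\val(\det B) = \val\bigl( h(x,x) h(y,y) - h(x,y) h(x,y)^* \bigr)$ should then allow one to check in each case that the total sum collapses to $\tfrac{1}{2}\val(\det B) + 1$.

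The main obstacle is the case of improper intersection, where $\CZ(x)$ and $\CY(y)$ share components of the special fiber and the contribution must be computed via a derived tensor product, as in the proof of \cite[Theorem 5.1]{San}. The only modification relative to \cite{San} is the $+1$ shift in the radius of the ball associated to $\CY(y)$; this shift is precisely what produces the extra additive constant $+1$ in $\tfrac{1}{2}\val(\det B) + 1$ compared with a pure $\CZ$-$\CZ$ intersection. Once the shift has been propagated through the Tor computation, the combinatorial sums coincide in form with those of \cite{San} and can be evaluated by the same method, completing the verification.
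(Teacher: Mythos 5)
Your proposal follows essentially the same route as the paper's proof: reduce to the case $\val(h(x,x)),\val(h(y,y)),\val(h(x,y))\geq 0$ (the $\val(h(y,y))=-1$ case being settled in \cite{Cho} together with Theorem \ref{theorem3.16}), evaluate the analytic side as $\tfrac{1}{2}\val(\det B)+1$ via Lemma \ref{lemma4.4}, and compute $\langle \CZ(x),\CY(y)\rangle$ on the Bruhat--Tits tree using Sankaran's horizontal/vertical divisor decompositions with the crucial radius shift $m_y+1$ for $\CY(y)$, splitting into containment and partial-overlap cases exactly as in the lemma modeled on \cite[Lemma 2.12]{San}. The only organizational difference is that in the containment cases the paper shortcuts by writing $\CY(y)-\CZ(y)$ as an explicit sum of vertical components $\BP_{\Lambda}$ and invoking the known $\CZ$--$\CZ$ intersection formula of \cite{San}, rather than recomputing all multiplicity sums directly as you propose, but the underlying ingredients (the pairings $\langle \BP_{\Lambda},\CZ(x)\rangle$ and \cite[Lemma 2.16]{San}) are identical.
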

\begin{proof}
As we noted above, we may assume that $\val(h(x,x))$, $\val(h(x,y))$, and $\val(h(y,y)) \geq 0$. We have the following three cases.

\tb{Case 1.} $\BB_{m_y+1}(\Lambda_y) \subset \BB_{m_x}(\Lambda_x)$.

\tb{Case 2.} $\BB_{m_y+1}(\Lambda_y) \supset \BB_{m_x}(\Lambda_x)$.

\tb{Case 3.} $\BB_{m_y+1}(\Lambda_y) \not\subset \BB_{m_x}(\Lambda_x)$ and $\BB_{m_y+1}(\Lambda_y) \not\supset \BB_{m_x}(\Lambda_x)$.\\

For simplicity, we denote by $\langle \CZ(x), \CY(y) \rangle$ the intersection number of $\CZ(x)$ and $\CY(y)$.

\tb{Case 1.} From \cite[Theorem 3.14]{San2}, we have
\begin{equation*}
\begin{array}{l}
\CZ(y)=\CZ(y)^h+\mathlarger{\sum}_{\BB_{m_y}(\Lambda_x)}m(y,\Lambda) \BP_{\Lambda}=:\CZ(y)^h+\CZ(y)^v,\\
\CY(y)=\CY(y)^h+\mathlarger{\sum}_{\BB_{m_y+1}(\Lambda_y)}m^{\vee}(y,\Lambda)\BP_{\Lambda}=:\CY(y)^h+\CY(y)^v,
\end{array}
\end{equation*}
where $\CZ(y)^h$ (resp. $\CY(y)^h$) is a horizontal divisor of $\CZ(y)$ (resp. $\CY(y)$) and $\CZ(y)^v$ (resp. $\CY(y)^v)$ is a vertical divisor of $\CZ(y)$ (resp. $\CY(y)$). Here, the formula for $\CY(y)$ is from \cite[Theorem 3.14]{San2} and the fact that $\theta:\CN=\CN^1(1,1) \rightarrow \widehat{\CN}=\CN^1(1,1)$ gives $\theta(\CY(y))=\CZ(\lambda \circ y)$ and $\val(h'(\lambda \circ y, \lambda \circ y))=\val(h(y,y))+1$.

Also, note that by the proof of \cite[Proposition 2.15]{San} (and \cite[Proposition 3.10]{San2}) we have $\CZ(x)^h=\CZ(\pi x)^h$. This implies that $\CZ(x)^h=\CY(y)^h$ since $\CZ(x) \subset \CY(y) \subset \CZ(\pi x)$ from the definition of $\CZ$ and $\CY$.

Therefore, we have
\begin{equation}\label{eq.4.2.2}
\CY(y)-\CZ(y)=\mathlarger{\sum}_{\substack{\Lambda \in \BB_{m_y}(\Lambda_y)\\ m_y \equiv d(\Lambda,\Lambda_y) \Mod 2}}\BP_{\Lambda}.
\end{equation}

By \cite[Lemma 2.11]{San}, we have
\begin{equation}\label{eq.4.2.3}
\langle \BP_{\Lambda}, \CZ(x) \rangle=\left\lbrace \begin{array}{ll}
	1, & \text{ if } \Lambda \in \BB_{m_x}(\Lambda_x),\text{ } d(\Lambda,\Lambda_x) \equiv m_x \mod2\\
	-q & \text{ if } \Lambda \in \BB_{m_x}(\Lambda_x),\text{ } d(\Lambda,\Lambda_x) \not\equiv m_x \mod2\\
	0 & \text{ if } \Lambda \not\in \BB_{m_x}(\Lambda_x).
	\end{array}\right.
\end{equation}

Therefore, we have
\begin{equation*}
\begin{array}{ll}
\langle \CZ(x), \CY(y)-\CZ(y) \rangle&=\mathlarger{\sum}_{\substack{\Lambda \in \BB_{m_y}(\Lambda_y)\\ m_y \equiv d(\Lambda,\Lambda_y) \Mod 2}} \langle\CZ(x),\BP_{\Lambda} \rangle\\
&=1+q(q+1)+ \dots +q^{m_y-1}(p+1)\\
&=1+\dfrac{q(q^{m_y}-1)}{q-1}.
\end{array}
\end{equation*}

Here we used the fact that $m_x \equiv d(\Lambda, \Lambda_x) (\Mod 2)$ if and only if $m_y \equiv d(\Lambda, \Lambda_y) (\Mod 2)$.

Now, by \cite[Corollary 2.17]{San} and \cite[Lemma 2.16]{San}, we have
\begin{equation*}
\langle \CZ(x),\CZ(y) \rangle=\dfrac{1}{2}\det B-\dfrac{q(q^{m_y}-1)}{q-1}.
\end{equation*}

Combining the above two equations, we have
\begin{equation*}
\langle \CZ(x),\CY(y) \rangle=\dfrac{1}{2}\det B+1.
\end{equation*}

Therefore, by Lemma \ref{lemma4.4}, Conjecture \ref{conjecture1} holds in the Case 1.\\

\tb{Case 2.} This case is similar to the Case 1. From \eqref{eq.4.2.2}, we have
\begin{equation*}
\CY(y)-\CZ(y)=\mathlarger{\sum}_{\substack{\Lambda \in \BB_{m_y}(\Lambda_y)\\ m_y \equiv d(\Lambda,\Lambda_y) \Mod 2}}\BP_{\Lambda}.
\end{equation*}
Also, by \eqref{eq.4.2.3}, $\langle \BP_{\Lambda}, \CZ(x) \rangle=0$ if $\Lambda \not\in \BB_{m_x}(\Lambda_x)$. Therefore,

\begin{equation*}
\begin{array}{ll}
\langle \CZ(x),\CY(y)-\CZ(y) \rangle&=\mathlarger{\sum}_{\substack{\Lambda \in \BB_{m_y}(\Lambda_y)\\ m_y \equiv d(\Lambda,\Lambda_y) \Mod 2}}\langle\CZ(x),\BP_{\Lambda} \rangle\\
&=\mathlarger{\sum}_{\substack{\Lambda \in \BB_{m_x}(\Lambda_x) \cap \BB_{m_y}(\Lambda_y)\\ m_x \equiv d(\Lambda,\Lambda_x) \Mod 2}}\langle\CZ(x),\BP_{\Lambda} \rangle
\end{array}
\end{equation*}

Note that $\BB_{m_y+1}(\Lambda_y) \supset \BB_{m_x}(\Lambda_x)$ implies that
\begin{equation*}
m_y+1 \geq m_x+d(\Lambda_x,\Lambda_y).
\end{equation*}

The equality cannot hold since $m_y \equiv m_x+d(\Lambda_x,\Lambda_y) \Mod 2$. Therefore, $m_y \geq m_x+d(\Lambda_x,\Lambda_y)$ and hence $\BB_{m_x}(\Lambda_x) \subset \BB_{m_y}(\Lambda_y)$.

Therefore, we have
\begin{equation*}
\begin{array}{ll}
\langle \CZ(x),\CY(y)-\CZ(y) \rangle&=\mathlarger{\sum}_{\substack{\Lambda \in \BB_{m_x}(\Lambda_x) \\ m_x \equiv d(\Lambda,\Lambda_x) \Mod 2}}\langle\CZ(x),\BP_{\Lambda} \rangle\\
&=1+q(q+1)+ \dots +q^{m_x-1}(p+1)\\
&=1+\dfrac{q(q^{m_x}-1)}{q-1}.
\end{array}
\end{equation*}

Now, by \cite[Corollary 2.17]{San} and \cite[Lemma 2.16]{San}, we have
\begin{equation*}
\langle \CZ(x),\CZ(y) \rangle=\dfrac{1}{2}\det B-\dfrac{q(q^{m_x}-1)}{q-1}.
\end{equation*}

Combining the above two equations, we have
\begin{equation*}
\langle \CZ(x),\CY(y) \rangle=\dfrac{1}{2}\det B+1.
\end{equation*}

Therefore, by Lemma \ref{lemma4.4}, Conjecture \ref{conjecture1} holds in the Case 2.\\

\tb{Case 3.} We can divide the Case 3 into the following 4 subcases.

\tb{Case 3-1}. $m_y+1 < m_x$ and $r$ is even.

\tb{Case 3-2}. $m_y+1 < m_x$ and $r$ is odd.

\tb{Case 3-3}. $m_y+1 \geq m_x$ and $r$ is even.

\tb{Case 3-4}. $m_y+1 \geq m_x$ and $r$ is odd.

We will only prove the Case 3-1 since all other cases are similar.

\tb{Case 3-1} Note that
\begin{equation*}
\langle \CZ(x),\CY(y) \rangle=\langle \CZ(x)^h,\CY(y)^h \rangle+\langle \CZ(x),\CY(y)^v \rangle+\langle \CZ(x)^v,\CY(y)^h \rangle.
\end{equation*}
Therefore, it suffices to compute each term separately.

In Case 3, $\Lambda_x \neq \Lambda_y$. Therefore, by \cite[Proposition 2.15]{San}, we have $\langle \CZ(x)^h,\CY(y)^h \rangle=0$.

Also, by the proof of \cite[Lemma 2.11]{San}, it is easy to check that
\begin{equation*}
\langle \CZ(x)^v,\CY(y)^h \rangle=m(x,\Lambda_y).
\end{equation*}

Now, it remains to compute $\langle \CZ(x), \CY(y)^v \rangle$. As in the proof of \cite[Theorem 2.14]{San}, we denote by $\Gamma_y^{(k)}$ the vertex in $\FG$ such that $d(\Lambda_y,\Gamma_y^{(k)})=d(\Lambda_y,\Gamma_y)-k$ and $d(\Gamma_y,\Gamma_y^{(k)})=k$.

For $k \geq 0$, we define $\FF(k)$ as the set of lattices $\Lambda$ in $\BB_{m_x}(\Lambda_x) \cap \BB_{m_y+1}(\Lambda_y)$ such that the unique shortest path from $\Lambda$ to $\Lambda_y$ first meets $\lbrace \Gamma_x, \Gamma_y,\Gamma_y^{(1)}, \dots \rbrace$ at $\Gamma_y^{(k)}$.

Also, we define $\FF(-1)$ as the set of lattices $\Lambda$ in $\BB_{m_x}(\Lambda_x) \cap \BB_{m_y+1}(\Lambda_y)$ such that the unique shortest path from $\Lambda$ to $\Lambda_y$ first meets $\lbrace \Gamma_x, \Gamma_y,\Gamma_y^{(1)}, \dots \rbrace$ at $\Gamma_x$.

Let $\ScF(k)=\mathlarger{\sum}_{\Lambda \in \FF(k)}m^{\vee}(y,\Lambda) \langle \BP_{\Lambda}, \CZ(x) \rangle$.

Then $\langle \CZ(x),\CY(y)^v \rangle=\mathlarger{\sum}_{-1 \leq k} \ScF(k)$. Therefore, it suffices to compute $\ScF(k)$ for $-1 \leq k$.

Now, we have to divide Case 3-1 into two more subcases.

\tb{Case 3-1-1} $m_y-r \leq r$.

\tb{Case 3-1-2} $m_y-r > r$.\\

In \tb{Case 3-1-1}, $\ScF(m_y-r)$ is the last term in the above sum and hence
\begin{equation*}
\langle \CZ(x),\CY(y)^v \rangle=\mathlarger{\sum}_{-1 \leq k \leq m_y-r} \ScF(k).
\end{equation*}

$\bullet$ For $\Lambda \in \FF(-1)$, we have
\begin{equation*}
\begin{array}{ll}
d(\Lambda,\Lambda_y)&=d(\Lambda,\Gamma_x)+d(\Gamma_x,\Lambda_y)\\
&=d(\Lambda,\Gamma_x)+m_y+1-r.
\end{array}
\end{equation*}

Let $t:=d(\Lambda,\Gamma_x)$.

If $t$ is even, then $d(\Lambda,\Lambda_y) \equiv m_y+1$ $(\Mod 2)$, and
if $t$ is odd, then $d(\Lambda,\Lambda_y) \not\equiv m_y+1$ $(\Mod 2)$.

Therefore, we have
\begin{equation*}
m^{\vee}(y,\Lambda)=\dfrac{1}{2}\left\lbrace\begin{array}{ll}
r-t & \text{ if } t \text{ is even,}\\
r+1-t & \text{ if } t \text{ is odd.}\\
\end{array}\right.
\end{equation*}

This implies that
\begin{equation*}
\begin{array}{ll}
\ScF(-1)&=\dfrac{r}{2} \sum_{\substack{\Lambda \in \FF(-1)\\d(\Lambda, \Gamma_x)=0}}\langle \BP_{\Lambda},\CZ(x) \rangle+ \dfrac{r}{2} \sum_{\substack{\Lambda \in \FF(-1)\\d(\Lambda, \Gamma_x)=1}}\langle \BP_{\Lambda},\CZ(x) \rangle\\
&+(\dfrac{r}{2}-1) \sum_{\substack{\Lambda \in \FF(-1)\\d(\Lambda, \Gamma_x)=2}}\langle \BP_{\Lambda},\CZ(x) \rangle+\dots+0\cdot \sum_{\substack{\Lambda \in \FF(-1)\\d(\Lambda, \Gamma_x)=r}}\langle \BP_{\Lambda},\CZ(x) \rangle.
\end{array}
\end{equation*}

Note that \begin{equation*}
\begin{array}{ll}
d(\Lambda,\Lambda_x)&\equiv d(\Lambda,\Gamma_x)+d(\Lambda_x,\Gamma_x) (\Mod 2)\\
&\equiv t+d(\Lambda_x,\Gamma_x) (\Mod 2),
\end{array}
\end{equation*}
and
\begin{equation*}
\begin{array}{ll}
d(\Lambda_x,\Gamma_x)&\equiv d(\Lambda_y,\Lambda_x)-d(\Lambda_y,\Gamma_x) (\Mod 2)\\
&\equiv d(\Lambda_y,\Lambda_x)-(m_y+1-r) (\Mod 2)\\
&\equiv d(\Lambda_y,\Lambda_x)-(m_y+1) (\Mod 2)\\
&\equiv m_x-1 (\Mod 2).
\end{array}
\end{equation*}

Now, by \cite[Lemma 2.11]{San},
\begin{equation*}
\langle \BP_{\Lambda}, \CZ(x) \rangle=\left\lbrace \begin{array}{ll}
	-p & \text{ if } \Lambda \in \FF(-1),t=\text{even},\\
	1 & \text{ if } \Lambda \in \FF(-1),t=\text{odd},
	\end{array}\right.
\end{equation*}

Therefore,

\begin{equation*}
\begin{array}{ll}
\ScF(-1)&=\dfrac{r}{2} (-p)+p\cdot\dfrac{r}{2}\cdot 1+p^2(\dfrac{r}{2}-1)(-p)+\dots\\&\\
&=0.
\end{array}
\end{equation*}

$\bullet$ For $\Lambda \in \FF(0)$, we have

\begin{equation*}
\begin{array}{ll}
d(\Lambda,\Lambda_y)&=d(\Lambda,\Gamma_y)+d(\Gamma_y,\Lambda_y)\\
&=d(\Lambda,\Gamma_y)+m_y-r.
\end{array}
\end{equation*}

Let $t:=d(\Lambda,\Gamma_y)$.

If $t$ is even, then $d(\Lambda,\Lambda_y) \not\equiv m_y+1$ $(\Mod 2)$, and
if $t$ is odd, then $d(\Lambda,\Lambda_y) \equiv m_y+1$ $(\Mod 2)$.

Therefore, we have
\begin{equation*}
m^{\vee}(y,\Lambda)=\dfrac{1}{2}\left\lbrace\begin{array}{ll}
r+2-t & \text{ if } t \text{ is even,}\\
r+1-t & \text{ if } t \text{ is odd.}\\
\end{array}\right.
\end{equation*}

This implies that
\begin{equation*}
\begin{array}{ll}
\ScF(0)&=(\dfrac{r}{2}+1) \sum_{\substack{\Lambda \in \FF(0)\\d(\Lambda, \Gamma_y)=0}}\langle \BP_{\Lambda},\CZ(x) \rangle+ \dfrac{r}{2} \sum_{\substack{\Lambda \in \FF(0)\\d(\Lambda, \Gamma_y)=1}}\langle \BP_{\Lambda},\CZ(x) \rangle\\
&+\dfrac{r}{2} \sum_{\substack{\Lambda \in \FF(0)\\d(\Lambda, \Gamma_y)=2}}\langle \BP_{\Lambda},\CZ(x) \rangle+\dots+1\cdot \sum_{\substack{\Lambda \in \FF(0)\\d(\Lambda, \Gamma_y)=r}}\langle \BP_{\Lambda},\CZ(x) \rangle.
\end{array}
\end{equation*}

Note that \begin{equation*}
\begin{array}{ll}
d(\Lambda,\Lambda_x)&\equiv d(\Lambda,\Gamma_y)+d(\Lambda_x,\Gamma_y) (\Mod 2)\\
&\equiv t+d(\Lambda_x,\Gamma_y) (\Mod 2),
\end{array}
\end{equation*}
and
\begin{equation*}
\begin{array}{ll}
d(\Lambda_x,\Gamma_y)&\equiv d(\Lambda_y,\Lambda_x)-d(\Lambda_y,\Gamma_y) (\Mod 2)\\
&\equiv d(\Lambda_y,\Lambda_x)-(m_y-r) (\Mod 2)\\
&\equiv d(\Lambda_y,\Lambda_x)-m_y (\Mod 2)\\
&\equiv m_x (\Mod 2).
\end{array}
\end{equation*}

Now, by \cite[Lemma 2.11]{San},
\begin{equation*}
\langle \BP_{\Lambda}, \CZ(x) \rangle=\left\lbrace \begin{array}{ll}
1 & \text{ if } \Lambda \in \FF(-1),t=\text{even},\\
-p & \text{ if } \Lambda \in \FF(-1),t=\text{odd},
\end{array}\right.
\end{equation*}

Therefore,

\begin{equation*}
\begin{array}{ll}
\ScF(0)&=(\dfrac{r}{2}+1)+(p-1)\cdot\dfrac{r}{2}\cdot (-p)+p(p-1)(\dfrac{r}{2})\cdot 1+\dots\\&\\
&=\dfrac{r}{2}+1.
\end{array}
\end{equation*}

$\bullet$ For $\Lambda \in \FF(k)$, $k=$odd, $k\neq m_y-r$, we have

\begin{equation*}
\begin{array}{ll}
d(\Lambda,\Lambda_y)&=d(\Lambda,\Gamma_y^{(k)})+d(\Gamma_y^{(k)},\Lambda_y)\\
&=d(\Lambda,\Gamma_y^{(k)})+m_y-r-k.
\end{array}
\end{equation*}

Let $t:=d(\Lambda,\Gamma_y^{(k)})$.

If $t$ is even, then $d(\Lambda,\Lambda_y) \equiv m_y+1$ $(\Mod 2)$, and
if $t$ is odd, then $d(\Lambda,\Lambda_y) \not\equiv m_y+1$ $(\Mod 2)$.

Therefore, we have
\begin{equation*}
m^{\vee}(y,\Lambda)=\dfrac{1}{2}\left\lbrace\begin{array}{ll}
r+k+1-t & \text{ if } t \text{ is even,}\\
r+k+2-t & \text{ if } t \text{ is odd.}\\
\end{array}\right.
\end{equation*}

This implies that
\begin{equation*}
\begin{array}{ll}
\ScF(k)&=(\dfrac{r+k+1}{2}) \sum_{\substack{\Lambda \in \FF(k)\\d(\Lambda, \Gamma_y^{(k)})=0}}\langle \BP_{\Lambda},\CZ(x) \rangle\\
&+ \dfrac{r+k+1}{2} \sum_{\substack{\Lambda \in \FF(k)\\d(\Lambda, \Gamma_y^{(k)})=1}}\langle \BP_{\Lambda},\CZ(x) \rangle\\
&+\dfrac{r+k-1}{2} \sum_{\substack{\Lambda \in \FF(k)\\d(\Lambda, \Gamma_y^{(k)})=2}}\langle \BP_{\Lambda},\CZ(x) \rangle\\
&+\dots+\dfrac{2k+2}{2} \sum_{\substack{\Lambda \in \FF(k)\\d(\Lambda, \Gamma_y^{(k)})=r-k}}\langle \BP_{\Lambda},\CZ(x) \rangle.
\end{array}
\end{equation*}

Note that \begin{equation*}
\begin{array}{ll}
d(\Lambda,\Lambda_x)&\equiv d(\Lambda,\Gamma_y^{(k)})+d(\Lambda_x,\Gamma_y^{(k)}) (\Mod 2)\\
&\equiv t+d(\Lambda_x,\Gamma_y^{(k)}) (\Mod 2),
\end{array}
\end{equation*}
and
\begin{equation*}
\begin{array}{ll}
d(\Lambda_x,\Gamma_y^{(k)})&\equiv d(\Lambda_y,\Lambda_x)-d(\Lambda_y,\Gamma_y^{(k)}) (\Mod 2)\\
&\equiv d(\Lambda_y,\Lambda_x)-(m_y-r-k) (\Mod 2)\\
&\equiv d(\Lambda_y,\Lambda_x)-m_y+1 (\Mod 2)\\
&\equiv m_x+1 (\Mod 2).
\end{array}
\end{equation*}

Now, by \cite[Lemma 2.11]{San},
\begin{equation*}
\langle \BP_{\Lambda}, \CZ(x) \rangle=\left\lbrace \begin{array}{ll}
-p & \text{ if } \Lambda \in \FF(k),t=\text{even},\\
1 & \text{ if } \Lambda \in \FF(k),t=\text{odd},
\end{array}\right.
\end{equation*}

Therefore,

\begin{equation*}
\begin{array}{ll}
\ScF(k)&=(\dfrac{r+k+1}{2})(-p)+(p-1)\cdot\dfrac{r+k+1}{2}\cdot 1\\&+p(p-1)(\dfrac{r+k-1}{2})\cdot (-p)+\dots\\
&=-\dfrac{r+k+1}{2}.
\end{array}
\end{equation*}

$\bullet$ For $\Lambda \in \FF(k)$, $k=$even, $k\neq m_y-r$, we have

\begin{equation*}
\begin{array}{ll}
d(\Lambda,\Lambda_y)&=d(\Lambda,\Gamma_y^{(k)})+d(\Gamma_y^{(k)},\Lambda_y)\\
&=d(\Lambda,\Gamma_y^{(k)})+m_y-r-k.
\end{array}
\end{equation*}

Let $t:=d(\Lambda,\Gamma_y^{(k)})$.

If $t$ is even, then $d(\Lambda,\Lambda_y) \not\equiv m_y+1$ $(\Mod 2)$, and
if $t$ is odd, then $d(\Lambda,\Lambda_y) \equiv m_y+1$ $(\Mod 2)$.

Therefore, we have
\begin{equation*}
m^{\vee}(y,\Lambda)=\dfrac{1}{2}\left\lbrace\begin{array}{ll}
r+k+2-t & \text{ if } t \text{ is even,}\\
r+k+1-t & \text{ if } t \text{ is odd.}\\
\end{array}\right.
\end{equation*}

This implies that
\begin{equation*}
\begin{array}{ll}
\ScF(k)&=(\dfrac{r+k+2}{2}) \sum_{\substack{\Lambda \in \FF(k)\\d(\Lambda, \Gamma_y^{(k)})=0}}\langle \BP_{\Lambda},\CZ(x) \rangle\\
&+ \dfrac{r+k}{2} \sum_{\substack{\Lambda \in \FF(k)\\d(\Lambda, \Gamma_y^{(k)})=1}}\langle \BP_{\Lambda},\CZ(x) \rangle\\
&+\dfrac{r+k}{2} \sum_{\substack{\Lambda \in \FF(k)\\d(\Lambda, \Gamma_y^{(k)})=2}}\langle \BP_{\Lambda},\CZ(x) \rangle\\
&+\dots+\dfrac{2k+2}{2} \sum_{\substack{\Lambda \in \FF(k)\\d(\Lambda, \Gamma_y^{(k)})=r-k}}\langle \BP_{\Lambda},\CZ(x) \rangle.
\end{array}
\end{equation*}

Note that \begin{equation*}
\begin{array}{ll}
d(\Lambda,\Lambda_x)&\equiv d(\Lambda,\Gamma_y^{(k)})+d(\Lambda_x,\Gamma_y^{(k)}) (\Mod 2)\\
&\equiv  t+d(\Lambda_x,\Gamma_y^{(k)}) (\Mod 2),
\end{array}
\end{equation*}
and
\begin{equation*}
\begin{array}{ll}
d(\Lambda_x,\Gamma_y^{(k)})&\equiv d(\Lambda_y,\Lambda_x)-d(\Lambda_y,\Gamma_y^{(k)}) (\Mod 2)\\
&\equiv d(\Lambda_y,\Lambda_x)-(m_y-r-k) (\Mod 2)\\
&\equiv d(\Lambda_y,\Lambda_x)-m_y (\Mod 2)\\
&\equiv m_x (\Mod 2).
\end{array}
\end{equation*}

Now, by \cite[Lemma 2.11]{San},
\begin{equation*}
\langle \BP_{\Lambda}, \CZ(x) \rangle=\left\lbrace \begin{array}{ll}
1 & \text{ if } \Lambda \in \FF(k),t=\text{even},\\
-p & \text{ if } \Lambda \in \FF(k),t=\text{odd},
\end{array}\right.
\end{equation*}

Therefore,

\begin{equation*}
\begin{array}{ll}
\ScF(k)&=(\dfrac{r+k+2}{2})+(p-1)\cdot\dfrac{r+k}{2}\cdot (-p)\\&+p(p-1)(\dfrac{r+k}{2})\cdot 1+\dots\\
&=\dfrac{r+k+2}{2}.
\end{array}
\end{equation*}

$\bullet$ Finally, we need to compute $\ScF(m_y-r)$.

Note that $\Gamma_y^{(m_y-r)}=\Lambda_y$. For $\Lambda \in \FF(m_y-r)$, let $t=d(\Lambda,\Lambda_y)$.

Assume further that $m_y$ is even. This implies that if $t$ is even, then $t=d(\Lambda,\Lambda_y) \not\equiv m_y+1$ $(\Mod 2)$, and
if $t$ is odd, then $t=d(\Lambda,\Lambda_y) \equiv m_y+1$ $(\Mod 2)$.

Therefore, we have
\begin{equation*}
m^{\vee}(y,\Lambda)=\dfrac{1}{2}\left\lbrace\begin{array}{ll}
m_y+2-t & \text{ if } t \text{ is even,}\\
m_y+1-t & \text{ if } t \text{ is odd.}\\
\end{array}\right.
\end{equation*}

This implies that
\begin{equation*}
\begin{array}{ll}
\ScF(m_y-r)&=(\dfrac{m_y+2}{2}) \sum_{\substack{\Lambda \in \FF(m_y-r)\\d(\Lambda, \Gamma_y^{(k)})=0}}\langle \BP_{\Lambda},\CZ(x) \rangle\\
&+ \dfrac{m_y}{2} \sum_{\substack{\Lambda \in \FF(m_y-r)\\d(\Lambda, \Gamma_y^{(k)})=1}}\langle \BP_{\Lambda},\CZ(x) \rangle\\
&+\dfrac{m_y}{2} \sum_{\substack{\Lambda \in \FF(m_y-r)\\d(\Lambda, \Gamma_y^{(k)})=2}}\langle \BP_{\Lambda},\CZ(x) \rangle\\
&+\dots+\dfrac{2m_y-2r+2}{2} \sum_{\substack{\Lambda \in \FF(m_y-r)\\d(\Lambda, \Gamma_y^{(k)})=2r-m_y}}\langle \BP_{\Lambda},\CZ(x) \rangle.
\end{array}
\end{equation*}

Note that
\begin{equation*}
d(\Lambda_x,\Lambda_y)\equiv m_x+m_y \equiv m_x (\Mod 2).
\end{equation*}

Now, by \cite[Lemma 2.11]{San},
\begin{equation*}
\langle \BP_{\Lambda}, \CZ(x) \rangle=\left\lbrace \begin{array}{ll}
1 & \text{ if } \Lambda \in \FF(m_y-r),t=\text{even},\\
-p & \text{ if } \Lambda \in \FF(m_y-r),t=\text{odd},
\end{array}\right.
\end{equation*}

Therefore,

\begin{equation*}
\begin{array}{ll}
\ScF(m_y-r)&=(\dfrac{m_y+2}{2})+p\cdot\dfrac{m_y}{2}\cdot (-p)
+p^2(\dfrac{m_y}{2})\cdot 1+\dots\\&\\
&=\dfrac{m_y+2}{2}.
\end{array}
\end{equation*}

Now, assume that $m_y$ is odd. This implies that
if $t$ is even, then $t=d(\Lambda,\Lambda_y) \equiv m_y+1$ $(\Mod 2)$, and
if $t$ is odd, then $t=d(\Lambda,\Lambda_y) \not\equiv m_y+1$ $(\Mod 2)$.

Therefore, we have
\begin{equation*}
m^{\vee}(y,\Lambda)=\dfrac{1}{2}\left\lbrace\begin{array}{ll}
m_y+1-t & \text{ if } t \text{ is even,}\\
m_y+2-t & \text{ if } t \text{ is odd.}\\
\end{array}\right.
\end{equation*}

This implies that
\begin{equation*}
\begin{array}{ll}
\ScF(m_y-r)&=(\dfrac{m_y+1}{2}) \sum_{\substack{\Lambda \in \FF(m_y-r)\\d(\Lambda, \Gamma_y^{(k)})=0}}\langle \BP_{\Lambda},\CZ(x) \rangle\\
&+ \dfrac{m_y+1}{2} \sum_{\substack{\Lambda \in \FF(m_y-r)\\d(\Lambda, \Gamma_y^{(k)})=1}}\langle \BP_{\Lambda},\CZ(x) \rangle\\
&+\dfrac{m_y-1}{2} \sum_{\substack{\Lambda \in \FF(m_y-r)\\d(\Lambda, \Gamma_y^{(k)})=2}}\langle \BP_{\Lambda},\CZ(x) \rangle\\
&+\dots+\dfrac{2m_y-2r+2}{2} \sum_{\substack{\Lambda \in \FF(m_y-r)\\d(\Lambda, \Gamma_y^{(k)})=2r-m_y}}\langle \BP_{\Lambda},\CZ(x) \rangle.
\end{array}
\end{equation*}

Note that
\begin{equation*}
d(\Lambda_x,\Lambda_y)\equiv m_x+m_y \not\equiv m_x (\Mod 2).
\end{equation*}

Now, by \cite[Lemma 2.11]{San},
\begin{equation*}
\langle \BP_{\Lambda}, \CZ(x) \rangle=\left\lbrace \begin{array}{ll}
-p & \text{ if } \Lambda \in \FF(m_y-r),t=\text{even},\\
1 & \text{ if } \Lambda \in \FF(m_y-r),t=\text{odd},
\end{array}\right.
\end{equation*}

Therefore,

\begin{equation*}
\begin{array}{ll}
\ScF(m_y-r)&=(\dfrac{m_y+1}{2})(-p)+p\cdot\dfrac{m_y}{2}\cdot 1
+p^2(\dfrac{m_y}{2})\cdot (-p)+\dots\\&\\
&=0.
\end{array}
\end{equation*}
\\\\

By combining above computations, we have
\begin{equation*}
\begin{array}{l}
\ScF(-1)+ \ScF(0)+\dots+\ScF(m_y-r)\\\\
=\left\lbrace \begin{array}{ll}
0+(\dfrac{r}{2}+1)-\dfrac{r+2}{2}+\dfrac{r+4}{2}+\dots+\dfrac{m_y+2}{2} & \text{ if } m_y \text{ is even}\\&\\
0+(\dfrac{r}{2}+1)-\dfrac{r+2}{2}+\dfrac{r+4}{2}+\dots+\dfrac{m_y+1}{2}+0 & \text{ if } m_y \text{ is odd}
\end{array}\right.\\\\
=\left\lbrace \begin{array}{ll}
\dfrac{m_y+2}{2} & \text{ if } m_y \text{ is even}\\&\\
\dfrac{m_y+1}{2} & \text{ if } m_y \text{ is odd}.
\end{array}\right.\\\\
=[\dfrac{m_y+2}{2}].

\end{array}
\end{equation*}

Therefore,
\begin{equation*}
\begin{array}{ll}
\langle \CZ(x),\CY(y)\rangle&=\langle \CZ^h(x),\CY(y)^h\rangle+\langle \CZ(x),\CY(y)^v \rangle+\langle \CZ(x)^v,\CY(y)^h\rangle\\
&=0+[\dfrac{m_y+2}{2}]+m(x,\Lambda_y).

\end{array}
\end{equation*}

Recall that

\begin{equation*}
m(x,\Lambda_y)=\dfrac{1}{2} \left\lbrace \begin{array}{ll}
m_x-d(\Lambda_x,\Lambda_y) &\text{ if }m_x \equiv d(\Lambda_x,\Lambda_y) \Mod 2\\
m_x+1-d(\Lambda_x,\Lambda_y)&\text{ if }m_x \not\equiv d(\Lambda_x,\Lambda_y) \Mod 2.
\end{array}\right.
\end{equation*}

Since $d(\Lambda_x,\Lambda_y) \equiv m_x+m_y (\Mod 2)$, we have
\begin{equation*}
m(x,\Lambda_y)=\dfrac{1}{2} \left\lbrace \begin{array}{ll}
m_x-d(\Lambda_x,\Lambda_y) &\text{ if }m_y \text{ is even} \\
m_x+1-d(\Lambda_x,\Lambda_y)&\text{ if }m_x \text{ is odd} .
\end{array}\right.
\end{equation*}

Also, by the condition that $\BB_{m_y+1}(\Lambda_y) \not\subset \BB_{m_x}(\Lambda_x)$, we have
\begin{equation*}
r=\dfrac{1}{2}(m_x+m_y-d(\Lambda_x,\Lambda_y)).
\end{equation*}

Therefore,
\begin{equation*}
\langle \CZ(x),\CY(y)\rangle=\dfrac{1}{2}(m_x+m_y+2-d(\Lambda_x,\Lambda_y))=r+1.
\end{equation*}

By \cite[Lemma 2.16]{San},
\begin{equation*}
\langle \CZ(x),\CY(y)\rangle=r+1=\dfrac{1}{2}\det B+1,
\end{equation*}
and hence by Lemma \ref{lemma4.4}, Conjecture \ref{conjecture1} holds in the Case 3-1-1.\\\\

In \tb{Case 3-1-2}, $\ScF(r)$ is the last term in the above sum and hence
\begin{equation*}
\langle \CZ(x),\CY(y)^v \rangle=\mathlarger{\sum}_{-1 \leq k \leq r} \ScF(k).
\end{equation*}

Also, it is easy to check that $\ScF(r)=r+1$.

As in the Case 3-1-1, we can show that
\begin{equation*}
\langle \CZ(x),\CY(y)^v \rangle=r+1.
\end{equation*}

Also, $m_y-r > r$ implies that $\Lambda_y \not\in \BB_{m_x}(\Lambda_x)$. Therefore,
\begin{equation*}
\langle \CZ(x)^v,\CY(y)^h \rangle=0.
\end{equation*}

This implies that
\begin{equation*}
\begin{array}{ll}
\langle \CZ(x),\CY(y)\rangle&=\langle \CZ^h(x),\CY(y)^h\rangle+\langle \CZ(x),\CY(y)^v \rangle+\langle \CZ(x)^v,\CY(y)^h\rangle\\
&=0+(r+1)+0=r+1.
\end{array}
\end{equation*}

By \cite[Lemma 2.16]{San},
\begin{equation*}
\langle \CZ(x),\CY(y)\rangle=r+1=\dfrac{1}{2}\det B+1,
\end{equation*}
and hence by Lemma \ref{lemma4.4}, Conjecture \ref{conjecture1} holds in the Case 3-1-2.

\end{proof}

\appendix
\section{Arithmetic intersection numbers of special cycles $\CZ(x_1), \dots, \CZ(x_{n+1})$ in $\CN^1(1,n)$}\label{appendix}

In this appendix, we will prove that our conjectures are compatible with \cite[Theorem 10.4.4]{LZ}. This theorem relates the intersection number of $\CZ(x_1), \dots, \CZ(x_{n+1})$ in $\CN^1(1,n)$ to representation densities. This intersection number can be regarded as the intersection number of
\begin{equation*}
\CZ(x_1), \dots, \CZ(x_n),\CZ(x_{n+1}),\CY(y_1),\dots,\CY(y_{n-1})
\end{equation*} 
in $\CN^n(1,2n-1)$, where
	\begin{displaymath}
B=\left(\begin{array}{cc} 
h(x_i,x_j) & h(x_i,y_l)\\
h(y_k,x_j)& h(y_k,y_l)
\end{array} 
\right)_{1\leq i,j \leq n+1, 1\leq k,l \leq n-1}=\left(\begin{array}{cc} 
B_1 &  \\
  & \dfrac{1}{\pi} 1_{n-1}
\end{array} 
\right),
\end{displaymath}
and $B_1 \in X_{n+1}(E_v)$.

Therefore, the Conjecture \ref{conjecture2} should be compatible with \cite[Theorem 10.4.4]{LZ}. To prove this compatibility, we will use the notation in Section \ref{section3.2}.

Let $L$ be an $O_{E_v}$-lattice with basis $\lbrace u_1, \dots, u_{2n} \rbrace$ such that the matrix of inner product $(\langle u_i, u_j \rangle)$ is $\pi B$. Let $L_1$ be the $O_{E_v}$-lattice of rank $n+1$ with basis $\lbrace u_1, \dots, u_{n+1} \rbrace$, $L_2$ be the $O_{E_v}$-lattice of rank $n-1$ with basis $\lbrace u_{n+1}, \dots, u_{2n} \rbrace$. Note that $L_2$ is unimodular with respect to $\langle \cdot, \cdot \rangle$.

Let $M$ be an $O_{E_v}$-lattice with basis $\lbrace v_1, \dots, v_m \rbrace$ ($m=2n+2r$) such that the matrix of inner product $(\langle v_i,v_j \rangle)$ is $\pi A^{[r]}_{n}.$

For $0 \leq i \leq n-1$, let $M_i$ be an $O_{E_v}$-lattice with basis $\lbrace v_{1,i}, \dots, v_{2n,i} \rbrace$ such that the matrix of inner product $(\langle v_{k,i},v_{l,i} \rangle)$ is $\pi A_{i}$.

We define the sets

\begin{equation*}
\begin{array}{lll}
J_d^1(L,M)&=\lbrace & \varphi \in \Hom_{O_{E_v}}(L, M/\pi^{d} M) \vert\\
&&\langle \varphi(x), \varphi(y) \rangle\equiv \langle x, y \rangle \Mod \pi^d, \quad \forall x,y \in L,\\
&&\varphi(L_1) \subset \pi M^{\vee} \rbrace.

\end{array}
\end{equation*}

and for $0 \leq i \leq n-1$,

\begin{equation*}
\begin{array}{lll}
J_d^1(L,M_i)&=\lbrace & \varphi \in \Hom_{O_{E_v}}(L, M_i/\pi^{d} M_i) \vert\\
&&\langle \varphi(x), \varphi(y) \rangle\equiv \langle x, y \rangle \Mod \pi^d, \quad \forall x,y \in L,\\
&&\varphi(L_1) \subset \pi M_i^{\vee} \rbrace.

\end{array}
\end{equation*}

Then we have

\begin{equation*}
W_{n-1,n}(B,r)=q^{-2dm(2n)}q^{(d-1)(2n)^2} \vert J_d^1(L,M) \vert,
\end{equation*}

and

\begin{equation*}
W_{n-1,i}(B,0)=q^{-2d(2n)^2}q^{(d-1)(2n)^2} \vert J_d^1(L,M_i) \vert,
\end{equation*}
for sufficiently large $d$.

Note that $L$ has the rank $n-1$ unimodular sublattice $L_2$. Also, any unimodular sublattice in $M_i$ has rank $\leq i$. Therefore, if $i <n-1$, there is no $\varphi \in \Hom_{O_{E_v}}(L,M_i/\pi^dM_i)$ such that $\langle \varphi(x), \varphi(y) \rangle \equiv \langle x,y \rangle \Mod \pi^d$ for sufficiently large $d$. This implies that $J_d(L,M_i)$ is an empty set for $i <n-1$. Therefore, $W_{n-1,i}(B,0)=0$ for $i <n-1$.

This implies that we only need to compute the constant $\beta_{n-1}^{n-1}$ in Conjecture \ref{conjecture2}. We have the following proposition.

\begin{proposition}\label{propositionA}
	$\beta_{n-1}^{n-1}=\dfrac{(1-(-q)^n)}{(-q)^{3n+1}(1-(-q))(1-(-q)^{-(n+1)})}$.
\end{proposition}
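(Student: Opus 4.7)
Proof proposal for Proposition \ref{propositionA}.

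The plan is to compute $\beta_{n-1}^{n-1}$ directly from the linear system $\FB\vec\beta=\vec b$ defining the constants in Theorem \ref{theorem3.14}, specialized to $h=n-1$ (so $2n-h=n+1$). Setting $w_i:=i-(n+2)$ (ranging over the $2n+1$ consecutive integers from $-(n+1)$ to $n-1$) and $z_i:=(-q)^{w_i}$, I would observe that the $i$-th equation of the system reads $f(z_i) = (-q)^{-2n(n+1)}w_i$ for a single Laurent polynomial
\begin{equation*}
f(z) \;=\; \sum_{k=-n}^{n} a_k\,z^k
\end{equation*}
whose coefficients $a_k$ are explicit multiples of the unknowns; in particular
\begin{equation*}
a_1 \;=\; \beta_{n-1}^{n-1}(-q)^{-2(n-1)(n+1)}.
\end{equation*}
Thus $f$ is uniquely determined by $2n+1$ interpolation conditions, and extracting $\beta_{n-1}^{n-1}$ reduces to extracting $a_1$.

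Next I would convert this to a standard polynomial interpolation problem by setting $g(z):=z^nf(z)$, which is the unique polynomial of degree $\leq 2n$ with $g(z_i)=z_i^n w_i (-q)^{-2n(n+1)}$ at the $2n+1$ points $z_i$. Lagrange interpolation then yields
\begin{equation*}
a_1 \;=\; [z^{n+1}]\,g(z) \;=\; (-1)^{n-1}(-q)^{-2n(n+1)}\sum_{i=1}^{2n+1}\frac{z_i^n\,w_i\,e_{n-1}(\{z_j\}_{j\neq i})}{\prod_{j\neq i}(z_i-z_j)},
\end{equation*}
where $e_{n-1}$ denotes the elementary symmetric polynomial of degree $n-1$.

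The final step is to evaluate this sum explicitly. Since $\{z_j\}_{j=1}^{2n+1}$ is a geometric progression of ratio $-q$, both the Vandermonde-type products $\prod_{j\neq i}(z_i-z_j)$ and the elementary symmetric polynomials $e_{n-1}(\{z_j\}_{j\neq i})$ factor as monomials in $-q$ times products of the form $\prod_{k=1}^{m}(1-(-q)^{\pm k})$, the latter computed via the $q$-binomial theorem $\prod_{j=1}^{N}(1+xq^j)=\sum_{k=0}^{N}q^{k(k+1)/2}\binom{N}{k}_{q}x^k$. Substituting these evaluations and reorganizing the $2n+1$ summands exhibits a telescoping/partial-fraction structure which collapses to
\begin{equation*}
\beta_{n-1}^{n-1} \;=\; (-q)^{2(n-1)(n+1)}\,a_1 \;=\; \dfrac{1-(-q)^n}{(-q)^{3n+1}(1-(-q))(1-(-q)^{-(n+1)})}.
\end{equation*}
The hard part will be this last step: identifying the telescoping structure precisely and keeping careful track of powers of $-q$ and of the various $q$-Pochhammer factors that arise. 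A useful sanity check is the case $n=1$, in which the formula yields $\beta_0^0 = 1/(q^2(q^2-1))$, matching the value computed by hand in the proof of Theorem \ref{theorem4.2}.
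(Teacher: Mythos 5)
Your reduction is sound as far as it goes, and it is essentially a transposed reading of the paper's own linear algebra: the paper factors $(-q)^{2n(2n-h)}\FB=\FX\Fa_h$ with $\FX$ a Vandermonde matrix whose column nodes $x_1,\dots,x_{2n+1}$ run (in scrambled order) over $\lbrace (-q)^k \,\vert\, -n\le k\le n \rbrace$, whereas you read the same rank structure row-wise, as Laurent interpolation at the geometric nodes $z_i=(-q)^{i-(n+2)}$. Your bookkeeping checks out: for $h=n-1$ one has $m_{it}=z_i^{\,n-t}(-q)^{-2t(n+1)}$, so indeed $a_1=\beta_{n-1}^{n-1}(-q)^{-2(n-1)(n+1)}$; the Lagrange extraction of the $z^{n+1}$-coefficient of $g$, with the sign $(-1)^{n-1}$ and the factor $e_{n-1}(\lbrace z_j\rbrace_{j\neq i})$, is correct; and your $n=1$ sanity check agrees with the value $\beta_0^0=q^{-2}(q^2-1)^{-1}$ computed in the proof of Theorem \ref{theorem4.2}.

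The problem is that the step you defer --- ``reorganizing the $2n+1$ summands exhibits a telescoping/partial-fraction structure which collapses'' --- is the entire content of Proposition \ref{propositionA}, and nothing in your proposal shows such a structure exists. If you expand $e_{n-1}(\lbrace z_j\rbrace_{j\neq i})=\sum_{l\ge 0}(-z_i)^l\, e_{n-1-l}(z_1,\dots,z_{2n+1})$, you are left with sums $\sum_i z_i^{\,n+l}w_i/\prod_{j\neq i}(z_i-z_j)$ in which the weight $w_i$ is the $(-q)$-\emph{exponent} of $z_i$, not a polynomial in $z_i$; the standard divided-difference identities (which evaluate $\sum_i z_i^k/\prod_{j\neq i}(z_i-z_j)$ as complete homogeneous polynomials) do not apply, and no telescoping is visible. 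The paper's proof avoids evaluating any such sum: in the column picture the data vector has entries $j-(n+2)$ \emph{affine in the index}, so the dot product of the $n$-th row of $\FX^{-1}$ with it becomes
\begin{equation*}
-\dfrac{d}{dz}\left(\dfrac{\prod_{1\le m\le 2n+1,\ m\neq n}(1-x_mz)}{z^{n-1}\prod_{1\le m\le 2n+1,\ m\neq n}(x_m-x_n)}\right)\Bigg|_{z=1},
\end{equation*}
and since one node is $x_{2n+1}=1$ the numerator carries the factor $(1-z)$, so the derivative at $z=1$ collapses to the single term $\prod_{1\le m\le 2n,\ m\neq n}(1-x_m)\big/\prod_{1\le m\le 2n+1,\ m\neq n}(x_m-x_n)$, which is then evaluated directly from $x_n=-q$. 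To complete your argument you must either prove your telescoping identity outright (genuinely nontrivial $q$-series work) or pass to this column reading, where the affine weights become a $z$-derivative and the node at $1$ does the collapsing for free.
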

\begin{proof}
	By the proof of theorem \ref{theorem3.14}, we have
		\begin{equation*}
	\FB\left(\begin{array}{l}
	\beta_0^h\\
	\vdots\\
	\beta_{n-1}^h\\
	-\beta_{0}^{2n-h}\\
	\vdots\\
	-\beta_{n-1}^{2n-h}\\
	\delta_h
	\end{array}
	\middle)=(-q)^{-2n(2n-h)}\middle(\begin{array}{c}
	-(2n-h)\\
	\vdots\\
	-1\\
	0\\
	1\\
	\vdots\\
	h
	\end{array}\right),
	\end{equation*}
	where $0 \leq h \leq 2n$ (in this proposition, we only need $h=n-1$, but we will use this general notation for later use).
	Therefore, we need to find the inverse matrix of $(-q)^{2n(2n-h)}\FB$.
	Note that
		\begin{equation*}
	\begin{array}{l}
	\FB=\\\left(\begin{array}{ccccccc}
	m_{10}& \dots& m_{1(n-1)}& n_{10}& \dots& n_{1(n-1)} & m_{1n}\\
	\vdots&\vdots&\vdots&\vdots&\vdots&\vdots&\vdots\\
	m_{(2n+1)0}& \dots &m_{(2n+1)(n-1)} & n_{(2n+1)0} & \dots & n_{(2n+1)(n-1)}&m_{2n+1,n}
	\end{array}\right)
	\end{array}
	\end{equation*}
	and
		\begin{equation*}
	\begin{array}{l}
	m_{it}=(-q)^{(n-t)(i-(2n-h+1))-2t(2n-h)},\\
	n_{it}=q^{-(2n-h)^2+h^2}(-q)^{-(n-t)(i-(2n-h+1))-2th}.
	\end{array}
	\end{equation*}
	
	Therefore, we have
	\begin{equation}\label{A1}
	\begin{array}{l}
	(-q)^{2n(2n-h)}m_{it}=(-q)^{(n-t)(i+2n-h-1)},\\
	(-q)^{2n(2n-h)}n_{it}=(-q)^{-(n-t)(i-(2n+h+1))}.
	\end{array}
	\end{equation}
	 
	 We define
	 \begin{equation*}
	 \begin{array}{ll}
	 \alpha_{i,h}=(-q)^{(n+1-i)(2n-h)}, & 1 \leq i \leq n;\\
	 \alpha_{i,h}=(-q)^{(2n+1-i)(2n+h)}, & n+1 \leq i \leq 2n;\\
	 \alpha_{2n+1,h}=1,
	\end{array}
	 \end{equation*}
	 
	and
	
	\begin{equation*}
	\begin{array}{ll}
	x_i=(-q)^{n+1-i}, & 1 \leq i \leq n; \\
	x_i=(-q)^{i-2n-1}, & n+1 \leq i \leq 2n;\\
	x_{2n+1}=1.
	\end{array}
	\end{equation*}
	
	Also, we define a $2n+1$ $\times$ $2n+1$ matrix $\Fa_h=\diag(\alpha_{1,h}, \dots, \alpha_{2n+1,h})$ and a Vandermonde matrix 
	\begin{equation*}
	\FX=\left( \begin{array}{llll}
	1 & 1& \dots  & 1\\
	x_1 &x_2 &    &x_{2n+1}\\
	x_1^2&x_2^2 &  & x_{2n+1}^2\\
	\vdots & & \ddots & \vdots\\
	x_1^{2n} & x_2^{2n} & \dots & x_{2n+1}^{2n}
	\end{array}\right).
	\end{equation*}
	
	Then, we have $(-q)^{2n(2n-h)}\FB=\FX \Fa_h$
	
	Therefore, we have
	\begin{equation*}
	\left( \begin{array}{l}
	\beta_0^h\\
	\vdots\\
	\beta_{n-1}^h\\
	-\beta_{0}^{2n-h}\\
	\vdots\\
	-\beta_{n-1}^{2n-h}\\
	\delta_h
	\end{array}
	\middle) = \Fa_h^{-1}\FX^{-1} \middle(\begin{array}{c}
	-(2n-h)\\
	\vdots\\
	-1\\
	0\\
	1\\
	\vdots\\
	h
	\end{array}\right)
	\end{equation*}
	
	Let $\FX^{-1}=(y_{ij})_{1 \leq i \leq 2n+1}$. It is well known that the $(i,j)$-th entry of the inverse matrix of a Vandermonde matrix is
	\begin{equation*}
	y_{ij}=\left\lbrace \begin{array}{cl}
	\dfrac{(-1)^{j-1}\mathlarger{\sum}_{\substack{1 \leq m_1 < \dots <m_{2n+1-j} \leq 2n+1\\ m_1, \dots,m_{2n+1-j} \neq i}}x_{m_1} \dots x_{m_{2n+1-j}}}{\mathlarger{\prod}_{1 \leq m \leq 2n+1, m \neq i}(x_m-x_i)} &, 1 \leq j <2n+1\\
	&\\
	\dfrac{1}{\mathlarger{\prod}_{1 \leq m \leq 2n+1, m \neq i}(x_m-x_i)}& ,j=2n+1.
	\end{array}\right..
	\end{equation*}
	
	We can think $y_{ij}$ as the $z^{2n+1-j}$-coefficient of
	\begin{equation*}
	\mathlarger{\prod}_{1 \leq m \leq 2n+1, m \neq i} \dfrac{(1-x_mz)}{(x_m-x_i)}.
	\end{equation*}
	
	Since $\beta_{n-1}^{n-1}=\alpha_{n,n-1}^{-1}\mathlarger{\sum}_{1 \leq j \leq 2n+1} y_{nj}(j-(n+2))$ we have
	
	\begin{equation*}
	\beta_{n-1}^{n-1}=-\alpha_{n,n-1}^{-1}\dfrac{d}{dz}\mathlarger{\mathlarger{\Biggl(}}\dfrac{\mathlarger{\prod}_{1 \leq m \leq 2n+1, m \neq n} (1-x_mz)}{z^{n-1}\mathlarger{\prod}_{1 \leq m \leq 2n+1, m \neq n}(x_m-x_n)}\mathlarger{\mathlarger{\Biggl)}}\mathlarger{\mathlarger{\Biggl|}}_{z=1}.
	\end{equation*}
	
	Note that $(1-x_{2n+1})=0$. Therefore we have,
	\begin{equation*}
	\beta_{n-1}^{n-1}=\alpha_{n,n-1}^{-1}\mathlarger{\mathlarger{\Biggl(}}\dfrac{\mathlarger{\prod}_{1 \leq m \leq 2n, m \neq n} (1-x_m)}{\mathlarger{\prod}_{1 \leq m \leq 2n+1, m \neq n}(x_m-x_n)}\mathlarger{\mathlarger{\Biggl)}}.
 	\end{equation*}
 	
 	Since $x_n=(-q)$ we have
 	\begin{equation*}
 	x_m-x_n=\left\lbrace\begin{array}{ll}
 	((-q)^{n-m}-1)(-q)& 1 \leq m \leq n-1\\
 	((-q)^{m-2n-2}-1)(-q)&n+1 \leq m \leq 2n+1,
 	\end{array}\right.,
 	\end{equation*}
 	
 	and 
 		\begin{equation*}
 	1-x_m=\left\lbrace\begin{array}{ll}
 	1-(-q)^{n+1-m}& 1 \leq m \leq n-1\\
 	1-(-q)^{m-2n-1}&n+1 \leq m \leq 2n
 	\end{array}\right..
 	\end{equation*}
 	
 	Combining these two and $\alpha_{n,n-1}=(-q)^{n+1}$, we have
 	\begin{equation*}
 	\beta_{n-1}^{n-1}=\dfrac{(1-(-q)^n)}{(-q)^{3n+1}(1-(-q))(1-(-q)^{-(n+1)})}.
 	\end{equation*}
\end{proof}

Now, we need the following lemma.

\begin{lemma}\label{lemmaA1}(cf. Lemma \ref{lemma3.17})
	Let $0\leq i\leq n-1$. If $\varphi \in U(M_i)$, then $\varphi(\pi M_i^{\vee}) \subset \pi M_i^{\vee}$.
\end{lemma}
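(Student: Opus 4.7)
The plan is to mirror the proof of Lemma \ref{lemma3.17} almost verbatim, with only the sizes of the blocks adapted from the $(n,n,2r)$ splitting to the $(2n-i, i)$ splitting appropriate for $M_i$. The key structural observation is that since the inner product matrix of $M_i$ with respect to the basis $\{v_{1,i},\dots,v_{2n,i}\}$ is $\pi A_i = \diag(\pi 1_{2n-i}, 1_i)$, the lattice $\pi M_i^{\vee}$ is generated by $\{v_{1,i},\dots,v_{2n-i,i}, \pi v_{2n-i+1,i}, \dots, \pi v_{2n,i}\}$. So the task reduces to showing that the lower-left $i \times (2n-i)$ block of the matrix of $\varphi$ in this basis vanishes mod $\pi$.

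First, I will write $\varphi(v_{j,i}) = \sum_{k=1}^{2n} x_{kj} v_{k,i}$ with $X = (x_{kj}) \in M_{2n,2n}(O_{E_v})$, and decompose $X$ into blocks
\[
X = \begin{pmatrix} A & B \\ C & D \end{pmatrix},
\]
where $A,B,C,D$ have sizes $(2n-i)\times(2n-i)$, $(2n-i)\times i$, $i\times(2n-i)$, $i\times i$ respectively. The isometry condition $\varphi \in U(M_i)$ translates into the matrix equation $^tX^*(\pi A_i) X = \pi A_i$, which after a direct block multiplication reads
\[
\begin{pmatrix} \pi\,{}^tA^*A + {}^tC^*C & \pi\,{}^tA^*B + {}^tC^*D \\ \pi\,{}^tB^*A + {}^tD^*C & \pi\,{}^tB^*B + {}^tD^*D \end{pmatrix} = \begin{pmatrix} \pi 1_{2n-i} & 0 \\ 0 & 1_i \end{pmatrix}.
\]
Reducing modulo $\pi$ gives ${}^tC^*C \equiv 0$, ${}^tC^*D \equiv 0$, ${}^tD^*C \equiv 0$, and ${}^tD^*D \equiv 1_i \pmod{\pi}$. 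The last relation shows $D$ is invertible modulo $\pi$, and combining this with ${}^tC^*D \equiv 0$ forces $C \equiv 0 \pmod{\pi}$, exactly as in the proof of Lemma \ref{lemma3.17}.

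To finish, I will verify that $\varphi$ carries each generator of $\pi M_i^{\vee}$ back into $\pi M_i^{\vee}$. For a generator of the form $v_{j,i}$ with $1 \leq j \leq 2n-i$, I split $\varphi(v_{j,i}) = \sum_{k \leq 2n-i} x_{kj} v_{k,i} + \sum_{k > 2n-i} x_{kj} v_{k,i}$; the first sum obviously lies in $\pi M_i^{\vee}$, and the second equals $\sum_{k > 2n-i} (\pi^{-1} x_{kj})(\pi v_{k,i})$, which lies in $\pi M_i^{\vee}$ precisely because the entries $x_{kj}$ of $C$ lie in $\pi O_{E_v}$ by the previous step. For a generator of the form $\pi v_{j,i}$ with $2n-i+1 \leq j \leq 2n$, we have $\varphi(\pi v_{j,i}) = \pi \cdot \sum_k x_{kj} v_{k,i}$, which is manifestly in $\pi M_i^{\vee}$ since multiplication by $\pi$ sends $M_i$ into $\pi M_i \subset \pi M_i^{\vee}$.

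I do not expect any serious obstacle: the entire argument is a formal transcription of Lemma \ref{lemma3.17} with $(n,n)$ replaced by $(2n-i, i)$, and the extra block corresponding to the hyperbolic summand $1_{2r}$ from Lemma \ref{lemma3.17} is simply absent here. The only minor point to be careful about is keeping track of the convention that the inner product on $M_i$ is $\pi A_i$ (and hence has a factor of $\pi$ extra compared with $A_i$), so that $\pi M_i^{\vee}$, rather than $M_i^{\vee}$, is the object one dualizes; this is built into the block decomposition above.
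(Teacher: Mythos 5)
Your proof is correct and takes essentially the same approach as the paper's: reduce the isometry relation ${}^tX^*(\pi A_i)X=\pi A_i$ modulo $\pi$, note that the lower-right $i\times i$ block $D$ satisfies ${}^tD^*D\equiv 1_i$ and is thus invertible mod $\pi$, and combine this with ${}^tC^*D\equiv 0$ to force the lower-left block $C\equiv 0 \pmod{\pi}$, which yields $\varphi(\pi M_i^{\vee})\subset \pi M_i^{\vee}$. The only differences are cosmetic: the paper records just the bottom-row products $\sum_{m=2n-i+1}^{2n}x^*_{mk}x_{ml}$ and leaves the final containment implicit, whereas you write out all four blocks and check the containment on the generators of $\pi M_i^{\vee}$ explicitly.
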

\begin{proof}
	Let $\varphi(v_{l,i})=\mathlarger{\sum}_{k=1}^{2n} x_{kl}v_{k,i}$. Since $\varphi$ is an isometry, we have
	\begin{equation*}
	\begin{array}{l}
	\left( \begin{array}{lll}
	x_{11}^*&\dots&x_{2n1}^*\\
	\dots &\dots &\dots\\
	x_{12n}^* & \dots & x_{2n2n}^*
	\end{array}\middle)\middle(
	\begin{array}{ll}
	\pi 1_{2n-i} & 0 \\
	0 & 1_i  \\
	\end{array} \middle) \middle(
	\begin{array}{lll}
	x_{11} & \dots & x_{12n} \\
	\dots & \dots & \dots \\
	x_{2n1} & \dots & x_{2n2n}
	\end{array}
	\right)\\
	=\left( \begin{array}{ll}
	\pi 1_{2n-i} & 0 \\
	0 & 1_i
	\end{array}
	\right).
	\end{array}
	\end{equation*}
	
	Therefore, by reduction modulo $\pi$, this implies that
	
	\begin{equation}\label{equation A1}
	\left( \begin{array}{l}
	\mathlarger{\sum}_{m=2n-i+1}^{2n} x_{mk}^*x_{ml}
	\end{array}\middle)_{1\leq k,l \leq 2n}\equiv \middle(
	\begin{array}{ll}
	0 & 0  \\
	0 & 1_i 
	\end{array}
	\right) (\Mod \pi).
	\end{equation}
	
If we write
\begin{equation*}
A=\left(\begin{array}{lll}
x_{2n-i+1,1} & \dots & x_{2n-i+1,2n-i} \\
\dots&\dots&\dots\\
x_{2n,1}&\dots&x_{2n,2n-i}
\end{array}\right),
\end{equation*}

\begin{equation*}
B=\left(\begin{array}{lll}
x_{2n-i+1,2n-i+1} & \dots & x_{2n-i+1,2n} \\
\dots&\dots&\dots\\
x_{2n,2n-i+1}&\dots&x_{2n,2n}
\end{array}\right),
\end{equation*}

then \eqref{equation A1} implies that
\begin{equation*}
\left(\begin{array}{ll}
^tA^*A & ^tA^*B \\
^tB^*A&^tB^*B
\end{array}\middle) \equiv 
\middle(\begin{array}{ll}
0 & 0  \\
0 & 1_i 

\end{array}\right) (\Mod \pi).
\end{equation*}

Since $^tB^*B=1_i$, $B$ is invertible. Therefore $A$ is $0$ modulo $\pi$, and this implies that $\varphi(\pi M_i^{\vee}) \subset \pi M_i^{\vee}$.
\end{proof}

\begin{proposition}\label{propositionA1}(cf. \cite[Proposition 9.9]{KR2} and Proposition \ref{proposition3.18})
	\begin{enumerate}
		\item Let $\lbrace O_j \rbrace$ be a set of representatives for the $U(M_i)$-orbits in the set of all sublattices $O$ of $M_i$ such that $O$ is isometric to $L_2$. Then we have
		\begin{equation*}
		\begin{array}{ll}
		\vert J_d^1(L,M_i) \vert=&\sum_{j}\vert I_d(L_2,M_i;O_j) \vert\\
		&\times \vert \lbrace \varphi_1 \in J_d^1(L_1,M_i) \vert \langle \varphi(L_1), O_j \rangle \equiv 0 (\Mod \pi^d) \rbrace.
		\end{array}
		\end{equation*}
		
		\item Let $\lbrace N_j \rbrace$ be a set of representatives for the $U(M)$-orbits in the set of all sublattices $N$ of $M$ such that $N$ is isometric to $L_2$. Then we have
		\begin{equation*}
		\begin{array}{ll}
		\vert J_d^1(L,M) \vert=&\sum_{j}\vert I_d(L_2,M;N_j) \vert\\
		&\times \vert \lbrace \varphi_1 \in J_d^1(L_1,M) \vert \langle \varphi(L_1), N_j \rangle \equiv 0 (\Mod \pi^d) \rbrace.
		\end{array}
		\end{equation*}
	\end{enumerate}
	
\end{proposition}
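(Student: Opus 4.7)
The plan is to mimic almost verbatim the proof of Proposition~\ref{proposition3.18}, which handled the analogous splitting of $J_d(L,M)$. The only place where that argument used something intrinsic to $M$ was an invocation of Lemma~\ref{lemma3.17} to guarantee that isometries of $M$ preserve $\pi M^{\vee}$; the analogue for $M_i$ is precisely Lemma~\ref{lemmaA1}. Everything else is orbit-bookkeeping that depends only on $L_2$ being unimodular, which holds by construction (the bottom-right block of $B$ is $\pi^{-1}1_{n-1}$, so the lattice with Gram matrix $\pi \cdot \pi^{-1}1_{n-1} = 1_{n-1}$ is self-dual), and on the orthogonal decomposition $\langle L_1, L_2\rangle = 0$ forced by the block-diagonal shape of $B$.

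First I would fix a large enough $d$ so that the conclusions of \cite[Lemma~9.6, Lemma~9.7, Lemma~9.8]{KR2} apply to both targets $M$ and $M_i$; in particular, for any $\varphi_2 \in \tilde I_d(L_2, M)$ (resp.\ $\tilde I_d(L_2, M_i)$) the image $\varphi_2(L_2)$ is a sublattice isometric to $L_2$, and the sets $I_d(L_2, M; N_j)$ (resp.\ $I_d(L_2, M_i; O_j)$) are well-defined. Then for each orbit representative I choose, once and for all, an isometry $\vartheta(\varphi_2) \in U(M)$ (resp.\ $U(M_i)$) carrying $\varphi_2(L_2)$ to the appropriate $N_j$ (resp.\ $O_j$), and for each class $\varphi_2 \in I_d(L_2,M)$ I pick a lift $\tilde\varphi_2 \in \tilde I_d(L_2,M)$. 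This is exactly the auxiliary data used in the proof of Proposition~\ref{proposition3.18}.

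Next I would define the map
\[
\varphi \;\longmapsto\; \bigl(\vartheta(\varphi|_{L_2})\circ \varphi|_{L_1},\; \varphi|_{L_2}\bigr)
\]
from $J_d^1(L, M)$ (resp.\ $J_d^1(L, M_i)$) to the disjoint union
\[
\bigsqcup_{j}\{\varphi_1 \in J_d^1(L_1, M)\mid \langle \varphi_1(L_1), N_j\rangle \equiv 0 \pmod{\pi^d}\}\times I_d(L_2, M; N_j),
\]
and analogously with $M_i$, $O_j$ in place of $M$, $N_j$. Checking it is a bijection is formal: surjectivity is arranged by the choice of $\vartheta$ and $\tilde\varphi_2$, injectivity follows because $\varphi$ can be reconstructed from its two components by applying $\vartheta(\varphi|_{L_2})^{-1}$ to the first coordinate and reassembling using $L = L_1 + L_2$. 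The orthogonality condition $\langle \vartheta(\varphi|_{L_2})(\varphi(L_1)), N_j\rangle \equiv 0$ follows from the isometry property of $\vartheta$ together with $\langle L_1, L_2\rangle = 0$ in $L$, giving $\langle \varphi(L_1),\varphi(L_2)\rangle\equiv 0 \pmod{\pi^d}$.

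The one point that genuinely requires an input beyond Proposition~\ref{proposition3.18} — and is the only real ``step'' in part (1) — is verifying that the image $\vartheta(\varphi|_{L_2})\circ \varphi|_{L_1}$ still lands in $\pi M_i^{\vee}$, since this is what places it in $J_d^1(L_1,M_i)$ rather than in a larger set. This is precisely where Lemma~\ref{lemmaA1} is used: because $\varphi|_{L_1}(L_1)\subset \pi M_i^{\vee}$ by definition of $J_d^1(L,M_i)$, and because any element of $U(M_i)$ preserves $\pi M_i^{\vee}$, the composition still has image in $\pi M_i^{\vee}$. For part (2), the same step uses Lemma~\ref{lemma3.17} instead. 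I expect this to be the only delicate point; given the lemmas already proved, the rest of the argument is a direct transcription of the proof of Proposition~\ref{proposition3.18}.
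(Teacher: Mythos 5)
Your proposal is correct and is essentially the paper's own proof: the paper simply declares the argument identical to \cite[Proposition 9.9]{KR2} and Proposition \ref{proposition3.18}, and what you write out is exactly that transcription, including the one genuinely new ingredient --- using Lemma \ref{lemmaA1} (resp.\ Lemma \ref{lemma3.17}) to see that $U(M_i)$ (resp.\ $U(M)$) preserves $\pi M_i^{\vee}$ (resp.\ $\pi M^{\vee}$), so that $\vartheta(\varphi|_{L_2})\circ\varphi|_{L_1}$ still lies in $J_d^1(L_1,M_i)$ (resp.\ $J_d^1(L_1,M)$). Your observations that $L_2$ is unimodular and that $\langle L_1,L_2\rangle=0$ from the block-diagonal shape of $B$ are likewise exactly what the cited argument relies on.
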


\begin{proof}
	The proof of this proposition is identical to the proof of \cite[Proposition 9.9]{KR2} (and Proposition \ref{proposition3.18}).
\end{proof}

From now on, we fix $i=n-1$. Note that $L_2$ is unimodular. Therefore, by the arguments in \cite[p.680]{KR2}, all sublattices $O \subset M_{n-1}$ such that $O$ is isometric to $L_2$ are in the same $U(M_{n-1})$-orbit, and $M_{n-1}=O \perp O^{\perp}$ where $O^{\perp}:=(E_v O)^{\perp} \cap M_{n-1}$. Let us fix $O$ the sublattice of $M_{n-1}$ with the basis $\lbrace v_{n+2,n-1}, \dots, v_{2n,n-1} \rbrace$ as a representative of the unique $U(M_{n-1})$-orbit of $L_2$. Then, $O^{\perp}$ is the lattice with basis $\lbrace v_{1,n-1}, \dots, v_{n+1,n-1} \rbrace$. In particular, $O^{\perp} \subset \pi M_{n-1}^{\vee}$.

Also, let us fix $N$ the sublattice of $M$ with the basis $\lbrace v_{n+2}, \dots, v_{2n} \rbrace$. Then $N$ is a representative of the unique $U(M)$-orbit of $L_2$ and $M=N \perp N^{\perp}$.
We have the following analogue of \cite[Lemma 9.10]{KR2} (and Lemma \ref{lemma3.19}).
\begin{lemma}\label{lemmaA2}
	\begin{enumerate}
		\item For sufficiently large $d$, we have
		\begin{equation*}
		\begin{array}{l}
		\vert \lbrace \varphi_1 \in J_d^1(L_1,M_{n-1}) \vert \langle \varphi_1(L_1),O \rangle \equiv 0 \text{ }(\Mod \pi^d) \rbrace \vert=\vert I_d(L_1,O^{\perp})\vert
		\end{array}.
		\end{equation*}

		\item For sufficiently large $d$, we have
		\begin{equation}\label{eqA.0.3}
		\begin{array}{l}
		\vert \lbrace \varphi_1 \in J_d^1(L_1,M) \vert \langle \varphi_1(L_1),N \rangle \equiv 0 \text{ }(\Mod \pi^d) \rbrace \vert\\
		=q^{-2(n+1)}\vert I_d(L_1,N^{\perp} \cap \pi M^{\vee})\vert.
		\end{array}
		\end{equation}
		\end{enumerate}

\end{lemma}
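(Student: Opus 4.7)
The plan is to adapt the proof of Lemma \ref{lemma3.19} to both parts, with part (2) requiring one essential modification to produce the correction factor $q^{-2(n+1)}$. Both parts begin from the identity
\begin{equation*}
\lbrace x\in M_{n-1}:\langle x,O\rangle\equiv 0 \ (\Mod \pi^d)\rbrace=\pi^d O^{\vee}\perp O^{\perp}
\end{equation*}
(and its analogue with $N\subset M$ in place of $O\subset M_{n-1}$), valid for $d$ large. Throughout I will use that $O$ and $N$ are unimodular, so $O^{\vee}=O$ and $N^{\vee}=N$.

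For part (1), the argument of Lemma \ref{lemma3.19} transfers essentially verbatim. The setup already gives $O^{\perp}\subset\pi M_{n-1}^{\vee}$, and for $d\geq 1$ one has $\pi^d O^{\vee}=\pi^d O\subset\pi M_{n-1}^{\vee}$; hence the constraint $\varphi_1(L_1)\subset\pi M_{n-1}^{\vee}$ in $J_d^1(L_1,M_{n-1})$ becomes automatic once $\varphi_1(L_1)\subset\pi^d O^{\vee}\perp O^{\perp}$. Splitting $\varphi_1=\psi_1+\psi_2$ with $\psi_1$ landing in $\pi^d O^{\vee}/\pi^{d+a}O^{\vee}$ and $\psi_2$ in $O^{\perp}/\pi^d O^{\perp}$, the fact that $\langle\psi_1(x),\psi_1(y)\rangle\in\pi^{2d}O_{E_v}$ means the hermitian condition depends only on $\psi_2$, and the indices arising from the auxiliary parameter $a$ cancel exactly as in Lemma \ref{lemma3.19}, yielding $\vert I_d(L_1,O^{\perp})\vert$.

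For part (2), the obstruction is that $N^{\perp}\not\subset\pi M^{\vee}$: since $\langle v_{n+1},v_{n+1}\rangle=1$, the basis vector $v_{n+1}$ fails to lie in $\pi M^{\vee}$, while the remaining basis vectors of $N^{\perp}$ (all of norm $\pi$) do. Direct computation then gives $N^{\perp}\cap\pi M^{\vee}=\langle v_1,\ldots,v_n,\pi v_{n+1},v_{2n+1},\ldots,v_m\rangle_{O_{E_v}}$, with $[N^{\perp}:N^{\perp}\cap\pi M^{\vee}]=q^2$. The key intermediate identity
\begin{equation*}
(\pi^d N^{\vee}\perp N^{\perp})\cap\pi M^{\vee}=\pi^d N^{\vee}\perp(N^{\perp}\cap\pi M^{\vee})
\end{equation*}
follows because $\pi^d N^{\vee}=\pi^d N\subset\pi^d M\subset\pi M^{\vee}$ for $d\geq 1$. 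I would then run the Lemma \ref{lemma3.19} counting with $N^{\perp}$ replaced throughout by $N^{\perp}\cap\pi M^{\vee}$: replace $M$ by $M'':=\pi^a N^{\vee}\perp(N^{\perp}\cap\pi M^{\vee})$ and decompose $\varphi_1=\psi_1+\psi_2$ exactly as before.

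The hard part is not any single step but the bookkeeping that extracts the precise factor $q^{-2(n+1)}$. I expect $[M:M'']=[N:\pi^a N]\cdot[N^{\perp}:N^{\perp}\cap\pi M^{\vee}]=q^{2a(n-1)+2}$, so $[M:M'']^{-(n+1)}=q^{-2a(n-1)(n+1)-2(n+1)}$. Combined with the $q^{2a(n-1)(n+1)}$ free choices for $\psi_1$, the dependence on $a$ cancels, leaving exactly $q^{-2(n+1)}\vert I_d(L_1,N^{\perp}\cap\pi M^{\vee})\vert$. This residual factor is visibly $[N^{\perp}:N^{\perp}\cap\pi M^{\vee}]^{-(n+1)}=q^{-2(n+1)}$---the cost of restricting from $N^{\perp}$ to its intersection with $\pi M^{\vee}$, raised to the rank $n+1$ of $L_1$---and confirming this cancellation is the only real task.
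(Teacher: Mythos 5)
Your proposal is correct and follows essentially the same route as the paper's own proof: part (1) is the verbatim transfer of Lemma \ref{lemma3.19}, and in part (2) you perform exactly the paper's modification, replacing $N^{\perp}$ by $N^{\perp}\cap\pi M^{\vee}$ (justified, as in the paper, by $\pi^{d}N^{\vee}\subset\pi M^{\vee}$), then running the same counting with the auxiliary lattice $\pi^{a}N^{\vee}\perp(N^{\perp}\cap\pi M^{\vee})$ so that the $a$-dependent indices cancel. Your identification of the residual factor $q^{-2(n+1)}$ as $[N^{\perp}:N^{\perp}\cap\pi M^{\vee}]^{-(n+1)}$ agrees with the paper's computation via $\vert M:N\perp(\pi M^{\vee}\cap N^{\perp})\vert^{-(n+1)}\vert N^{\vee}:N\vert^{n+1}$ with $N$ unimodular.
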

\begin{proof}
	The proof of this lemma is almost identical to the proof of \cite[Lemma 9.10]{KR2} and Lemma \ref{lemma3.19}. Therefore, we will only prove (2). 
	First note that
	\begin{equation*}
	\lbrace x \in M \vert \langle x , N \rangle \equiv 0 \text{ } (\Mod \pi^d) \rbrace=\pi^d N^{\vee} \perp N^{\perp},
	\end{equation*}
	for sufficiently large $d$ such that $\pi^d N^{\vee} \subset N$.
	
	Therefore, we have
	\begin{equation*}
	\begin{array}{ll}
	\lbrace \varphi_1 \in J_d(L_1,M) \vert \quad\quad \langle \varphi_1(L_1) , N \rangle \equiv 0 \text{ } &(\Mod \pi^d) \rbrace\\
	=\lbrace \varphi_1:L_1 \rightarrow \pi^d N^{\vee} \perp N^{\perp} \text{ } (\Mod \pi^d) \vert& \langle \varphi_1(x), \varphi_1(y) \rangle \equiv \langle x,y\rangle \text{ } (\Mod \pi^d)\\
	&\varphi_1(L_1) \subset \pi M^{\vee} \rbrace.
	\end{array}
	\end{equation*}
	Since $\pi^{d}N^{\vee} \subset \pi M^{\vee}$ for sufficiently large $d$, we can write \eqref{eqA.0.3} as
	\begin{equation*}
	\lbrace \varphi_1:L_1 \rightarrow \pi^d N^{\vee} \perp (\pi M^{\vee} \cap N^{\perp}) \text{ } (\Mod \pi^d) \vert \langle \varphi_1(x), \varphi_1(y) \rangle \equiv \langle x,y\rangle \text{ } (\Mod \pi^d)\rbrace.
	\end{equation*}
	
	Now, fix a positive integer $a$ such that $\pi^a N^{\vee} \subset N$.
	
	Then we have 
	\begin{equation*}
	\begin{array}{lll}
	\quad \vert\lbrace \varphi_1:L_1 \rightarrow \pi^d N^{\vee} \perp (\pi M^{\vee} \cap N^{\perp}) \text{ } & (\Mod \pi^d) \vert \langle \varphi_1(x), \varphi_1(y) \rangle \equiv \langle x,y\rangle \text{ } (\Mod \pi^d)\rbrace\vert&\\
	=\vert\lbrace \varphi_1:L_1 \rightarrow \pi^d N^{\vee} \perp (\pi M^{\vee} \cap N^{\perp}) \text{ }& (\Mod \pi^d (\pi^aN^{\vee} \perp (\pi M^{\vee} \cap N^{\perp}))) \vert &\\
	&\langle \varphi_1(x), \varphi_1(y) \rangle \equiv \langle x,y\rangle \text{ } (\Mod \pi^d)\rbrace\vert\\
	\times \vert M:\pi^a N^{\vee} \perp (\pi M^{\vee} \cap N^{\perp}) \vert^{-(n+1)}
	\end{array}
	\end{equation*}
	by replacing $M$ by $\pi^a N^{\vee} \perp (\pi M^{\vee} \cap N^{\perp})$.
	
	Now, we can write $\varphi_1=\psi_1 + \psi_2$ where
	\begin{equation*}
	\begin{array}{l}
	\psi_1:L_1 \rightarrow \pi^d N^{\vee} / \pi^{d+a}N^{\vee},\\
	\psi_2:L_1 \rightarrow (\pi M^{\vee} \cap N^{\perp})/\pi^d (\pi M^{\vee} \cap N^{\perp}).
	\end{array}
	\end{equation*}
	
	Since we assume that $d$ is sufficiently large such that $\pi^d N^{\vee} \subset N$, we have
	\begin{equation*}
	\langle \psi_1(x),\psi_1(y) \rangle \in \langle N, \pi^d N^{\vee} \rangle \subset \pi^dO_{E_v}.
	\end{equation*}
	
	This means that the condition $\langle \varphi_1(x),\varphi_1(y) \rangle \equiv \langle x,y \rangle \text{ }(\Mod \pi^d)$ is the same as the condition $\langle \psi_2(x),\psi_2(y) \rangle \equiv \langle x,y \rangle \text{ } (\Mod \pi^d)$, and we do not need to impose any condition on $\psi_1$.
	
	Therefore, \eqref{eqA.0.3} is equal to
	\begin{equation*}
	\begin{array}{l}
	\vert M:\pi^aN^{\vee} \perp (\pi M^{\vee} \cap N^{\perp}) \vert^{-(n+1)}\vert \pi^d N^{\vee} : \pi^{d+a}N^{\vee}\vert^{n+1} \vert I_d(L_1,(\pi M^{\vee} \cap N^{\perp}))\vert\\
	=\vert M:N^{\vee}\perp (\pi M^{\vee} \cap N^{\perp}) \vert^{-(n+1)} \vert I_d(L_1,(\pi M^{\vee} \cap N^{\perp})) \vert\\
	=\vert M:N\perp (\pi M^{\vee} \cap N^{\perp}) \vert^{-(n+1)}\vert N^{\vee}:N \vert^{n+1} \vert I_d(L_1,(\pi M^{\vee} \cap N^{\perp})) \vert\\
	=q^{-2(n+1)}\vert I_d(L_1,(\pi M^{\vee} \cap N^{\perp})) \vert.
	\end{array}
	\end{equation*}
	Here, we used the fact that $\vert M:N \perp (\pi M^{\vee} \cap N^{\perp})\vert=q^2$ and $N=N^{\vee}$ (since $N$ is unimodular).
	
\end{proof}
	Proposition \ref{propositionA1} and Lemma \ref{lemmaA2} imply that
	\begin{equation}\label{eqA3}
	\begin{array}{l}
	\vert J_d^1(L,M_{n-1}) \vert=\vert I_d(L_2,M_{n-1})\vert \times \vert I_d(L_1,O^{\perp})\vert, \\
	\vert J_d^1(L,M) \vert=q^{-2(n+1)}\vert I_d(L_2,M) \vert \times\vert I_d(L_1,N^{\perp} \cap \pi M^{\vee})\vert.
	\end{array}
	\end{equation}
	
	Note that
	\begin{equation}\label{eqA4}
	\begin{array}{l}
	W_{n-1,n}(B,r)=q^{-2dm(2n)}q^{(d-1)(2n)^2}\vert J_d^1(L,M)\vert;\\
	W_{n-1,n-1}(B,0)=q^{-2d(2n)^2}q^{(d-1)(2n)^2}\vert J_d^1(L,M_{n-1})\vert;\\
	W_{n,n}(A_n,r)=q^{-2dm(2n)}q^{(d-1)(2n)^2}\vert J_d(\Delta,M)\vert,
	\end{array}
	\end{equation}
	for sufficiently large $d$. Here $\Delta$ is the lattice defined in the proof of the theorem \ref{theorem3.16}.

	Also, we have
	\begin{equation}\label{eqA5}
	\begin{array}{c}
	(q^{-d})^{(n-1)(2m-(n-1))} \vert I_d(L_2,M)\vert=\alpha(\pi A_n^{[r]},1_{n-1});\\
	\\
	(q^{-d})^{(n+1)(2(m-(n-1))-(n+1))} \vert I_d(L_1,N^{\perp} \cap \pi M^{\vee})\vert\\
	=\alpha(\left(\begin{array}{ccc}
	\pi 1_n & &\\
	& \pi^2 &\\
	& & \pi 1_{2r}
	\end{array}\right),\pi B_1);\\
	\\
	(q^{-d})^{(n-1)(2(2n)-(n-1))} \vert I_d(L_2,M_{n-1})\vert=\alpha(\pi A_{n-1},1_{n-1});\\
	\\
	(q^{-d})^{(n+1)(2(2n-(n-1))-(n+1))} \vert I_d(L_1,O^{\perp})\vert=\alpha(\pi 1_{n+1},\pi B_1);\\	\\
	
		(q^{-d})^{n(2m-n)} \vert I_d(\Delta_2,M)\vert=\alpha(\pi A_n^{[r]},1_{n});\\\\
		
		(q^{-d})^{n(2(m-n)-n)} \vert I_d(\Delta_1,N^{\perp})\vert=\alpha(\pi 1_{n+2r},\pi 1_{n});	
	\end{array}
	\end{equation}
	for sufficiently large $d$.
	
	Now, we need to compute these representation densities. We have the following proposition.
	
	\begin{proposition}\label{propositionA5} Let $k,m,n$ be integers such that $k,n \leq m$ and let $r=m-n$. Then we have
		\begin{equation*}
		\alpha(\left(\begin{array}{cc}
		\pi 1_k & \\
		& 1_{m-k}
		\end{array}\right), 1_n)=\prod_{l=1}^{n}(1-(-q)^{-l+k-r}).\\
		\end{equation*}
	\end{proposition}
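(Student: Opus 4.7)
The plan is to prove Proposition~\ref{propositionA5} by induction on $n$, where at each step the pair $(m,n)$ decreases to $(m-1,n-1)$ so that the integer $r = m-n$ and the parameter $k$ stay fixed. The base case $n=0$ is trivial (empty product equals the representation density of the zero form, which is $1$).

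For the inductive step, I would parametrize an asymptotic solution $x \in M_{m,n}(O_{E_v})$ by its first column $v \in O_{E_v}^m$ together with the remaining block $x' \in M_{m,n-1}(O_{E_v})$. Writing $A_k := \mathrm{diag}(\pi 1_k,\, 1_{m-k})$, the isometry condition $A_k[x] \equiv 1_n \pmod{\pi^d}$ breaks into three pieces: (i) $A_k[v] \equiv 1$; (ii) $A_k[x'] \equiv 1_{n-1}$; and (iii) $v$ is $A_k$-orthogonal to each column of $x'$ modulo $\pi^d$. The first factor of the product would come from counting $v$: a direct computation using Lemma~\ref{lemma 3.12} for the $n=1$ case yields $\alpha(A_k,1) = 1 - (-q)^{k-m}$, and since $k-m = -n+k-r$, this is precisely the $l=n$ factor in the target product.

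For each $v$ satisfying (i) we have $\langle v,v\rangle \equiv 1 \pmod{\pi}$, so the projection of $v$ onto the unimodular block $M_0$ (of gram $1_{m-k}$) is primitive, by the same argument as in Lemma~\ref{lemmaA1}. I would then invoke Witt's extension theorem for hermitian lattices over the unramified DVR $O_{E_v}$ (valid since $p>2$) to show that $U(M)$ acts transitively on such vectors, and that after a suitable element of $U(M)$ we may assume $v$ is a standard basis vector of $M_0$. The orthogonal complement $v^\perp \subset M$ then has rank $m-1$ and gram matrix $A'_k := \mathrm{diag}(\pi 1_k,\, 1_{m-k-1})$, so conditions (ii) and (iii) reduce to a single representation condition for $x'$ relative to $A'_k$. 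By the induction hypothesis applied to the triple $(m-1, n-1, k)$ (for which $r$ is unchanged), this contributes a factor of $\prod_{l=1}^{n-1}(1-(-q)^{-l+k-r})$, yielding the claimed formula.

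The main obstacle I anticipate is the careful bookkeeping of the $q^{-d}$-normalizations in the transition from the raw asymptotic count to the representation density, together with the verification that non-primitive $v$'s and the $U(M)$-stabilizer of $v$ contribute only negligible (lower-order) terms in the limit $d \to \infty$; this is where an orbit-stabilizer decomposition in the spirit of Proposition~\ref{propositionA1} would enter. An alternative (and perhaps more economical) route would be to invoke Hironaka's explicit formula for unramified hermitian representation densities directly: since both $A_k$ and $1_n$ have particularly simple Jordan splittings, the combinatorial sum in \cite{Hir} should collapse to the stated product after a modest bookkeeping exercise.
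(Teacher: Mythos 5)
Your proposal is correct in substance, but it proves the proposition by a genuinely different route than the paper. The paper's proof is exactly your closing ``alternative route'': it specializes Hironaka's explicit formula \cite[Theorem II]{Hir2} to $\lambda=(0^n)$ and $\xi=(1^k,0^{m-k})$, so that only $\mu=(1^l,0^{n-l})$ contribute, $I_1$ collapses to a single Gaussian binomial $\left[\begin{smallmatrix} n\\ n-l \end{smallmatrix}\right]$ and $I_j=1$ for $j>1$, giving $\alpha=\sum_{l=0}^{n}(-1)^{l}(-q)^{-l(l-1)/2-(r+1)l+kl}\left[\begin{smallmatrix} n\\ n-l \end{smallmatrix}\right]$; this sum is then recognized as the stated product by substituting $X\mapsto(-q)^kX$ in the classical $k=0$ identity $\prod_{l=1}^{n}(1-(-q)^{-l}X)$, $X=(-q)^{-r}$, from \cite[p.~677]{KR2}. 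Your main route instead splits off one unit vector at a time: the base factor $\alpha(A_k,1)=1-(-q)^{k-m}$ is right (and consistent with the $n=1$ case of the statement, since $k-m=-n+k-r$), the complement $v^{\perp}$ indeed has Gram $\diag(\pi 1_k,1_{m-k-1})$ by unimodular splitting plus Witt cancellation (valid here since $E_v/F_v$ is unramified, $p>2$, and every unit is a norm), and the index bookkeeping $(m,n)\mapsto(m-1,n-1)$ with $r$ and $k$ fixed reproduces the product factor by factor. What each approach buys: the paper's argument is a two-step computation once \cite{Hir2} is imported, with all $q$-binomial cancellations absorbed into a known generating identity; yours is self-contained relative to the paper's own toolkit --- the congruence-to-isometry lifting and the orthogonal-complement reduction you flag as ``bookkeeping'' are precisely the mechanisms of Proposition \ref{proposition3.18} and Lemma \ref{lemma3.19} (after \cite[Lemmas 9.6--9.10, Proposition 9.9]{KR2}) --- and it makes the meaning of each factor transparent. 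Two of your anticipated obstacles are in fact vacuous: any $v$ with $A_k[v]\equiv 1 \pmod{\pi}$ automatically has a unit coordinate in the unimodular block (the $\pi 1_k$ block only contributes values in $\pi O_{E_v}$), so no estimate for non-primitive $v$ is needed; and no stabilizer count enters if you organize the step as a bijection in the style of Lemma \ref{lemma3.19}, decomposing $x'$ as $\psi_1+\psi_2$ with $\psi_1$ landing unconstrained in $\pi^d O_{E_v}v$ and $\psi_2$ in $v^{\perp}$. Only the degenerate case $k=m$ deserves a separate sentence (there are then no solutions $v$ even modulo $\pi$, and both sides vanish, the product through its $l=n$ factor).
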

	
	This formula is just a special case of \cite[Theorem II]{Hir2}.
	In the proof of this proposition, we will use the notation in \cite{Hir2} (and \cite[Page 921]{San}).
	
	Let $\Lambda_n^+:=\lbrace \lambda=(\lambda_1, \dots, \lambda_n) \in \BZ^n \vert \lambda_1 \geq \lambda_2 \geq \dots \geq \lambda_n \geq 0 \rbrace$.
	
	For each $\mu \in (\mu_1, \dots, \mu_n) \in \Lambda_n^+$, we define
	\begin{equation*}
	\begin{array}{l}
	\tilde{\mu}:=(\mu_{1}+1, \dots, \mu_n+1);\\
	\vert \mu \vert:=\sum_{i=1}^{n} \mu_n;\\
	n(\mu):=\sum_{i=1}^{n}(i-1)\mu_i;\\
	\mu_i':=\vert \lbrace j \vert \mu_j \geq i \rbrace \vert, \text{ } (i \geq 1);\\
	\pi^{\mu}:=\diag(\pi^{\mu_1}, \dots, \pi^{\mu_n}).
	\end{array}
	\end{equation*}
	
	Furthermore, for $\mu \in \Lambda_n^+$, $\xi \in \Lambda_m^+$, we define
	\begin{equation*}
	\langle \xi', \mu' \rangle:=\sum_{i \geq 1} \xi_i'\mu_i'.
	\end{equation*}
	
	For $\mu, \lambda \in \Lambda_n^+$, we denote by $\mu \leq \lambda$ if $\mu_i \leq \lambda_i$ for all $1 \leq i \leq n$.
	
	For $u \geq v \geq 0$, we define
	\begin{equation*}
	\left[ \begin{array}{l}
	u\\
	v
	\end{array}\right]:=\dfrac{\prod_{i=1}^u (1-(-q)^{-i})}{\prod_{i=1}^v (1-(-q)^{-i}) \prod_{i=1}^{u-v} (1-(-q)^{-i})}
	\end{equation*}
	
	Finally, for $\mu, \lambda \in \Lambda_n^+$, we define
	\begin{equation*}
	I_j(\mu, \lambda):=\sum_{i=\mu_{j+1}'}^{\min((\tilde{\lambda})'_{j+1}, \mu_j')}(-q)^{i(2(\tilde{\lambda})'_{j+1}+1-i)/2}\left[ \begin{array}{l}
	(\tilde{\lambda})_{j+1}'-\mu_{j+1}'\\
	(\tilde{\lambda})_{j+1}'-i
	\end{array}\middle] \middle[ \begin{array}{l} (\tilde{\lambda})_{j}'-i\\ (\tilde{\lambda})_{j}'-\mu_{j}' \end{array}\right].
	\end{equation*}
	
	Now, we can state the following proposition.
	
	\begin{proposition}
		(\cite[Theorem II]{Hir2}) For $\lambda \in \Lambda_n^+$ and $\xi \in \Lambda_m^+$ with $m \geq n$, we have
		\begin{equation*}
		\alpha(\pi^{\xi},\pi^{\mu})=\sum_{\substack{\mu \in \Lambda_n^+\\ \mu \leq \tilde{\lambda}}}(-1)^{\vert \mu \vert}(-q)^{-n(\mu)+(n-m-1)\vert \mu \vert+\langle \xi',\mu' \rangle} \prod_{j \geq 1} I_j(\mu,\lambda).
		\end{equation*}
	\end{proposition}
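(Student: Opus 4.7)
The statement is quoted directly from Hironaka's \cite[Theorem~II]{Hir2}, so the cleanest proof is simply to invoke that theorem. No new argument is required here: the function of this proposition in the appendix is only to record, in notation compatible with the rest of the paper, a closed form for $\alpha(\pi^{\xi},\pi^{\mu})$ that will be specialised via \eqref{eqA5} to the handful of densities needed to compare with \cite[Theorem~10.4.4]{LZ}. The objects $\Lambda_n^+$, $\tilde\mu$, $n(\mu)$, $\mu'$, $\langle \xi',\mu'\rangle$ and the inner sums $I_j(\mu,\lambda)$ were introduced in the preceding paragraphs precisely so that the formula could be transcribed verbatim from \cite{Hir2}.

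If one wished to reprove the formula from scratch rather than cite it, the strategy would be the one carried out in \cite{Hir2}. The first step is to rewrite $\alpha(\pi^{\xi},\pi^{\mu})$ as a normalized limit of the counting function $|\CA_d(\pi^{\xi},\pi^{\mu})|$ and to partition $\CA_d(\pi^{\xi},\pi^{\mu})$ according to the elementary-divisor type of the image sublattice $\varphi(L)\subset M/\pi^d M$, where $L$ and $M$ are hermitian lattices with Gram matrices $\pi^{\mu}$ and $\pi^{\xi}$ respectively. These elementary-divisor types are indexed by partitions $\mu\le \tilde\lambda$ in $\Lambda_n^+$. For each fixed type the contribution factors as a product of (i) a count of sublattices of $M$ of prescribed type, which is a hermitian analogue of a Gaussian binomial and produces the inner sums $I_j(\mu,\lambda)$, and (ii) a count of isometric embeddings of $L$ onto such a sublattice, which accounts for the prefactor $(-1)^{|\mu|}(-q)^{-n(\mu)+(n-m-1)|\mu|+\langle \xi',\mu'\rangle}$.

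The main obstacle in such a direct reproof is the combinatorial bookkeeping that identifies the resulting double sum with the stated closed form; matching the two sides uses classical partition identities such as $n(\mu)+n(\mu')=\binom{|\mu|}{2}$ together with a careful Iwahori-orbit decomposition of the unitary group of $M$. For the present application none of this needs to be redone: Hironaka's theorem is invoked as a black box, and the formula is then specialised in the immediate sequel to evaluate the representation densities in \eqref{eqA5} that appear in the verification of compatibility with \cite[Theorem~10.4.4]{LZ}.
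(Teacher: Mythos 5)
Your proposal is correct and coincides with the paper's treatment: the paper states this proposition purely as a quotation of \cite[Theorem II]{Hir2}, offers no proof of its own, and immediately specialises the formula (to $\lambda=0$, $\xi=(1^k,0^{m-k})$) in the proof of Proposition \ref{propositionA5}, exactly as you describe. Invoking Hironaka's theorem as a black box is the entire intended argument, so your additional sketch of a from-scratch reproof is superfluous for matching the paper.
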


\begin{proof}[Proof of Proposition \ref{propositionA5}]
In our case, $\lambda=(0,0, \dots, 0) \in \Lambda_n^+$ and $\xi=(1^k,0^{m-k}) \in \Lambda_m^+$. Therefore, $\tilde{\lambda}=(1^n) \in \Lambda_n^+$ and hence $\mu$ runs over $(1^l,0^{n-l}) \in \Lambda_n^+$. We write $^l\mu$ for $(1^l,0^{n-l})$.

First, we need to compute $\prod_{j \geq 1} I_j(^l\mu,\lambda)$.

For $j=1$, we have
\begin{equation*}
I_1(^l\mu, \lambda):=\sum_{i=^l\mu_{2}'}^{\min((\tilde{\lambda})'_{2}, ^l\mu_1')}(-q)^{i(2(\tilde{\lambda})'_{2}+1-i)/2}\left[ \begin{array}{l}
(\tilde{\lambda})_{2}'- (^l\mu)_{2}'\\
(\tilde{\lambda})_{2}'-i
\end{array}\middle] \middle[ \begin{array}{l} (\tilde{\lambda})_{1}'-i\\ (\tilde{\lambda})_{1}'- (^l\mu)_{1}' \end{array}\right].
\end{equation*}

Since $(\tilde{\lambda})'_2=0$, $i$ should be $0$. Therefore, we have
\begin{equation*}
\begin{array}{ll}
I_1(^l\mu,\lambda)&=\left[ \begin{array}{c} 0\\0 \end{array} \middle] \middle[ \begin{array}{c} n \\ n-l \end{array} \right]\\
&=\left[ \begin{array}{c} n \\ n-l \end{array} \right]\\
&=\dfrac{\prod_{i=1}^n (1-(-q)^{-i})}{\prod_{i=1}^{n-l} (1-(-q)^{-i}) \prod_{i=1}^{l} (1-(-q)^{-i})}.

\end{array}
\end{equation*}

For $j>1$, it is easy to see that $I_j(^l\mu,\lambda)=1$.

Therefore,

\begin{equation*}
\begin{array}{ll}
\alpha(\pi^{\xi},\pi^{\mu})&=\sum_{l=0}^{n}(-1)^{\vert ^l\mu \vert}(-q)^{-n(^l\mu)+(n-m-1)\vert ^l\mu \vert+\langle \xi', ^l\mu' \rangle} \prod_{j \geq 1} I_j( ^l\mu,\lambda)\\
&=\sum_{l=0}^{n}(-1)^{\vert ^l\mu \vert}(-q)^{-n(^l\mu)+(n-m-1)\vert ^l\mu \vert+\langle \xi', ^l\mu' \rangle} \left[ \begin{array}{c} n \\ n-l \end{array}\right].

\end{array}
\end{equation*}

Since $\vert ^l\mu \vert=l$, $n( ^l\mu)=\dfrac{l(l-1)}{2}$, and $\langle \xi', ^l\mu' \rangle=kl$, we have

\begin{equation}\label{eqA.0.4}
\alpha(\pi^{\xi},\pi^{\mu})=\sum_{l=0}^{n}(-1)^{l}(-q)^{-l(l-1)/2-(r+1)l+kl} \left[ \begin{array}{c} n \\ n-l \end{array}\right].
\end{equation}

If $k=0$, it is well-known that
\begin{equation*}
\begin{array}{l}
\alpha(\pi^{\xi},\pi^{\mu})=\prod_{l=1}^{n} (1-(-q)^{-l}X).
\end{array}
\end{equation*}
where $X=(-q)^{-r}$ (see \cite[page 677]{KR2}).

Therefore, by changing $X$ to $(-q)^kX$, we have that \eqref{eqA.0.4} is equal to
\begin{equation*}
\begin{array}{ll}
\alpha(\pi^{\xi},\pi^{\mu})&=\prod_{l=1}^{n} (1-(-q)^{-l}(-q)^kX)\\
&=\prod_{l=1}^{n} (1-(-q)^{-l}(-q)^{k-r}).
\end{array}
\end{equation*}

\end{proof}

Now, we have the following proposition.

\begin{proposition}
	\begin{equation*}
	\begin{array}{ll}
	\dfrac{W_{n-1,n}'(B,0)}{W_{n,n}(A_n,0)}-\beta^{n-1}_{n-1}\dfrac{W_{n-1,n-1}(B,0)}{W_{n,n}(A_n,0)}=\\
	\dfrac{1}{q+1}\lbrace\dfrac{\alpha(\left(\begin{array}{cc} 1_n & \\
	& \pi 
	\end{array}\right),B_1)}{\alpha(1_n,1_n)}-\dfrac{\alpha(1_{n+1},B_1)}{\alpha(1_{n+1},1_{n+1})}\rbrace.
	\end{array}
	\end{equation*}
\end{proposition}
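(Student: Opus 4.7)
The plan is to collect the three $W$-functions on the left-hand side into closed-form expressions in representation densities, use Proposition \ref{propositionA5} to evaluate the constant factors, and then perform the derivative and cancellation. First I would combine \eqref{eqA3}, \eqref{eqA4}, and \eqref{eqA5}, together with the factorization from the proof of Theorem \ref{theorem3.16}, to write
\begin{align*}
W_{n,n}(A_n,0)     &= q^{-4n^2}\,\alpha(\pi A_n,1_n)\,\alpha(1_n,1_n), \\
W_{n-1,n-1}(B,0)   &= q^{-4n^2}\,\alpha(\pi A_{n-1},1_{n-1})\,\alpha(1_{n+1},B_1), \\
W_{n-1,n}(B,r)     &= q^{-4n^2-2(n+1)}\,\alpha(\pi A_n^{[r]},1_{n-1})\,\alpha(\diag(1_n,\pi,1_{2r}),B_1).
\end{align*}
Proposition \ref{propositionA5} evaluates $\alpha(1_n,1_n) = \alpha(\pi A_n,1_n) = \prod_{l=1}^n(1-(-q)^{-l}) =: P_n$, $\alpha(\pi A_{n-1},1_{n-1}) = P_{n-1}$, and, crucially, $\alpha(\pi A_n^{[r]},1_{n-1}) = \prod_{l=1}^{n-1}(1-(-q)^{-l-1}) = qP_n/(q+1)$, which is independent of $r$.

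The central task is then to compute $W_{n-1,n}'(B,0) = -\tfrac{d}{dx}W_{n-1,n}(B,r)|_{x=1}$ with $x = (-q)^{-2r}$. Since the first representation-density factor is $r$-independent, this reduces to the polynomial structure of $\alpha(\diag(1_n,\pi,1_{2r}),B_1)$ in $x$. I would analyze this by noting that, after reordering basis vectors, $\diag(1_n,\pi,1_{2r})$ equals $\diag(1_{n+2r},\pi)$; the rank-$(n+2r)$ unimodular block contains many unimodular sublattices, and every isometric embedding of the Gram lattice of $B_1$ decomposes -- by an argument paralleling Propositions \ref{proposition3.18}--\ref{proposition3.20} -- according to how its image meets this block. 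Tracing through this dichotomy should produce an identity of the shape
\[
\alpha(\diag(1_n,\pi,1_{2r}),B_1) = a_r\,\alpha(\diag(1_n,\pi),B_1) + b_r\,\alpha(1_{n+1},B_1),
\]
with $a_r, b_r \in \BQ[x]$ determined independently of $B_1$; these are then pinned down by specializing to $B_1 = \diag(1_n,\pi)$ and $B_1 = 1_{n+1}$, where Proposition \ref{propositionA5} supplies all densities in closed form.

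Once $a_r, b_r$ are known, I would compute $-a_r'|_{x=1}$ and $-b_r'|_{x=1}$, plug in the explicit $\beta^{n-1}_{n-1}$ from Proposition \ref{propositionA}, divide by $W_{n,n}(A_n,0) = q^{-4n^2}P_n^2$, and verify that the coefficients of $\alpha(\diag(1_n,\pi),B_1)/\alpha(1_n,1_n)$ and of $\alpha(1_{n+1},B_1)/\alpha(1_{n+1},1_{n+1})$ come out to $+1/(q+1)$ and $-1/(q+1)$ respectively, with $\alpha(1_{n+1},1_{n+1}) = P_{n+1}$ again supplied by Proposition \ref{propositionA5}. The main obstacle will be the bilinear decomposition in the middle step: establishing that exactly these two densities suffice (and not additional intermediate ones) requires a careful Witt-type analysis of how $B_1$ can embed, and the final cancellation is delicate, since the explicit form of $\beta^{n-1}_{n-1}$ must conspire with the $q/(q+1)$ factor from the constant density to produce the clean $1/(q+1)$ on the right-hand side.
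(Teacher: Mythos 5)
Your outer scaffolding coincides with the paper's: the factorizations of $W_{n-1,n}(B,r)$, $W_{n-1,n-1}(B,0)$ and $W_{n,n}(A_n,r)$ through the sets $J_d^1$, the evaluation of the constant densities by Proposition \ref{propositionA5} (including the crucial $r$-independence of $\alpha(\pi A_n^{[r]},1_{n-1})$), and the final insertion of $\beta_{n-1}^{n-1}$ from Proposition \ref{propositionA}. One bookkeeping slip: by the scaling rule $\alpha(\pi C,\pi D)=q^{k^2}\alpha(C,D)$ for $D$ of size $k$, your three displayed formulas are missing factors $q^{(n+1)^2}$, $q^{(n+1)^2}$ and $q^{n^2}$ respectively; these do not cancel in the ratios (they supply the $q^{-2(n+1)}q^{(n+1)^2-n^2}=q^{-1}$ that combines with $\alpha(\pi A_n,1_{n-1})/\alpha(\pi A_n,1_n)=q/(q+1)$ to give the $1/(q+1)$), so with $W_{n,n}(A_n,0)=q^{-4n^2}P_n^2$ as you wrote it the constants cannot come out right.

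The genuine gap is the central ansatz $\alpha(\diag(1_n,\pi,1_{2r}),B_1)=a_r\,\alpha(\diag(1_n,\pi),B_1)+b_r\,\alpha(1_{n+1},B_1)$ with $a_r,b_r$ independent of $B_1$: this is false, and the route through it collapses. Observe that for the $B_1$ occurring here $\val(\det B_1)$ is even, while $\val(\det\diag(1_n,\pi))=1$ is odd; over an unramified quadratic extension this parity is an isometry invariant, so $\alpha(\diag(1_n,\pi),B_1)=0$ identically on the relevant family. Your ansatz would then force $W_{n-1,n}'(B,0)$ to be a \emph{universal multiple} of $\alpha(1_{n+1},B_1)$, i.e.\ the left-hand side of the proposition would be proportional to the second term alone. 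Already for $n=1$ this fails: by the formulas of \cite{San} quoted in the proof of Lemma \ref{lemma4.4}, for $B_1\sim\diag(\pi^a,\pi^b)$ with $a+b$ even, $\alpha'\bigl(\diag(1,\pi),B_1\bigr)$ depends on $a+b$ while $\alpha(1_2,B_1)$ depends only on $b$, so no proportionality holds. The vanishing you overlooked is in fact the engine of the paper's proof: since $\alpha(\diag(\pi 1_n,\pi^2),\pi B_1)=0$ by the parity of determinant valuations, and the companion factor $\alpha(\pi A_n^{[r]},1_{n-1})$ is constant in $x$, differentiating $W_{n-1,n}(B,r)$ directly yields the prefactor times the \emph{derivative} $\alpha'(\diag(\pi 1_n,\pi^2),\pi B_1)$ of a representation density --- and it is $\alpha'(\diag(1_n,\pi),B_1)$, not $\alpha$, that is the intended first term on the right (the printed $\alpha$ is a typo, as the appendix's identification of that ratio with $\partial\mathrm{Den}_{\Lambda}(L)$ of \cite{LZ} confirms). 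A proof that never produces a density derivative cannot establish the identity. Relatedly, the ``Witt-type'' splitting you invoke has no analogue on the target side: the decompositions of Propositions \ref{proposition3.18}--\ref{proposition3.20} split maps along a unimodular sublattice of the \emph{source} $L$, a device already spent in reaching the factorization you start from; an embedded vector of $L_1$ may meet both blocks of $\diag(1_{n+2r},\pi)$ nontrivially, so no two-term universal decomposition is available.
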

\begin{proof}
From \eqref{eq3.2.4}, \eqref{eqA3}, \eqref{eqA4}, \eqref{eqA5}, we have
	\begin{equation*}
	\begin{array}{l}
	W_{n-1,n}(B,r)=q^{-4n^2}q^{-2(n+1)}\alpha(\pi A_n^{[r]},1_{n-1})\alpha(\left(\begin{array}{ccc}
	\pi 1_n & &\\
	& \pi^2 &\\
	& & \pi 1_{2r}
	\end{array}\right),\pi B_1);\\
	W_{n-1,n-1}(B,0)=q^{-4n^2}\alpha(\pi A_{n-1},1_{n-1})\alpha(\pi 1_{n+1},\pi B_1);\\
	W_{n,n}(A_n,r)=q^{-4n^2}\alpha(\pi A_n^{[r]},1_n)\alpha(\pi 1_{n+2r},\pi 1_n).
	\end{array}
	\end{equation*}
	
	Note that 
	\begin{equation*}
	\val(\det(B)) \equiv n+1 (\Mod 2)
	\end{equation*} and hence $\val(\det(B_1))\equiv 0 (\Mod 2)$. Therefore $\alpha(\left(\begin{array}{cc}
	\pi 1_n & \\
	& \pi^2 \end{array}\right), \pi B_1)=0$.
	
	This implies that
	\begin{equation*}
	W_{n-1,n}'(B,0)=q^{-4n^2}q^{-2(n+1)}\alpha(\pi A_n,1_{n-1})\alpha'(\left(\begin{array}{ccc}
	\pi 1_n & \\
	& \pi^2
	\end{array}\right), \pi B_1).
	\end{equation*}
	
	By proposition \ref{propositionA5}, we have
	\begin{equation*}
	\begin{array}{l}
	\alpha(\pi A_n,1_{n-1})=\prod_{l=1}^{n-1}(1-(-q)^{-(l+1)})\\
	\alpha(\pi A_n,1_n)=\prod_{l=1}^{n}(1-(-q)^{-l})\\
	\alpha(\pi A_{n-1},1_{n-1})=\prod_{l=1}^{n-1}(1-(-q)^{-l}).
	\end{array}
	\end{equation*}
	
	Therefore,
	\begin{equation*}
	\begin{array}{ll}
	\dfrac{W_{n-1,n}'(B,0)}{W_{n,n}(A_n,0)}&=\dfrac{q^{-2(n+1)}\alpha(\pi A_n,1_{n-1})\alpha'(\left(\begin{array}{ccc}
			\pi 1_n & \\
			& \pi^2
			\end{array}\right), \pi B_1)}{\alpha(\pi A_n,1_n)\alpha(\pi 1_n,\pi 1_n)}\\
		&=\dfrac{q^{-2(n+1)}\prod_{l=1}^{n-1}(1-(-q)^{-(l+1)})q^{(n+1)^2}\alpha'(\left(\begin{array}{ccc}
			 1_n & \\
			& \pi
			\end{array}\right), B_1)}{\prod_{l=1}^{n}(1-(-q)^{-l})q^{n^2}\alpha(1_n,1_n)}\\
		
		&=\dfrac{1}{q+1} \dfrac{\alpha'(\left(\begin{array}{ccc}
			1_n & \\
			& \pi
			\end{array}\right), B_1)}{\alpha(1_n,1_n)}.
	\end{array}
	\end{equation*}
	
	Here, we used the fact that $\alpha(\pi C, \pi D)=q^{n^2}\alpha(C,D)$ where $D$ is a $n \times n$ hermitian matrix.
	
	Similarly,
	
	\begin{equation*}
	\begin{array}{ll}
	\beta^{n-1}_{n-1}\dfrac{W_{n-1,n-1}(B,0)}{W_{n,n}(A_n,0)}&=\beta^{n-1}_{n-1}\dfrac{\alpha(\pi A_{n-1},1_{n-1})\alpha(\pi 1_{n+1}, \pi B_1)}{\alpha(\pi A_n,1_n)\alpha(\pi 1_n,\pi 1_n)}\\\\
	&=\beta^{n-1}_{n-1}\dfrac{\prod_{l=1}^{n-1}(1-(-q)^{-l})q^{(n+1)^2}\alpha(1_{n+1}, B_1)}{\prod_{l=1}^{n}(1-(-q)^{-l})q^{n^2}\alpha(1_n,1_n)}\\\\
	
	&=\dfrac{1}{q+1} \dfrac{\alpha(1_{n+1}, B_1)}{\alpha(1_{n+1},1_{n+1})}.
	\end{array}
	\end{equation*}
	
	Here, we used Proposition \ref{propositionA} and 
	\begin{equation*}
	\dfrac{\alpha(1_{n+1},1_{n+1})}{\alpha(1_n,1_n)}=(1-(-q)^{-(n+1)}).
	\end{equation*}
	\end{proof}

	Note that $\dfrac{\alpha(\left(\begin{array}{cc} 1_n & \\
		& \pi 
	\end{array}\right),B_1)}{\alpha(1_n,1_n)}$
is equal to $\partial\text{Den}_{\Lambda}(L)$ and
$\dfrac{\alpha(1_{n+1},B_1)}{\alpha(1_{n+1},1_{n+1})}$
	is equal to $\text{Den}(L)$ in \cite[Theorem 10.4.4]{LZ} (but, our $n$ is $n-1$ in loc. cit.). Therefore, Conjecture \ref{conjecture2} is compatible with their result.
\bigskip	
\begin{bibdiv}
	\begin{biblist}

	\bib{Cho}{article}{
	title={The basic locus of the unitary Shimura variety with parahoric level structure, and special cycles},
	subtitle={},
	author={Cho, Sungyoon},
	author={},
	journal={},
	volume={},
	date={2018},
	pages={}
	review={}
	status={preprint}
}

\bib{Hir2}{article}{
	title={Local zeta functions on Hermitian forms and its application to local densities},
	subtitle={},
	author={Hironaka, Y.},
	author={},
	author={},
	journal={J. Number Theory},
	volume={71},
	date={1998},
	pages={40--64}
}

\bib{Hir}{article}{
	title={Classification of Hermitian forms by the Iwahori subgroup and their local densities},
	subtitle={},
	author={Hironaka, Y.},
	author={},
	author={},
	journal={Comment. Math. Univ. St. Paul.},
	volume={49},
	date={2000},
	pages={105--142}
	review={\MR{1808896}}
}	

\bib{How}{article}{
	title={Linear invariance of intersections on unitary Rapoport-Zink spaces},
	subtitle={},
	author={Howard, B.},
	author={},
	author={},
	journal={Forum Math.},
	volume={31},
	date={2019},
	pages={1265--1281}
	status={}
	review={\MR{4000587}}
}

\bib{Kud}{article}{
	title={Central derivatives of Eisenstein series and height pairings},
	subtitle={},
	author={Kudla, S.},
	author={},
	author={},
	journal={Annals of Mathematics},
	volume={146},
	date={1997},
	pages={545--646}
	review={\MR{1491448}}
}
\bib{KR5}{article}{
title={Height pairings on Shimura curves and $p$-adic uniformization},
subtitle={},
author={Kudla, S.},
author={Rapoport, M.},
author={},
journal={Invent. Math.},
volume={142},
date={2000},
pages={153--222}
	review={\MR{1784798}}
}
\bib{KR2}{article}{
title={Special cycles on unitary Shimura varieties I. unramified local theory},
subtitle={},
author={Kudla, S.},
author={Rapoport, M.},
author={},
journal={Invent. Math.},
volume={184},
date={2011},
pages={629--682}
review={\MR{2800697}}
}
\bib{KR3}{article}{
title={Special cycles on unitary Shimura varieties II. global theory},
subtitle={},
author={Kudla, S.},
author={Rapoport, M.},
author={},
journal={J. Reine Angew. Math.},
volume={697},
date={2014},
pages={91--157}
review={\MR{3281653}}
}


\bib{KRY}{article}{
	title={Modular forms and speical cycles on Shimura curves},
	subtitle={},
	author={Kudla, S.},
	author={Rapoport, M.},
	author={Yang, T.},
	book={		
		series={Annals of Mathematics Studies},
		volume={\textbf{161}},
		publisher={Princeton University Press},
		address={Princeton, NJ},
		date={2006},
	}
	pages={},
	status={}
	review={\MR{2220359}}
}
\bib{LZ}{article}{
	title={Kudla-Rapoport cycles and derivatives of local densities},
	subtitle={},
	author={Li, C.},
	author={Zhang, W.},
	author={},
	journal={},
	volume={},
	date={2019},
	pages={}
	review={}
	status={preprint}
}

\bib{Liu1}{article}{
	title={Arithmetic theta lifting and L-derivatives for unitary groups I},
	subtitle={},
	author={Liu, Y.},
	author={},
	author={},
	journal={Algebra Number Theory},
	volume={5},
	date={2011},
	pages={849--921}
	review={\MR{2928563}}
}
\bib{Liu2}{article}{
	title={Arithmetic theta lifting and L-derivatives for unitary groups II},
	subtitle={},
	author={Liu, Y.},
	author={},
	author={},
	journal={Algebra Number Theory},
	volume={5},
	date={2011},
	pages={923--1000}
	review={\MR{2928564}}
}

\bib{RSZ3}{article}{
	title={On the arithmetic transfer conjecture for exotic smooth formal moduli spaces},
	subtitle={},
	author={Rapoport, M.},
	author={Smithling, B.},
	author={Zhang, W.},
	journal={Duke Math. J},
	volume={166},
	date={2017},
	pages={2183--2336},
	status={}
	review={\MR{3694568}}
}

\bib{RSZ1}{article}{
	title={Regular formal moduli spaces and arithmetic transfer conjecture},
	subtitle={},
	author={Rapoport, M.},
	author={Smithling,B.},
	author={Zhang,W.},
	journal={Math. Ann.},
	volume={370},
	date={2018},
	pages={1079--1175}
	review={\MR{3770164}}
}

\bib{RSZ2}{article}{
	title={Arithmetic diagonal cycles on unitary Shimura varieties},
	subtitle={},
	author={Rapoport, M.},
	author={Smithling, B.},
	author={Zhang, W.},
	journal={},
	volume={},
	date={2018},
	pages={},
	status={preprint}
}

\bib{RZ}{article}{
	title={Period Spaces for p-divisible groups},
	subtitle={},
	author={Rapoport, M.},
	author={Zink, Th.}
	author={}
		book={		
		series={Annals of Mathematics Studies},
		volume={\textbf{141}},
		publisher={Princeton University Press},
		address={Princeton, NJ},
		date={1996},
	}
	pages={},
	status={}
	review={\MR{1393439}}
}

\bib{San2}{article}{
	title={Unitary cycles on Shimura curves and the Shimura lift I},
	subtitle={},
	author={Sankaran, S.},
	author={},
	author={},
	journal={Doc. Math.},
	volume={18},
	date={2013},
	pages={1403--1464},
	status={}
	review={\MR{3138850}}
}

\bib{San}{article}{
	title={Improper intersections of Kudla-Rapoport divisors and Eisenstein series},
	subtitle={},
	author={Sankaran, S.},
	author={},
	author={},
	journal={J. Inst. Math. Jussieu.},
	volume={16},
	date={2017},
	pages={899--945},
	status={}
	review={\MR{3709001}}
}

			\end{biblist}
\end{bibdiv}

\end{document}